\newtheorem{theorem}{Theorem}
\newtheorem{lemma}[theorem]{Lemma}
\newtheorem{corollary}[theorem]{Corollary}
\newtheorem{proposition}[theorem]{Proposition}
\numberwithin{equation}{section}
\numberwithin{theorem}{section}
\theoremstyle{definition}
\newtheorem{definition}[theorem]{Definition}
\newtheorem*{example*}{Example}
\newtheorem{example}[theorem]{Example}
\newtheorem{remark}[theorem]{Remark}
\newtheorem*{remark*}{Remark}
\newcommand{\bF}{{\mathbb F}}
\newcommand{\bZ}{{\mathbb Z}}
\newcommand{\bC}{{\mathbb C}}
\newcommand{\frg}{{\mathfrak g}}
\newcommand{\frh}{{\mathfrak h}}
\newcommand{\frc}{{\mathfrak c}}
\newcommand{\frf}{{\mathfrak f}}
\newcommand{\frs}{{\mathfrak s}}
\newcommand{\calB}{{\mathcal B}}
\newcommand{\calI}{{\mathcal I}}
\newcommand{\subo}{_{\bar 0}}
\providecommand{\espan}[1]{\text{span}\left\{ #1\right\}}
\providecommand{\alg}[1]{\text{alg}\left\langle #1\right\rangle}
\providecommand{\group}[1]{\text{group}\left\langle #1\right\rangle}
 \DeclareMathOperator{\tri}{\mathfrak{tri}}
 \DeclareMathOperator{\fro}{\mathfrak{o}}
 \DeclareMathOperator{\frsl}{{\mathfrak{sl}}}
 \DeclareMathOperator{\frsp}{{\mathfrak{sp}}}
 \DeclareMathOperator{\frso}{{\mathfrak{so}}}
 \DeclareMathOperator{\trace}{trace}
 \DeclareMathOperator{\ad}{ad}
  \DeclareMathOperator{\Tri}{Tri}
  \DeclareMathOperator{\Centr}{Centr}
  \DeclareMathOperator{\Norm}{Norm}
 \DeclareMathOperator{\der}{\mathfrak{der}}
 \DeclareMathOperator{\End}{End}
 \DeclareMathOperator{\Mat}{Mat}
 \DeclareMathOperator{\Aut}{Aut}
 \DeclareMathOperator{\Antiaut}{Antiaut}
 \DeclareMathOperator{\Cent}{Cent}
 \DeclareMathOperator{\Diag}{Diag}
 \DeclareMathOperator{\diag}{diag}
   \DeclareMathOperator{\Supp}{Supp}
 \DeclareMathOperator{\degree}{deg}
 \DeclareMathOperator{\ex}{ex}
\newenvironment{romanenumerate}
 {\begin{enumerate}
 \renewcommand{\theenumi}{\roman{enumi}}
 }{\end{enumerate}}
\begin{document}

\title{Fine gradings on simple classical Lie algebras}

\author[Alberto Elduque]{Alberto Elduque$^{\star}$}
 \thanks{$^{\star}$ Supported by the Spanish Ministerio de
 Educaci\'{o}n y Ciencia
 and FEDER (MTM 2007-67884-C04-02) and by the
Diputaci\'on General de Arag\'on (Grupo de Investigaci\'on de
\'Algebra)}
 \address{Departamento de Matem\'aticas e
 Instituto Universitario de Matem\'aticas y Aplicaciones,
 Universidad de Zaragoza, 50009 Zaragoza, Spain}
 \email{elduque@unizar.es}


\date{October 19, 2009}

\subjclass[2000]{Primary 17B70; Secondary 17B60}

\keywords{Grading, fine, simple, Lie algebra, graded division algebra}

\begin{abstract}
The fine abelian group gradings on the simple classical Lie algebras (including $D_4$) over algebraically closed fields of characteristic $0$ are determined up to equivalence. This is achieved by assigning certain invariant to such gradings that involve central graded division algebras and suitable sesquilinear forms on free modules over them.
\end{abstract}

\maketitle

\section{Introduction}\label{se:Introduction}

The purpose of this paper is the complete determination of the fine gradings on the simple classical Lie algebras over an arbitrary ground field $\bF$ of characteristic $0$, up to equivalence.

\smallskip

Fine gradings on the classical simple complex Lie algebras other than $\frso_8(\bC)$ (type $D_4$) were considered in \cite{HPP}. The arguments there are computational and use explicitly that the ground field is the field of complex numbers. The problem of whether two of the fine gradings obtained are equivalent is not completely settled, so the question of obtaining a complete and irredundant list of the fine gradings remained open.

On the other hand, the gradings on the simple Lie algebra $\frso_8(\bF)$, for an algebraically closed field of characteristic $0$, have been determined in \cite{DV} and \cite{DMVpr}, where the authors use the results in \cite{HPP}. However there are some missing fine gradings in this last reference. The complete list appears in our Theorem \ref{th:D4}.

A more conceptual approach to gradings on the simple classical Lie algebras have been considered in \cite{BSZ01}, \cite{BShZ}, \cite{BZ} or \cite{BG}, based on the study of the gradings on associative algebras. (Note that \cite[Theorem 2]{BShZ} contains an error which is corrected in \cite[theorem 3.6]{BG}.) Still this more conceptual approach does not give criteria to determine when two fine gradings are equivalent. Again type $D_4$ is excluded in these papers, and a minor point in \cite[Proposition 6.4]{BZ} is corrected in our Example \ref{ex:counterex_Bahturinetal}.

\smallskip

The assumption on the ground field to be algebraically closed and of characteristic $0$ will be kept all over the paper. Our approach will be closer to this more conceptual approach initiated by Bahturin and his collaborators.

\smallskip

The main idea here is to assign an invariant to each fine grading on a simple associative algebra. A key point in defining these invariants is that given a group grading $\Gamma:R=\oplus_{g\in G}R_g$ of the simple associative algebra $R$, there is a central graded division algebra $D=\oplus_{g\in G}D_g$ and a graded (necessarily free) right module $V=\oplus_{g\in G}V_g$ over $D$ such that $R$ is isomorphic, as a $G$-graded algebra, to the algebra of endomorphisms $\End_D(V)$, endowed with the natural grading over $G$ induced by the one in $V$. It is this point of view the one that indicates easily the right invariants to be used.

The central graded division algebra becomes the first component of the invariants attached to the grading $\Gamma$. Under further restrictions, the right module $V$ is endowed with a nondegenerate sesquilinear form. Diagonalizations of such forms will give  families of homogeneous elements which appear as the second components of the invariants. Equivalence classes of fine gradings will then be in bijection with suitable equivalence classes of pairs.

A quite self contained proof of the $D_4$ case will be given, following some of the ideas in \cite{DMVpr}, and using the previous results, in order to complete the list of nonequivalent fine gradings in this case too. The existence of outer automorphisms or order $3$ for the simple Lie algebra of type $D_4$ makes this case the most difficult and interesting one.

\medskip

But let us start with a review of the definitions.

Let $A$ be an algebra (not necessarily associative) over our ground field $\bF$, a \emph{grading} on $A$ is a decomposition
\begin{equation}\label{eq:Gamma_grading}
\Gamma: A=\oplus_{s\in S}A_s
\end{equation}
of $A$ into a direct sum of subspaces such that for any $s_1,s_2\in S$ there exists a $s_3\in S$ with $A_{s_1}A_{s_2}\subseteq A_{s_3}$. The grading $\Gamma$ is said to be a \emph{group grading} if there is a group $G$ containing $S$ such that $A_{s_1}A_{s_2}\subseteq A_{s_1s_2}$ (multiplication of indices in the group $G$) for any $s_1,s_2\in S$. Then we can write $\Gamma: A=\oplus_{g\in G}A_g$. The subset $\{g\in G: A_g\ne 0\}$ is called the \emph{support} of the grading and denoted by $\Supp\Gamma$ (or $\Supp A$ if the context is clear). If the group $G$ is abelian the grading is said to be an \emph{abelian group grading}. Semigroup gradings are defined in the same way. It was asserted in \cite{PZ} that any grading on a Lie algebra is a semigroup grading, but counterexamples were found in \cite{Eld06} and \cite{Eldpra}.

Two gradings $\Gamma: A=\oplus_{s\in S}A_s$ and $\Gamma':A'=\oplus_{s'\in S'}A'_{s'}$ are \emph{equivalent} if there is an isomorphism $\phi:A\rightarrow A'$ such that for any $s\in S$ there is a $s'\in S'$ with $\phi(A_s)=A'_{s'}$. The isomorphism $\phi$ will be called a \emph{graded isomorphism}.

Given two group gradings over the same group, $\Gamma: A=\oplus_{g\in G}A_s$ and $\Gamma':A'=\oplus_{g\in G}A'_g$, a $G$-homomorphism (or $G$-graded homomorphism) will indicate an algebra homomorphism $\phi:A\rightarrow A'$ such that $\phi(A_g)\subseteq A'_g$ for any $g\in G$. If the $G$-homomorphism $\phi$ is an algebra isomorphism, then it will be said to be a $G$-isomorphism (or $G$-graded isomorphism).

The \emph{type} of a grading $\Gamma$ is the sequence of numbers $(n_1,n_2,\ldots,n_r)$ where $n_i$ denotes the number of homogeneous spaces of dimension $i$, $i=1,\ldots,r$, $n_r\ne 0$. (Thus $\dim A=\sum_{i=1}^r in_i$.)

Let $\Gamma$ and $\Gamma'$ be two gradings on $A$ as above. The grading $\Gamma$ is said to be a \emph{refinement} of $\Gamma'$ (or $\Gamma'$ a \emph{coarsening} of $\Gamma$) if for any $s\in S$ there is an index $s'\in S'$ such that $A_s\subseteq A_{s'}$. In other words, any homogeneous space in $\Gamma'$ is a (direct) sum of some homogeneous spaces in $\Gamma$. A grading is called \emph{fine} if it admits no proper refinement.

We will restrict ourselves to abelian group gradings, and hence additive notation will be used. Actually, any group grading on a finite dimensional simple Lie algebra is an abelian group grading (see
\cite[Proposition 3]{Misha}, where a very complete survey of many results and references on gradings on Lie algebras can be found). Hence a fine grading will refer to an abelian group grading which admits no proper refinement in the class of abelian group gradings. Moreover, since all the algebras considered will be finite dimensional, the abelian grading groups will always be considered, without loss of generality, to be finitely generated.

Given a grading as in \eqref{eq:Gamma_grading}, one may consider the abelian group $G$ generated by $\{s\in S: A_s\ne 0\}$ subject only to the relations $s_1+s_2=s_3$ if $0\ne A_{s_1}A_{s_2}\subseteq A_{s_3}$. Then $A$ is graded over $G$ (or $G$-graded); that is,
\[
A=\oplus_{g\in G}A_g,
\]
where $A_g$ is the sum of the homogeneous components $A_s$ such that the class of $s$ in $G$ is $g$. Note that if $\Gamma$ is already an abelian group grading there is at most one such homogeneous component. This group $G$ has the following property: given any group grading $A=\oplus_{h\in H}A_h$ for an abelian group $H$ which is a coarsening of $\Gamma$, then there exists a unique homomorphism $f:G\rightarrow H$  such that $A_h=\oplus_{g\in f^{-1}(h)}A_g$ (see \cite{Misha}). The group $G$ is called the \emph{universal grading group of $\Gamma$}. The universal grading groups of two equivalent gradings are isomorphic.

Again, given a grading as in \eqref{eq:Gamma_grading}, the \emph{diagonal group of $\Gamma$}, denoted by $\Diag_\Gamma(A)$ is defined by:
\[
\Diag_\Gamma(A)=\{\Phi\in\Aut A: \Phi\vert_{A_s}\ \text{is a scalar multiple of the identity}\ \forall s\in S\}.
\]
If $\Gamma$ is a fine grading (in the class of abelian group gradings) then $\Diag_\Gamma(A)$ is a maximal abelian diagonalizable subgroup (or just \emph{MAD} for short) of the group $\Aut A$ of automorphisms of $A$ (see \cite{PZ}). Conversely, given a MAD $M$ in $\Aut A$, then the algebra $A$ decomposes as $A=\oplus_{\chi\in \hat M}A\chi$, where $\hat M$ is the set of characters of the group $M$ (that is,  continuous homomorphisms $\chi:M\rightarrow \bF^\times$ in the Zariski topology) and $A_\chi=\{x\in A: f(x)=\chi(f)x\ \forall f\in M\}$ for any $\chi\in\hat M$, and this is a fine grading.

\smallskip

A description of the fine gradings for the classical simple complex Lie algebras (except type $D_4$) has been obtained in \cite{HPP}. An explicit and irredundant description was later given for some classical simple Lie algebras of small rank (see the nice survey \cite{Svobodova} and references therein). Such descriptions are also known for the octonions \cite{Eld98}, the exceptional simple Jordan algebra (or Albert algebra) \cite{DMF4} and for the exceptional simple Lie algebras of type $G_2$ \cite{DMG2}, and independently \cite{BTG2}, and of type $F_4$ \cite{DMF4}.

\smallskip

Here such an explicit and irredundant description will be given for all the simple classical Lie algebras, including type $D_4$.

\medskip

The paper is structured as follows. Next section will be devoted to study the (abelian group) gradings on associative matrix algebras. Although the main result  (Theorem \ref{th:fine_gradings_R}) is already known (\cite[Theorem 6]{BSZ01}),  in our proof we will introduce some of the methods and notations that will be needed in the later sections. Then Section \ref{se:matrix_anti} will deal with simple associative graded algebras endowed with a graded antiautomorphism (that is, all the homogeneous spaces are invariant for the antiautomorphism). Some invariants will be attached to these gradings, which will allow us to distinguish non equivalent gradings. Section \ref{se:A} will then be devoted to the classification, up to equivalence, of the fine gradings on the simple Lie algebras of type $A$ (the special linear Lie algebras), and Section \ref{se:finesosp} to the corresponding classification for types $B$, $C$ and $D$, with the exception of $D_4$ which, as mentioned above, is exceptional due to the existence of outer order $3$ automorphisms. The final Section \ref{se:D4} will deal with this exceptional case.

\bigskip

\section{Gradings on matrix algebras}\label{se:matrix}

Recall that \emph{the ground field $\bF$ will always be assumed to be algebraically closed and of characteristic zero}.

\smallskip

Given a natural number $n$, consider the associative algebra
\begin{equation}\label{eq:divisionAn}
A_n=\alg{x,y : x^n=1=y^n,\ xy=\epsilon_n yx},
\end{equation}
where $\epsilon_n$ is a primitive $n$th root of unity. The algebra $A_n$ is isomorphic to the matrix algebra $\Mat_n(\bF)$, and it is naturally $\bZ_n\times\bZ_n$-graded, with $\degree(x)=(\bar 1,\bar 0)$ and $\degree(y)=(\bar 0,\bar 1)$. Moreover, $A_n$ is a graded division algebra (that is, any nonzero homogeneous element is invertible) and it is central.

The next result appears essentially in \cite[Theorem 5]{BSZ01}. We include it for completeness.

\begin{proposition}\label{pr:graded_division}
Let $\Gamma: D=\oplus_{g\in G}D_g$ be a central graded division algebra over the group $G$. Then $H=\Supp\Gamma$ is a subgroup of $G$ and there is a decomposition $H=H_1\times \cdots\times H_r$, where for each $i=1,\ldots,r$, $H_i$ is isomorphic to $\bZ_{n_i}\times\bZ_{n_i}$, $n_i$ a power of a prime, such that $D=D_1\cdot\cdots\cdot D_r\simeq D_1\otimes \cdots\otimes D_r$, where each $D_i=\oplus_{h\in H_i}D_h$ is graded isomorphic to the algebra $A_{n_i}$ above.
\end{proposition}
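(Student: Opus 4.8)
The plan is to first establish that $H=\Supp\Gamma$ is a subgroup of $G$. Since any nonzero homogeneous element is invertible, for $g,h\in H$ pick $0\ne u\in D_g$, $0\ne v\in D_h$; then $uv$ is a nonzero element of $D_{g+h}$ (nonzero because $u$ is invertible), so $g+h\in H$, and $u^{-1}\in D_{-g}$ is nonzero, so $-g\in H$. Thus $H$ is a subgroup, and we may replace $G$ by $H$ and assume the grading has full support. Next I would observe that for each $g\in H$ the component $D_g$ is one-dimensional: if $u,v\in D_g$ are nonzero then $u^{-1}v\in D_0$; but $D_0$ is a graded division algebra concentrated in degree $0$, hence an ordinary division algebra over the algebraically closed field $\bF$, so $D_0=\bF$ and $u^{-1}v\in\bF$. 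In particular $\dim_\bF D=|H|$, and $H$ is finite.

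The core of the argument is the structure of $H$ together with the commutation relations. For each $g\in H$ choose $0\ne u_g\in D_g$. For $g,h\in H$ we have $u_gu_h=\beta(g,h)u_hu_g$ for a scalar $\beta(g,h)\in\bF^\times$, and one checks directly that $\beta:H\times H\to\bF^\times$ is a bicharacter (alternating, since $\beta(g,g)=1$ as $D_g$ is commutative with itself up to scalar — more precisely $u_g^2$ is a nonzero scalar multiple of $u_{2g}$ and one argues $\beta(g,g)=1$). The centrality of $D$ (center equal to $\bF$, concentrated in degree $0$) translates exactly into the nondegeneracy of $\beta$: if $\beta(g,\cdot)\equiv 1$ then $u_g$ is central, forcing $g=0$. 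So I would invoke the standard classification of finite abelian groups equipped with a nondegenerate alternating bicharacter: such an $(H,\beta)$ decomposes as an orthogonal (with respect to $\beta$) direct product $H=H_1\times\cdots\times H_r$ where each $H_i\simeq\bZ_{n_i}\times\bZ_{n_i}$ with $n_i$ a prime power, and $\beta$ restricted to each $H_i$ is the standard symplectic pairing (a root-of-unity pairing). This is pure abelian-group-with-form theory, essentially the symplectic basis theorem for the pairing induced on $H$, combined with primary decomposition.

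With this decomposition in hand, I would set $D_i=\oplus_{h\in H_i}D_h$; since the $H_i$ are mutually $\beta$-orthogonal, the elements of $D_i$ commute with those of $D_j$ for $i\ne j$, and because $D=\oplus_{h\in H}D_h$ with each piece one-dimensional, multiplication gives a surjective linear map $D_1\otimes\cdots\otimes D_r\to D$ which is an isomorphism by a dimension count ($\prod|H_i|=|H|=\dim D$). It remains to identify each $D_i$ with $A_{n_i}$: choosing generators $a,b$ of $H_i$ with $\beta(a,b)=\epsilon_{n_i}$ a primitive $n_i$th root of unity, and picking $x\in D_a$, $y\in D_b$, the relation $xy=\epsilon_{n_i}yx$ holds, and $x^{n_i}$, $y^{n_i}$ are nonzero scalars (they lie in $D_0=\bF$) which can be rescaled to $1$ by replacing $x,y$ by suitable scalar multiples; the universal property defining $A_{n_i}$ in \eqref{eq:divisionAn} then yields a graded homomorphism $A_{n_i}\to D_i$, surjective and hence an isomorphism by dimension. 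The main obstacle is the bicharacter classification step: care is needed to verify $\beta$ is genuinely alternating and nondegenerate from the division and centrality hypotheses, and to handle the passage to a symplectic basis over a non-cyclic group — but this is standard and can be quoted or done by induction on $|H|$, splitting off one hyperbolic pair $H_1$ at a time and passing to its $\beta$-orthogonal complement.
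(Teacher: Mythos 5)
Your proof is correct, and it takes a genuinely different route from the paper's. The paper argues directly on elements: it first splits $H$ into its primary components by showing that two homogeneous elements whose degrees have coprime orders must commute, so $D$ factors as a tensor product over the primary parts; then for a $p$-group it writes $H$ as a product of cyclic factors of orders $p^{m_1}\geq p^{m_2}\geq\cdots$, uses centrality to force $m_1=m_2$ and to locate a second generator $h_i$ with a primitive $p^{m_1}$th root of unity as commutator scalar, splits off the copy of $A_{p^{m_1}}$ supported on $\langle h_1,h_i\rangle$, and peels off its centralizer via the double-centralizer theorem. You instead package all of the commutation data at once into the alternating bicharacter $\beta(g,h)$ with $u_gu_h=\beta(g,h)u_hu_g$, observe that centrality of $D$ is precisely nondegeneracy of $\beta$, and quote the symplectic-basis theorem for finite abelian groups with nondegenerate alternating bicharacter to get the orthogonal decomposition $H=H_1\times\cdots\times H_r$ with $H_i\simeq\bZ_{n_i}\times\bZ_{n_i}$, from which the tensor decomposition and the identification $D_i\simeq A_{n_i}$ follow by rescaling and dimension count. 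The two arguments buy different things: the paper's is self-contained and never leaves the algebra $D$ itself, effectively reproving the bicharacter classification inline via the coprime split, the equality $m_1=m_2$, and the double centralizer; yours is shorter and more conceptual if one is willing to cite (or separately prove) the symplectic-basis theorem, and the hyperbolic-pair induction you sketch is morally the same splitting step the paper performs with $\tilde H_1=\langle h_1,h_i\rangle$ and $\Cent_D(D_1)$. Both are valid; your statement that $\beta$ is alternating ($\beta(g,g)=1$) is immediate, and nondegeneracy from centrality is exactly the paper's observation that a homogeneous element commuting with everything lies in $D_0=\bF1$ only when its degree is $0$.
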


(Unadorned tensor products are always consider over the ground field: $\otimes =\otimes_{\bF}$.)

\begin{proof}
Since the zero homogeneous component $D_0$ is a finite dimensional division algebra over $\bF$ it follows that $D_0=\bF 1$ and for any $g\in G$ with $D_g\ne 0$, the dimension of $D_g$ is $1$. For any $g_1,g_2\in G$ with $D_{g_1}\ne 0\ne D_{g_2}$, the fact that $D$ is a graded division algebra forces $D_{g_1}D_{g_2}$ to be $\ne 0$ and hence equal to $D_{g_1+g_2}$. This shows that $H=\Supp\Gamma$ is a subgroup of $G$.

Assume that $H=H_1\times H_2$ with $\gcd\bigl(\vert H_1\vert,\vert H_2\vert\bigr)=1$. Then for any $g\in H_1$, $h\in H_2$ and $0\ne x\in D_{g}$, $0\ne y\in D_h$, there are natural numbers $n,m$ such that $0\ne x^n\in \bF 1$, $0\ne y^m\in\bF 1$, and $\gcd(n,m)=1$. Then $xyx^{-1}y^{-1}\in D_0$, so $xyx^{-1}y^{-1}=\alpha 1$ for some $0\ne \alpha\in \bF$. But since $xyx^{-1}\in D_h$, $xyx^{-1}=\mu y$ for some $0\ne \mu\in \bF$ such that $\mu^n=1$. It follows that $\alpha=\mu$ satisfies $\alpha^n=1$. In a similar vein, dealing with $yx^{-1}y^{-1}$ we get $\alpha^m=1$ too. Thus $\alpha=1$ and hence $D_1=\oplus_{g\in H_1}D_g$ commutes elementwise with $D_2=\oplus_{h\in H_2}D_{h}$, so that $D=D_1D_2$ which, by dimension count, is isomorphic to $D_1\otimes D_2$.

Therefore it is enough to assume the order of $H$ to be a power of a prime $p$. Then $H$ decomposes as a product of cyclic groups $H=H_1\times \cdots \times H_r$, with $H_i$ generated by an element $h_i$ of order $p^{m_i}$ and $m_1\geq m_2\geq \cdots\geq m_r$. If $m_1>m_2$ holds and $0\ne x\in D_{h_1}$, $0\ne y\in D_{h_i}$ ($i\geq 2$), as before we obtain $xyx^{-1}y^{-1}=\alpha 1$  with $\alpha^{p^{m_i}}=1$, so that $\alpha^{p^{m_2}}=1$ as $m_2\geq m_i$. It follows that $x^{p^{m_2}}$ is an homogeneous central element of $D$. But $D$ is central, so $x^{p^{m_2}}\in D_{p^{m_2}h_1}\cap\bF 1=0$, a contradiction. Hence $m_1=m_2$ and there exists an index $i\geq 2$ with $m_1=m_i$ such that $xzx^{-1}z^{-1}=\alpha 1$ with $z\in D_{h_i}$ and $\alpha$ a primitive $p^{m_i}$th root of unity (otherwise $x^{p^{m_1-1}}$ would be central). Then $\tilde H_1=\group{h_1,h_i}$ is isomorphic to $\bZ_{p^{m_1}}\times \bZ_{p^{m_1}}$, and $D_1=\oplus_{h\in \tilde H_1}D_h$ is a central graded division algebra, which is graded isomorphic to $A_{p^{m_1}}$. In particular, $D_1$ is a central simple subalgebra of $D$, so that by the double centralizer property $D=D_1\Cent_{D}(D_1)=D_1D_2\simeq D_1\otimes D_2$, where $D_2=\Cent_{D}(D_1)$ is the centralizer of $D_1$ in $D$. Thus the algebra $D_2$ is again a central graded division algebra and the result follows.
\end{proof}

\begin{remark}
The central graded division algebra $(D,\Gamma)$ in Proposition \ref{pr:graded_division} is completely determined (as a graded division algebra) by the group $H=\Supp\Gamma$, which in turn is the universal grading group. These gradings on $D\simeq\Mat_{\sqrt{\vert H\vert}}(\bF)$ will be referred to as \emph{division gradings}. (These are called \emph{Pauli gradings} in \cite{Svobodova}.) \qed
\end{remark}

\begin{definition}
Let $\Gamma:R=\oplus_{g\in G}R_g$ be a group grading of the matrix algebra $R=\Mat_n(\bF)$. A nonzero idempotent $e=e^2\in R_0$ is said to be a \emph{graded primitive idempotent} of $(R,\Gamma)$ if the graded subalgebra $eRe$ is a graded division algebra. \qed
\end{definition}

Note that given any nonzero idempotent $e$ of $R=\Mat_n(\bF)$, the element $e$ is conjugated to a diagonal matrix with only $0$'s and $1$'s on the diagonal, and hence the subalgebra $eRe$ is isomorphic to the matrix algebra $\Mat_r(\bF)$ with $r$ the number of $1$'s on the diagonal. In particular, $eRe$ is always a central algebra (with unity $e$).

\begin{proposition}\label{pr:graded_primitive_idempotent}
Let $\Gamma:R=\oplus_{g\in G}R_g$ be a group grading of the matrix algebra $R=\Mat_n(\bF)$ and let $e\in R_0$ be a nonzero idempotent. Then $e$ is a graded primitive idempotent if and only if $Re$ is a minimal graded left ideal of $R$.
\end{proposition}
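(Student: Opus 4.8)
I would first unwind the terminology. By the definition just stated, saying that \emph{$e$ is a graded primitive idempotent of $(R,\Gamma)$} means, by definition, that the graded subalgebra $eRe$ — which is unital with identity $e$ and, as an ungraded algebra, isomorphic to $\Mat_r(\bF)$ — is a graded division algebra. In particular $0$ and $e$ are then the only homogeneous idempotents of $eRe$ (a nonzero homogeneous idempotent of a graded division algebra is invertible, hence equals the identity), so $e$ cannot be written as a sum of two nonzero orthogonal homogeneous idempotents, which is what makes the name \emph{primitive} appropriate. Thus what must be proved is the equivalence: $eRe$ is a graded division algebra if and only if $Re$ is a minimal graded left ideal of $R$. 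A standing remark, used throughout, is that since $e\in R_0$ the subspaces $Re$, $eRe$, $Rx$ and $eR\ell$ (for homogeneous $x$, $\ell$) are all graded, with homogeneous components $R_ge$, $eR_ge$, $R_hx$, $eR_h\ell$; hence whenever a homogeneous element is written as a product involving an arbitrary factor, that factor may be replaced by its relevant homogeneous component.

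For the implication ``$Re$ minimal $\Rightarrow$ $eRe$ a graded division algebra'', I would take a nonzero homogeneous $x\in(eRe)_g=eR_ge$. Then $Rx$ is a nonzero graded left ideal contained in $Re$, hence $Rx=Re$, so $e=ax$ for some $a\in R$; since $e\in R_0$ and $x\in R_g$ we may take $a\in R_{-g}$. With $y:=eae\in(eRe)_{-g}$ and using $ex=x$ one gets $yx=ea(ex)=e(ax)=e$, so $x$ has a left inverse in the finite dimensional unital algebra $eRe$, hence is invertible there. (If one prefers not to invoke finite dimensionality, the same step applied to $y$ gives $z\in(eRe)_g$ with $zy=e$, and then $z=z(yx)=(zy)x=x$, so $xy=e$ as well.) Thus every nonzero homogeneous element of $eRe$ is invertible, i.e., $eRe$ is a graded division algebra.

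For the converse, ``$eRe$ a graded division algebra $\Rightarrow$ $Re$ minimal'', I would let $L$ be a nonzero graded left ideal with $L\subseteq Re$ and choose a nonzero homogeneous $\ell\in L_g$; then $\ell=\ell e$, so $eR\ell\subseteq eRe$. This subspace is nonzero: if $eR\ell=0$ then, since $R=\Mat_n(\bF)$ is simple and $\ell\ne0$ forces $R\ell R=R$, we would get $eR=e(R\ell R)=(eR\ell)R=0$, contradicting $0\ne e=e^2\in eR$. Hence there is a homogeneous $r\in R$ with $x:=er\ell$ a nonzero homogeneous element of $eRe$, so by hypothesis there is $y\in eRe$ with $yx=e$, and then $e=yx=(yer)\ell\in R\ell\subseteq L$. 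Therefore $Re\subseteq L$, so $L=Re$; and since $e\in R_0$ the ideal $Re$ is itself graded. This shows $Re$ is a minimal graded left ideal.

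The argument is essentially bookkeeping, and I do not expect a genuine obstacle. The points that need a little care are the systematic passage to homogeneous components, the elementary fact that $eRe$ is unital with identity $e$ (so that the graded division algebra hypothesis really does produce the inverses $y$), and, in the second implication, the use of simplicity of $\Mat_n(\bF)$ to force $R\ell R=R$.
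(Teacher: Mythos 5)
Your proof is correct, and it takes a more hands-on route than the paper's. For the implication ``$Re$ minimal $\Rightarrow eRe$ graded division'', the paper invokes the graded version of Schur's Lemma to see that $\End_R(Re)$ is a graded division algebra and then transports this through the graded isomorphism $\End_R(Re)\rightarrow eRe$, $\varphi\mapsto e\varphi$; you instead manufacture the inverse of a nonzero homogeneous $x\in eR_ge$ directly from the equality $Rx=Re$, which is a legitimate unpacking of the same idea. For the converse, the paper picks a minimal graded left ideal $J\subseteq Re$, writes $J=Rf$ for a graded primitive idempotent $f$ (via the first half), and argues with the degree-zero idempotent $efe$ of the graded division algebra $eRe$; you take an arbitrary nonzero graded left ideal $L\subseteq Re$, use simplicity of $R$ to produce a nonzero homogeneous element $er\ell\in eRe$ with $\ell\in L$, and invert it to conclude $e\in L$. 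Your version is more elementary and self-contained (no Schur, no auxiliary idempotent $f$, no need to first locate a minimal graded ideal inside $L$), and all the small verifications (homogeneity of the factors, $\ell e=\ell$, $eR\ell\neq 0$ via $R\ell R=R$) are done correctly. What the paper's route buys are the byproducts it establishes along the way — that every minimal graded left ideal has the form $Rf$ for a graded primitive idempotent $f$, and that $\End_R(Re)\simeq eRe$ as graded algebras — facts that are reused in Proposition \ref{pr:uniqueirreducible} and Theorem \ref{th:induced_grading}, whereas your argument proves exactly the stated equivalence and nothing more.
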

\begin{proof}
First, if $I$ is a minimal graded left ideal of $R$, $I^2$ is not $0$ since $R$ is simple, so there is an homogeneous element $0\ne x\in I$ such that $Ix\ne 0$ and, by minimality of $I$, we have $I=Ix$. Hence there is an element $e\in I_0$ such that $x=ex$. Again by minimality, $I\cap\{r\in R: rx=0\}=0$ holds, so $e^2-e=0$ and $I=Re$.

By the natural graded version of Schur's Lemma, the minimality of $I$ forces the endomorphism ring $D=\End_R(I)$ to be a $G$-graded division algebra. (The action of the elements of $D$ on $I$ will be considered on the right.)

Note finally that the map
\[
\begin{split}
\End_R(I)&\longrightarrow eRe\\
\varphi&\mapsto e\varphi=e^2\varphi=e(e\varphi)\in eI=eRe,
\end{split}
\]
is a $G$-graded isomorphism.

\smallskip

Conversely, if $e$ is a graded primitive idempotent, and $J$ is a minimal graded left ideal contained in $Re$, by the above arguments $J=Rf$ for a graded primitive idempotent $f$. But $f\in Re$, so $fe=f$ and $efe=ef$ is a degree $0$ idempotent in the graded division algebra $eRe$. Hence either $ef=0$ or $ef=e$. In the first case we get $f=f^2=(fe)^2=fefe=0$, a contradiction, while in the second case we get $e=ef\in J$, so $J=Re$ is minimal.
\end{proof}

The left ideal $I=Re$ in the proposition above is an irreducible $G$-graded left module for $R$, meaning that $I$ is a left module for $R$ with $RI\ne 0$, it is $G$-graded with $R_{g_1}I_{g_2}\subseteq I_{g_1+g_2}$ for any $g_1$ and $g_2$ in $G$, and it contains no proper $G$-graded submodule.

Given two $G$-graded modules $V^1$ and $V^2$ for $R$, an \emph{homogeneous homomorphism} of degree $g\in G$ is an $R$-module homomorphism $\phi:V^1\rightarrow V^2$ such that $\phi(V_{g'}^1)\subseteq V_{g'+g}^2$ for any $g'\in G$.

\begin{proposition}\label{pr:uniqueirreducible} Let $\Gamma:R=\oplus_{g\in G}R_g$ be a group grading of the matrix algebra $R=\Mat_n(\bF)$, and let $V^1$ and $V^2$ be two irreducible  $G$-graded left modules for $R$. Then there is an homogeneous isomorphism $\phi:V^1\rightarrow V^2$. Moreover, if $D^i=\End_R(V^i)$ denotes the centralizer of the action of $R$ on $V^i$ for $i=1,2$, then there is a $G$-graded isomorphism $\varphi:D^1\rightarrow D^2$ such that $\phi(vd)=\phi(v)\varphi(d)$ for any $v\in V^1$ and $d\in D^1$. (Here $D^1$ and $D^2$ are graded with the natural gradings induced by the gradings on $V^1$ and $V^2$, so that $V_{g_1}^iD_{g_2}^i\subseteq V_{g_1+g_2}^i$ for any $i=1,2$ and $g_1,g_2\in G$.)
\end{proposition}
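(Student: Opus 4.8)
The plan is to produce the homogeneous isomorphism $\phi$ first and then transport the right $D^1$-action along it. The crucial soft observation is that the space $\Hom_R(V^1,V^2)$ of \emph{all} $R$-module homomorphisms carries a natural $G$-grading, precisely because $V^1$ and $V^2$ are finite dimensional graded modules: for $g\in G$ and $\phi\in\Hom_R(V^1,V^2)$ one defines $\phi_g\colon V^1\to V^2$ by letting $\phi_g(v)$ be the $V^2_{g'+g}$-component of $\phi(v)$ for $v\in V^1_{g'}$, and comparing homogeneous components in $\phi(rv)=r\phi(v)$ shows each $\phi_g$ is again $R$-linear; hence $\Hom_R(V^1,V^2)=\bigoplus_{g\in G}\Hom_R(V^1,V^2)_g$ (a finite sum). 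Since $R\simeq\Mat_n(\bF)$ is simple Artinian with a single irreducible module up to isomorphism and $V^1,V^2$ are nonzero finite dimensional $R$-modules (they are nonzero because $RV^i\ne 0$), we have $\Hom_R(V^1,V^2)\ne 0$, so some $\Hom_R(V^1,V^2)_g$ is nonzero; I would pick a nonzero $\phi$ there, of degree $g$.

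Next I would invoke the graded form of Schur's Lemma. The subspace $\ker\phi$ is a graded submodule of $V^1$ — writing $x=\sum_k x_k$ with $x_k\in V^1_k$, the elements $\phi(x_k)$ lie in the distinct components $V^2_{k+g}$, so $\phi(x)=0$ forces every $\phi(x_k)=0$ — and it is proper since $\phi\ne 0$, whence $\ker\phi=0$ by irreducibility of $V^1$; likewise $\phi(V^1)$ is a nonzero graded submodule of the irreducible $V^2$, hence all of $V^2$. Thus $\phi$ is an homogeneous isomorphism of degree $g$ and $\phi^{-1}$ is homogeneous of degree $-g$. For the second assertion, recall (from the convention fixed in the proof of Proposition \ref{pr:graded_primitive_idempotent}) that $D^i=\End_R(V^i)$ acts on $V^i$ on the right, making $V^i$ an $(R,D^i)$-bimodule graded by $V^i_{g_1}D^i_{g_2}\subseteq V^i_{g_1+g_2}$. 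I would define $\varphi\colon D^1\to D^2$ by $\varphi(d)(w)=\phi\bigl(\phi^{-1}(w)\,d\bigr)$; since $\phi$ (hence $\phi^{-1}$) is $R$-linear and $R$ commutes with the right $D^1$-action, $\varphi(d)\in\End_R(V^2)=D^2$, $\varphi$ is an algebra homomorphism, and $d'\mapsto\bigl(v\mapsto\phi^{-1}(\phi(v)\,d')\bigr)$ is a two-sided inverse, so $\varphi$ is an isomorphism satisfying $\phi(vd)=\phi(v)\varphi(d)$ by construction. It is graded: for $d\in D^1_h$ and $w\in V^2_{g'}$ one has $\phi^{-1}(w)\in V^1_{g'-g}$, so $\phi^{-1}(w)d\in V^1_{g'-g+h}$ and $\varphi(d)(w)\in V^2_{g'+h}$, i.e. $\varphi(D^1_h)\subseteq D^2_h$; being bijective, $\varphi$ is a $G$-graded isomorphism.

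The whole argument is formal, and I do not expect a genuine obstacle: the only points requiring care are the verification that $\Hom_R(V^1,V^2)$ and the $D^i$ really are $G$-graded (an easy homogeneous-component chase) and keeping the left/right conventions straight, so that the transport map $\varphi$ comes out an algebra homomorphism rather than an anti-homomorphism and the compatibility $\phi(vd)=\phi(v)\varphi(d)$ holds on the correct side.
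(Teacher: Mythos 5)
Your proof is correct. The route you take is genuinely different from the paper's: the author fixes a graded primitive idempotent $e$ (whose existence comes from Proposition~\ref{pr:graded_primitive_idempotent}), sets $I=Re$, and shows that any irreducible $G$-graded module $V$ is isomorphic to $I$ via $x\mapsto xv$ for a suitable homogeneous $v\in V$; composing two such isomorphisms handles $V^1\simeq V^2$, and the second assertion about $D^i$ is declared ``clear.'' You instead work directly with the pair $(V^1,V^2)$: you observe that $\Hom_R(V^1,V^2)$ inherits a $G$-grading by decomposing a morphism into homogeneous components (legitimate because each $V^i$ is finite dimensional, which here follows from $V^i=Rv$ for any nonzero homogeneous $v$ and $\dim R<\infty$), note it is nonzero by ordinary Artin--Wedderburn theory, pick a nonzero homogeneous component, and apply the graded Schur argument; the isomorphism $\varphi$ of centralizers is then just conjugation by $\phi$ (correctly coming out an algebra, not anti-algebra, homomorphism since both centralizers act on the right). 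Your argument bypasses the idempotent/minimal-left-ideal machinery entirely and also spells out the ``clear'' second part, so it is self-contained where the paper leans on its surrounding framework; conversely the paper's version produces a concrete model $I=Re$, which it exploits again in Theorem~\ref{th:induced_grading}. There is no gap in your proposal.
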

\begin{proof}
Let $e$ be a graded primitive idempotent of $R$, and let $V$ be an irreducible $G$-graded left module for $R$. Then since $R$ is simple, the action on $V$ is faithful, so with $I=Re$ we have $0\ne IV$, and hence there is an homogeneous element $v\in V$ such that $0\ne Iv$. But $V$ is irreducible, so we get $V=Iv$, and the map $\phi:I\rightarrow V$ such that $\phi(x)=xv$ for any $x\in I$ is an homogeneous isomorphism. This proves the first part of the proposition, while the second is clear.
\end{proof}

\begin{definition}
Given a central graded division algebra $D=\oplus_{h\in H}D_h$ with $\Supp D=H$, a group $G$ containing $H$ as a subgroup, and a $G$-graded right $D$-module $V$ (that is, $V_gD_h\subseteq V_{g+h}$ for any $g\in G$ and $h\in H$, and note that the division property of $D$ forces $V$ to be a free right $D$-module containing bases consisting of homogeneous elements), the $G$-grading induced on $R=\End_D(V)$, where $f\in R_g$ if $f(V_{g'})\subseteq V_{g+g'}$ for any $g'\in G$, is said to be an \emph{induced grading}. \qed
\end{definition}

\begin{remark} We may ``shift'' the grading in $V$ in the previous definition by fixing an element $h\in G$ and defining $\tilde V_{g}=V_{g+h}$ for any $g\in G$. Then $V=\oplus_{g\in G}\tilde V_g$ is another grading which gives the same induced grading on $R=\End_D(V)$.

Also note that in the situation of the previous definition, the algebra $R=\End_D(V)$ is isomorphic to $\Mat_m(D)$ ($m=\dim_DV$) which in turn, since $D$ is isomorphic to $\Mat_r(\bF)$ for some $r$ by Proposition \ref{pr:graded_division}, is isomorphic to $\Mat_{mr}(\bF)$. \qed
\end{remark}

\smallskip

The next result shows that any group grading on a finite dimensional simple algebra is isomorphic to an induced grading from a right module for a central graded division algebra.

\begin{theorem}\label{th:induced_grading} Let $\Gamma:R=\oplus_{g\in G}R_g$ be a group grading of the matrix algebra $R=\Mat_n(\bF)$. Then there is a unique irreducible $G$-graded left module $V$ for $R$, up to homogeneous isomorphisms. Moreover, the action of $R$ on $V$ gives a $G$-graded isomorphism $\Phi:R\rightarrow \End_D(V): r\mapsto \rho_r$, where $\rho_r(v)=rv$ for any $r\in R$ and $v\in V$, $D=\End_R(V)$ is the centralizer of the action of $R$, and $\End_D(V)$ is endowed with the induced grading.
\end{theorem}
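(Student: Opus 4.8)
The plan is to combine the structure theory already developed in this section with a standard Wedderburn/density argument adapted to the graded setting. First I would produce an irreducible $G$-graded left module: by Proposition~\ref{pr:graded_primitive_idempotent} it suffices to exhibit a graded primitive idempotent, equivalently a minimal graded left ideal. Since $R=\Mat_n(\bF)$ is finite dimensional, the set of nonzero graded left ideals is nonempty (it contains $R$) and has minimal elements; pick a minimal one, call it $I=Re$ with $e\in R_0$ an idempotent, as in the proof of Proposition~\ref{pr:graded_primitive_idempotent}. Set $V=I$, which is an irreducible $G$-graded left module, and $D=\End_R(V)$, acting on the right; by the graded Schur lemma $D$ is a $G$-graded division algebra, and it is central because $eRe\cong D$ is central (any $eRe$ is central, as noted after the definition of graded primitive idempotent). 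Uniqueness of $V$ up to homogeneous isomorphism, together with the compatible $G$-graded isomorphism of the centralizers, is exactly Proposition~\ref{pr:uniqueirreducible}.

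Next I would check that the action map $\Phi:R\to\End_D(V)$, $r\mapsto\rho_r$ with $\rho_r(v)=rv$, is a $G$-graded algebra homomorphism: it is clearly an algebra homomorphism, each $\rho_r$ commutes with the right $D$-action by associativity (elements of $D$ act as $R$-module endomorphisms), and if $r\in R_g$ then $\rho_r(V_{g'})=rV_{g'}\subseteq V_{g+g'}$, so $\rho_r\in(\End_D(V))_g$; thus $\Phi$ is a $G$-homomorphism. Injectivity is immediate from simplicity of $R$ (the action on $V$ is faithful since $RV=RI\ne 0$ and the kernel is a two-sided ideal).

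The remaining point is surjectivity, i.e. that $\Phi$ is an isomorphism; this is the step I expect to be the main (though still routine) obstacle. One clean way is a dimension count: by the remark following the definition of induced grading, $\End_D(V)\cong\Mat_{mr}(\bF)$ where $m=\dim_D V$ and $D\cong\Mat_r(\bF)$, while $\dim_\bF V=\dim_\bF I=\dim_\bF Re$. Since $R=\Mat_n(\bF)$ is simple, all minimal left ideals have the same dimension $n$, so $\dim_\bF V=n$, giving $mr^2=\dim_\bF V\cdot r = nr$; hence $\dim_\bF\End_D(V)=(mr)^2 \cdot$, wait — more directly, $\dim_\bF\End_D(V)=m^2\dim_\bF D=m^2 r^2=(mr)^2=n^2=\dim_\bF R$, so the injection $\Phi$ is an isomorphism. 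Alternatively, and more in the spirit of the paper, one can invoke the graded Jacobson density theorem: $V$ is a faithful irreducible graded $R$-module, $D=\End_R(V)$, and density gives that $\Phi(R)$ is dense in $\End_D(V)$; since $V$ is finite dimensional over $\bF$ (hence over $D$), density forces $\Phi(R)=\End_D(V)$. Finally, once $\Phi$ is a $G$-graded isomorphism, equipping $\End_D(V)$ with the induced grading is consistent with the grading transported from $R$, completing the proof. I would present the dimension-count version as the cleanest, remarking that it also re-derives that $D$ has the form given by Proposition~\ref{pr:graded_division}.
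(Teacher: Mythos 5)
Your dimension count has a genuine gap. You assert that ``all minimal left ideals have the same dimension $n$, so $\dim_\bF V=n$'', but $V=Re$ is a \emph{minimal graded left ideal}, and the graded primitive idempotent $e$ need not be a primitive idempotent of $R$ as an ungraded algebra; it can have rank $k>1$. Concretely, if $R=\Mat_n(\bF)$ carries a division grading (Proposition~\ref{pr:graded_division}), then $e=1$, $V=R$, and $\dim_\bF V=n^2$, not $n$. The bookkeeping that follows ($mr^2=\dim_\bF V\cdot r=nr$, then $(mr)^2=n^2$) is not self-consistent and only produces the right answer by accident. The count is salvageable once you track the rank $k$ of $e$: then $\dim_\bF V=nk$, $D=eRe\cong\Mat_k(\bF)$ so $r=k$, $m=\dim_D V=nk/k^2=n/k$, hence $mr=n$ and $\dim_\bF\End_D(V)=(mr)^2=n^2=\dim_\bF R$, which together with injectivity gives the isomorphism.

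Your alternative via a ``graded Jacobson density theorem'' is a valid route, but the paper never develops such a theorem. What actually suffices, and is cleaner, is the ordinary (ungraded) double-centralizer/Wedderburn theorem: $V$ is a faithful finitely generated left module over the finite-dimensional simple algebra $R$, and $D=\End_R(V)$ is by definition the ungraded centralizer, so $R\to\End_D(V)$ is an isomorphism by the classical result; no graded version is needed for surjectivity. The paper instead gives a short self-contained argument avoiding both dimension counting and density: writing $\varphi_r$ for left multiplication on $I=Re$, one notes $\varphi_I$ is a left ideal of $\End_D(I)$ (for $f\in\End_D(I)$ and $x\in I$, $f\circ\varphi_x=\varphi_{f(x)}$, since $xy=x\cdot(ey)$ with $ey\in D$), and $IR=R$ by simplicity of $R$, so $\varphi_R=\varphi_{IR}=\varphi_I\varphi_R$ is a left ideal of $\End_D(I)$ containing $\varphi_1=\mathrm{id}$ and hence is all of $\End_D(I)$.
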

\begin{proof}
The uniqueness is an immediate consequence of Proposition \ref{pr:uniqueirreducible}, and it is clear that $\Phi$ is a one-to-one $G$-homomorphism. To show that $\Phi$ is an isomorphism, we may assume that $V$ is a graded minimal left ideal $I=Re$ for a graded primitive idempotent $e$. But the map $R\rightarrow \End_{eRe}(I)$: $r\mapsto \varphi_r(:x\mapsto rx)$, is an isomorphism, as the image $\varphi_R$ equals $\varphi_{IR}=\varphi_I\varphi_R$, which is a left ideal of $\End_{eRe}(I)$ containing the identity element $1=\varphi_1$, and hence it is the whole $\End_{eRe}(I)$.
\end{proof}

\begin{corollary} Let $\Gamma_1:R=\oplus_{g_1\in G_1}R_{g_1}$ and $\Gamma_2:R=\oplus_{g_2\in G_2}R_{g_1}$ be two group gradings of the matrix algebra $R=\Mat_n(\bF)$, and let $\Phi:(R,\Gamma_1)\rightarrow (R,\Gamma_2)$ be a graded isomorphism. Let $V^i$ be an irreducible $G_i$-graded left module for $R$, $i=1,2$. Then there is an isomorphism $\phi:V^1\rightarrow V^2$ such that for any $g_1\in G_1$ there is an element $g_2\in G_2$ with $\phi(V_{g_1}^1)=V_{g_2}^2$ such that $\phi(rv)=\Phi(r)\phi(v)$ for any $r\in R$ and $v\in V^1$.

Moreover, if both $\Gamma_1$ and $\Gamma_2$ are gradings over the same group $G$ and $\Phi$ is a $G$-graded isomorphism, then $\phi$ can be taken to be an homogeneous isomorphism.
\end{corollary}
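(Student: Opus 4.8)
The plan is to transport the whole question to minimal graded left ideals, where the effect of $\Phi$ on degrees is transparent, and then use the uniqueness statement of Proposition~\ref{pr:uniqueirreducible} to pass back to the given modules $V^1$ and $V^2$. Throughout I write $\sigma$ for the relabelling map induced by $\Phi$ on the supports, i.e. $\Phi(R_{g_1})=R_{\sigma(g_1)}$ whenever $R_{g_1}\neq 0$; note that $\sigma$ is a bijection $\Supp\Gamma_1\to\Supp\Gamma_2$ because $\Phi$ is a bijection of $R$ and distinct nonzero homogeneous components are distinct subspaces.

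\textbf{Step 1: $\Phi$ carries graded primitive idempotents to graded primitive idempotents.} Choose a graded primitive idempotent $e\in R_0$ of $(R,\Gamma_1)$ (one exists by Theorem~\ref{th:induced_grading} and the discussion preceding it). Since the identity of $R$ is homogeneous of degree $0$ in any group grading of a unital algebra and $\Phi(1)=1$, one gets $\sigma(0)=0$, hence $\Phi(e)\in R_0$ with respect to $\Gamma_2$; moreover $\Phi(e)$ is an idempotent and $\Phi$ restricts to an algebra isomorphism $eRe\to\Phi(e)R\Phi(e)$ sending the $\Gamma_1$-homogeneous component $e R_{g_1}e$ onto the $\Gamma_2$-homogeneous component $\Phi(e)R_{\sigma(g_1)}\Phi(e)$. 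Consequently every nonzero $\Gamma_2$-homogeneous element of $\Phi(e)R\Phi(e)$ is the $\Phi$-image of an invertible homogeneous element of the graded division algebra $eRe$, hence is itself invertible; so $\Phi(e)R\Phi(e)$ is a graded division algebra and $\Phi(e)$ is a graded primitive idempotent of $(R,\Gamma_2)$. By Proposition~\ref{pr:graded_primitive_idempotent}, $Re$ is an irreducible $G_1$-graded left module for $R$ and $R\Phi(e)$ is an irreducible $G_2$-graded left module for $R$.

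\textbf{Step 2: the map on the ideals, and assembling $\phi$.} The restriction $\phi_0:=\Phi\vert_{Re}\colon Re\to R\Phi(e)$ is a linear bijection satisfying $\phi_0(rx)=\Phi(r)\phi_0(x)$ for all $r\in R$, $x\in Re$, and since $e\in R_0$ it sends $(Re)_{g_1}=R_{g_1}e$ onto $(R\Phi(e))_{\sigma(g_1)}=R_{\sigma(g_1)}\Phi(e)$; in particular $(Re)_{g_1}=0$ if and only if $(R\Phi(e))_{\sigma(g_1)}=0$. Now apply Proposition~\ref{pr:uniqueirreducible} (over $G_1$ and over $G_2$ respectively) to obtain homogeneous $R$-module isomorphisms $\theta\colon V^1\to Re$ and $\psi\colon R\Phi(e)\to V^2$, and set $\phi:=\psi\circ\phi_0\circ\theta\colon V^1\to V^2$. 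Then $\phi$ is a bijection, $\phi(rv)=\psi(\phi_0(\theta(rv)))=\psi(\Phi(r)\phi_0(\theta(v)))=\Phi(r)\phi(v)$ because $\theta,\psi$ are genuine $R$-homomorphisms and $\phi_0$ intertwines via $\Phi$, and for each $g_1\in\Supp\Gamma_1$ we get $\phi(V^1_{g_1})=\psi\bigl((R\Phi(e))_{\sigma(g_1)}\bigr)=V^2_{\sigma(g_1)}$ since $\theta$ and $\psi$ are homogeneous while $\phi_0$ shifts the degree by $\sigma$ (for $g_1\notin\Supp\Gamma_1$ one has $V^1_{g_1}=0$ and any $g_2$ with $V^2_{g_2}=0$ will do). This is the first assertion. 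For the \emph{moreover} part, if $G_1=G_2=G$ and $\Phi$ is a $G$-graded isomorphism then $\sigma=\mathrm{id}$, so $\phi_0$ is homogeneous and hence so is the composite $\phi$.

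\textbf{Expected difficulty.} There is no serious obstacle here; the argument is bookkeeping built on Propositions~\ref{pr:graded_primitive_idempotent} and \ref{pr:uniqueirreducible}. The only point that requires a little care is Step~1, namely verifying that a graded isomorphism preserves the class of graded primitive idempotents and correctly identifying the $\Gamma_2$-grading on the image ideal $R\Phi(e)$ through the relabelling $\sigma$; once that is in place, everything else follows by composing with the homogeneous isomorphisms furnished by uniqueness.
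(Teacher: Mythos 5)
Your proof is correct and follows essentially the same path as the paper's: pick a graded primitive idempotent $e$, pass to the minimal graded left ideals $Re$ and $R\Phi(e)$, and invoke Proposition~\ref{pr:uniqueirreducible} to transfer to $V^1$ and $V^2$. The paper's proof is a one-liner that simply asserts $\Phi(e)$ is again a graded primitive idempotent and that the restriction of $\Phi$ works; you fill in the verification of that assertion and the bookkeeping with the homogeneous isomorphisms $\theta,\psi$ (the only slight imprecision is that $\phi(V^1_{g_1})$ need not be exactly $V^2_{\sigma(g_1)}$ once the degree shifts of $\theta$ and $\psi$ are accounted for, but the statement only requires some $g_2$, so this is harmless).
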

\begin{proof}
Let $e$ be a graded primitive idempotent for $(R,\Gamma_1)$. Then $\Phi(e)$ is a graded primitive idempotent for $(R,\Gamma_2)$ and by Proposition \ref{pr:uniqueirreducible} we may assume $V^1=Re$ and $V^2=R\Phi(e)$. Then the restriction of $\Phi$ to $Re$ gives the desired isomorphism.
\end{proof}

Let us consider a central graded division algebra $D=\oplus_{h\in H}D_h$ with $\Supp D=H$, a group $G$ containing $H$ as a subgroup, and a $G$-graded right $D$-module $V$ as above. Take a basis $\calB$ of the free $D$-module $V$ consisting of homogeneous elements: $\calB=\{v_1,\ldots,v_m\}$, and let $g_i=\degree(v_i)$ for any $i=1,\ldots,m$. If for some $i\ne j$ we have $0\ne g_i-g_j\in H$, then we may change $v_j$ by $v_jd_h$ for $h=g_i-g_j$ and $0\ne d\in D_h$, and hence assume that $g_i=g_j$. Therefore an homogeneous basis $\calB$ can always be taken so that $\{g_i-g_j: 1\leq i\leq j\leq m\}\cap H=\{0\}$. In this case, let $V^0$ be the $\bF$-linear space spanned by the elements of $\calB$: $V^0=\bF v_1\oplus\cdots\oplus\bF v_m$, so that $V$ is naturally isomorphic to $V^0\otimes D$, and $\End_D(V)\simeq \End_{\bF}(V^0)\otimes D$. The induced grading on $\End_D(V)$ corresponds to the grading on $\End_{\bF}(V^0)\otimes D$ obtained as the tensor product of the induced grading on $\End_{\bF}(V^0)$ and the division grading on $D$. The first one can be refined to a $\bZ^{m-1}$-grading, where if $\epsilon_i$ denotes the element in $\bZ^{m-1}$ with a $1$ in the $i$th position and $0$'s elsewhere, then $\degree(v_1)=0$ and $\degree(v_i)=\epsilon_{i-1}$ for any $i=2,\ldots,m$. Note that $\Supp \End_{\bF}(V^0)=\{g_i-g_j:1\leq i\leq j\leq m\}$ and the assignment $\epsilon_i\mapsto g_{i+1}-g_1$ gives an epimorphism $\bZ^{m-1}\rightarrow \group{\Supp \End_{\bF}(V^0)}$ from $\bZ^{m-1}$ onto the subgroup generated by the support. The finer grading thus obtained in $\End_{\bF}(V^0)$ is said to be a \emph{Cartan grading}. Therefore, our initial grading on $\End_D(V)$ is refined to a $\bZ^{m-1}\times H$-grading which is fine (and $\bZ^{m-1}\times H$ turns out to be the universal grading group).

We summarize our arguments in the following result:

\begin{theorem}\label{th:fine_gradings_R}
\textbf{(Fine gradings on $\Mat_n(\bF)$ \cite[Theorem 6]{BSZ01}) } \quad Any fine grading on $\Mat_n(\bF)$ is equivalent to a tensor product of a Cartan grading and a division grading. \qed
\end{theorem}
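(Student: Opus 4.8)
The plan is to assemble the statement directly from the structural results already proven in this section, so that essentially no new computation is needed. Given a fine grading $\Gamma$ on $R=\Mat_n(\bF)$, Theorem \ref{th:induced_grading} provides an irreducible $G$-graded left module $V$ with centralizer $D=\End_R(V)$, a central graded division algebra by the graded Schur Lemma, and a $G$-graded isomorphism $R\simeq\End_D(V)$ in which $\End_D(V)$ carries the induced grading. By Proposition \ref{pr:graded_division}, $D$ (with $H=\Supp D$, its universal grading group) carries a division grading, and $H\le G$. So every grading on a matrix algebra is already an induced grading from a central graded division algebra $D$ and a homogeneous $G$-graded right $D$-module $V$.

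Next I would normalize the homogeneous basis. As in the paragraph preceding the statement, choose a homogeneous $D$-basis $\calB=\{v_1,\dots,v_m\}$ of $V$; since for $g_i-g_j\in H$ one may replace $v_j$ by $v_jd$ with $0\ne d\in D_{g_i-g_j}$, one may and does arrange that $\{g_i-g_j : 1\le i,j\le m\}\cap H=\{0\}$. Setting $V^0=\bF v_1\oplus\cdots\oplus\bF v_m$ gives $V\simeq V^0\otimes D$ and hence $\End_D(V)\simeq\End_\bF(V^0)\otimes D$ as $G$-graded algebras, the grading being the tensor product of the induced grading on $\End_\bF(V^0)$ (coming from the degrees $g_i$) and the division grading on $D$. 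The induced grading on $\End_\bF(V^0)$ refines to the Cartan grading over $\bZ^{m-1}$ via $\degree(v_1)=0$, $\degree(v_i)=\epsilon_{i-1}$, and the assignment $\epsilon_i\mapsto g_{i+1}-g_1$ recovers the original $G$-degrees. Thus $\Gamma$ is refined by a $\bZ^{m-1}\times H$-grading which is the tensor product of a Cartan grading and a division grading.

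It remains to observe that this refinement is not proper, i.e.\ that the tensor product of a Cartan grading on $\Mat_m(\bF)$ and a division grading on $\Mat_r(\bF)$ is itself a fine grading, so that by fineness of $\Gamma$ it must coincide with $\Gamma$ up to equivalence. This is the one point requiring a small argument rather than bookkeeping. For the Cartan factor: the homogeneous components of degree $\epsilon_i-\epsilon_j$ (with the convention $\epsilon_0=0$) are the one-dimensional spaces $\bF E_{i+1,j+1}$ of matrix units, so this grading is already as fine as possible — its nonzero components are $1$-dimensional and the support generates $\bZ^{m-1}$. For the division factor, Proposition \ref{pr:graded_division} shows all nonzero homogeneous components are $1$-dimensional. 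Hence in the tensor product grading on $\End_\bF(V^0)\otimes D$ every nonzero homogeneous component is $1$-dimensional, and a grading all of whose components are at most one-dimensional admits no proper refinement; therefore it is fine. Combining, any fine grading $\Gamma$ refines to, hence equals up to equivalence, a tensor product of a Cartan grading and a division grading.

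The main (mild) obstacle is this last fineness verification: one must check that the tensor-product grading genuinely has all homogeneous components of dimension $\le 1$, which uses that the two supports intersect trivially inside $\bZ^{m-1}\times H$ — exactly the normalization $\{g_i-g_j\}\cap H=\{0\}$ arranged above — so that homogeneous components of $\End_\bF(V^0)\otimes D$ do not fuse. Everything else is a direct invocation of Theorem \ref{th:induced_grading} and Proposition \ref{pr:graded_division} together with the explicit construction of the Cartan grading already carried out in the text, so the proof reduces to citing those and remarking that a grading with one-dimensional components is automatically fine.
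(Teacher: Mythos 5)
Your construction of the refinement follows the paper's argument exactly: identify $R$ with $\End_D(V)$ via the irreducible graded module (Theorem \ref{th:induced_grading}), normalize a homogeneous basis so that $\{g_i-g_j\}\cap H=\{0\}$, and realize $\End_D(V)\simeq\End_\bF(V^0)\otimes D$ with the $\bZ^{m-1}\times H$-grading as a tensor product of a Cartan grading and the division grading on $D$. At that point you are done: this grading refines $\Gamma$, and since $\Gamma$ is fine and hence admits no proper refinement, the refinement must coincide with $\Gamma$.

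The extra step where you try to show the tensor-product grading is fine by asserting that all its nonzero homogeneous components are $1$-dimensional is false and should be removed. In the Cartan grading on $\End_\bF(V^0)$ over $\bZ^{m-1}$ the degree-$0$ component is the full diagonal subalgebra $\bigoplus_{i=1}^m\bF E_{ii}$, which has dimension $m$; consequently in the tensor product the component of degree $(0,h)\in\bZ^{m-1}\times H$ is $\bigl(\bigoplus_i\bF E_{ii}\bigr)\otimes D_h$, of dimension $m$, for every $h\in H$. The normalization $\{g_i-g_j\}\cap H=\{0\}$ prevents fusion between distinct $(g,h)$ with $g\ne 0$, but it says nothing about the $g=0$ part. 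So the dimension-count criterion you invoke simply does not apply here, and fineness of the Cartan grading has to be established by a different argument (e.g.\ that $\Diag_\Gamma(\Mat_m(\bF))$ is the image in $PGL_m$ of the diagonal torus, which is a MAD). Fortunately, as noted above, none of this is needed for the theorem: fineness of $\Gamma$ alone forces the refinement to be non-proper.
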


\smallskip

Given a fine grading $\Gamma: R=\oplus_{g\in G}R_g$ of the matrix algebra $R=\Mat_n(\bF)$ consider the element
\[
\calI(R,\Gamma)=[eRe],
\]
where $e$ is a graded primitive idempotent of $(R,\Gamma)$ and $[eRe]$ denotes the isomorphism class, as a graded algebra, of the central graded division algebra $eRe$. By Proposition \ref{pr:uniqueirreducible} this is well defined.

\begin{theorem}\label{th:fine_gradings_R_invariant}
Two fine gradings $\Gamma_1$ and $\Gamma_2$ of the matrix algebra $R=\Mat_n(\bF)$ are equivalent if and only if $\calI(R,\Gamma_1)=\calI(R,\Gamma_2)$.
\end{theorem}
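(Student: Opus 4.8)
The plan is to prove the two implications separately. The forward direction is essentially a corollary of Proposition~\ref{pr:uniqueirreducible} together with the corollary following Theorem~\ref{th:induced_grading}. Indeed, if $\Phi:(R,\Gamma_1)\to(R,\Gamma_2)$ is a graded isomorphism and $e$ is a graded primitive idempotent of $(R,\Gamma_1)$, then $\Phi(e)$ is a nonzero idempotent lying in the zero component of $\Gamma_2$, and $\Phi$ restricts to a graded algebra isomorphism $eRe\to\Phi(e)R\Phi(e)$. Hence $\Phi(e)$ is a graded primitive idempotent of $(R,\Gamma_2)$ and $[eRe]=[\Phi(e)R\Phi(e)]$, so $\calI(R,\Gamma_1)=\calI(R,\Gamma_2)$, where I should note that $\calI$ is well defined precisely because of Proposition~\ref{pr:uniqueirreducible} (all graded primitive idempotents give graded-isomorphic $eRe$, via the identification $eRe\simeq\End_R(Re)$ and uniqueness of the irreducible graded module).

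For the converse, suppose $\calI(R,\Gamma_1)=\calI(R,\Gamma_2)$, i.e.\ the central graded division algebras $D^1=e_1Re_1$ and $D^2=e_2Re_2$ (for graded primitive idempotents $e_i$ of $\Gamma_i$) are graded isomorphic. By Theorem~\ref{th:induced_grading}, for $i=1,2$ there is an irreducible $G_i$-graded left module $V^i$ for $R$ with $R\simeq\End_{D^i}(V^i)$ as $G_i$-graded algebras, $D^i=\End_R(V^i)$. So it suffices to show that two induced gradings built from graded-isomorphic central graded division algebras $D^1\simeq D^2=:D$ are equivalent. Now comes the key point: the universal grading group of $D$ is its support $H$ (by the Remark after Proposition~\ref{pr:graded_division}), which depends only on the graded-isomorphism class of $D$; and $V^i$ is a free right $D^i$-module of some rank $m_i$. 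Since $\dim_\bF R=n^2$ and $\dim_\bF D=|H|$ is the same for both, we get $m_1^2|H|=n^2=m_2^2|H|$, hence $m_1=m_2=:m$. Transporting the grading along the fixed graded isomorphism $D^1\to D^2$, both $V^1$ and $V^2$ become free right $D$-modules of rank $m$ with a homogeneous basis, and by the normalization discussion preceding Theorem~\ref{th:fine_gradings_R} each can be refined to the standard form $V^0\otimes D$ with $\degree(v_1)=0$, $\degree(v_i)=\epsilon_{i-1}$, giving the Cartan-times-division grading over $\bZ^{m-1}\times H$. Since $\Gamma_1$ and $\Gamma_2$ are fine, these refinements must coincide with $\Gamma_1$ and $\Gamma_2$ themselves (after the shift in the Remark following the induced-grading definition), and the resulting graded algebra is, up to a group isomorphism fixing $H$ and relabelling the $\epsilon_i$, the same $\bZ^{m-1}\times H$-graded algebra. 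Composing the identifications $R\simeq\End_{D^1}(V^1)\simeq\End_D(V^0\otimes D)\simeq\End_{D^2}(V^2)\simeq R$ yields a graded isomorphism $(R,\Gamma_1)\to(R,\Gamma_2)$.

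I expect the main obstacle to be the careful bookkeeping of grading groups: a graded isomorphism between $\Gamma_1$ and $\Gamma_2$ need not be a $G$-graded isomorphism for a common group, so one has to argue at the level of equivalence (and universal grading groups) rather than fixing one ambient group. Concretely, the delicate step is verifying that the two Cartan-times-division normal forms produced from $V^1$ and $V^2$ are equivalent as gradings: the division part is pinned down by $[D]$ and the Cartan part by the single integer $m$, but one must check that the epimorphism $\bZ^{m-1}\to\group{\Supp\End_\bF(V^0)}$ and the chosen homogeneous basis do not introduce any further invariant. This is exactly where fineness is used twice — once to say $\Gamma_i$ equals its own normalized refinement, and once (implicitly, via Theorem~\ref{th:fine_gradings_R}) to know the normal form is the full $\bZ^{m-1}\times H$-grading with no proper coarsening lurking. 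Everything else is a routine assembly of the already-established module-theoretic dictionary.
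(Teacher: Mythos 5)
Your proof is correct and follows the same route as the paper's: the forward direction is the straightforward observation that a graded isomorphism carries a graded primitive idempotent to another one and restricts to a graded isomorphism of the corner algebras, and the converse reduces both fine gradings to the Cartan--$\otimes$--division normal form of Theorem~\ref{th:fine_gradings_R}, matches the division parts via the hypothesis $[D_1]=[D_2]$ and the ranks via a dimension count, and then pastes the identifications together. Your extra detail (explicitly routing through Theorem~\ref{th:induced_grading} and the rank computation $m_1^2|H|=n^2=m_2^2|H|$) is just an unfolding of the paper's one-line appeal to Theorem~\ref{th:fine_gradings_R}, not a genuinely different argument.
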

\begin{proof}
Write $\Gamma_1:R=\oplus_{g\in G}R_g$ and $\Gamma_2:R=\oplus_{g'\in G'}R_{g'}$. If $\Phi:(R,\Gamma_1)\rightarrow (R,\Gamma_2)$ is an equivalence of gradings and $e$ is a graded primitive idempotent of $(R,\Gamma_1)$, then so is $\Phi(e)$ relative to $(R,\Gamma_2)$, and $\Phi$ restricts to a graded isomorphism from $eRe$ onto $\Phi(e)R\Phi(e)=\Phi(eRe)$. Thus $\calI(R,\Gamma_1)=\calI(R,\Gamma_2)$.

Conversely, assume $\calI(R,\Gamma_1)=\calI(R,\Gamma_2)$ holds. Both $\Gamma_1$ and $\Gamma_2$ are fine so, up to equivalence (Theorem \ref{th:fine_gradings_R}), $R\simeq \Mat_{m_i}(\bF)\otimes D_i$, $i=1,2$, with $\Gamma_i$ corresponding to a Cartan grading on $\Mat_{m_i}(\bF)$ and a division grading on $D_i$. But then $\calI(R,\Gamma_i)=[D_i]$, $i=1,2$, and it follows that $D_1$ and $D_2$ are graded isomorphic. Then $m_1=m_2$ and any graded isomorphism between $D_1$ and $D_2$ extends (using the identity map from $\Mat_{m_1}(\bF)=\Mat_{m_2}(\bF)$ on itself) to an equivalence between $\Gamma_1$ and $\Gamma_2$.
\end{proof}

\medskip

\section{Gradings on matrix algebras with an antiautomorphism}\label{se:matrix_anti}

\begin{lemma}\label{le:graded_division_anti}
Let $D=\oplus_{h\in H}D_h$ be a central graded division algebra with $H=\Supp D$, and let $\tau$ be a graded antiautomorphism (that is, $\tau$ is an antiautomorphism with $D_h^\tau=D_h$ for any $h\in H$). Then $H$ is a $2$-elementary group and $\tau$ is an involution ($\tau^2=id$).
\end{lemma}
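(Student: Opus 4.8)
The plan is to exploit the structure from Proposition \ref{pr:graded_division}, which says that $D\simeq A_{n_1}\otimes\cdots\otimes A_{n_r}$ as graded algebras, with $H=H_1\times\cdots\times H_r$ and $H_i\simeq\bZ_{n_i}\times\bZ_{n_i}$, $n_i$ a prime power. So it suffices to understand what a graded antiautomorphism forces on each $A_n$. First I would reduce to the ``one-slice'' case: work with a fixed generator pair. For each $h\in H$ pick $0\ne u_h\in D_h$; since $D_h$ is one-dimensional, $u_h^\tau=\lambda_h u_h$ for a scalar $\lambda_h\in\bF^\times$. The key computation is to compare $(u_gu_h)^\tau$ with $u_h^\tau u_g^\tau$. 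Writing $u_gu_h=c(g,h)u_{g+h}$ for a nonzero scalar cocycle $c(g,h)$, this yields the relation $\lambda_{g+h}\,c(g,h)=\lambda_g\lambda_h\,c(h,g)$, i.e. $\lambda_{g+h}=\lambda_g\lambda_h\,\beta(g,h)$ where $\beta(g,h)=c(h,g)/c(g,h)$ is the commutation bicharacter of the grading (so $u_gu_h=\beta(g,h)\,u_hu_g$, and $\beta$ is a nondegenerate alternating bicharacter on $H$ because $D$ is central).

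Now I specialize. Taking $g=h$ in the displayed relation gives $\lambda_{2h}=\lambda_h^2\,\beta(h,h)=\lambda_h^2$ (alternating: $\beta(h,h)=1$). Iterating, $\lambda_{kh}=\lambda_h^k$ for all $k\geq 0$ (an easy induction using $\lambda_{(k+1)h}=\lambda_{kh}\lambda_h\beta(kh,h)$ and $\beta(kh,h)=\beta(h,h)^k=1$). Applying this with $kh=0$, where $0$ is the order of $h$: if $h$ has order $n$ then $1=\lambda_0=\lambda_h^{\,n}$ (one can normalize $u_0=1$, so $\lambda_0=1$). On the other hand, applying $\tau$ twice: $u_h^{\tau^2}=\tau(\lambda_h u_h)=\lambda_h^2 u_h$, and since $\tau^2$ is a graded \emph{automorphism} fixing the central $D_0=\bF1$ and every one-dimensional $D_h$ up to scalar, we must check $\tau^2=\mathrm{id}$ is equivalent to $\lambda_h^2=1$ for all $h$. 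The crucial extra input comes from nondegeneracy of $\beta$: for any $h\ne 0$ there is $g$ with $\beta(g,h)\ne 1$; then from $\lambda_{g+h}=\lambda_g\lambda_h\beta(g,h)$ and the symmetric version $\lambda_{h+g}=\lambda_h\lambda_g\beta(h,g)=\lambda_h\lambda_g\beta(g,h)^{-1}$ we get $\beta(g,h)=\beta(g,h)^{-1}$, hence $\beta(g,h)^2=1$. Since this holds for all such $g$ and $\beta$ is nondegenerate with values generating the relevant roots of unity, every $\beta(g,h)$ is $\pm1$; a nondegenerate alternating $\pm1$-valued bicharacter on a finite abelian group forces that group to be $2$-elementary. (Concretely: if some $H_i\simeq\bZ_n\times\bZ_n$ with $n>2$, its bicharacter takes a value that is a primitive $n$th root of unity, contradicting $\beta^2=1$.) Thus $H$ is $2$-elementary, every $n_i=2$, and $D\simeq A_2\otimes\cdots\otimes A_2$ (a product of quaternion-like graded algebras).

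Finally, with $H$ $2$-elementary I return to $\lambda_{kh}=\lambda_h^k$ and $\lambda_h^{\,n}=1$ with $n=2$: so $\lambda_h^2=1$ for every $h$, which gives $u_h^{\tau^2}=\lambda_h^2 u_h=u_h$ for all $h$, i.e. $\tau^2=\mathrm{id}$ on a spanning set of homogeneous elements, hence $\tau$ is an involution. I expect the main obstacle to be the clean deduction that $\beta$ is $\pm1$-valued and that this forces $H$ to be $2$-elementary — one has to handle carefully the interplay between the scalars $\lambda_h$, the $2$-cocycle $c$, and nondegeneracy of $\beta$, and in particular rule out the possibility that $\tau^2$, while not the identity, is still a graded automorphism (this is where centrality of $D$, via Proposition \ref{pr:graded_division}, is essential, since it pins down $\beta$ as nondegenerate). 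Everything else is bookkeeping with one-dimensional homogeneous components.
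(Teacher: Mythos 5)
Your proof is correct and is essentially the paper's argument in bicharacter dress: the core step in both is that applying the order-reversing map $\tau$ to a commutation relation $u_g u_h = \beta(g,h)^{-1} u_h u_g$ forces $\beta(g,h)=\beta(h,g)$, and combined with $\beta(g,h)\beta(h,g)=1$ this gives $\beta^2 = 1$, which (via either the Proposition~\ref{pr:graded_division} decomposition or nondegeneracy of $\beta$) forces $H$ to be $2$-elementary and then $\lambda_h^2=1$ gives $\tau^2=\mathrm{id}$. One small correction: with your convention $\beta(g,h)=c(h,g)/c(g,h)$ the commutation relation reads $u_g u_h = \beta(g,h)^{-1} u_h u_g$, not $u_g u_h = \beta(g,h)u_h u_g$ (a parenthetical slip that does not affect the argument), and note that $\beta^2=1$ does not actually require nondegeneracy — your chain of equalities establishes it for every pair $g,h$; nondegeneracy is only needed afterward to conclude $2h=0$ for all $h$.
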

\begin{proof}
For any $h\in H$, $D_h=\bF x_h$ and $x_h^\tau\in\bF x_h$, so $x_h^\tau=\alpha_{x_h}x_h$ for some scalar $\alpha_{x_h}$. If $H$ were not $2$-elementary, the proof of Proposition \ref{pr:graded_division} shows that there are homogeneous elements $x,y\in D$ such that $xyx^{-1}y^{-1}=\epsilon 1$, where $\epsilon$ is a primitive $n$th root of unity, $n>2$. Then $xy=\epsilon yx$, so $(xy)^\tau=\epsilon(yx)^\tau$, or $\alpha_x\alpha_y yx=\epsilon\alpha_x\alpha_y xy$. Thus, $yx=\epsilon xy=\epsilon^2yx$, and $\epsilon^2=1$, a contradiction. Therefore $H$ is $2$-elementary and for any homogeneous element $x\in D$, $x^2\in D_0=\bF 1$, so $x^2=(x^2)^\tau=(x^\tau)^2=\alpha_x^2 x^2$. Hence $x^\tau=\pm x$ and $\tau^2=id$.
\end{proof}

The quaternion algebra $Q=\Mat_2(\bF)$ is a central $\bZ_2\times\bZ_2$-graded division algebra with:
\begin{equation}\label{eq:quaternion_units}
Q_{(\bar 0,\bar 0)}=\bF 1,\quad
 Q_{(\bar 1,\bar 0)}=\bF q_1,\quad
 Q_{(\bar 0,\bar 1)}=\bF q_2,\quad
 Q_{(\bar 1,\bar 1)}=\bF q_3,
\end{equation}
where
\[
q_1=\begin{pmatrix} 1&0\\ 0&-1\end{pmatrix},\quad
q_2=\begin{pmatrix} 0&1\\ 1&0\end{pmatrix},\quad
q_3=\begin{pmatrix} 0&1\\ -1&0\end{pmatrix},
\]
so that $q_1^2=1=q_2^2$ and $q_1q_2=-q_2q_1=q_3$.

The involution $\tau_{o}$ given by
\begin{equation}\label{eq:tau_o}
q_1^{\tau_o}=q_1,\quad q_2^{\tau_o}=q_2,\quad q_3^{\tau_o}=-q_3,
\end{equation}
is the usual transpose involution, which is orthogonal; while the involution $\tau_s$ given by
\begin{equation}\label{eq:tau_s}
q_i^{\tau_s}=-q_i,\quad i=1,2,3,
\end{equation}
is the standard conjugation of the quaternion algebra $Q$, and this is a symplectic involution.

\begin{corollary}\label{co:graded_division_involution}
Let $D=\oplus_{h\in H}D_h$ be a central graded division algebra, $H=\Supp D$, endowed with a graded involution $\tau$. Then:
\begin{romanenumerate}
\item If $\tau$ is orthogonal, then $(D,\tau)$ is graded isomorphic to $(Q,\tau_o)^{\otimes n}$ for some $n\geq 0$. (This means that $D$ is graded isomorphic to the $\bZ_2^{2n}$-graded algebra $Q^{\otimes n}$ through an isomorphism which takes $\tau$ to $\tau_o^{\otimes n}$.)
\item If $\tau$ is symplectic, then $(D,\tau)$ is graded isomorphic to $(Q,\tau_s)\otimes (Q,\tau_o)^{\otimes n}$ for some $n\geq 0$.
\end{romanenumerate}
\end{corollary}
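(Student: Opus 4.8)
The plan is to leverage Lemma~\ref{le:graded_division_anti} and Proposition~\ref{pr:graded_division} together, by showing that the tensor decomposition $D \simeq D_1 \otimes \cdots \otimes D_r$ coming from Proposition~\ref{pr:graded_division} can be chosen compatibly with $\tau$. By the lemma, $H = \Supp D$ is $2$-elementary, so each factor $H_i$ in $H = H_1 \times \cdots \times H_r$ is $\bZ_2 \times \bZ_2$, each $D_i \simeq A_2 = Q$, and $\tau$ is an involution with $x^\tau = \pm x$ on each homogeneous line. The key observation is that the sign map $h \mapsto \epsilon(h)$ defined (on nonzero homogeneous $x_h \in D_h$, where $x_h^\tau = \epsilon(h) x_h$ and note $\epsilon(h) \in \{\pm1\}$ is independent of the choice of $x_h$ since $D_h$ is a line) is \emph{not} a homomorphism but satisfies a cocycle-type identity: from $x_h x_k = \beta(h,k) x_{h+k}$ and applying $\tau$ one gets $\epsilon(h)\epsilon(k) x_k x_h = \epsilon(h+k) \beta(h,k) x_{h+k}$, and combining with $x_k x_h = \beta(k,h) x_{h+k}$ yields $\epsilon(h+k) = \epsilon(h)\epsilon(k) \beta(k,h)/\beta(h,k) = \epsilon(h)\epsilon(k)\,b(h,k)$ where $b(h,k) = \beta(k,h)\beta(h,k)^{-1} \in \{\pm 1\}$ is the commutation bicharacter. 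So $\epsilon$ is a ``quadratic form'' refining the alternating bilinear form $b$ on the $\bF_2$-vector space $H$, i.e.\ $\epsilon(h+k)\epsilon(h)\epsilon(k) = b(h,k)$.

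The proof then proceeds by induction on $r$ (equivalently on $\dim_{\bF_2} H$). First I would dispose of the symplectic/orthogonal dichotomy on a single $Q$: a graded involution of $Q = \Mat_2(\bF)$ with support $\bZ_2^2$ is, up to graded isomorphism, either $\tau_o$ or $\tau_s$, according to whether the associated quadratic form $\epsilon$ on $\bZ_2^2$ is the one with $\epsilon(q_1) = \epsilon(q_2) = +1$, $\epsilon(q_3) = -1$ (Arf invariant $0$, giving $\tau_o$, orthogonal) or $\epsilon \equiv -1$ on nonzero elements (Arf invariant $1$, giving $\tau_s$, symplectic); this is a direct check using \eqref{eq:tau_o} and \eqref{eq:tau_s}, and one identifies which case occurs by whether $\tau$ on $D$ restricted to that factor fixes a $3$-dimensional space of symmetric elements or a $1$-dimensional one — i.e.\ by the orthogonal/symplectic type, which for a tensor product $(Q,\tau_1)\otimes\cdots\otimes(Q,\tau_r)$ is determined by the parity of the number of symplectic factors (since $\tau_s \otimes \tau_s$ is orthogonal on $Q\otimes Q \simeq \Mat_4(\bF)$). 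For the inductive step, I would pick any rank-$2$ $\bF_2$-subspace $\tilde H_1 \leq H$ on which $b$ is nondegenerate (such exists by the standard symplectic basis theorem for the alternating form $b$, which is nondegenerate on all of $H$ because $D$ is central — if $h$ were in the radical of $b$ then $x_h$ would be a homogeneous central element, hence in $D_0 = \bF 1$), let $D_1 = \oplus_{h \in \tilde H_1} D_h$; this is a $\tau$-stable central graded division subalgebra isomorphic to $Q$, so by the double centralizer property $D = D_1 \otimes \Cent_D(D_1)$ with both tensor factors $\tau$-stable, $\Cent_D(D_1) = \oplus_{h \in \tilde H_1^\perp} D_h$ again a central graded division algebra with a graded involution, of strictly smaller support. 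Applying the induction hypothesis to $\Cent_D(D_1)$ and choosing $\tilde H_1$ so that $(D_1,\tau|_{D_1})$ is orthogonal whenever possible (i.e.\ unless $\epsilon$ restricted to $\tilde H_1^\perp$ is already forced — but one can always arrange at most one symplectic factor by the $\tau_s\otimes\tau_s\cong\tau_o\otimes\tau_o$ absorption) gives the stated normal form.

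The main obstacle is the bookkeeping needed to guarantee \emph{at most one} symplectic factor, which is exactly the statement that the Arf invariant (rather than merely the type) does not obstruct further normalization — concretely, one needs the graded isomorphism $(Q,\tau_s) \otimes (Q,\tau_s) \simeq (Q,\tau_o) \otimes (Q,\tau_o)$ as $\bZ_2^4$-graded algebras-with-involution. I would verify this by exhibiting an explicit change of homogeneous generators: writing the two tensor copies with generators $q_i \otimes 1$ and $1 \otimes q_i$, one replaces them by suitable products $q_i \otimes q_j$ realizing a new symplectic basis of $\bZ_2^4$ on which the quadratic form $\epsilon = \epsilon_s \boxplus \epsilon_s$ (Arf $0$, since $1+1 = 0$) takes the form $\epsilon_o \boxplus \epsilon_o$; this is the hyperbolic-plane decomposition $\mathbb{H} \perp \mathbb{H} \cong \mathbb{H} \perp \mathbb{H}$ at the level of quadratic forms over $\bF_2$, and it lifts to the algebra level because the commutation bicharacter $b$ is preserved. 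Once this absorption lemma is in hand, the induction collapses the count of symplectic factors to $0$ or $1$, and comparing with the known orthogonal/symplectic type of $(D,\tau)$ (three-dimensional vs.\ one-dimensional fixed space modulo the even part, or equivalently the sign in the discriminant computation) fixes which case holds, completing the proof.
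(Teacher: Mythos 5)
Your argument is correct, and the quadratic-form viewpoint (the sign function $\epsilon$ with the cocycle identity $\epsilon(h+k)\epsilon(h)\epsilon(k)=b(h,k)$, the Arf invariant, the absorption $(Q,\tau_s)^{\otimes 2}\simeq(Q,\tau_o)^{\otimes 2}$) is a genuine reorganization of the paper's proof rather than the same argument. The paper also proceeds by induction via the double-centralizer trick, but it steers the induction so that the absorption lemma is never needed: whenever $\dim D>4$, one first picks a homogeneous $x$ of nonzero degree with $x^\tau=x$; then whether the centralizing element $y$ satisfies $y^\tau=y$ or $y^\tau=-y$, the assignment $x\mapsto q_1$ and $y\mapsto q_2$ or $q_3$ respectively identifies $(\alg{x,y},\tau)$ with $(Q,\tau_o)$, and one recurses on the centralizer. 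The existence of such an $x$ is exactly the fact that your $\epsilon$ cannot be identically $-1$ on $H\setminus\{0\}$ once $|H|>4$, and this drops out immediately from your cocycle identity applied to any three independent elements of $H$: under $\epsilon\equiv -1$ one gets $b(h,l)=b(k,l)=-1$, hence $b(h+k,l)=1$, contradicting $b(h+k,l)=\epsilon(h+k)\epsilon(l)\epsilon(h+k+l)=-1$. With this choice the single possible symplectic factor is automatically deferred to the base case $\dim D\leq 4$, so no absorption lemma is required. Your route exposes the underlying Arf theory and is more conceptual, but it does obligate you to carry out carefully the isometry-lifting you only sketch (that a symplectic transformation of $\bZ_2^{2n}$ preserving $\epsilon$ lifts to a graded isomorphism of $D$ commuting with $\tau$); the paper's route sidesteps that technicality entirely. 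You in fact touch the paper's shortcut in passing --- ``choosing $\tilde H_1$ so that $(D_1,\tau|_{D_1})$ is orthogonal whenever possible'' --- without pursuing it, and pursuing it would have dispensed with the absorption step.
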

\begin{proof}
If $\dim D\leq 4$, then either $D=\bF$ or $D$ is the quaternion algebra and the result is clear. Otherwise ($\dim D>4$) take an homogeneous element $x\in D_h$, $h\ne 0$, with $x^\tau=x$ and $x^2=1$ (this is always possible). Since $D$ is central, there is another homogeneous element $y\in D_g$ with $y^2=1$ and $xyx^{-1}=-y$. Then if $y^\tau=y$, the assignment $x\mapsto q_1$, $y\mapsto q_2$ gives an isomorphism from $(\alg{x,y},\tau)$ onto $(Q,\tau_o)$ and, since $D=\alg{x,y}\Cent_D(\alg{x,y})\simeq \alg{x,y}\otimes \Cent_D(\alg{x,y})$ we can proceed by an inductive argument. In case $y^\tau=-y$, then with $x\mapsto q_1$ and $y\mapsto q_3$ the same argument above works.
\end{proof}

\begin{lemma}\label{le:graded_involutions_D}
If $D$ is a central graded division algebra and $\tau$ and $\tau'$ are two graded involutions, then there is a nonzero homogeneous element $d\in D$ such that $x^{\tau'}=dx^\tau d^{-1}$ for any $x\in D$.
\end{lemma}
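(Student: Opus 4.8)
The plan is to use the well-known fact that an antiautomorphism composed with an involution is an automorphism, and then invoke the structure of the automorphism group of a graded division algebra. More precisely, the composition $\sigma = \tau' \circ \tau^{-1} = \tau'\circ\tau$ is an algebra automorphism of $D$, and since both $\tau$ and $\tau'$ preserve each homogeneous component $D_h$, so does $\sigma$; that is, $\sigma$ is a \emph{graded} automorphism of $D$. So the real content to establish is: every graded automorphism of a central graded division algebra $D$ is inner, and is in fact conjugation by a \emph{homogeneous} element. Once that is in hand, write $\sigma(x) = d x d^{-1}$ for a nonzero homogeneous $d$, and then $x^{\tau'} = \sigma(\tau(x)) \cdot$\,---\,wait, one must be careful with the order: from $\tau' = \sigma\circ\tau$ we get $x^{\tau'} = (x^\tau)^\sigma = d\, x^\tau d^{-1}$, which is exactly the claim (note $\tau^{-1}=\tau$ by Lemma \ref{le:graded_division_anti}, so $\sigma$ is indeed an automorphism since the composite of two antiautomorphisms is an automorphism).

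To prove that a graded automorphism $\sigma$ of $D$ is conjugation by a homogeneous element, I would argue as follows. By Proposition \ref{pr:graded_division}, $D_0 = \bF 1$ and each $D_h$ ($h\in H$) is one-dimensional, say $D_h = \bF x_h$ with a fixed choice of nonzero $x_h$; since $\sigma$ preserves $D_h$ we have $\sigma(x_h) = \chi(h) x_h$ for a scalar $\chi(h)\in\bF^\times$, and from $x_g x_h \in D_{g+h}$ and multiplicativity of $\sigma$ one checks $\chi:H\to\bF^\times$ is a character of the finite abelian group $H$. The group $H\cong \prod_i (\bZ_{n_i}\times\bZ_{n_i})$ is finite and abelian, so $\chi$ is realized by an element of $H$ via the (nondegenerate) commutator pairing: concretely, on each factor $A_{n_i} = \alg{x,y: x^{n_i}=y^{n_i}=1, xy = \epsilon_{n_i} yx}$, conjugation by $x$ multiplies $y$ by $\epsilon_{n_i}^{-1}$ and fixes $x$, and conjugation by $y$ multiplies $x$ by $\epsilon_{n_i}$ and fixes $y$, so conjugations by homogeneous elements realize all characters of $\bZ_{n_i}\times\bZ_{n_i}$; taking tensor products realizes all characters of $H$. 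Hence there is a homogeneous $d\in D$ (a product of such $x$'s and $y$'s) with $d x_h d^{-1} = \chi(h) x_h = \sigma(x_h)$ for all $h$, so $\sigma = \operatorname{Ad}(d)$ on a spanning set, hence everywhere.

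The main obstacle, or rather the only point requiring care, is the bookkeeping in the previous paragraph: one must verify that the map $\chi$ is genuinely a character (multiplicativity of $\chi$ in $h$ follows from $\sigma(x_g x_h) = \sigma(x_g)\sigma(x_h)$ together with $x_g x_h \in \bF^\times x_{g+h}$), and that the assignment $d\mapsto \operatorname{Ad}(d)|_{H}$ from homogeneous units to characters of $H$ is surjective — this is essentially the statement that the commutator form on $H$ induced by $D$ is nondegenerate, which is visible factor-by-factor from the presentation of $A_{n_i}$. An alternative, perhaps cleaner, route that avoids even this: $D$ is a finite-dimensional central simple $\bF$-algebra, so by the Skolem–Noether theorem $\sigma = \operatorname{Ad}(d)$ for \emph{some} $d\in D^\times$; writing $d = \sum_{h} d_h$ in homogeneous components, the relation $d x_g = \sigma(x_g) d$ forces (comparing the $D_{g+h}$-components) $d_h x_g = \chi(g) x_g d_h$ for each $h$, and since each $d_h x_g$ is a nonzero multiple of $x_{g+h}$ whenever $d_h\ne 0$, a counting/linear-independence argument shows one may discard all but one homogeneous component of $d$, i.e. replace $d$ by a single $d_h\ne 0$, which still conjugates $\sigma$. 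I would present the proof using this Skolem–Noether shortcut, as it keeps the argument short and self-contained given what is already in the paper.
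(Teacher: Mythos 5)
Your proposal is correct. Both you and the paper reduce to Skolem--Noether: since $D$ is, as an ungraded algebra, a matrix algebra, $\sigma=\tau'\circ\tau$ (a degree-$0$ automorphism) must be $\operatorname{Ad}(d)$ for some invertible $d\in D$; the work is in showing that $d$ may be taken homogeneous. Here the two proofs genuinely diverge. The paper exploits the fact that $H$ is $2$-elementary (by Lemma~\ref{le:graded_division_anti}), so $x^2\in\bF 1$ for homogeneous $x$ and hence $dxd^{-1}=\pm x$; it then argues by contradiction using centralizers that a nonhomogeneous $d$ with at least two homogeneous components is impossible. You instead decompose $d=\sum_h d_h$ and compare graded components of the identity $dx_g=\chi(g)x_g d$ directly: each nonzero $d_h$ separately satisfies $d_h x_g=\chi(g)x_g d_h$ for all $g$, and since each $d_h$ is invertible (graded division algebra), any single nonzero homogeneous component already realizes $\sigma$ by conjugation. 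Your route is slightly cleaner and does not require $H$ to be $2$-elementary, so it would prove the stronger statement that any degree-$0$ automorphism of any central graded division algebra is conjugation by a homogeneous element; the paper's version uses the standing $2$-elementary hypothesis to obtain the same conclusion by a different path. Both arguments fill the gap correctly, and the alternative character-pairing route you sketch (nondegeneracy of the commutator form on $H$) would also work, but the Skolem--Noether shortcut you chose to present is both correct and closest in spirit to the paper.
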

\begin{proof}
The map $\psi=\tau\tau'$ is a degree $0$ automorphism of $D$. But as an ungraded algebra $D$ is a matrix algebra, so any automorphism is inner. Hence there is an invertible element $a\in D$ such that $\psi(x)=axa^{-1}$. Since the degree of $\psi$ is $0$ and $x^2\in \bF 1$ for any homogeneous $x$, it follows that $axa^{-1}=\pm x$ for any homogeneous $x$. If $a$ were not homogeneous, then we would have $a=a_1+\cdots +a_r$, with $0\ne a_i\in D_{h_i}$ $i=1,\ldots,r$, the $h_i$'s being different. But if $r>1$, the centralizer of $a_1$ is different from the centralizer of $a_2$ (as the order $2$ inner automorphism induced by $a_1$ is different from the one induced by $a_2$)  and if $x$ is an homogeneous element in $\Cent(a_1)\setminus\Cent(a_2)$, then $ax\ne \pm xa$, a contradiction. Hence the degree $0$ automorphisms of $D$ are the inner automorphisms obtained by conjugation by homogeneous elements.

In particular, there is a nonzero homogeneous element $d\in D$ such that $\tau'=\tau^d$, where $x^{\tau^d}=dx^\tau d^{-1}$ for any $x\in D$.
\end{proof}

\medskip

\begin{definition}\label{de:sesquilinearbalanced}
Let $G$ be an abelian group, let $D$ be a central $G$-graded division algebra over $\bF$ with support $H$ endowed with a graded involution $\tau$, let $V$ be a $G$-graded right module for $D$. A \emph{$\tau$-sesquilinear nondegenerate homogeneous form of degree $g\in G$} is a map
\[
B:V\times V\rightarrow D
\]
satisfying the following conditions:
\begin{romanenumerate}
\item $B$ is $\bF$-bilinear,
\item $B(V_{g_1},V_{g_2})\subseteq D_{g_1+g_2+g}$ for any $g_1,g_2\in G$,
\item $B(xd,y)=d^\tau B(x,y)$, $B(x,yd)=B(x,y)d$ for any $x,y\in V$ and $d\in D$,
\item $\{v\in V: B(v,V)=0\}=0=\{v\in V: B(V,v)=0\}$.
\end{romanenumerate}
Moreover, this $\tau$-sesquilinear form is said to be \emph{balanced} if for any homogeneous elements $v$ and $w$ in $V$, $B(v,w)$ equals $0$ if and only if so does $B(w,v)$. \qed
\end{definition}

\begin{theorem}\label{th:grading_varphi_B}
Let $G$ be an abelian group, let $D$ be a central $G$-graded division algebra over $\bF$ with support $H$, let $V$ be a $G$-graded right module for $D$, and let $\varphi$ be a $G$-antiautomorphism of the $G$-graded algebra $R=\End_\bF (V)$ with the induced grading. (Hence $\varphi$ is an antiautomorphism of $R$ such that $\varphi(R_g)=R_g$ for any $g\in G$.) Then the following conditions hold:
\begin{enumerate}
\item There exists a graded involution $\tau$ of $D$, an element $g\in G$, and a $\tau$-sesquilinear nondegenerate homogeneous form of degree $g$: $B:V\times V\rightarrow D$ such that $B(rv,w)=B(v,\varphi(r)w)$ for any $r\in R$, and $v,w\in V$.
\item If $\varphi^2$ acts as a scalar on each homogeneous component, then any $B$ as in the first item is balanced.
\item If $\tau'$ is another graded involution of $D$, and $B':V\times V\rightarrow D$ is another $\tau'$-sesquilinear nondegenerate homogeneous form of degree $g'\in G$ such that $B'(rv,w)=B'(v,\varphi(r)w)$ for any $r\in R$ and $v,w\in V$, then there exists an homogeneous element $d\in D_{g-g'}$ such that $\tau'=\tau^d$ (that is, $x^{\tau'}=dx^\tau d^{-1}$ for any $x\in D$) and $B'(v,w)=dB(v,w)$ for any $v,w\in V$.
\end{enumerate}
\end{theorem}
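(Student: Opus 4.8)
The plan is to reconstruct $B$ from $\varphi$ by transporting the standard rank-one duality between $V$ and its dual. First I would fix an irreducible graded left $D$-module structure, or rather work directly: since $R=\End_\bF(V)$ acts irreducibly on $V$, any other irreducible graded $R$-module is homogeneously isomorphic to $V$ after a degree shift (Theorem \ref{th:induced_grading} and Proposition \ref{pr:uniqueirreducible}). The twist by $\varphi$ turns $V$ into a \emph{right} $R$-module $V^\varphi$ via $v\cdot r=\varphi(r)v$, equivalently a left module over $R^{op}\simeq R$; dualizing, the $\bF$-dual $V^*$ with its contragredient $R$-action is an irreducible graded left $R$-module, hence homogeneously isomorphic, up to a shift $g$, to $V$ itself. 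Concretely, one gets a graded $\bF$-linear bijection $\Psi:V\to V^*$ of degree $g$ with $\Psi(rv)=\varphi(r)^*\Psi(v)$ (the contragredient), and then $B(v,w):=\langle \Psi(v),w\rangle$ — but $B$ must take values in $D$, not $\bF$, so the correct object is the $D$-valued pairing coming from identifying $\Hom_\bF(V,D)$-type duals. The cleanest route: $\End_D(V)$ is the centralizer we actually care about, so I would instead use that $R=\End_\bF(V)=\End_D(V)\otimes D$ (from the last displayed discussion before Theorem \ref{th:fine_gradings_R}), and reduce the construction of $B$ to the classical theory of sesquilinear forms on the free module $V$ over $D$.

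Here is the order of steps. (1) Existence of $\tau$ and $B$ for item (1): Choose a homogeneous $D$-basis $\{v_1,\dots,v_m\}$ of $V$ adapted as in the excerpt, so $V\simeq V^0\otimes D$ with $V^0=\oplus\bF v_i$ and $R\simeq \End_\bF(V^0)\otimes D$. The antiautomorphism $\varphi$ of $R$ restricts on the degree-zero identity-like pieces and, composed with the standard transpose antiautomorphism of $\End_\bF(V^0)$ relative to the dual basis, yields a degree-zero automorphism of the $D$-part; conjugating, I extract a graded involution $\tau$ of $D$ and a Gram matrix $\Phi=(B(v_i,v_j))\in \Mat_m(D)$ such that $\varphi(r)=\Phi^{-1}\,\bar r^{t}\,\Phi$ where $\bar{\ }$ applies $\tau$ entrywise and $t$ is transpose; then define $B$ on all of $V$ by sesquilinearity, $B(\sum v_i d_i,\sum v_j e_j)=\sum d_i^\tau B(v_i,v_j) e_j$. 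Nondegeneracy of $B$ is equivalent to invertibility of $\Phi$, which holds because $\varphi$ is bijective; homogeneity of degree $g$ is read off from the degrees of the $v_i$ and the entries of $\Phi$, after possibly absorbing a shift. The defining identity $B(rv,w)=B(v,\varphi(r)w)$ is then a matrix computation from the formula for $\varphi$. That $\tau$ is an involution follows from Lemma \ref{le:graded_division_anti}, since $H=\Supp D$ must be $2$-elementary for a graded antiautomorphism to exist (applying the lemma to the graded antiautomorphism $\tau$ we have just produced).

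(2) Balanced property: if $\varphi^2$ is scalar on each homogeneous component, then from $B(rv,w)=B(v,\varphi(r)w)$ applied twice one gets that $B(v,w)$ and $B(w,v)$ are related by a $D$-scalar times a unit; more precisely, the two sesquilinear forms $B(v,w)$ and $B(w,v)^\tau$ both represent the same (up to the scalar) antiautomorphism $\varphi$, and an argument like that of Lemma \ref{le:graded_involutions_D} (the "uniqueness up to a homogeneous factor" mechanism) forces $B(w,v)=\lambda\, B(v,w)^\tau$ for a fixed nonzero homogeneous $\lambda\in D$; in particular $B(v,w)=0\iff B(w,v)=0$ for homogeneous $v,w$, which is exactly the balanced condition. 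I would spell this out by evaluating $B(v,\varphi^2(r)w)=B(\varphi(r)^{?}\cdots)$ — carefully tracking which side the antiautomorphism lands on — to pin down $\lambda$ and its square.

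(3) Uniqueness (item 3): Given another pair $(\tau',B')$ for the same $\varphi$, the map $V\to V$, or rather the comparison of Gram matrices, shows $\Phi'^{-1}\bar{r}'^{t}\Phi'=\Phi^{-1}\bar r^{t}\Phi$ for all $r$; writing $d:=$ the homogeneous element encoding the change of involution, Lemma \ref{le:graded_involutions_D} gives $\tau'=\tau^d$ for some nonzero homogeneous $d\in D$, and then substituting back forces $\Phi'=d\Phi$ up to the center, i.e. $B'(v,w)=dB(v,w)$; comparing degrees of $B$ (degree $g$) and $B'$ (degree $g'$) pins $d\in D_{g-g'}$. The main obstacle I anticipate is item (2): getting the bookkeeping of left/right actions and the passage through $\varphi$ exactly right so that the scalar $\lambda$ is genuinely central/homogeneous and the "$=0$ iff $=0$" conclusion is clean — in the classical $\bF$-linear case this is the familiar $\epsilon$-Hermitian dichotomy, but over the graded division algebra $D$ one must be careful that $\lambda$ need not be $\pm1$ yet is still a unit, so $B(v,w)=0\iff B(w,v)=0$ survives even if the form is not literally Hermitian or skew-Hermitian.
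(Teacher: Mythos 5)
Your overall strategy — construct $\tau$ and a Gram matrix from the matrix form of $\varphi$ on $\Mat_m(D)$, then argue items~(2) and~(3) by the ``uniqueness up to a homogeneous unit'' mechanism — is different from the paper's. The paper takes $V=I=Re$ for a graded primitive idempotent $e$, sets $D=eRe$, and simply \emph{defines} $B(x,y)=ea\varphi(x)y$ for a suitably chosen homogeneous $a$ with $ea\varphi(e)be=e$; the involution $\tau$ on $D$ and all three items then fall out from this explicit formula (in particular item~(2) is an immediate chain of equivalences $B(x,y)=0\Leftrightarrow\varphi(x)y=0\Leftrightarrow\varphi(y)\varphi^2(x)=0\Leftrightarrow\varphi(y)x=0\Leftrightarrow B(y,x)=0$). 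Your route can be made to work for item~(1), but you should be aware that extracting a \emph{graded} involution $\tau$ and a homogeneous-entry Gram matrix from a graded antiautomorphism of $\Mat_m(D)$ is not a free consequence of the ungraded structure theorem: the ungraded decomposition $\varphi(A)=\Phi^{-1}(A^T)^{\tau}\Phi$ is only unique up to replacing $(\tau,\Phi)$ by $(\tau^{c},c\Phi)$, and you would have to show a representative with graded $\tau$ can be chosen. The paper's construction gives this for free, since $\tau\colon d\mapsto ea\varphi(d)be$ is visibly degree-preserving.

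Item~(2) contains a genuine gap. You assert that the two sesquilinear forms $B(v,w)$ and $B(w,v)^{\tau}$ ``both represent the same (up to the scalar) antiautomorphism $\varphi$,'' and then appeal to an item~(3)-type uniqueness. This is not correct: if $C(v,w):=B(w,v)^{\tau}$, a direct computation from $B(rv,w)=B(v,\varphi(r)w)$ shows that $C$ satisfies $C(rv,w)=C(v,\varphi^{-1}(r)w)$; that is, $C$ has adjoint $\varphi^{-1}$, not $\varphi$. When $\varphi^2\neq\operatorname{id}$ (which is allowed under the hypothesis that $\varphi^2$ is merely diagonal on the grading), item~(3) therefore does not compare $B$ and $C$ at all. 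To rescue the argument you would have to write $\varphi^{-1}=\varphi\circ\varphi^{-2}$, realize $\varphi^{2}$ as conjugation $\iota_a$ by a homogeneous $a\in R$ (this uses the hypothesis that $\varphi^2\in\Diag_\Gamma(R)$ and an argument as in the proof of Theorem~\ref{th:equivalence_varphi_gradings}), replace $C$ by $C''(v,w):=C(v,a^{-1}w)$ so that $C''$ has adjoint $\varphi$, and then apply item~(3): one gets $C''=\mu\,B$ with $\mu\in\bF^{\times}$ (not just a homogeneous unit of $D$, since both forms are $\tau$-sesquilinear and hence $d\in D_{g-g'}$ in item~(3) must be central). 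Unwinding, the conclusion is $B(a^{-1}w,v)^{\tau}=\mu\,B(v,w)$, from which one still needs a separate (basis-dependent) argument that $B(a^{-1}w,v)=0\Leftrightarrow B(w,v)=0$ for homogeneous $w,v$. Each of these steps is defensible, but none of them is stated in your outline, and the imprecise claim about the adjoint is the kind of slip that would make the write-up wrong as given. The paper's construction avoids this traffic entirely because the identity $B(x,y)=ea\varphi(x)y$ makes item~(2) a one-line consequence of $\varphi^2(x)=\alpha_g x$. Item~(3) in your sketch matches the paper's proof.
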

\begin{proof}
By Proposition \ref{pr:uniqueirreducible} we may take $V=I=Re$, for a graded primitive idempotent $e$ of $R$, and hence we take $D=eRe$. Then $0\ne \varphi(e)\in R_0$ is another graded primitive idempotent. Since $eR\varphi(e)\ne 0$, there is an homogeneous element $a\in R_g$ such that $ea\varphi(e)\ne 0$. Now $ea\varphi(e)Re\ne 0$, so there is an homogeneous element $b'$ with $ea\varphi(e)b'e\ne 0$, and since $eRe$ is a graded division algebra (with unity $e$) there is an homogeneous element $d\in eRe$ with $ea\varphi(e)b'ede=e$. Take $b=b'ed$, then $ea\varphi(e)be=e$ and $b\in R_{-g}$.
Besides,  $(\varphi(e)bea\varphi(e))^2=\varphi(e)b(ea\varphi(e)be)a\varphi(e)=\varphi(e)bea\varphi(e)$ is a degree $0$ nonzero idempotent of the graded division algebra $\varphi(e)R\varphi(e)$, so we have
\begin{equation}\label{eq:ab}
\varphi(e)=\varphi(e)bea\varphi(e).
\end{equation}
Consider the following $\bF$-bilinear map, which is homogeneous of degree $g$:
\[
\begin{split}
B:I\times I&\longrightarrow D,\\
 (x,y)&\mapsto B(x,y)=ea\varphi(x)y.
\end{split}
\]
Note that the subspace $K_l=\{ x\in I: B(x,I)=0\}$ is a graded left ideal of $R$ which does not contain $e$ (as $ea\varphi(e)\ne 0$, so $ea\varphi(e)Re\ne 0$). Since $I$ is minimal, $K_l=0$ follows. In the same vein, the subspace $K_r=\{x\in I: B(I,x)=0\}$ is trivial too. On the other hand, for any $x,y\in I$ and $d\in D$:
\[
\begin{split}
B(x,yd)&=B(x,y)d\\[3pt]
B(xd,y)&=ea\varphi(d)\varphi(x)y=ea\varphi(d)\varphi(e)\varphi(x)y\qquad\text{(as $x=xe$)}\\
 &=ea\varphi(d)\varphi(e)bea\varphi(e)\varphi(x)y\qquad\text{(because of \eqref{eq:ab})}\\
 &=ea\varphi(d)bea\varphi(x)y=ea\varphi(d)beB(x,y)\\
 &=d^\tau B(x,y),
\end{split}
\]
where
\begin{equation}\label{eq:dtau}
d^\tau=ea\varphi(d)be\in eRe=D.
\end{equation}
The linear map $\tau:D\rightarrow D$ thus defined is homogeneous of degree $0$ and for any $d_1,d_2\in D$:
\[
\begin{split}
(d_1d_2)^\tau&=ea\varphi(d_2)\varphi(d_1)be
    =ea\varphi(d_2)\varphi(e)\varphi(d_1)be\qquad\text{(as $d_1=d_1e$)}\\
 &=ea\varphi(d_2)\varphi(e)bea\varphi(e)\varphi(d_1)be\\
 &=d_2^\tau d_1^\tau.
\end{split}
\]
Hence, by Lemma \ref{le:graded_division_anti}, $H=\Supp D$ is a $2$-elementary group and $\tau$ is an involution.

We conclude that $B$ is a $\tau$-sesquilinear nondegenerate homogeneous form of degree $g$ on $I$. Besides, for any $r\in R$, and $x,y\in I$:
\[
B(rx,y)=ea\varphi(rx)y=ea\varphi(x)\varphi(r)y=B(x,\varphi(r)y),
\]
so that the antiautomorphism $\varphi$ is given by the adjoint map relative to $B$. This proves (1).

\smallskip

Now, in case the hypotheses of item (2) are fulfilled, for any homogeneous elements $x,y\in I=Re$, one has:
\[
\begin{split}
B(x,y)=0\quad&\Longleftrightarrow\quad ea\varphi(x)y=0\\
 &\Longleftrightarrow\quad ea\varphi(e)\varphi(x)y=0\qquad\text{(as $x=xe$)}\\
 &\Longleftrightarrow\quad \varphi(x)y=0\qquad\text{(as $\varphi(e)bea\varphi(e)=\varphi(e)$)}\\
 &\Longleftrightarrow\quad \varphi(y)\varphi^2(x)=0\\
 &\Longleftrightarrow\quad\varphi(y)x=0\qquad\text{(as $\varphi^2(x)=\alpha_g x$ if $x\in I_g$)}\\
 &\Longleftrightarrow\quad B(y,x)=0,
\end{split}
\]
and this proves (2).

\smallskip

Now assume that $B_i$ is a $\tau_i$-sesquilinear nondegenerate form of degree $g_i$ whose adjoint map is $\varphi$ for $i=1,2$. By Lemma \ref{le:graded_involutions_D} there is a nonzero homogeneous element $d\in D_h$ such that $x^{\tau_1}=dx^{\tau_2}d^{-1}$. Consider the map $\bar B_1:V\times V\rightarrow D$ given by $\bar B_1(v,w)=dB_1(v,w)$. It is trivial to check that this is a $\tau_2$-sesquilinear nondegenerate homogeneous form of degree $g+h$ whose adjoint map is $\varphi$. Take an homogeneous basis $\{v_1,\dotsc,v_m\}$ of $V$ as a (free) $D$-module, and take the coordinate matrices $\Delta_i=\Bigl(B_i(v_i,v_j)\Bigr)\in\Mat_m(D)$ of $B_i$, $i=1,2$. The coordinate matrix of $\bar B_1$ is then $\bar\Delta_1=d\Delta_1$. For elements $v=\sum_{i=1}^m v_ix_i$ and $w=\sum_{i=1}^m v_iy_i$ we have:
\[
B_1(v,w)=\bigl(x_1^{\tau_1},\dotsc,x_m^{\tau_1}\bigr)\Delta_1\begin{pmatrix} y_1\\ \vdots\\ y_m\end{pmatrix}
=(X^T)^{\tau_1} \Delta Y,
\]
where $X=\bigl(x_1,\dotsc,x_m\bigr)^T$, $Y=\bigl(y_1,\dotsc,y_m\bigr)^T$, $T$ denotes the transpose, and $\Delta^{\tau}$ denotes the matrix obtained by applying $\tau$ to all the entries in $\Delta$.
In a similar vein we get:
\[
\begin{split}
\bar B_1(v,w)&=(X^T)^{\tau_2}\bar\Delta_1 Y,\\
B_2(v,w)&=(X^T)^{\tau_2}\bar\Delta_2 Y.
\end{split}
\]
Denote by $M_r$ the coordinate matrix of the action of $r\in R=\End_D(V)$ on $V$. Then, since $\bar B_1(rv,w)=\bar B_1(v,\varphi(r)w)$ for any $r\in R$ and $v,w\in V$, we get:
\[
(X^T)^{\tau_2}(M_r^T)^{\tau_2}\bar \Delta_1 Y= (X^T)^{\tau_2}\bar \Delta_1 M_{\varphi(r)}Y,
\]
for any $X,Y$, and hence we obtain $M_{\varphi(r)}=\bar\Delta_1^{-1}\bigl(M_r)^{\tau_2}\bar\Delta_1$, and also
$M_{\varphi(r)}=\Delta_2^{-1}\bigl(M_r)^{\tau_2}\Delta_2$. We conclude that $\Delta_2\bar\Delta_1^{-1}$ is in the center of $R$, which is $\bF 1$. Thus  there is a scalar $0\ne \mu\in\bF$ such that $\Delta_2=\mu\bar\Delta_1=\mu d\Delta_1$. But we may change $d$ by $\mu d$. This proves (3).
\end{proof}

\begin{definition}\label{de:varphi_grading}
Let $\varphi$ be an antiautomorphism of the matrix algebra $R=\Mat_n(\bF)$. A grading $\Gamma: R=\oplus_{g\in G}R_g$ is said to be a \emph{$\varphi$-grading} in case the following conditions hold:
\[
\varphi(R_g)=R_g\ \forall g\in G,\quad \varphi^2\vert_{R_g}=\alpha_g id\ \text{for some $0\ne\alpha_g\in \bF$}\ \forall g\in G,\quad \alpha_0=1.
\]

If $\varphi_1$ and $\varphi_2$ are two antiautomorphisms of $R$ and $\Gamma_i=\oplus_{g_i\in G_i}R_{g_i}$ is a $\varphi_i$-grading for $i=1,2$, then the $\varphi_1$-grading $\Gamma_1$ is said to be equivalent to the $\varphi_2$-grading $\Gamma_2$ if there is a graded isomorphism $\Phi:(R,\Gamma_1)\rightarrow (R,\Gamma_2)$ such that $\Phi\varphi_1\Phi^{-1}=\varphi_2$.\quad\qed
\end{definition}

The relevance of this definition will become clear when we study in the next section the fine gradings of the simple Lie algebras of type $A$.

\smallskip

We may talk about refinements and coarsenings of $\varphi$-gradings, of fine $\varphi$-gradings, ...

\smallskip
Because of Theorem \ref{th:induced_grading}, the first part of the previous theorem can be recast as follows:

\begin{corollary}\label{co:grading_varphi_B}
Let $\varphi$ be an antiautomorphism of the matrix algebra $R=\Mat_n(\bF)$, and let $\Gamma: R=\oplus_{g\in G} R_g$ be a $\varphi$-grading. Then there is a $2$-elementary subgroup $H$ of $G$, a central graded division algebra $D=\oplus_{h\in H}D_h$ (with $\Supp D=H$) endowed with a graded involution $\tau$, and a $G$-graded right $D$-module $V$ endowed with a $\tau$-sesquilinear nondegenerate and balanced homogeneous form $B:V\times V\rightarrow D$, such that $(R,\Gamma)$ is $G$-isomorphic to $(\End_D(V),\Gamma_V)$, where $\Gamma_V$ denotes the induced grading, through an isomorphism $\Phi$ such that $\Phi\varphi=\varphi_B\Phi$, where $\varphi_B$ denotes the antiautomorphism of $\End_D(V)$ given by the adjoint map relative to $B$. \qed
\end{corollary}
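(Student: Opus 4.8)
The plan is to deduce the corollary directly from Theorems \ref{th:induced_grading} and \ref{th:grading_varphi_B}, the former serving to realize $(R,\Gamma)$ concretely as an endomorphism algebra over a graded division algebra, and the latter to produce the involution $\tau$ and the form $B$. First I would apply Theorem \ref{th:induced_grading} to the $\varphi$-grading $\Gamma$. This yields the (unique up to homogeneous isomorphism) irreducible $G$-graded left module $V$ for $R$, the central $G$-graded division algebra $D=\End_R(V)$ acting on $V$ on the right — so that $V$ is a $G$-graded right $D$-module of exactly the kind appearing in Definition \ref{de:sesquilinearbalanced} — with $H:=\Supp D$, and the $G$-graded isomorphism $\Phi\colon(R,\Gamma)\to(\End_D(V),\Gamma_V)$, $r\mapsto\rho_r$, where $\Gamma_V$ denotes the induced grading.

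Next I would transport $\varphi$ across $\Phi$, setting $\hat\varphi:=\Phi\varphi\Phi^{-1}$, an antiautomorphism of $\End_D(V)$. Since $\Phi$ is a $G$-graded isomorphism and $\varphi(R_g)=R_g$ for every $g$ (as $\Gamma$ is a $\varphi$-grading, Definition \ref{de:varphi_grading}), the map $\hat\varphi$ preserves every homogeneous component of $(\End_D(V),\Gamma_V)$; moreover $\hat\varphi^{2}=\Phi\varphi^{2}\Phi^{-1}$ acts on the $g$-component as the scalar $\alpha_g$, with $\alpha_0=1$. Thus $\hat\varphi$ is a $G$-antiautomorphism of $\End_D(V)$ satisfying all the hypotheses of Theorem \ref{th:grading_varphi_B}, including the one appearing in its part~(2).

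Applying Theorem \ref{th:grading_varphi_B} to $\hat\varphi$ then produces a graded involution $\tau$ of $D$, an element $g\in G$, and a $\tau$-sesquilinear nondegenerate homogeneous form $B\colon V\times V\to D$ of degree $g$ with $B(\hat r v,w)=B(v,\hat\varphi(\hat r)w)$ for all $\hat r\in\End_D(V)$ and $v,w\in V$; that is, $\hat\varphi$ is precisely the adjoint map $\varphi_B$ of $B$ (the adjoint being uniquely determined by the nondegeneracy condition (iv) of Definition \ref{de:sesquilinearbalanced}). By part~(2) of that theorem together with the scalar action of $\hat\varphi^{2}$ noted above, $B$ is balanced; and since $D$ carries a graded involution, Lemma \ref{le:graded_division_anti} forces $H=\Supp D$ to be $2$-elementary. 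Finally $\hat\varphi=\varphi_B$ rewrites as $\Phi\varphi=\varphi_B\Phi$, which is the claimed compatibility, so all the required data $(H,D,\tau,V,B,\Phi)$ is in place.

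The proof is essentially bookkeeping, since the substance has already been extracted in Theorems \ref{th:induced_grading} and \ref{th:grading_varphi_B}; the main — and only — points deserving a word of care are that conjugation by the $G$-graded isomorphism $\Phi$ genuinely carries a $\varphi$-grading to a $\hat\varphi$-grading, so that hypothesis~(2) of Theorem \ref{th:grading_varphi_B} survives the transport, and the identification of the transported antiautomorphism $\hat\varphi$ with the adjoint map $\varphi_B$, which is immediate from the uniqueness of adjoints relative to a nondegenerate form.
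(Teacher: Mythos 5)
Your proof is correct and follows exactly the route the paper intends: it derives the corollary by combining Theorem \ref{th:induced_grading} (to realize $(R,\Gamma)$ as $\End_D(V)$) with parts (1) and (2) of Theorem \ref{th:grading_varphi_B} (to produce $\tau$ and the balanced form $B$ whose adjoint is the transported antiautomorphism). The paper gives no further argument beyond citing these two theorems, so your bookkeeping matches its proof.
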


\smallskip

Let $D=\oplus_{h\in H}D_h$ be a central graded division algebra, with $\Supp D=H$, endowed with a graded involution $\tau$, let $G$ be a group containing $H$ as a subgroup, and let $V$ be a $G$-graded right $D$-module $V$ endowed with a $\tau$-sesquilinear nondegenerate balanced homogeneous form $B$ of degree $g\in G$. Then for any graded $D$-submodule $W$ of $V$ it makes sense to define the orthogonal $D$-submodule
\[
W^\perp=\{ x\in V: B(w,x)=0\ \forall w\in W\}=\{x\in V: B(x,w)=0\ \forall w\in W\}.
\]
In particular, if $x$ is an homogeneous element of $V$ with $B(x,x)\ne 0$, it follows that $V$ decomposes as:
\[
V=xD\oplus(xD)^\perp.
\]
Also, for any homogeneous elements $x,y\in V$ with $B(x,x)=0=B(y,y)$ and $B(x,y)\ne 0$, so that $y\not\in xD$, the element $B(x,y)=d$ is an homogeneous element of $D$ and $B(x,yd^{-1})=1$. Then $B(yd^{-1},x)=\mu\in \bF^\times$ (as $D_0=\bF 1$). We may scale $x$ by $\frac{1}{\sqrt{\mu}}$ and change $y$ to $yd^{-1}$ to get $B(x,y)=\nu$ and $B(y,x)=\nu^{-1}$, for $\nu=\frac{1}{\sqrt{\mu}}\in \bF$. In this case
\[
V=(xD\oplus yD)\oplus(xD\oplus yD)^\perp.
\]
In this way, an homogeneous $D$-basis of $V$: $\calB=\{v_1,\ldots,v_p,v_{p+1},\ldots,v_{p+2s}\}$ can be obtained such that
\begin{equation}\label{eq:good_basis}
\left\{\begin{aligned}
 &B(v_i,v_i)=d_i\in D_{h_i},\ i=1,\ldots,p,\ \text{with $d_i^2=1$},\\
 &B(v_{p+2j-1},v_{p+2j})=\nu_j,\ B(v_{p+2j},v_{p+2j-1})=\nu_j^{-1},\ 0\ne \nu_j\in\bF,\ j=1,\ldots,s,\\
 &B(v_i,v_j)=0\ \text{otherwise.}
\end{aligned}\right.
\end{equation}
Such a basis will be called a \emph{good basis} of $V$ relative to $B$. Let $g_i=\degree(v_i)$ ($v_i\in V_{g_i}$) for any $i=1,\ldots,p+2s$. Then if the degree of $B$ is $g_B\in G$ we have (note that $h=-h$ for any $h\in H$):
\begin{equation}\label{eq:relations}
2g_1+h_1=\cdots=2g_p+h_p=g_{p+1}+g_{p+2}=\cdots=g_{p+2s-1}+g_{p+2s}=-g_B.
\end{equation}

The matrix algebra $R=\End_D(V)$ is endowed with the induced grading, and the good basis $\calB$ allows us to identify $R$ with $\Mat_{p+2s}(D)\simeq \Mat_{p+2s}(\bF)\otimes D$. Note that different bases provide different identifications and hence different embeddings of $D$ into $R$. The element $E_{ij}\otimes d$ ($1\leq i,j\leq p+2s$, $d\in D$) will denote the $D$-endomorphism of $V$ which takes $v_j$ to $v_id$ and $v_l$ to $0$ for any $l\ne j$.

Moreover, given two elements $x=\sum_{i=1}^{p+2s} v_ix_i$, and $y=\sum_{i=1}^{p+2s}v_iy_i$, ($x_i,y_i\in D$ for any $i$), then
\[
B(x,y)=\begin{pmatrix}
x_1^\tau&\cdots,x_{p+2s}^\tau
\end{pmatrix}
\Delta\begin{pmatrix} y_1\\ \vdots\\ y_{p+2s}\end{pmatrix}
=(X^T)^\tau \Delta Y,
\]
where $\Delta$ is the block diagonal matrix:
\begin{equation}\label{eq:GammaBlocks}
\Delta=\begin{pmatrix}
d_1&&&&&&& \\
&\ddots&&&&&&\\
&&d_p&&&&&\\
&&&0&\nu_1&&&\\
&&&\nu_1^{-1}&0&&&\\
&&&&&\ddots&&\\
&&&&&&0&\nu_s\\
&&&&&&\nu_s^{-1}&0
\end{pmatrix}.
\end{equation}
For simplicity we will write
\[
\Delta=\diag\left(d_1,\ldots,d_p,\bigl(\begin{smallmatrix} 0&\nu_1\\ \nu_1^{-1}&0\end{smallmatrix}\bigr),\ldots,\bigl(\begin{smallmatrix} 0&\nu_s\\ \nu_s^{-1}&0\end{smallmatrix}\bigr)\right).
\]
Then for any $A\in\End_D(V)\simeq\Mat_{p+2s}(D)$, the adjoint map relative to $B$ is given by:
\begin{equation}\label{eq:varphiBDelta}
\varphi_B(A)=\Delta^{-1}(A^T)^\tau\Delta,
\end{equation}
so that
\begin{equation}\label{eq:varphiB2}
\varphi_B^2(A)=\Gamma A\Gamma^{-1},
\end{equation}
where
\[
\Gamma=\Delta^{-1}(\Delta^T)^\tau
\]
is the diagonal matrix
\begin{equation}\label{eq:varphiB2bis}
\diag(
\epsilon_1,\ldots,\epsilon_{p+2s})
\end{equation}
with $\epsilon_{p+2j-1}=\nu_j^2$ and $\epsilon_{p+2j}=\nu_j^{-2}$ for $j=1,\ldots,s$, and $\epsilon_i\in\{1,-1\}$ for $i=1,\ldots,p$
($d_i^\tau\in\bF d_i$, so $d_i^\tau=\epsilon_id_i$ with $\epsilon_i=\pm 1$ for any $i$).

Note that the restriction of $\varphi_B^2$ to $E_{ij}\otimes D$ is $(\epsilon_i\epsilon_j^{-1}) id$ and that the subspace $E_{ij}\otimes D_h$ is contained in the homogeneous component $R_{g_i-g_j+h}$ for any $1\leq i,j\leq p+2s$ and $h\in H$.

This proves the first part of the next result, the second part being clear too.

\begin{proposition}\label{pr:involutions}
Let $D=\oplus_{h\in H}D_h$ be a central graded division algebra, with $\Supp D=H$, endowed with a graded involution $\tau$, let $G$ be a group containing $H$ as a subgroup, and let $V$ be a $G$-graded right $D$-module $V$ endowed with a $\tau$-sesquilinear nondegenerate balanced homogeneous form $B$ of degree $g_B\in G$. Take a good basis $\{v_1,\dotsc,v_{p+2s}\}$ of $V$ over $D$ where $\deg v_i=g_i$ for any $i$, so that the coordinate matrix of $B$ in this basis is
\[
\Delta=\diag\left(d_1,\ldots,d_p,\bigl(\begin{smallmatrix} 0&\nu_1\\ \nu_1^{-1}&0\end{smallmatrix}\bigr),\ldots,\bigl(\begin{smallmatrix} 0&\nu_s\\ \nu_s^{-1}&0\end{smallmatrix}\bigr)\right).
\]
Write $\Gamma=\Delta^{-1}(\Delta^T)^\tau=\diag(
\epsilon_1,\ldots,\epsilon_{p+2s})$ as in \eqref{eq:varphiB2bis}. Then the following conditions hold:
\begin{enumerate}
\renewcommand*{\labelenumii}{(\theenumi.\theenumii)}
\item The induced grading $\Gamma_V$ is a $\varphi_B$-grading if and only if $\epsilon_i\epsilon_j^{-1}=\epsilon_{i'}\epsilon_{j'}^{-1}$ for any $i,j,i',j'$ such that $(g_i-g_j)-(g_{i'}-g_{j'})\in H$.
\item $\varphi_B$ is an involution ($\varphi_B^2=id$) if and only if $\epsilon_i=\epsilon_j=\pm 1$ for any $i\ne j$, and this happens if and only if either:
\begin{enumerate}
\item $d_i^\tau=d_i$ for any $i=1,\ldots,p$, and $\nu_j\in \{1,-1\}$ for any $j=1,\ldots,s$. Note that in this case, by multiplying by $-1$ the vector $v_{p+2j}$ if necessary, we may assume that $\nu_j=1$ for any $j=1,\ldots,s$, so that all the blocks $\left(\begin{smallmatrix} 0&\nu_j\\\nu_j^{-1}&0\end{smallmatrix}\right)$ may be assumed to be
    $\left(\begin{smallmatrix} 0&1\\ 1&0\end{smallmatrix}\right)$.
\item Or $d_i^\tau=-d_i$ for any $i=1,\ldots,p$, and $\nu_j$ is a primitive fourth root of unity for any $j=1,\ldots,s$. In this case, by multiplying $v_{p+2j}$ by $\nu_j^{-1}$ we may change all the blocks  $\left(\begin{smallmatrix} 0&\nu_j\\\nu_j^{-1}&0\end{smallmatrix}\right)$ in \eqref{eq:GammaBlocks} to
    $\left(\begin{smallmatrix} 0&1\\ -1&0\end{smallmatrix}\right)$.
\end{enumerate}
    In case \emph{(2.a)} the form $B$ is hermitian (that is, $B(y,x)=B(x,y)^\tau$ for any $x,y\in V$) and $\varphi_B$ is an orthogonal (respectively symplectic) involution if and only if so is $\tau$, while in case \emph{(2.b)} $B$ is skew-hermitian ($B(y,x)=-B(x,y)^\tau$) and $\varphi_B$ is an orthogonal (respectively symplectic) involution if and only if $\tau$ is symplectic (respectively orthogonal). \qed
\end{enumerate}
\end{proposition}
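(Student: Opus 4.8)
The plan is to extract everything from formula \eqref{eq:varphiB2}, namely $\varphi_B^2(A)=\Gamma A\Gamma^{-1}$ with $\Gamma=\diag(\epsilon_1,\dots,\epsilon_{p+2s})$, together with the two observations recorded just before the statement: that $\varphi_B^2$ acts on $E_{ij}\otimes D$ as the scalar $\epsilon_i\epsilon_j^{-1}$, and that $E_{ij}\otimes D_h\subseteq R_{g_i-g_j+h}$. For \emph{(1)}, the induced grading $\Gamma_V$ is a $\varphi_B$-grading exactly when $\varphi_B^2$ restricts to a scalar on each homogeneous component $R_g$. Since $R_g$ is the sum of the spaces $E_{ij}\otimes D_h$ with $g_i-g_j+h=g$, and two such spaces $E_{ij}\otimes D_h$, $E_{i'j'}\otimes D_{h'}$ occur in a common $R_g$ precisely when $(g_i-g_j)-(g_{i'}-g_{j'})=h'-h\in H$, the scalar requirement is exactly $\epsilon_i\epsilon_j^{-1}=\epsilon_{i'}\epsilon_{j'}^{-1}$ whenever $(g_i-g_j)-(g_{i'}-g_{j'})\in H$; the normalization $\alpha_0=1$ is automatic, by taking $i=j$ and $h=0$. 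This settles \emph{(1)}.

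Next I would treat the first equivalence in \emph{(2)}: $\varphi_B^2=\mathrm{id}$ iff $\Gamma$ lies in the centre $\bF 1$ of $R\simeq\Mat_{p+2s}(D)$, i.e. iff $\epsilon_1=\cdots=\epsilon_{p+2s}$. Since $\epsilon_i\in\{1,-1\}$ for $i\le p$ while $\epsilon_{p+2j-1}\epsilon_{p+2j}=1$, the common value must be $\pm1$ and $\nu_j^2=\epsilon_{p+2j-1}=\pm1$, which is exactly the dichotomy between case (2.a) ($d_i^\tau=d_i$, $\nu_j^2=1$) and case (2.b) ($d_i^\tau=-d_i$, $\nu_j$ a primitive fourth root of unity). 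The normalizations are elementary: in (2.a) replacing $v_{p+2j}$ by $-v_{p+2j}$ turns each off-diagonal block into $\bigl(\begin{smallmatrix}0&1\\1&0\end{smallmatrix}\bigr)$, and in (2.b) replacing $v_{p+2j}$ by $v_{p+2j}\nu_j^{-1}$ (using $B(xd,y)=d^\tau B(x,y)$ with the $\tau$-fixed scalar $d=\nu_j^{-1}\in\bF$) turns each block into $\bigl(\begin{smallmatrix}0&1\\-1&0\end{smallmatrix}\bigr)$. After this one checks directly that $(\Delta^T)^\tau=\Delta$ in case (2.a) (each $d_i$ is $\tau$-symmetric and the blocks $\bigl(\begin{smallmatrix}0&1\\1&0\end{smallmatrix}\bigr)$ are symmetric with $\tau$-fixed entries) and $(\Delta^T)^\tau=-\Delta$ in case (2.b) (the $d_i$ are $\tau$-skew and the blocks $\bigl(\begin{smallmatrix}0&1\\-1&0\end{smallmatrix}\bigr)$ are skew with $\tau$-fixed entries). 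Writing $B(x,y)=(X^T)^\tau\Delta Y$ for the coordinate vectors $X,Y$ and applying $\tau$ entrywise gives $B(x,y)^\tau=(Y^T)^\tau(\Delta^T)^\tau X$, which equals $B(y,x)=(Y^T)^\tau\Delta X$ in case (2.a) and $-B(y,x)$ in case (2.b); hence $B$ is hermitian, respectively skew-hermitian.

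The substantive point — and the step I expect to cost the most care — is the orthogonal-versus-symplectic assertion. In either case $\varphi_B$ is the adjoint involution on $\End_D(V)$ of the $\epsilon$-hermitian form $B$ over $(D,\tau)$, with $\epsilon=+1$ in (2.a) and $\epsilon=-1$ in (2.b), because $B(rv,w)=B(v,\varphi_B(r)w)$; the claim is then exactly the standard description of the type of such an adjoint involution (for $\epsilon=+1$ it has the same type as $\tau$, for $\epsilon=-1$ the opposite type). Concretely one can prove this by writing $\varphi_B=\mathrm{Int}(\Delta^{-1})\circ\sigma_0$ with $\sigma_0(A)=(A^T)^\tau$: under the identification $R=\End_D(V)\simeq\Mat_{p+2s}(\bF)\otimes D$ the involution $\sigma_0$ is the tensor product of the (orthogonal) transpose involution of $\Mat_{p+2s}(\bF)$ with $\tau$, so by the rule for the type of a tensor product of involutions $\sigma_0$ is orthogonal iff $\tau$ is orthogonal and symplectic iff $\tau$ is symplectic; and since $\sigma_0(\Delta^{-1})=\bigl((\Delta^T)^\tau\bigr)^{-1}=\Delta^{-1}$ in case (2.a) and $=-\Delta^{-1}$ in case (2.b), conjugation by $\Delta^{-1}$ preserves the type in the first case and reverses it in the second. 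The mild subtlety here is precisely that the $\bF$-degree of $R$ is $(p+2s)r$ with $r^2=\dim_\bF D$, not $p+2s$, so that the contribution of $\tau$ to the type has to be tracked with care; the two structural facts invoked are classical (each follows from a short count of the dimension of the space of symmetric elements).
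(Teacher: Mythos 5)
Your proof is correct and follows exactly the route the paper has in mind: part (1) falls out of the two observations recorded just before the proposition (the scalar $\epsilon_i\epsilon_j^{-1}$ on $E_{ij}\otimes D$ and $E_{ij}\otimes D_h\subseteq R_{g_i-g_j+h}$), and the paper simply declares part (2) ``clear too.'' You have supplied the details the paper leaves implicit — the centre argument for $\varphi_B^2=\mathrm{id}$, the computation $B(x,y)^\tau=(Y^T)^\tau(\Delta^T)^\tau X$ giving the hermitian/skew-hermitian dichotomy, and the standard type rules for a tensor product of involutions and for the twist $\mathrm{Int}(\Delta^{-1})\circ\sigma_0$ according to whether $\sigma_0(\Delta^{-1})=\pm\Delta^{-1}$ — all consistent with the paper's setup and notation.
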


\smallskip

In general, assuming that the induced grading $\Gamma_V$ is a $\varphi_B$-grading, it fails to be a fine $\varphi_B$-grading, but it can be refined as follows.

Consider the group $\tilde G$ generated by elements $\tilde g_1,\ldots,\tilde g_{p+2s}$ and $H$ subject only to the relations (see \eqref{eq:relations})
\begin{equation}\label{eq:relationstilde}
2\tilde g_1+h_1=\cdots=2\tilde g_p+h_p=\tilde g_{p+1}+\tilde g_{p+2}=\cdots=\tilde g_{p+2s-1}+\tilde g_{p+2s}=0.
\end{equation}
Then $V$ is a $\tilde G$-graded right $D$-module by means of the assignment
\[
\degree (v_i)=\tilde g_i\quad\forall i=1,\ldots,p+2s.
\]
Denote by $\tilde \Gamma_V$ the induced grading on $\End_D(V)$. The $\tau$-sesquilinear nondegenerate balanced  form $B$ becomes homogeneous of degree $0$ relative to the $\tilde G$-grading.

\begin{remark}\label{re:GbarGtilde}
If $\bar G$ is any abelian group with $2\bar G=G$ (such group always exist since $G$ is finitely generated), then there is a well defined homomorphism
\[
\begin{split}
\chi:\tilde G&\longrightarrow \bar G\\
\tilde g_i&\mapsto g_i+\frac{1}{2}g_B,\quad\text{for any $i=1,\ldots,p+2s$}\\
h&\mapsto h\qquad\text{for any $h\in H$},
\end{split}
\]
where $\frac{1}{2}g_B$ denotes an element of $\bar G$ such that $2\bigl(\frac{1}{2}g_B\bigr)=g_B$ (the degree of $B$).

Moreover, we have $\Supp(\Gamma_V)=\{g_i-g_j+h: 1\leq i,j\leq p+2s,\ h\in H\}$, while $\Supp(\tilde\Gamma_V)=\{\tilde g_i-\tilde g_j+h: 1\leq i,j\leq p+2s,\ h\in H\}$, and $\chi\bigl(\Supp(\tilde\Gamma_V)\bigr)=\Supp(\Gamma_V)(\subseteq G)$.

Also, the group $\tilde G$ is a Cartesian product $\tilde H\times \bZ^s$, where $\tilde H$ is the group generated by $\tilde g_1,\ldots,\tilde g_p$ and $H$, and $\bZ^s$ is (isomorphic to) the subgroup (freely) generated by $\{\tilde g_{p+2j-1}: j=1,\ldots,s\}$. Because of \eqref{eq:relationstilde}, $\tilde H$ is a product of copies of $\bZ_4$ and of $\bZ_2$. The quotient $\tilde H/H$ is isomorphic to $\bZ_2^p$.

Finally, for $p\geq 1$ the subgroup generated by $\Supp(\tilde\Gamma_V)$ is a Cartesian product $\hat H\times \bZ^s$, where  $\hat H$ is the subgroup generated by $\tilde g_1-\tilde g_2,\ldots,\tilde g_1-\tilde g_p$ and $H$, and $\bZ^s$ is (isomorphic to) the subgroup (freely) generated by the elements $\tilde g_{p+2j-1}-\tilde g_1$, $j=1,\ldots, s$. And for $p=0$ the subgroup generated by $\Supp(\tilde\Gamma_V)$ is the Cartesian product $H\times \bZ^s$, where $\bZ^s$ is the subgroup generated by the elements $\tilde g_{p+2j-1}$, $j=1\ldots,s$.

We may ``shift'' the grading on $V$ without modifying the grading on $\End_D(V)$ by defining $\degree(v_i)=\tilde g_i-\tilde g_1$ for any $i$ (that is, by imposing the degree of $v_1$ to be $0$). \qed
\end{remark}

\begin{remark}\label{re:homogeneous_subspaces}
Note too that for any $h\in H$, the homogeneous component $R_h$ in the $\tilde\Gamma_V$ grading is $R_h=\sum_{i=1}^{p+2s}E_{ii}\otimes D_h$, whose dimension is $p+2s$. On the other hand, we have the one-dimensional homogeneous components
\[
R_{2\tilde g_{2p+j-1}+h}=E_{p+2j-1,p+2j}\otimes D_h,\qquad
R_{-2\tilde g_{2p+j-1}+h}=E_{p+2j,p+2j-1}\otimes D_h,
\]
and the dimension of all the other homogeneous components is exactly $2$. \qed
\end{remark}

\begin{proposition}\label{pr:GammatildeV}
Under the conditions of Remark \ref{re:GbarGtilde} the following properties hold:
\begin{enumerate}
\item $\tilde\Gamma_V$ is a refinement of $\Gamma_V$.
\item $\tilde\Gamma_V$ is a $\varphi_B$-grading.
\item $\tilde\Gamma_V$ is a fine $\varphi_B$-grading unless $p=2$, $s=0$ and $h_1=h_2$. In this case $\tilde\Gamma_V$ can be refined to a $\varphi_B$-grading which makes $\End_D(V)$ a graded division algebra.
\end{enumerate}
\end{proposition}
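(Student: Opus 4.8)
The plan is to settle items (1) and (2) with essentially no work and to put all the effort into (3).

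\emph{Item (1).} By Remark~\ref{re:GbarGtilde} the map $\chi$ is a group homomorphism $\tilde G\to\bar G$ sending $\tilde g_i-\tilde g_j+h\mapsto g_i-g_j+h$, with $\chi\bigl(\Supp\tilde\Gamma_V\bigr)=\Supp\Gamma_V$. Since the $\tilde\Gamma_V$-homogeneous component of degree $\tilde g_i-\tilde g_j+h$ is $E_{ij}\otimes D_h$, which lies inside the $\Gamma_V$-homogeneous component of degree $g_i-g_j+h$, every homogeneous subspace of $\tilde\Gamma_V$ is contained in one of $\Gamma_V$; this is exactly the assertion that $\tilde\Gamma_V$ refines $\Gamma_V$.

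\emph{Item (2).} The relations \eqref{eq:relationstilde} were chosen precisely so that $B$ is homogeneous of degree $0$ for $\tilde\Gamma_V$. Feeding this into $B(rv,w)=B(v,\varphi_B(r)w)$ and using that $B$ is nondegenerate in its second slot gives $\varphi_B(R_{\tilde g})\subseteq R_{\tilde g}$ for every $\tilde g$, hence $\varphi_B(R_{\tilde g})=R_{\tilde g}$ since $\varphi_B$ is bijective and $R$ finite dimensional. For $\varphi_B^2$ to act as a scalar on each component it suffices, by the criterion of Proposition~\ref{pr:involutions}(1) read now for the $\tilde G$-grading, that $\epsilon_i\epsilon_j^{-1}=\epsilon_{i'}\epsilon_{j'}^{-1}$ whenever $(\tilde g_i-\tilde g_j)-(\tilde g_{i'}-\tilde g_{j'})\in H$. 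This holds because there is a group homomorphism $\mu\colon\tilde G\to\bF^\times$ with $\mu|_H=1$ and $\mu(\tilde g_i)=\epsilon_i$ for all $i$: it is compatible with \eqref{eq:relationstilde} since $\mu(\tilde g_i)^2\mu(h_i)=\epsilon_i^2=1$ for $i\le p$ (there $\epsilon_i=\pm1$) and $\mu(\tilde g_{p+2j-1})\mu(\tilde g_{p+2j})=\nu_j^2\nu_j^{-2}=1$, and evaluating $\mu$ at an element of $H$ gives the required equality. Finally $R_0=\bigoplus_iE_{ii}\otimes D_0=\bigoplus_i\bF E_{ii}$ consists of diagonal matrices, on which $\varphi_B^2$ (conjugation by the diagonal matrix $\Gamma$ of \eqref{eq:varphiB2}) acts by $\epsilon_i\epsilon_i^{-1}=1$; so $\alpha_0=1$ and $\tilde\Gamma_V$ is a $\varphi_B$-grading.

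\emph{Item (3).} Suppose $\Gamma'$ is a $\varphi_B$-grading refining $\tilde\Gamma_V$ properly; the aim is to force $p=2$, $s=0$, $h_1=h_2$. By Remark~\ref{re:homogeneous_subspaces} the components of $\tilde\Gamma_V$ are of three kinds: the $(p+2s)$-dimensional $R_h=\bigoplus_iE_{ii}\otimes D_h$; the one-dimensional $E_{p+2j-1,p+2j}\otimes D_h$ and $E_{p+2j,p+2j-1}\otimes D_h$; and two-dimensional $E_{ij}\otimes D_h\oplus E_{i'j'}\otimes D_{h'}$ with $(i,j)\ne(i',j')$. The one-dimensional components cannot be refined, so $\Gamma'$ must split some $R_h$ or some two-dimensional component. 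On a two-dimensional component of the third kind a computation with \eqref{eq:varphiBDelta} shows $\varphi_B$ interchanges the two displayed lines; hence a $\varphi_B$-compatible splitting of it must be into its two $\varphi_B$-eigenlines, and — again by \eqref{eq:varphiBDelta} — the square of a spanning vector of an eigenline, as well as the product of the two eigenlines, land in $E_{ii}\otimes D_{h+h'}\oplus E_{jj}\otimes D_{h+h'}$ (for the $i,j$ read off from the pair) and together span it. Comparing $\Gamma'$-degrees then forces $\Gamma'$ to split $R_{h+h'}$ too, through subspaces diagonal across $E_{ii}\otimes D_{h+h'}$ and $E_{jj}\otimes D_{h+h'}$; iterating, and using that $1=\sum_kE_{kk}$ lies in degree $0$ while the $E_{kk}$ are orthogonal idempotents, one sees this is consistent only when $p+2s=2$ (the one case where $E_{ii}+E_{jj}$ is a scalar), and then only when $\tilde g_1-\tilde g_2$ is $2$-torsion, i.e.\ $h_1=h_2$; a parallel short argument rules out a $\Gamma'$ that splits only $R_h$'s when $p+2s\ge2$. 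Hence $\tilde\Gamma_V$ is a fine $\varphi_B$-grading outside that configuration. When $p=2$, $s=0$, $h_1=h_2$: $H$ is $2$-elementary (Lemma~\ref{le:graded_division_anti}), so $D\simeq Q^{\otimes r}$ by Corollary~\ref{co:graded_division_involution} and $\End_D(V)\simeq\Mat_2(D)\simeq Q^{\otimes(r+1)}$; writing the $\Mat_2(\bF)$ tensor factor in the basis $1$, $E_{11}-E_{22}$, $E_{12}+E_{21}$, $E_{12}-E_{21}$, its Pauli (division) grading has each one-dimensional component inside a $\tilde\Gamma_V$-component (again because $\tilde g_1-\tilde g_2$ is $2$-torsion), a direct check with \eqref{eq:varphiBDelta} shows $\varphi_B$ is a scalar on each of its components, and it visibly makes $\End_D(V)$ a graded division algebra; being a coarsening of it, $\tilde\Gamma_V$ is not fine here.

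\emph{Where the difficulty lies.} Items (1) and (2) are bookkeeping. The real obstacle is (3): ruling out every proper $\varphi_B$-compatible refinement requires tracking simultaneously how $\varphi_B$ acts on the (at most two-dimensional, apart from the $R_h$) homogeneous components and what the group-grading axiom permits, and then showing that the sole surviving case $p=2$, $s=0$, $h_1=h_2$ is exactly the one in which $\End_D(V)\cong Q^{\otimes(r+1)}$ carries its Pauli grading as a $\varphi_B$-refinement.
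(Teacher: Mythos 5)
Items (1) and (2) are correct, and your argument for (2) is in fact cleaner than the paper's: you verify the criterion from Proposition~\ref{pr:involutions}(1) by exhibiting a character $\mu\colon\tilde G\to\bF^\times$ trivial on $H$ with $\mu(\tilde g_i)=\epsilon_i$, which is consistent with the defining relations \eqref{eq:relationstilde} because $\epsilon_i^2=1$ for $i\le p$ and $\epsilon_{p+2j-1}\epsilon_{p+2j}=1$. The paper instead does this by a direct case split on when $(\tilde g_i-\tilde g_j)-(\tilde g_{i'}-\tilde g_{j'})\in H$; both work, but the character argument encapsulates the check nicely.

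Item (3), however, has a genuine gap. You correctly identify the key mechanism — the two-dimensional $\tilde\Gamma_V$-components must split into $\varphi_B$-eigenlines in any $\varphi_B$-refinement, and their squares and products lie in the corresponding $R_h$'s — which matches the paper's approach. But the decisive steps are left unproven: ``iterating \dots one sees this is consistent only when $p+2s=2$ \dots and then only when $h_1=h_2$'' and ``a parallel short argument rules out a $\Gamma'$ that splits only $R_h$'s''. The paper's proof here is a careful, case-by-case argument depending on $p$ and $s$: for $p=0$ and $p=1$ one uses that the one-dimensional components $E_{p+2j-1,p+2j}\otimes D_h$ and $E_{p+2j,p+2j-1}\otimes D_h$ yield degree-$0$ idempotents $E_{p+2j-1,p+2j-1}\otimes 1$, $E_{p+2j,p+2j}\otimes 1$; for $p\ge 3$ one feeds eigenvector products into multiple blocks to get $E_{11}\otimes 1$ homogeneous of degree $0$; for $p=2$, $s>0$ one mixes both devices. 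None of this follows from the single observation about $1=\sum_k E_{kk}$. Most seriously, the hardest case, $p=2$, $s=0$, $h_1\ne h_2$, requires a delicate additional idea: choosing a homogeneous $z\in D$ with $zd_1=d_1z$ but $zd_2=-d_2z$ (using the double-centralizer property) and then manipulating products of eigenvectors with $(E_{11}-E_{22})\otimes z$ to force $E_{21}\otimes 1$ and $E_{12}\otimes 1$ homogeneous, hence $E_{11}\otimes 1$, $E_{22}\otimes 1$ of degree $0$. Your sketch does not see this case at all, and the claim that the obstruction is simply ``$\tilde g_1-\tilde g_2$ is $2$-torsion'' would wrongly suggest that $h_1\ne h_2$ causes no difficulty. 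Your final construction of the refinement in the genuinely non-fine case $p=2$, $s=0$, $h_1=h_2$ is correct and matches the paper.
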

\begin{proof}
Part (1) is clear as it is part (2) since the conditions in item (1) of Proposition \ref{pr:involutions} are satisfied because $(\tilde g_i-\tilde g_j)-(\tilde g_{i'}-\tilde g_{j'})$ is in  $H$ only if either $i=i'$, $j=j'$ or $1\leq i,j\leq p$ and $i=j'$, $j=i'$. In the latter case, since $\epsilon_i,\epsilon_j\in\{1,-1\}$, $\epsilon_j=\epsilon_j^{-1}$ and the condition $\epsilon_i\epsilon_j^{-1}=\epsilon_{i'}\epsilon_{j'}^{-1}$ is satisfied.

\smallskip

The proof of (3) is quite technical, although the main idea used is simple. Given any homogeneous component $R_g$ of our $\varphi_B$-grading, then there is a scalar $\alpha_g\ne 0$ such that $\varphi_B^2(x)=\alpha_g x$ for any $x\in R_g$ (Definition \ref{de:varphi_grading}). Then for any $x\in R_g$, $x\pm\sqrt{\alpha_g}^{-1}\varphi_B(x)$ is an eigenvector for $\varphi_B$. If the dimension of $R_g$ is $2$, then these eigenvectors must be homogeneous in any refinement (as $\varphi_B$-gradings) of our grading.

\smallskip

Consider first the case $p=2$, $s=0$ and $h_1=h_2$, then we may take $d_1=d_2=d$ with $d^2=1$ and hence for any homogeneous $x\in D$, $\varphi_B(E_{ij}\otimes x)=E_{ji}\otimes dx^\tau d$ for $1\leq i,j\leq 2$, and $\varphi_B^2=id$, so the homogeneous spaces of $\tilde\Gamma_V$ are exactly the following spaces:
\[
R_h=(E_{11}\otimes D_h)\oplus(E_{22}\otimes D_h),\quad
R_{\tilde g_1-\tilde g_2+h}=(E_{12}\otimes D_h)\oplus (E_{21}\otimes D_h),
\]
for $h\in H$. This grading can be refined to the $\varphi_B$-grading over the group $\bZ_2^2\times H$ with (note that $\varphi_B^2=id$ here):
\[
\left\{%
\begin{aligned}
R_{(\bar 0,\bar 0)+h}&=(E_{11}+E_{22})\otimes D_h,\\
R_{(\bar 1,\bar 1)+h}&=(E_{11}-E_{22})\otimes D_h,\\
R_{(\bar 1,\bar 0)+h}&=(E_{12}+E_{21})\otimes D_h,\\
R_{(\bar 0,\bar 1)+h}&=(E_{12}-E_{21})\otimes D_h.
\end{aligned}
\right.
\]
This gives a proper refinement of $\varphi_B$-gradings which makes $R=\End_D(V)$ a graded division algebra.

Now, in general, consider the element $\delta_j=\tilde g_{p+2j-1}\in \tilde G$, whose order is infinite, and assume first that $p=0$. Then with $R=\End_D(V)$ the homogeneous spaces $R_{2\delta_j+h}=E_{2j-1,2j}\otimes D_h$ and $R_{-2\delta_j+h}=E_{2j,2j-1}\otimes D_h$ have dimension $1$ for any $h\in H$ (see Remark \ref{re:homogeneous_subspaces}). Then $E_{2j-1,2j-1}\otimes 1=(E_{2j-1,2j}\otimes 1)(E_{2j,2j-1}\otimes 1)$ and $E_{2j,2j}\otimes 1$ are homogeneous idempotents in any refinement of $\tilde\Gamma_V$, and hence belong to the zeroth component in any refinement. For $i\ne j$, the dimension of $R_{\delta_i+\delta_j+h}$ is $2$ for any $h\in H$, as $R_{\delta_i+\delta_j+h}=(E_{2i-1,2j}\otimes D_h)\oplus (E_{2j-1,2i}\otimes D_h)$, and $\varphi_B^2$ acts on these spaces as $(\nu_i\nu_j)^2$ times the identity map since
\[
\varphi_B(E_{2i-1,2j}\otimes x_h)=E_{2j-1,2i}\otimes \nu_i\nu_jx_h^\tau
\]
for $x_h\in D_h$. Hence the elements $E_{2i-1,2j}\otimes x_h\pm E_{2j-1,2i}\otimes x_h^\tau$ (which are eigenvectors for $\varphi_B$ in $R_{\delta_i+\delta_j+h}$) are homogeneous in any refinement (of $\varphi_B$-gradings!); but then the elements:
\[
\begin{split}
&(E_{2i-1,2i-1}\otimes 1)(E_{2i-1,2j}\otimes x_h+E_{2j-1,2i}\otimes x_h^\tau)=E_{2i-1,2j}\otimes x_h,\\
&(E_{2i-1,2j}\otimes x_h+E_{2j-1,2i}\otimes x_h^\tau)(E_{2i,2i}\otimes 1)=E_{2j-1,2i}\otimes x_h^\tau=\pm E_{2j-1,2i}\otimes x_h
\end{split}
\]
are homogeneous of the same degree in any refinement. Therefore the homogeneous space $R_{\delta_i+\delta_j+h}$ does not split when considering a refinement. The same argument applies to the other homogeneous spaces, and this shows that $\tilde\Gamma_V$ is a fine $\varphi_B$-grading.

Now if $p>0$ the same arguments above show that $E_{p+j,p+j}\otimes 1$ is a degree $0$ idempotent in any refinement, and that the homogeneous subspaces $R_{\pm\delta_i\pm\delta_j+h}$ do not split. If $p=1$, as $1=\sum_{i=1}^{p+2s}E_{ii}\otimes 1$ is always homogeneous of degree $0$, it follows that $E_{11}\otimes 1$ is homogeneous of degree $0$ too in any refinement, and the same sort of arguments work. If $p\geq 3$ note that for $x\in D_h$, we have
\[
\varphi_B(E_{12}\otimes x)=E_{21}\otimes d_2^{-1}x^\tau d_1,\quad
\varphi_B(E_{21}\otimes x)=E_{12}\otimes d_1^{-1}x^\tau d_2,
\]
and since $R_{\tilde g_1-\tilde g_2+h}=(E_{12}\otimes D_h)\oplus(E_{21}\otimes D_{h+h_1+h_2})$, we have that the restriction of $\varphi_B^2$ to $R_{\tilde g_1+\tilde g_2+h}$ is either the identity or minus the identity: $\varphi_B^2\vert_{R_{\tilde g_1+\tilde g_2+h}}=\epsilon id$, $\epsilon=\pm 1$, depending on $d_i^\tau$ being equal to $d_i$ or to $-d_i$, for $i=1,2$ (and hence $\epsilon$ is independent of $h$). In particular, the eigenvectors (for  $\varphi_B$) $E_{12}\otimes 1+\sqrt{\epsilon}E_{21}\otimes d_1^{-1}d_2$ and $E_{21}\otimes 1\pm\sqrt{\epsilon}E_{12}\otimes d_1^{-1}d_2$ are homogeneous in any refinement and so is their product $(E_{11}\pm E_{22})\otimes 1$. In the same vein $(E_{11}-E_{33})\otimes 1$ is homogeneous, and the product $E_{11}\otimes 1$ is homogeneous too in any refinement. Now similar arguments as above work.

If $p=2$ and $s>0$ one gets that $(E_{11}\pm E_{22})\otimes 1$ is homogeneous in any refinement, but also $(E_{14}+E_{31})\otimes 1$ is homogeneous. Hence, multiplying by the degree $0$ idempotents $E_{44}\otimes 1$ and $E_{33}\otimes 1$ one gets that $E_{14}\otimes 1$ is homogeneous in any refinement. Then  so are $E_{41}\otimes 1$ and the product $E_{11}\otimes 1=(E_{14}\otimes 1)(E_{41}\otimes 1)$, and the result follows in this case too.

We are left with the case $p=2$, $s=0$ and $h_1\ne h_2$. As above for any $h\in H$ we have
$
R_h=(E_{11}\otimes D_h)\oplus(E_{22}\otimes D_h)$ and
$R_{\tilde g_1-\tilde g_2+h}=(E_{12}\otimes D_h)\oplus(E_{21}\otimes D_{h_1+h_2+h})$, and the restriction of $\varphi_B^2$ to $R_{\tilde g_1+\tilde g_2+h}$ is $\epsilon id$, for $\epsilon=\pm 1$ independent of $h$. Also, for any homogeneous $x\in D$, $\varphi_B(E_{21}\otimes x)=E_{12}\otimes d_1^{-1}x^\tau d_2=\pm E_{12}\otimes d_1^{-1}d_2x$ (since $xd_2x^{-1}=\pm d_2$). Thus, the eigenvectors for $\varphi_B$ given by $(E_{21}\otimes 1\pm \sqrt{\epsilon} E_{12}\otimes d_1^{-1}d_2)(1\otimes x)$, for homogeneous $x$, remain homogeneous in any refinement (as a $\varphi_B$-grading) and so are the elements
\[
(E_{21}\otimes 1\pm \sqrt{\epsilon}E_{12}\otimes d_1^{-1}d_2)(E_{21}\otimes 1\pm \sqrt{\epsilon}E_{12}\otimes d_1^{-1}d_2)(1\otimes x).
\]
We conclude that the elements of the form $(E_{11}\pm E_{22})\otimes x$, for homogeneous $x\in D$, are homogeneous in any refinement. One may take now an homogeneous element $z\in D$ with $zd_1=d_1z$ but $zd_2=-d_2z$ (since the centralizers $\Cent(d_i)$ of $d_1$ and $d_2$ are different as $\Cent(d_1)=\Cent(\bF 1+\bF d_1)$ and $\Cent(\Cent(\bF 1+\bF d_1))=\bF 1+\bF d_1$ by the double centralizer property). Then the elements
\[
\begin{split}
((E_{11}-E_{22})\otimes z)(E_{21}\otimes 1+\sqrt{\epsilon}E_{12}\otimes d_1^{-1}d_2)&=-E_{21}\otimes z+\sqrt{\epsilon}E_{12}\otimes zd_1^{-1}d_2,\\[3pt]
(E_{21}\otimes 1+\sqrt{\epsilon}E_{12}\otimes d_1^{-1}d_2)((E_{11}-E_{22})\otimes z)
 &=E_{21}\otimes z-\sqrt{\epsilon}E_{12}\otimes d_1^{-1}d_2z\\
 &=E_{21}\otimes z+\sqrt{\epsilon}E_{12}\otimes zd_1^{-1}d_2,
\end{split}
\]
are homogeneous of the same degree in any refinement. Hence $E_{21}\otimes z$ and $E_{12}\otimes zd_1^{-1}d_2$ are homogeneous of the same degree and hence $((E_{11}+E_{22})\otimes z)(E_{21}\otimes z)=\pm E_{21}\otimes 1$ is homogeneous in any refinement. In the same vein we get that $E_{12}\otimes 1$ is homogeneous and so are the idempotent elements $E_{11}\otimes 1=(E_{12}\otimes 1)(E_{21}\otimes 1)$ and $E_{22}\otimes 1=(E_{21}\otimes 1)(E_{12}\otimes 1)$. Being idempotent elements, their degree is $0$ in any refinement and now some previous arguments finish the proof.
\end{proof}

\begin{remark}\label{re:fineGamma}
Therefore, given an antiautomorphism $\varphi$ and a fine $\varphi$-grading $\Gamma$ of a matrix algebra $R$, Theorem \ref{th:grading_varphi_B} shows that $\Gamma$ is equivalent to a fine $\varphi_B$-grading $\Gamma_V$ on $\End_D(V)$ for some graded division algebra $D$ endowed with a graded involution $\tau$, right graded $D$-module $V$, and balanced $\tau$-sesquilinear nondegenerate homogeneous form $B:V\times V\rightarrow D$, where the fine $\varphi_B$-grading $\Gamma_V$ is the grading induced by the grading on $V$. Moreover, since this $\varphi_B$-grading is fine, by Proposition \ref{pr:GammatildeV} it is equivalent to the grading $\tilde\Gamma_V$ in Remark \ref{re:GbarGtilde} and, in particular, the case $p=2$, $s=0$ and $h_1=h_2$ is not possible. \qed
\end{remark}

\begin{remark}\label{re:goodbasis}
Consider the grading $\tilde\Gamma_V$ on $\End_D(V)$ above. Then the nonzero homogeneous elements of $V$ are
\[
V^{hom}=\cup_{i=1}^{p+2s} v_iD^{hom},
\]
where $D^{hom}=\cup_{h\in H}(D_h\setminus\{0\})$ denotes the set of nonzero homogeneous elements in the central graded division algebra $D$.
Therefore the good basis is completely determined by the $\tilde\Gamma_V$-grading up to a reordering and scaling by homogeneous elements of $D$.

Moreover, for any $i=1,\ldots,p+2s$ and $x_h\in D_h$,
\[
B(v_ix_h,v_ix_h)=x_h^\tau B(v_i,v_i)x_h\in\bF B(v_i,v_i),
\]
since $x_h^\tau=\pm x_h$ and $x_hD_{h'}x_h=x_h^{-1}D_{h'}x_h=D_{h'}$ for any $h'\in H$. Hence the tuple $(d_1,\ldots,d_p)$ is completely determined up to reordering and scaling by nonzero elements in $\bF$. \qed
\end{remark}

\medskip

Consider now the class of pairs $\bigl(D,(d_1,\ldots,d_p)\bigr)$ consisting of a central graded division algebras with support a $2$-elementary group and a $p$-tuple ($p\geq 0$) of nonzero homogeneous elements in $D$.

\begin{definition}\label{de:equivalenceDx1xp}
Two such pairs $\bigl(D,(d_1,\ldots,d_p)\bigr)$ and $\bigl(D',(d_1',\ldots,d_q')\bigr)$ are said to be equivalent if $p=q$, and there is a graded isomorphism $\Phi:D\rightarrow D'$, a permutation $\pi\in S_p$ and a nonzero homogeneous element $z\in D$ such that
\[
\Phi(zd_i)\in \bF d_{\pi(i)}'\quad \forall i=1,\ldots,p.
\]

This is an equivalence relation. The equivalence class of the pair $\bigl(D,(d_1,\ldots,d_p)\bigr)$ will be denoted by $[D,(d_1,\ldots,d_p)]$. \qed
\end{definition}

Given an antiautomorphism $\varphi$ and a fine $\varphi$-grading $\Gamma$ of a matrix algebra $R$, our results (see Remark \ref{re:fineGamma}) show that $\Gamma$ is equivalent to a fine $\varphi_B$-grading on some $\End_D(V)$ as above with a good basis $\calB$ such that \eqref{eq:good_basis} is satisfied.
Then define
\begin{equation}\label{eq:IRvarphiGamma}
\calI(R,\varphi,\Gamma)=[D,(B(v_1,v_1),\ldots,B(v_p,v_p))].
\end{equation}

\smallskip

Consider also the class of pairs $\bigl((D,\tau),(d_1,\ldots,d_p)\bigr)$ consisting of a central graded division algebra $D$ endowed with a graded involution $\tau$ and a $p$-tuple ($p\geq 0$) of nonzero homogeneous elements such that either $d_i^\tau=d_i$ for any $i$ or $d_i^\tau=-d_i$ for any $i$.

The relevance of the next definition will become clear in Theorems \ref{th:grading_involutions} and \ref{th:KR*}.

\begin{definition}\label{de:equivalenceDtaux1xp}
Two such pairs $\bigl((D,\tau),(d_1,\ldots,d_p)\bigr)$ and $\bigl((D',\tau'),(d_1',\ldots,d_q')\bigr)$ are said to be equivalent if $p=q$ and there exists a nonzero homogeneous element $z\in D$ and an isomorphism of graded algebras with involution $\Phi:(D,\tau^z)\rightarrow (D',\tau')$ (that is $\Phi\tau^z=\tau'\Phi$, where $x^{\tau^z}=zx^\tau z^{-1}$), and a permutation $\pi\in S_p$ such that
\[
\Phi(zd_i)\in \bF d_{\pi(i)}'\quad \forall i=1,\ldots,p.
\]

This is too an equivalence relation and the equivalence class of a pair like $\bigl((D,\tau),(d_1,\ldots,d_p)\bigr)$ will be denoted by $[(D,\tau),(d_1,\ldots,d_p)]$. \qed
\end{definition}

Note that since $z^2\in \bF 1$, the equation above is equivalent to the condition $\Phi(z^{-1}d_i)\in\bF d_{\pi(i)}'$.

\smallskip

Given an involution $\varphi$ and a fine $\varphi$-grading $\Gamma$ of a matrix algebra $R$, our results show that $\Gamma$ is equivalent to a fine $\varphi_B$-grading on some $\End_D(V)$ as above, with $B$ being hermitian or skew-hermitian (see Proposition \ref{pr:involutions}) relative to a graded involution $\tau$ of $D$, and with a good basis $\calB$ satisfying \eqref{eq:good_basis}. Then define
\begin{equation}\label{eq:I2RvarphiGamma}
\calI_2(R,\varphi,\Gamma)=[(D,\tau),(d_1,\ldots,d_p)].
\end{equation}

\begin{theorem}\label{th:calI_cal2I}
Let $\varphi$ be an antiautomorphism and $\Gamma$ a fine $\varphi$-grading of a matrix algebra $R$. Then $\calI(R,\varphi,\Gamma)$ is well defined.

Moreover, if $\varphi$ is an involution, then $\calI_2(R,\varphi,\Gamma)$ is well defined too.
\end{theorem}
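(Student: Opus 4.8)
The statement says that the classes defined in \eqref{eq:IRvarphiGamma} and \eqref{eq:I2RvarphiGamma} are independent of the choices made, so the plan is to fix a fine $\varphi$-grading $\Gamma$ of $R$ and check that any two ``presentations'' of it yield equivalent pairs. By Remark~\ref{re:fineGamma} (which rests on Corollary~\ref{co:grading_varphi_B} and Proposition~\ref{pr:GammatildeV}) at least one presentation exists, consisting of a graded isomorphism $\Phi_1\colon(R,\Gamma)\to(\End_D(V),\tilde\Gamma_V)$ with $\Phi_1\varphi\Phi_1^{-1}=\varphi_B$, for a central graded division algebra $D$ with a graded involution $\tau$, a graded right $D$-module $V$, a $\tau$-sesquilinear nondegenerate balanced homogeneous form $B$, and a good basis $\calB=\{v_1,\dots,v_{p+2s}\}$ as in \eqref{eq:good_basis}. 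Given a second presentation $\Phi_2\colon(R,\Gamma)\to(\End_{D'}(V'),\tilde\Gamma_{V'})$, with data $D',\tau',V',B',\calB'=\{v_1',\dots,v_{q+2t}'\}$, we must show that $\bigl[D,(B(v_1,v_1),\dots,B(v_p,v_p))\bigr]=\bigl[D',(B'(v_1',v_1'),\dots,B'(v_q',v_q'))\bigr]$, and, when $\varphi$ is an involution, the analogous equality of the refined classes of Definition~\ref{de:equivalenceDtaux1xp}.

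First I would form $\Psi:=\Phi_2\circ\Phi_1^{-1}\colon\End_D(V)\to\End_{D'}(V')$, a graded isomorphism with $\Psi\varphi_B\Psi^{-1}=\varphi_{B'}$. Up to homogeneous isomorphism, $V$ and $V'$ are the (unique) irreducible graded modules of $\End_D(V)$ and $\End_{D'}(V')$ (Proposition~\ref{pr:uniqueirreducible}), so arguing as in the proof of the corollary following Theorem~\ref{th:induced_grading} --- realizing $V$ as $\End_D(V)e$ and $V'$ as $\End_{D'}(V')\Psi(e)$ for a graded primitive idempotent $e$ and restricting $\Psi$ --- one obtains a bijective $\bF$-linear map $\psi\colon V\to V'$ sending homogeneous elements to homogeneous elements together with a graded algebra isomorphism $\chi\colon D\to D'$ such that $\psi(rv)=\Psi(r)\psi(v)$ and $\psi(vd)=\psi(v)\chi(d)$ for all $r\in\End_D(V)$, $v\in V$, $d\in D$.

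Next I would pull $B'$ back by setting $\hat B(v,w):=\chi^{-1}\bigl(B'(\psi v,\psi w)\bigr)$. A routine computation (using $\Psi\varphi_B\Psi^{-1}=\varphi_{B'}$) shows that $\hat B$ is a $\hat\tau$-sesquilinear nondegenerate homogeneous form on the right $D$-module $V$ with adjoint $\varphi_B$ on $\End_D(V)$, where $\hat\tau:=\chi^{-1}\tau'\chi$ is a graded involution of $D$. Thus $B$ and $\hat B$ are forms on $V$ with the same adjoint, and part~(3) of Theorem~\ref{th:grading_varphi_B} yields a nonzero homogeneous $d\in D$ with $\hat\tau=\tau^d$ and $\hat B(v,w)=dB(v,w)$. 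Hence $\chi\tau^d=\tau'\chi$ and $B'(\psi v,\psi w)=\chi(d)\,\chi\bigl(B(v,w)\bigr)$ for all $v,w$; in particular, writing $d_i=B(v_i,v_i)$, we get $B'(\psi v_i,\psi v_i)=\chi(dd_i)\ne 0$ for $i\le p$, $B'(\psi v_i,\psi v_i)=0$ for $i>p$, and the pairs $(\psi v_{p+2j-1},\psi v_{p+2j})$ remain mutually $B'$-orthogonal hyperbolic pairs.

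Finally I would match the good bases via Remark~\ref{re:goodbasis}: since every nonzero homogeneous vector of $V'$ is of the form $v_j'c$ with $c$ a nonzero homogeneous element of $D'$, each $\psi v_i$ equals $v_{j(i)}'c_i$ for an index $j(i)$ and nonzero homogeneous $c_i\in D'$, and (both sets having $p+2s=q+2t$ elements and $i\mapsto j(i)$ being injective) $j$ is a bijection which, by the last sentence of the previous paragraph, restricts to a bijection of the anisotropic indices; thus $p=q$ and $s=t$. Now $B'(\psi v_i,\psi v_i)=c_i^{\tau'}B'(v_{j(i)}',v_{j(i)}')c_i\in\bF^\times B'(v_{j(i)}',v_{j(i)}')$ for $i\le p$ (because $c_i^{\tau'}=\pm c_i$, $c_i^2\in\bF 1$, and conjugation of a homogeneous element by a homogeneous element in the $2$-elementary graded division algebra $D'$ is $\pm$ the identity), so $B'(v_{j(i)}',v_{j(i)}')\in\bF^\times\chi(dd_i)$. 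Taking $\Phi:=\chi$, $z:=d$ and $\pi:=j|_{\{1,\dots,p\}}$, the identity $\Phi(zd_i)=\chi(dd_i)\in\bF\,B'(v_{\pi(i)}',v_{\pi(i)}')$ is exactly the requirement of Definition~\ref{de:equivalenceDx1xp}, so $\calI(R,\varphi,\Gamma)$ is well defined. When $\varphi$ is an involution, $B$ and $B'$ are hermitian or skew-hermitian (Proposition~\ref{pr:involutions}(2)), so the $d_i$ share a sign for $\tau$ and the $B'(v_j',v_j')$ share a sign for $\tau'$, and the same $\Phi=\chi$, $z=d$ moreover intertwine $\tau^d$ and $\tau'$; thus Definition~\ref{de:equivalenceDtaux1xp} is met and $\calI_2(R,\varphi,\Gamma)$ is well defined too. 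I expect the only delicate point to be this last step: one must see that changing the good basis, the form $B$, and the division algebra produces exactly the ambiguities (a permutation of the anisotropic vectors, nonzero scalar factors, and the conjugation $\tau\mapsto\tau^d$) that the equivalence relations of Definitions~\ref{de:equivalenceDx1xp} and \ref{de:equivalenceDtaux1xp} already allow --- everything else is the direct application of Theorem~\ref{th:grading_varphi_B}(3) and Remark~\ref{re:goodbasis}.
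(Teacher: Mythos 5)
Your proposal is correct and follows essentially the same route as the paper's own (much terser) proof: both reduce well-definedness to the uniqueness of the graded module $V$ and its centralizer $D$ up to isomorphism, then invoke Theorem~\ref{th:grading_varphi_B}(3) for uniqueness of $B$ up to left multiplication by a homogeneous element of $D$ and Remark~\ref{re:goodbasis} for uniqueness of the good basis up to permutation and homogeneous scaling. Your write-up simply makes explicit the isomorphisms $\chi\colon D\to D'$ and $\psi\colon V\to V'$ that the paper handles implicitly via the identification with a fixed $\End_D(V)$.
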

\begin{proof}
For the first part, once $R$ is identified with $\End_D(V)$, where $V$ is the unique, up to isomorphism, irreducible graded module (Proposition \ref{pr:uniqueirreducible} and Theorem \ref{th:induced_grading}), and $D$ its centralizer, then the fine grading $\Gamma$ is identified with the fine $\varphi_B$-grading $\Gamma_V$ for a suitable sesquilinear nondegenerate balanced form. Now the results follows from the two following facts:
\begin{itemize}
\item $B$ is uniquely determined up to multiplication on the left by nonzero homogeneous elements in $D$ (Theorem \ref{th:grading_varphi_B}).
\item The elements of a good basis of $V$ are uniquely determined up to reordering and scaling by nonzero homogeneous elements in $D$ (Remark \ref{re:goodbasis}).
\end{itemize}

The same arguments are valid for the second part of the Theorem.
\end{proof}

\smallskip

The invariant $\calI(R,\varphi,\Gamma)$ does not classify $\varphi$-gradings up to equivalence, but up to a weaker condition, which is the relevant one when dealing with simple Lie algebras of type $A$.

\begin{theorem}\label{th:equivalence_varphi_gradings}
\null\quad\null
\begin{enumerate}
\item
Let $\varphi$ be an antiautomorphism of the matrix algebra $R$, let $\Gamma: R=\oplus_{g\in G}R_g$ be a fine $\varphi$-grading and let $\Psi\in\Diag_\Gamma(R)$ (the diagonal group of $\Gamma$).
Then $\Gamma$ is a fine $\Psi\varphi$-grading too and $\calI(R,\varphi,\Gamma)=\calI(R,\Psi\varphi,\Gamma)$.

\item
Let $\varphi_1$ and $\varphi_2$ be two antiautomorphisms of $R$, let $\Gamma_i$ be a fine $\varphi_i$-grading, $i=1,2$, then $\calI(R,\varphi_1,\Gamma_1)$ equals $\calI(R,\varphi_2,\Gamma_2)$ if and only if there exists a grading automorphism $\Phi:(R,\Gamma_1)\rightarrow (R,\Gamma_2)$ and an automorphism $\Psi\in\Diag_{\Gamma_2}(R)$ such that $\Phi\varphi_1\Phi^{-1}=\Psi\varphi_2$.
\end{enumerate}
\end{theorem}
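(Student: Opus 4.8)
The plan is to work in the normal form provided by Corollary~\ref{co:grading_varphi_B}, combined with Proposition~\ref{pr:GammatildeV} and Remark~\ref{re:fineGamma}: identify $R$ with $\End_{D^{(i)}}(V^{(i)})$ through isomorphisms $\Phi_i$ so that $\varphi_i=\varphi_{B^{(i)}}$ for a balanced $\tau^{(i)}$-sesquilinear form $B^{(i)}$, the fine $\varphi_i$-grading $\Gamma_i$ is $\tilde\Gamma_{V^{(i)}}$ for a good basis $\{v^{(i)}_1,\dots,v^{(i)}_{m_i}\}$, and $\calI(R,\varphi_i,\Gamma_i)=[D^{(i)},(d^{(i)}_1,\dots,d^{(i)}_{p_i})]$ with $d^{(i)}_j=B^{(i)}(v^{(i)}_j,v^{(i)}_j)$ and $(d^{(i)}_j)^2=1$. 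To prove (1) I would first observe that $\Psi$ preserves each $R_g$ and is scalar there, hence commutes with $\varphi$; then $(\Psi\varphi)^2=\Psi^2\varphi^2$ is scalar on each $R_g$ and the identity on the component of $1$, so $\Gamma$ is a $\Psi\varphi$-grading, and it is fine since any proper refinement of $\Gamma$ that is a $\Psi\varphi$-grading is automatically a $\varphi$-grading ($\Psi$ lies in the diagonal group of any refinement). Writing $\Psi=\mathrm{Inn}_u$ (Skolem--Noether), the condition $\Psi\in\Diag_\Gamma$ forces $u$, up to a scalar, to be $\operatorname{diag}(\mu_1,\dots,\mu_m)\otimes z$ with $z$ a nonzero homogeneous element of $D$ (a degree-$0$ automorphism of $D$ is conjugation by a homogeneous element, cf.\ the proof of Lemma~\ref{le:graded_involutions_D}). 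One then checks $\Psi\varphi=\varphi_{B'}$ for $B'(v,w)=B(v,u^{-1}w)$, that $\{v_j\}$ stays a good basis for $B'$ after rescaling, and that $B'(v_i,v_i)\in\bF d_iz$ for $i\le p$; since $zd_i\in\bF d_iz$, the $z$-freedom in Definition~\ref{de:equivalenceDx1xp} gives $\calI(R,\Psi\varphi,\Gamma)=[D,(d_1z,\dots,d_pz)]=[D,(d_1,\dots,d_p)]=\calI(R,\varphi,\Gamma)$.

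For (2), the implication ``$\Leftarrow$'' is immediate: the graded isomorphism $\Phi$ carries the package $(D^{(1)},V^{(1)},B^{(1)},\text{good basis})$ over to one realizing $\calI(R,\Phi\varphi_1\Phi^{-1},\Gamma_2)$ (in particular $\Gamma_2$ is a fine $(\Phi\varphi_1\Phi^{-1})$-grading), so $\calI(R,\varphi_1,\Gamma_1)=\calI(R,\Phi\varphi_1\Phi^{-1},\Gamma_2)=\calI(R,\Psi\varphi_2,\Gamma_2)$, and the right-hand side equals $\calI(R,\varphi_2,\Gamma_2)$ by part~(1). For ``$\Rightarrow$'' I would start from the data furnished by equality of the invariants: a graded isomorphism $\Lambda\colon D^{(1)}\to D^{(2)}$, a permutation $\pi$, and a nonzero homogeneous $z_0\in D^{(1)}$ with $\Lambda(z_0d^{(1)}_i)\in\bF d^{(2)}_{\pi(i)}$. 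Comparing the sizes of the $D^{(i)}$ and of the $V^{(i)}$ forces $p_1=p_2$, $s_1=s_2$, $m_1=m_2$, and reordering the $v^{(2)}_j$ lets me take $\pi=\mathrm{id}$. Then I would apply part~(1) on the $\varphi_1$-side, replacing $\varphi_1$ by $\Psi_1\varphi_1$ where $\Psi_1$ is the diagonal automorphism with matrix $\operatorname{diag}(\mu_iz_0)$, $\mu_i$ a square root of the commutation sign of $z_0$ with $d^{(1)}_i$; this square-root choice is exactly what keeps $\Psi_1$ in the diagonal group, using that commutation signs in a $2$-elementary graded division algebra come from a bilinear form. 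After this reduction one may assume $z_0=1$, so $\Lambda(d^{(1)}_i)=c_id^{(2)}_i$ with $c_i=\pm1$ (since $d^{(1)}_i$ and $d^{(2)}_i$ both square to $1$) and $\lambda(h^{(1)}_i)=h^{(2)}_i$.

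Now the relations \eqref{eq:relationstilde} for the two good bases correspond --- the $\lambda$-images of the $h^{(1)}_j$ are the $h^{(2)}_j$ and the $\bZ^s$-summands are matched by the identity --- so $\tilde g^{(1)}_j\mapsto\tilde g^{(2)}_j$, $h\mapsto\lambda(h)$ is a group isomorphism $\tilde G_1\to\tilde G_2$, and $\psi\colon V^{(1)}\to V^{(2)}$, $\psi(v^{(1)}_jd)=v^{(2)}_j\Lambda(d)$, is a graded $\Lambda$-semilinear bijection. Hence $\bar\Phi:=\psi(\,\cdot\,)\psi^{-1}$ is a graded isomorphism $(\End_{D^{(1)}}(V^{(1)}),\tilde\Gamma_{V^{(1)}})\to(\End_{D^{(2)}}(V^{(2)}),\tilde\Gamma_{V^{(2)}})$ with $\bar\Phi\varphi_{B^{(1)}}\bar\Phi^{-1}=\varphi_{\hat B}$ for $\hat B(\psi v,\psi w)=\Lambda(B^{(1)}(v,w))$. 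In the good basis $\{v^{(2)}_j\}$ the forms $\hat B$ and $B^{(2)}$ have coordinate matrices $N\Delta(B^{(2)})$ and $\Delta(B^{(2)})$ with $N$ diagonal and $N_i=\pm1$ for $i\le p$, so \eqref{eq:varphiBDelta} shows $\varphi_{\hat B}=\Theta\circ\varphi_{B^{(2)}}$ for an automorphism $\Theta$ of $R$; one must then verify $\Theta\in\Diag_{\tilde\Gamma_{V^{(2)}}}(R)$. Granting this, setting $\Phi:=\Phi_2^{-1}\bar\Phi\Phi_1$ and $\Psi:=\Phi_2^{-1}\Theta\Phi_2\in\Diag_{\Gamma_2}(R)$ gives $\Phi(\Psi_1\varphi_1)\Phi^{-1}=\Psi\varphi_2$, and absorbing $\Psi_1$ into the diagonal group on the $\Gamma_2$-side yields $\Phi\varphi_1\Phi^{-1}=\Psi'\varphi_2$ with $\Psi'\in\Diag_{\Gamma_2}(R)$, as required.

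The step I expect to be the main obstacle is precisely $\Theta\in\Diag_{\tilde\Gamma_{V^{(2)}}}(R)$. Since the invariant $\calI$ remembers neither the involutions $\tau^{(i)}$ nor the signs $(d^{(i)}_j)^{\tau^{(i)}}/d^{(i)}_j$, one is forced to compare two sesquilinear forms on $V^{(2)}$ attached to the \emph{same} fine grading $\tilde\Gamma_{V^{(2)}}$ --- $\hat B$, which is $\Lambda\tau^{(1)}\Lambda^{-1}$-sesquilinear, and $B^{(2)}$, which is $\tau^{(2)}$-sesquilinear --- and the inner twist by a homogeneous element relating $\Lambda\tau^{(1)}\Lambda^{-1}$ to $\tau^{(2)}$ is exactly what could keep $\Theta$ from acting by scalars on the two-dimensional homogeneous components. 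Controlling it is where I would spend the effort: one combines the reduction above (the twist of $\varphi_1$ by a diagonal automorphism, together with the available freedom in $\Lambda$ and in the good bases) with the fact that the diagonal entries of a good basis square to $1$, so that all scalars involved are $\pm1$ and the residual commutation-sign condition on the homogeneous components collapses to an identity. Once $\Theta$ is known to be diagonal, the rest is routine bookkeeping.
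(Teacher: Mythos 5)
Your part~(1) and the easy direction of~(2) are fine and essentially match the paper's argument. The problem is the converse of~(2), where you yourself flag the crux --- proving that the residual automorphism $\Theta=\varphi_{\hat B}\circ\varphi_{B^{(2)}}$ lies in $\Diag_{\tilde\Gamma_{V^{(2)}}}(R)$ --- and then leave it as a heuristic (``all scalars involved are $\pm 1$ and the residual commutation-sign condition collapses to an identity''). That heuristic does not close the gap. After your reduction to $z_0=1$ and $\Lambda(d^{(1)}_i)=\pm d^{(2)}_i$, one indeed finds $\Theta=\mathrm{Ad}_M$ with $M=(\sum\beta_iE_{ii})\otimes d_0$, $\beta_i\in\bF^\times$, where $d_0$ is the homogeneous element with $\Lambda\tau^{(1)}\Lambda^{-1}\circ\tau^{(2)}=\mathrm{Ad}_{d_0}$. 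So $\Theta$ preserves each $E_{ij}\otimes D_h$ and acts by a scalar there; but membership in $\Diag_{\tilde\Gamma_{V^{(2)}}}$ requires in addition that $\Theta$ act by a scalar on the two-dimensional pieces $(E_{ij}\otimes D_h)\oplus\varphi_{B^{(2)}}(E_{ij}\otimes D_h)$, which is equivalent to $M\varphi_{B^{(2)}}(M)\in\bF^\times 1$. Computing this for $i\le p$ gives $\beta_i^2\,\sigma\,\epsilon_i\, d_0^2$, where $\sigma$ is the sign $d_0^{\tau^{(2)}}/d_0$ and $\epsilon_i=\pm1$ is the commutation sign of $d_0$ with $d^{(2)}_i$, and there is no reason for $\epsilon_i$ to be independent of $i$. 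The $\pm1$-ness of the $\beta_i$ and $(d^{(i)}_j)^2=1$ do not by themselves kill this dependence; it measures exactly the mismatch you correctly identified between $\Lambda\tau^{(1)}\Lambda^{-1}$ and $\tau^{(2)}$, and it survives no matter how you juggle $\Lambda$ or the good bases alone.

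What is missing is the one extra normalization the paper performs first: use Theorem~\ref{th:grading_varphi_B}(3) to replace $B_2$ by $d\,B_2$ for the homogeneous element $d$ with $x^{\tau_1}=dx^{\tau_2}d^{-1}$, so that one may assume $\tau_1=\tau_2$ before ever constructing the comparison automorphism (the coordinate matrix $\Delta_2$ and the elements $d^{(2)}_i$ change in this step; they are rescaled/conjugated and their squares need not stay $1$, which is one reason your ``everything is $\pm1$'' step is not yet available to you at the point you invoke it). With $\tau_1=\tau_2=\tau$ one has $d_0=1$ in your picture and $M\varphi_{B^{(2)}}(M)=1$ comes out on the nose --- the computation you hoped for. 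The paper itself then takes a slightly more direct route: instead of your $\Lambda$-semilinear map $\psi$, it works only on the $\Gamma_2$ side, posits $a=(\sum_i\alpha_iE_{ii})\otimes z$, imposes $a\varphi_2(a)=1$ via equations \eqref{eq:alphasz} (which fix the $\alpha_i$), and then rescales the good basis $\{w_i\}$ so that the coordinate matrix of $\bar B_2(\cdot,\cdot)=B_2(\cdot,a^{-1}\cdot)$ equals $\Delta_1$ exactly; the equality of coordinate matrices gives $\Phi$, and $\Psi=\mathrm{Ad}_a\in\Diag_{\Gamma_2}(R)$ is immediate from $a\varphi_2(a)=1$. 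Either way, the $\tau_1=\tau_2$ normalization is the idea your proposal is missing, and without it the claim that $\Theta\in\Diag$ is not established.
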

\begin{proof}
For the first part note that since $\varphi$ respects the grading, then $\varphi\Psi=\Psi\varphi$ is another antiautomorphism respecting the grading, so that $\Gamma$ is a $\Psi\varphi$-grading and it is fine too. Identify $R$ with $\End_D(V)$ as before and, once a good basis $\calB$ is chosen, with $\Mat_{p+2s}(D)$. Since $\Psi$ is an automorphism of $R$, there is an invertible element $a\in \Mat_{p+2s}(D)$ such that $\Psi(u)=aua^{-1}$ for any $u$. But $1\in R_0$, so $\Psi(1)=1$ and hence the restriction of $\Psi$ to $R_0$ is the identity map. It follows that $a$ belongs to the centralizer $\Cent_R(R_0)$. Note that $R_0=\sum_{i=1}^{p+2s} E_{ii}\otimes \bF1$ and hence $a$ is of the form $a=\sum_{i=1}^{p+2s}E_{ii}\otimes a_i$ with $0\ne a_i\in D$ for any $i$. Also, all the subspaces $E_{ij}\otimes D_h$ are homogeneous (of degree $\degree(v_i)-\degree(v_j)+h$) so $\Psi$ acts as a scalar on each of them. In particular, considering the action of $\Psi$ on $E_{ii}\otimes D$ it follows that $z\mapsto a_iza_i^{-1}$ is a graded isomorphism of $D$, which by Lemma \ref{le:graded_involutions_D} gives that $a_i$ is homogeneous for any $i$. For any homogeneous $x\in D$ and $i\ne j$, $a(E_{ij}\otimes x)a^{-1}=\alpha E_{ij}\otimes x$ for some $0\ne\alpha\in \bF$, so $a_ixa_j^{-1}\in \bF x$ for any $i\ne j$ and hence $\degree(a_i)=\degree(a_j)$, It follows that $a=(\sum_{i=1}^{p+2s}\alpha_iE_{ii})\otimes d$ for a fixed nonzero homogeneous element $d$ of $D$ and nonzero scalars $\alpha_1,\ldots,\alpha_{p+2s}$. In particular $a$ is homogeneous of degree $\degree(d)$. Consider the $\tau$-sesquilinear nondegenerate balanced form given by
\[
\bar B(x,y)=B(x,a^{-1}y).
\]
For any $r\in R$,
\[
\bar B(rx,y)=B(rx,a^{-1}y)=B(x,\varphi(r)a^{-1}y)=B(x,a^{-1}\Psi\varphi(r)y)
 =\bar B(x,\Psi\varphi(r)y).
\]
Up to scaling by homogeneous elements in $D$, $\calB$ is a good basis for $\bar B$ too, and $\bar B(v_i,v_i)\in\bF B(v_i,v_i)d=\bF dB(v_i,v_i)$, which shows that $\calI(R,\varphi,\Gamma)=\calI(R,\Psi\varphi,\Gamma)$.

\smallskip

For the second part, if there exists a grading isomorphism $\Phi:(R,\Gamma_1)\rightarrow (R,\Gamma_2)$ then $\calI(R,\varphi_1,\Gamma_1)=\calI(R,\Phi\varphi_1\Phi^{-1},\Gamma_2)$. Moreover, if there is an automorphism $\Psi\in \Diag_{\Gamma_2}(R)$ such that $\Phi\varphi_1\Phi^{-1}=\Psi\varphi_2$, then we have $\calI(R,\varphi_1,\Gamma_1)=\calI(R,\Phi\varphi_1\Phi^{-1},\Gamma_2)=
\calI(R,\Psi\varphi_2,\Gamma_2)$ which equals $\calI(R,\varphi_2,\Gamma_2)$ by the the first part of the Theorem.

Conversely, assume $\calI(R,\varphi_1,\Gamma_1)=\calI(R,\varphi_2,\Gamma_2)$. This means that $R$ is graded isomorphic to $\End_D(V)$ with $\varphi_i$ corresponding to $\varphi_{B_i}$, $i=1,2$, for $\tau_i$-sesquilinear nondegenerate balanced forms $B_i$ on $V$ with good bases $\calB_1=\{v_i: i=1,\ldots,p+2s\}$ and $\calB_2=\{ w_i: i=1,\ldots,p+2s\}$ such that (after perhaps some reordering) the coordinate matrices of $B_1$ and $B_2$ are
\[
\Delta_1=\diag\left(d_1,\ldots,d_p,\bigl(\begin{smallmatrix} 0&\nu_1\\ \nu_1^{-1}&0\end{smallmatrix}\bigr),\ldots,\bigl(\begin{smallmatrix} 0&\nu_s\\ \nu_s^{-1}&0\end{smallmatrix}\bigr)\right)
\]
and
\[
\Delta_2=\diag\left(zd_1,\ldots,zd_p,\bigl(\begin{smallmatrix} 0&\gamma_1\\ \gamma_1^{-1}&0\end{smallmatrix}\bigr),\ldots,\bigl(\begin{smallmatrix} 0&\gamma_s\\ \gamma_s^{-1}&0\end{smallmatrix}\bigr)\right)
\]
for some nonzero homogeneous elements $z,d_1,\ldots,d_p\in D$ and nonzero scalars $\nu_1,\ldots,\nu_s$, $\gamma_1,\ldots,\gamma_s$.

There exists an homogeneous element $d\in D$ such that $x^{\tau_1}=dx^{\tau_2}d^{-1}$, and substituting $B_2(.,.)$ by $dB_2(.,.)$, we may assume that $\tau_1=\tau_2=\tau$. (The element $z$ and the $\gamma_i$'s may change in this process.)

Take now an element $a=(\sum_{i=1}^{p+2s}\alpha_iE_{ii})\otimes z$, with $0\ne \alpha_i\in \bF$ to be determined, and consider the $\tau$-sesquilinear form
\[
\bar B_2(x,y)=B_2(x,a^{-1}y),
\]
and the automorphism $\Psi$ of $R$ such that $\Psi(u)=aua^{-1}$. The adjoint map relative to $\bar B_2$ is $\Psi\varphi$ and we already know that $\Psi$ acts as a scalar on each subspace $E_{ij}\otimes D_h$ for $h\in H$ and on $R_h=\sum_{i=1}^{p+2s}\bigl(E_{ii}\otimes D_h\bigr)$ (see Remark \ref{re:homogeneous_subspaces}). In order to ensure that $\Psi\in\Diag_{\Gamma_2}R$ it is enough that $\Psi$ commute with $\varphi_2$, as any other homogeneous subspace is of the form $\bigl(E_{ij}\otimes D_h\bigr)+\varphi_B\bigl(E_{ij}\otimes D_h\bigr)$. Note that
\[
\Psi\varphi_2\Psi^{-1}(r)=\Psi\varphi_2(a^{-1}ra)
 =a\varphi_2(a)\varphi_2(r)\varphi_2(a^{-1})a^{-1},
\]
so $\Psi\varphi_2\Psi^{-1}$ equals $\varphi_2$ if and only if $a\varphi_2(a)\in\bF^\times 1$. We may restrict to the case $a\varphi_2(a)=1$ and using \eqref{eq:varphiBDelta} this is equivalent to the conditions:
\begin{equation}\label{eq:alphasz}
\left\{%
\begin{aligned}
&\alpha_i^2z(zd_i)^{-1}z^\tau zd_i=1\quad\forall i=1,\ldots,p,\\
&\alpha_{p+2j-1}\alpha_{p+2j}zz^\tau=1\quad\forall j=1,\ldots,s.
\end{aligned}
\right.
\end{equation}
Fix $\alpha_i$, $i=1,\ldots,p$, satisfying the first $p$ equations in \eqref{eq:alphasz} (they are uniquely determined up to a $\pm$ sign).
Note that $\bar B_2(w_i,w_i)=
B_2(w_i,\alpha_i^{-1}w_iz^{-1})
=\alpha_i^{-1}zd_iz^{-1}\in\bF d_i$ for any $i=1,\ldots,s$, so we may scale $w_i$ to $\bar w_i$ such that $\bar B_2(\bar w_i,\bar w_i)=d_i$. Also,
\[
\begin{split}
\bar B_2(w_{p+2j-1},w_{p+2j})
 &=B_2(w_{p+2j-1},a^{-1}w_{p+2j})=\gamma_j\alpha_{p+2j}^{-1}z^{-1},\\
\bar B_2(w_{p+2j},w_{p+2j-1})
 &=B_2(w_{p+2j},a^{-1}w_{p+2j-1})=\gamma_j^{-1}\alpha_{p+2j-1}^{-1}z^{-1}
  =\gamma_j^{-1}\alpha_{p+2j}z^\tau.
\end{split}
\]
(Use \eqref{eq:alphasz}.)

Let $\eta=z^2(\in\bF 1)$ and consider the elements $\bar w_{p+2j-1}=\sqrt{\eta^{-1}}w_{p+2j-1}$, $\bar w_{p+2j}=w_{p+2j}z$, then
\[
\begin{split}
\bar B_2(\bar w_{p+2j-1},\bar w_{p+2j})
 &=\sqrt{\eta^{-1}}\gamma_j\alpha_{p+2j}^{-1},\\
\bar B_2(\bar w_{p+2j},\bar w_{p+2j-1})
 &=(z^\tau)^2\gamma_j^{-1}\alpha_{p+2j}\sqrt{\eta^{-1}}
 =\sqrt{\eta}\gamma_j^{-1}\alpha_{p+2j},
\end{split}
\]
so with $\alpha_{p+2j}=\sqrt{\eta^{-1}}\gamma_j\nu_j^{-1}$ we get
\[
\bar B_2(\bar w_{p+2j-1},\bar w_{p+2j})=\nu_j,\quad
\bar B_2(\bar w_{p+2j},\bar w_{p+2j-1})=\nu_j^{-1}.
\]
Hence the good basis $\{\bar w_i,i=1,\ldots,p+2s\}$ for $\bar B_2$ has the same coordinate matrix $\Delta_1$ and adjoint map given by $\Psi\varphi_2$. The $D$-linear map $v_i\mapsto \bar w_i$, $i=1,\ldots,p+2s$, induces the required automorphism $\Phi:(R,\Gamma_1)\rightarrow (R,\Gamma_2)$ such that $\Phi\varphi_1\Phi^{-1}=\Psi\varphi_2$.
\end{proof}

\begin{corollary}\label{co:varphiB_gradings} \textbf{(of the proof)}
Let $V$ be a graded right module for a central graded division algebra $D$. Let $\tau_1$, $\tau_2$ be two graded involutions of $D$.  Consider nonzero homogeneous elements $z,d_1,\ldots,d_p\in D$ and nonzero scalars $\nu_1,\ldots,\nu_s\in \bF$. Then the fine $\varphi_B$-grading induced by the $\tau_1$-sesquilinear form $B$  with coordinate matrix
\[
\diag\left(d_1,\ldots,d_p,\bigl(\begin{smallmatrix} 0&\nu_1\\ \nu_1^{-1}&0\end{smallmatrix}\bigr),\ldots,\bigl(\begin{smallmatrix} 0&\nu_s\\ \nu_s^{-1}&0\end{smallmatrix}\bigr)\right)
\]
in a good basis is equivalent to the fine $\varphi_{B'}$-grading on $\End_D(V)$ induced by the $\tau_2$-sesquilinear form $B'$ with coordinate matrix
\[
\diag\left(zd_1,\ldots,zd_p,\bigl(\begin{smallmatrix} 0&1\\ 1&0\end{smallmatrix}\bigr),\ldots,\bigl(\begin{smallmatrix} 0&1\\ 1&0\end{smallmatrix}\bigr)\right). \qed
\]
\end{corollary}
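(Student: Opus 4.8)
The quickest route is through the invariant $\calI$ of Theorem \ref{th:calI_cal2I}. By hypothesis $\Gamma_V$ and $\Gamma_{V'}$ are fine $\varphi_B$- and $\varphi_{B'}$-gradings, respectively, so their invariants are defined, and by \eqref{eq:IRvarphiGamma} they are read off the diagonal parts of the given good bases: writing $\{v_i\}$ for the good basis of $B$ and $\{w_i\}$ for that of $B'$, one has $\calI(\End_D(V),\varphi_B,\Gamma_V)=[D,(d_1,\dots,d_p)]$ and $\calI(\End_D(V),\varphi_{B'},\Gamma_{V'})=[D,(zd_1,\dots,zd_p)]$. (If the entries $zd_i$ fail to square to $1$, one first rescales each $w_i$ by a scalar so that they do; this only multiplies the $zd_i$ by nonzero scalars and hence leaves the class unchanged, cf.\ Remark \ref{re:goodbasis}. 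Note that the graded involutions $\tau_1$ and $\tau_2$ do not enter $\calI$ at all.)

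These two classes coincide: in Definition \ref{de:equivalenceDx1xp} take $\Phi=\mathrm{id}_D$, $\pi=\mathrm{id}$ and the nonzero homogeneous element $z$ itself, so that $\Phi(zd_i)=zd_i\in\bF(zd_i)$ for every $i$; hence $[D,(d_1,\dots,d_p)]=[D,(zd_1,\dots,zd_p)]$. Therefore $\calI(\End_D(V),\varphi_B,\Gamma_V)=\calI(\End_D(V),\varphi_{B'},\Gamma_{V'})$, and Theorem \ref{th:equivalence_varphi_gradings}(2) yields the asserted equivalence of $\varphi$-gradings.

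Alternatively — and this is what the tag ``(of the proof)'' refers to — one produces the equivalence by hand, simply rerunning the converse part of the proof of Theorem \ref{th:equivalence_varphi_gradings} with $B_1:=B$ and $B_2:=B'$: use Lemma \ref{le:graded_involutions_D} to replace $B'$ by a $\tau_1$-sesquilinear form with the same adjoint, thereby reducing to $\tau_1=\tau_2$ (at the cost of replacing $z$ by another homogeneous element and the hyperbolic blocks by $\bigl(\begin{smallmatrix}0&\gamma_j\\\gamma_j^{-1}&0\end{smallmatrix}\bigr)$ for some $\gamma_j\in\bF^\times$, which do not change the $[D,(\cdot)]$-class of the diagonal part); then set $a=\bigl(\sum_i\alpha_iE_{ii}\bigr)\otimes z$ with the $\alpha_i$ chosen so that $a\,\varphi_{B'}(a)=1$, i.e.\ satisfying \eqref{eq:alphasz}, so that $\Psi\colon u\mapsto aua^{-1}$ lies in $\Diag_{\Gamma_{V'}}(\End_D(V))$ and $\bar B'(x,y):=B'(x,a^{-1}y)$ has adjoint $\Psi\varphi_{B'}$; the rescaling of $\{w_i\}$ carried out there (using $zd_iz^{-1}\in\bF d_i$, $z^2\in\bF 1$ and the choices of the remaining $\alpha_i$) turns $\bar B'$ into a form whose coordinate matrix is exactly that of $B$, and the $D$-linear map sending one good basis to the other induces the graded isomorphism $\Phi$ with $\Phi\varphi_B\Phi^{-1}=\Psi\varphi_{B'}$.

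There is no genuine obstacle here: the whole content is already contained in the two results invoked. The only point demanding a moment's care is the scalar bookkeeping in the direct argument — the $\alpha_i$ must be chosen so that $\Psi$ is diagonal \emph{and} the hyperbolic blocks of $\bar B'$ come out precisely as $\bigl(\begin{smallmatrix}0&\nu_j\\\nu_j^{-1}&0\end{smallmatrix}\bigr)$ — and checking that \eqref{eq:alphasz} still leaves enough freedom for this is exactly the computation already performed in the proof of Theorem \ref{th:equivalence_varphi_gradings}.
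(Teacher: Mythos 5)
Your proposal is correct and takes essentially the same approach as the paper: the Corollary is tagged ``(of the proof)'' precisely because it is obtained by rerunning the converse half of the proof of Theorem \ref{th:equivalence_varphi_gradings} with $B_1=B$ and $B_2=B'$ (your second route), and your first route via $\calI$ is just a packaged version of the same computation, with the observation $[D,(d_1,\dots,d_p)]=[D,(zd_1,\dots,zd_p)]$ following immediately from Definition \ref{de:equivalenceDx1xp}. You are also right to record the conclusion as $\Phi\varphi_B\Phi^{-1}=\Psi\varphi_{B'}$ with $\Psi\in\Diag_{\Gamma_{V'}}(R)$, which is exactly what the computation yields and is the notion of equivalence the paper uses here and in Remark \ref{re:varphiB_gradings}.
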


\begin{remark}\label{re:varphiB_gradings}
Under the conditions above, notice that $\varphi_{B'}^2(u)=\Gamma u\Gamma^{-1}$ with
\[
\Gamma=\diag(
\epsilon_1,\ldots,\epsilon_p,1,1,\ldots,1,1),
\]
with $\epsilon_i=1$ if $(zd_i)^{\tau'}=zd_i$ and $\epsilon_i=-1$ if $(zd_i)^{\tau'}=-zd_i$.

Therefore, given an antiautomorphism $\varphi$ and a fine $\varphi$-grading $\Gamma$ on the matrix algebra $R$ with $\calI(R,\varphi,\Gamma)=[D,(d_1,\ldots,d_p)]$, if there is an homogeneous element $z\in D$ and a graded involution $\tau'$ of $D$ such that $(zd_i)^{\tau'}=zd_i$ for all $i$, then $\Gamma$ is equivalent to a fine $\varphi'$-grading for an involution $\varphi'$. Otherwise, $\Gamma$ is equivalent to a fine $\varphi'$-grading for an antiautomorphism $\varphi'$ of order $4$. \qed
\end{remark}

\begin{example}\label{ex:counterex_Bahturinetal}
Let $D=Q$ be our simplest nontrivial graded division algebra , as in \eqref{eq:quaternion_units}, and let $V$ be a free right $D$-module of dimension $4$, and let $B:V\times V\rightarrow D$ be the sesquilinear form relative to the orthogonal involution $\tau_o$ in equation \eqref{eq:tau_o} with coordinate matrix
\[
\diag(1,q_1,q_2,q_3)
\]
in an homogeneous basis $\{v_1,v_2,v_3,v_4\}$ of $V$. Given any homogeneous element $z\in Q$, the tuple $\{z1,zq_0,zq_1,zq_2\}$ is again an homogeneous basis of $D=Q$, and hence there is no involution in $Q$ for which these basic elements are all symmetric. Consider the $\bZ_4^2\times\bZ_2$-grading on $R=\End_D(V)$ with
\[
\degree(v_1)=0,\ \degree(v_2)=(\bar 1,0,0),\ \degree(v_3)=(0,\bar 1,0),\ \degree(v_4)=(\bar 3,\bar 3,\bar 1),
\]
which is a fine $\varphi_B$-grading. Then there is no involution $\varphi'$ of $R$ such that this grading be isomorphic to a $\varphi'$-grading (Theorem \ref{th:equivalence_varphi_gradings} and Corollary \ref{co:varphiB_gradings}).
This provides a counterexample to \cite[Proposition 6.4]{BZ}. \qed
\end{example}

\medskip

For gradings related to involutions of matrix algebras we have the following result, which shows that the invariant $\calI_2$ classifies gradings up to equivalence:

\begin{theorem}\label{th:grading_involutions}
Let $\varphi_1$ and $\varphi_2$ be two involutions of the matrix algebra $R$, let $\Gamma_i$ be a fine $\varphi_i$-grading for $i=1,2$. Then  $\calI_2(R,\varphi_1,\Gamma_1)$ equals $\calI_2(R,\varphi_2,\Gamma_2)$ if and only if there is a graded automorphism $\Phi:(R,\Gamma_1)\rightarrow (R,\Gamma_2)$ such that $\Phi\varphi_1\Phi^{-1}=\varphi_2$.
\end{theorem}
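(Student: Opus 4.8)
The plan is to repeat the argument of Theorem~\ref{th:equivalence_varphi_gradings}(2), but carrying along the finer invariant $\calI_2$ in place of $\calI$; the point is that the graded involution $\tau$ recorded by $\calI_2$ is exactly the extra data needed to dispense with the auxiliary diagonal automorphism $\Psi$ appearing there and to obtain $\Phi\varphi_1\Phi^{-1}=\varphi_2$ on the nose.

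\emph{The direction $\Leftarrow$.} Suppose $\Phi\colon(R,\Gamma_1)\to(R,\Gamma_2)$ is a graded automorphism with $\Phi\varphi_1\Phi^{-1}=\varphi_2$. Present $R$ as $\End_D(V)$ via the unique irreducible graded module for $(R,\Gamma_1)$ (Theorem~\ref{th:induced_grading}), so $\varphi_1=\varphi_{B_1}$ for a $\tau_1$-sesquilinear nondegenerate balanced form which, $\varphi_1$ being an involution, is hermitian or skew-hermitian (Corollary~\ref{co:grading_varphi_B}, Proposition~\ref{pr:involutions}). By the corollary to Theorem~\ref{th:induced_grading}, $\Phi$ is implemented by a compatible pair $(\phi,\psi)$, $\phi\colon V\to V$ homogeneous and $\psi$ a graded automorphism of $D$; the form obtained by transporting $B_1$ along $(\phi,\psi)$ has adjoint $\Phi\varphi_1\Phi^{-1}=\varphi_2=\varphi_{B_2}$, hence by Theorem~\ref{th:grading_varphi_B}(3) agrees with $B_2$ up to left multiplication by a homogeneous element and the corresponding conjugation of the involution. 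Since good bases, and the tuples $\bigl(B(v_i,v_i)\bigr)$, are determined up to reordering and scaling by homogeneous elements (Remark~\ref{re:goodbasis}), reading this through Definition~\ref{de:equivalenceDtaux1xp} gives $\calI_2(R,\varphi_1,\Gamma_1)=\calI_2(R,\varphi_2,\Gamma_2)$.

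\emph{The direction $\Rightarrow$.} Assume the invariants coincide. As in Theorem~\ref{th:equivalence_varphi_gradings}, present both $\Gamma_i$ on a common $\End_D(V)$ ($D$ the centralizer of the $R$-action), with $\varphi_i=\varphi_{B_i}$ for $\tau_i$-sesquilinear nondegenerate balanced forms having good bases with coordinate matrices $\Delta_i$ as in~\eqref{eq:GammaBlocks}. Since $\varphi_i$ is an involution, Proposition~\ref{pr:involutions}(2) lets us take the off-diagonal blocks of $\Delta_i$ all equal to $\bigl(\begin{smallmatrix}0&1\\ 1&0\end{smallmatrix}\bigr)$ or all equal to $\bigl(\begin{smallmatrix}0&1\\ -1&0\end{smallmatrix}\bigr)$, and the $d^{(i)}_j$ all $\tau_i$-symmetric or all $\tau_i$-skew. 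Equality of $\calI_2$ provides, after identifying the two copies of $D$ by the isomorphism of graded algebras with involution of Definition~\ref{de:equivalenceDtaux1xp}, a homogeneous $z\in D$ with $\tau_2=\tau_1^{z}$ and (after reordering) $d^{(2)}_i=\lambda_i z\,d^{(1)}_i$, $\lambda_i\in\bF^\times$. Replace $B_2$ by $\hat B_2:=z^{-1}B_2$: left multiplication by a homogeneous element does not change the adjoint, so $\varphi_{\hat B_2}=\varphi_2$, whereas the involution becomes $\tau_2^{z}=(\tau_1^{z})^{z}=\tau_1$, since $z^2\in\bF 1$ ($H$ being $2$-elementary). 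Thus $B_1$ and $\hat B_2$ are $\tau_1$-sesquilinear forms on $V$ with adjoints $\varphi_1$ and $\varphi_2$, and the diagonal part of the coordinate matrix of $\hat B_2$ is $\bigl(\lambda_1 d^{(1)}_1,\dots,\lambda_p d^{(1)}_p\bigr)$.

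\emph{Conclusion and main obstacle.} It remains to make $B_1$ and $\hat B_2$ isometric. Using that the off-diagonal blocks of $\Delta_2$ have scalar entries (Proposition~\ref{pr:involutions}(2)), the off-diagonal blocks of $z^{-1}\Delta_2$ can be renormalized by scaling the hyperbolic pairs of the good basis by homogeneous elements; then, since $\varphi_{\hat B_2}=\varphi_2$ is an involution and the diagonal entries of $\hat B_2$ have the same $\tau_1$-symmetry as those of $B_1$, Proposition~\ref{pr:involutions}(2) forces $B_1$ and $\hat B_2$ into the same case, so a final scaling by scalars makes the two coordinate matrices equal. The $D$-linear map carrying the good basis of $B_1$ to that of $\hat B_2$ is then an isometry, so it conjugates $\varphi_1=\varphi_{B_1}$ into $\varphi_2=\varphi_{\hat B_2}$; and because the good bases consist of homogeneous vectors that recover the gradings $\Gamma_i$ (Remark~\ref{re:goodbasis}, Proposition~\ref{pr:GammatildeV}), this map is the sought graded automorphism $\Phi$ with $\Phi\varphi_1\Phi^{-1}=\varphi_2$. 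The step I expect to cost the most work is precisely this normalization of the hyperbolic part — in Theorem~\ref{th:equivalence_varphi_gradings} it is what forced a diagonal automorphism $\Psi$ into the picture, and one has to check that once $\tau_1=\tau_2$ has been arranged it can be carried out with scalar and homogeneous rescalings alone — together with excluding the degenerate configuration $p=2$, $s=0$, $h_1=h_2$, which was already barred for fine $\varphi$-gradings in Remark~\ref{re:fineGamma}.
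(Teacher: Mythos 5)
Your proposal is correct and follows essentially the same route as the paper: identify both fine $\varphi_i$-gradings with $\varphi_{B_i}$-gradings on a common $\End_D(V)$, use equality of $\calI_2$ to obtain a homogeneous $z$ with $\tau_2=\tau_1^z$ and matching diagonal entries, replace $B_2$ by $z^{-1}B_2$ so that both forms are $\tau_1$-sesquilinear, and adjust the hyperbolic pairs of the good basis to equalize the coordinate matrices, whence the change-of-basis isometry gives the graded automorphism $\Phi$. The paper compresses the hyperbolic-normalization step to ``after adjusting $w_{p+2},\ldots,w_{p+2s}$'' and declares the converse ``clear,'' whereas you spell out both the $\Leftarrow$ direction via Theorem~\ref{th:grading_varphi_B}(3) and Remark~\ref{re:goodbasis}, and the reason Proposition~\ref{pr:involutions}(2) pins down the common sign of the hyperbolic blocks — a useful elaboration, not a divergence.
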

\begin{proof}
Assume first $\calI_2(R,\varphi_1,\Gamma_1)=\calI_2(R,\varphi_2,\Gamma_2)$. Our algebra
$R$ is graded isomorphic to $\End_D(V)$ with $\varphi_i$ corresponding to $B_i$, $i=1,2$, for a $\tau_i$-hermitian or skew-hermitian nondegenerate form $B_i$ defined on $V$, and there is a nonzero homogeneous element $z\in D$ such that $x^{\tau_2}=zx^{\tau_1}z^{-1}$ for any $x\in D$, and there are good bases $\calB_1=\{v_1,\ldots,v_{p+2s}\}$ and $\calB_2=\{w_1,\ldots,w_{p+2s}\}$ of $V$ such that the coordinate matrices of $B_1$ and $B_2$ are
\[
\Delta_1=\diag\left(d_1,\ldots,d_p,\bigl(\begin{smallmatrix} 0&1\\ \pm 1&0\end{smallmatrix}\bigr),\ldots,\bigl(\begin{smallmatrix} 0&1\\ \pm 1&0\end{smallmatrix}\bigr)\right)
\]
and
\[
\Delta_2=\diag\left(zd_1,\ldots,zd_p,\bigl(\begin{smallmatrix} 0&1\\ \pm 1&0\end{smallmatrix}\bigr),\ldots,\bigl(\begin{smallmatrix} 0&1\\ \pm 1&0\end{smallmatrix}\bigr)\right)
\]
for suitable homogeneous elements $d_1,\ldots,d_p$ in $D$. Substituting $B_2$ by $z^{-1}B_2$ we may assume that $\tau_1=\tau_2$ and  (after adjusting $w_{p+2},\ldots,w_{p+2s}$) that $\Delta_1=\Delta_2$. Hence the result follows at once, and the converse is clear.
\end{proof}

\smallskip

\begin{example}\label{ex:R*_D4}
Let $\varphi$ be an orthogonal involution of the algebra $R=\Mat_8(\bF)$. According to Theorem \ref{th:grading_involutions}, the fine $\varphi$-gradings of $R$ are determined, up to equivalence, by the different possibilities for $\calI_2(R,\varphi,\Gamma)$. The possibilities for the associated graded division algebra are the following:
\begin{description}
\item[\fbox{$D=\bF$}] Here the possibilities for $\calI_2(R,\varphi,\Gamma)$ are $[(\bF,id),(1,\stackrel{r}{\ldots},1)]$ , with $r=0,2,4,6$ or $8$, which correspond, respectively, to gradings over $\bZ^4$, $\bZ_2\times\bZ^3$, $\bZ_2^3\times\bZ^2$, $\bZ_2^5\times\bZ$ and $\bZ_2^7$. ($5$ possibilities.)

\item[\fbox{$D=Q$}] Definition \ref{de:equivalenceDtaux1xp} shows that $\tau$ may be taken to be $\tau_o$ and hence $d_1,\ldots,d_p$ must be symmetric (as $\varphi$ is orthogonal). Note that we can reorder the elements $d_1,\ldots,d_p$, that these can be scaled and that if, for instance $d_1=\cdots=d_q$, then by multiplying by $z=d_1^{-1}$ we may assume $d_1=\cdots=d_q=1$. In this way we may always assume that if $p\geq 1$, then $d_1=1$ and $1$ is the element in the sequence that appears a greater number of times. Thus the possibilities for $\calI_2(R,\varphi,\Gamma)$ are $[(Q,\tau_o),(d_1,\ldots,d_p)]$ with $(d_1,\ldots,d_p)=\emptyset, (1,1), (1,q_1), (1,1,1,1),(1,1,1,q_1),(1,1,q_1,q_1)$ or $(1,1,q_1,q_2)$, which correspond respectively to gradings over $\bZ^2\times\bZ_2^2$, $\bZ\times \bZ_2^3$, $\bZ\times\bZ_2\times\bZ_4$, $\bZ_2^5$, $\bZ_2^3\times\bZ_4$, $\bZ_2^3\times\bZ_4$ and $\bZ_2\times\bZ_4^2$. ($7$ possibilities.)

\item[\fbox{$D=Q^{\otimes 2}$}] Here $\tau$ may be taken to be $\tau_o\otimes\tau_o$ and
    \[
    \calI_2(R,\varphi,\Gamma)=[(Q^{\otimes 2},\tau),(d_1,\ldots,d_p)],
     \]
     for $p=0$ or $2$. If $p=2$, $d_1$ and $d_2$ are symmetric for $\tau$ and we may multiply $d_1$ and $d_2$ by $d_1^{-1}$ and assume $d_1=1$ ($\tau^{d_1}$ remains orthogonal so it is again isomorphic to $\tau_o\otimes\tau_o$ by Corollary \ref{co:graded_division_involution}). The argument in the proof of this Corollary shows that we may assume that $d_2=1$ or $d_2=1\otimes q_1$. But the case of $d_2=1$ does not give a fine grading (Proposition \ref{pr:GammatildeV}), so we are left with two possibilities: $p=0$ or $p=2$ and $(d_1,d_2)=(1\otimes 1,1\otimes q_1)$, which correspond respectively to gradings over $\bZ\times\bZ_2^4$ and $\bZ_2^3\times\bZ_4$. ($2$ possibilities.)

\item[\fbox{$D=Q^{\otimes 3}$}] Here $\calI_2(R,*,\Gamma)=[(Q^{\otimes 3},\tau),(d_1)]$ and, as above, we may assume $\tau=\tau_o\otimes\tau_o\otimes\tau_o$ and $d_1=1$. This gives a $\bZ_2^6$-grading. ($1$ possibility.)
\end{description}
Therefore, there are $15$ fine $\varphi$-gradings, up to equivalence. \qed
\end{example}

\medskip

There appears the question of which fine gradings of the matrix algebra $R=\Mat_n(\bF)$ are $\varphi$-gradings for some antiautomorphism $\varphi$. Only a few of them are. This follows from the previous results:

\begin{proposition}\label{pr:fine_gradings_varphi}
Let $\Gamma:R=\oplus_{g\in G}R_g$ be a fine grading of the matrix algebra $R=\Mat_n(\bF)$ and let $e\in R$ be a graded primitive idempotent of $(R,\Gamma)$. Then $\Gamma$ is a $\varphi$-grading for some antiautomorphism $\varphi$ if and only if the central graded division algebra $D=eRe$ is graded isomorphic to $Q^{\otimes m}$ for some $m$, and either $n=2^m$ (so that $R=D$, $e=1$) or $n=2^{m+1}$ (so that $R=\Mat_2(\bF)\otimes D$).
\end{proposition}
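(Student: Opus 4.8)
The plan is to deduce both directions from the structure theory already in place: the realization of any fine $\varphi$-grading as a $\varphi_B$-grading $\tilde\Gamma_V$ on some $\End_D(V)$ (Remark \ref{re:fineGamma}, resting on Theorem \ref{th:grading_varphi_B} and Proposition \ref{pr:GammatildeV}), the classification of fine gradings on matrix algebras by the invariant $\calI$ (Theorem \ref{th:fine_gradings_R_invariant}), and their explicit shape as tensor products of a Cartan grading with a division grading (Theorem \ref{th:fine_gradings_R}).

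For the forward implication, assume $\Gamma$ is a $\varphi$-grading. Being fine as an abelian group grading it is in particular a fine $\varphi$-grading, so by Remark \ref{re:fineGamma} it is equivalent to a fine $\varphi_B$-grading $\tilde\Gamma_V$ on $\End_D(V)$, where $D$ carries a graded involution $\tau$, the module $V$ admits a good basis $\{v_1,\dotsc,v_{p+2s}\}$, and $R\simeq\End_D(V)\simeq\Mat_{p+2s}(D)$. Since $D$ carries a graded involution, Corollary \ref{co:graded_division_involution} shows $D$ is graded isomorphic to $Q^{\otimes m}$ for some $m\geq 0$; and as $D\simeq\End_R(V)\simeq eRe$ (Proposition \ref{pr:graded_primitive_idempotent}), this already gives $eRe\simeq Q^{\otimes m}$. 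With $k=p+2s$ we get $R\simeq\Mat_k(D)\simeq\Mat_{k2^m}(\bF)$, so $n=k2^m$, and it remains only to show $k\leq 2$.

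This last point is the heart of the matter, and is settled by a dimension count. By Remark \ref{re:homogeneous_subspaces}, relative to $\tilde\Gamma_V$ there are $\vert H\vert=4^m$ homogeneous components of dimension $k$ (the $R_h$), there are $2s\cdot 4^m$ one-dimensional components, and every remaining homogeneous component has dimension exactly $2$; since $\dim_\bF R=k^2 4^m$, the number of two-dimensional components is $\frac{1}{2}\,4^m(k^2-k-2s)$. On the other hand, by Theorem \ref{th:fine_gradings_R} the fine grading $\Gamma$ is equivalent to a tensor product of a Cartan grading on $\Mat_k(\bF)$ with a division grading on $D$ (the matrix size being forced by the invariant $[eRe]=[D]$), and every homogeneous component of such a grading has dimension $1$ or $k$. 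As equivalent gradings have the same type, the same holds for $\tilde\Gamma_V$; hence if $k\geq 3$ no two-dimensional component can occur, so $k^2-k-2s=0$, i.e.\ $2s=k(k-1)$, which together with $2s\leq p+2s=k$ forces $k(k-1)\leq k$, i.e.\ $k\leq 2$, a contradiction. Therefore $k\in\{1,2\}$: if $k=1$ then $R=D$ (so $e=1$) and $n=2^m$; if $k=2$ then $R\simeq\Mat_2(\bF)\otimes D$ and $n=2^{m+1}$.

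For the converse, assume $D=eRe$ is graded isomorphic to $Q^{\otimes m}$ and $n\in\{2^m,2^{m+1}\}$ (so $\dim_\bF D=4^m$ and, as an ungraded algebra, $D\simeq\Mat_{2^m}(\bF)$). By Theorem \ref{th:fine_gradings_R_invariant}, $\Gamma$ is the unique, up to equivalence, fine grading on $R$ with invariant $[D]$, so it suffices to exhibit one fine grading with this invariant which is a $\varphi$-grading. If $n=2^m$, take the division grading on $D\simeq Q^{\otimes m}$ itself (then $e=1$ and $\calI=[D]$): the transpose involution $\tau_o^{\otimes m}$ is a graded involution, all homogeneous components are one-dimensional and $\tau_o^{\otimes m}$-invariant, and $(\tau_o^{\otimes m})^2=id$, so the division grading is a $\tau_o^{\otimes m}$-grading. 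If $n=2^{m+1}$, let $V$ be the free right $D$-module of rank $2$ with homogeneous basis $\{v_1,v_2\}$ of degree $0$ and $B:V\times V\to D$ the $\tau_o^{\otimes m}$-sesquilinear form of coordinate matrix $\bigl(\begin{smallmatrix} 0&1\\ 1&0\end{smallmatrix}\bigr)$; then $\End_D(V)\simeq\Mat_2(D)\simeq\Mat_{2^{m+1}}(\bF)$, Proposition \ref{pr:GammatildeV} (case $p=0$, $s=1$) gives that $\tilde\Gamma_V$ is a fine $\varphi_B$-grading, and its invariant is $[D]$ since $E_{11}\otimes 1$ is a graded primitive idempotent with $(E_{11}\otimes 1)\End_D(V)(E_{11}\otimes 1)=E_{11}\otimes D$. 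In either case, transporting the antiautomorphism along the equivalence provided by Theorem \ref{th:fine_gradings_R_invariant} shows $\Gamma$ is itself a $\varphi$-grading. I expect the dimension-and-type comparison in the forward direction to be the only genuinely delicate step; the remainder is a matter of assembling results already at hand.
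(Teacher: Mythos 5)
Your proof is correct, and the forward direction is a genuinely different argument from the paper's. Where the paper, having reduced (via Theorem~\ref{th:fine_gradings_R} and Corollary~\ref{co:graded_division_involution}) to a Cartan grading on $\Mat_r(\bF)$ tensored with a division grading on $D\simeq Q^{\otimes m}$, appeals to the degree relations \eqref{eq:relations} and asserts rather tersely that they force $r\in\{1,2\}$ (the point being that $2g_1+h_1=\cdots=2g_p+h_p=g_{p+1}+g_{p+2}=\cdots=-g_B$ cannot all hold inside a free $\bZ^{r-1}$-direction once $r\geq 3$), you instead identify $\Gamma$ with the grading $\tilde\Gamma_V$ of Remark~\ref{re:GbarGtilde} via Remark~\ref{re:fineGamma}, read off its type from Remark~\ref{re:homogeneous_subspaces}, and compare with the type of the Cartan-$\otimes$-division grading, which for $k\geq 3$ has no $2$-dimensional component; the inequality $2s\leq p+2s=k$ then gives $k\leq 2$. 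Both routes ultimately lean on Theorem~\ref{th:fine_gradings_R}, but yours replaces a somewhat implicit degree analysis with an explicit count of homogeneous component dimensions, which is arguably easier to verify. Your backward direction matches the paper's in substance: the paper exhibits $\varphi=\tau_o^{\otimes m}$ and $\varphi=\tau_s\otimes\tau$ directly, while you build a hyperbolic $\varphi_B$-grading and transport $\varphi_B$ along the equivalence of Theorem~\ref{th:fine_gradings_R_invariant}; the transporting step is sound since any equivalence carries $R_0$ to $R_0$ and conjugation preserves the $\varphi$-grading conditions of Definition~\ref{de:varphi_grading}. The only small blemish is the phrase ``homogeneous basis $\{v_1,v_2\}$ of degree $0$'' in the $n=2^{m+1}$ case, which is at odds with passing to $\tilde\Gamma_V$ where $\deg v_1=-\deg v_2$ generates a $\bZ$-factor; but this is clearly just loose phrasing and does not affect the argument.
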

\begin{proof}
In case $R=D\simeq Q^{\otimes m}$ it is clear that $\Gamma$ is a $\varphi$-grading with $\varphi\simeq \tau_o^{\otimes m}$, while in case $R\simeq \Mat_2(\bF)\otimes D$, with $D\simeq Q^{\otimes m}$ then (Theorem \ref{th:fine_gradings_R}) $\Gamma$ is the tensor product of the Cartan grading on $\Mat_2(\bF)$ and the grading on $D$, and hence $\Gamma$ is a $\varphi$-grading with $\varphi=\tau_s\otimes \tau$, where $\tau_s$ is the symplectic involution in \eqref{eq:tau_s} and $\tau$ is an arbitrary graded involution on $D$.

Conversely, if $\Gamma$ is a $\varphi$-grading, then $D=eRe$ is endowed with a graded involution and Corollary \ref{co:graded_division_involution} shows that $D$ is graded isomorphic to $Q^{\otimes m}$ for some $m$. Moreover, since $\Gamma$ is fine, it is the tensor product of a Cartan grading (a $\bZ^{r-1}$-grading on $\Mat_r(\bF)$) and a $\bZ_2^{2m}$-grading on $D$. But \eqref{eq:relations} shows that this grading is a $\varphi$-grading only if $r=1,2$.
\end{proof}

\bigskip

\section{Fine gradings on simple Lie algebras of type $A$}\label{se:A}

The groups of automorphisms of the simple classical Lie algebras are explicitly computed in \cite[Chapter IX]{Jacobson}. For the special linear simple Lie algebra $\frsl_n(\bF)$ we obtain:

\begin{itemize}
\item Any automorphism of $\frsl_2$ is the restriction of an automorphism of $\Mat_2(\bF)$, so there is a natural isomorphism $\Aut\frsl_2(\bF)\simeq \Aut\Mat_2(\bF)$.

\item Any automorphism of $\frsl_n(\bF)$, $n\geq 3$, is either the restriction of an automorphism of $\Mat_n(\bF)$ or of the negative of an antiautomorphism of $\Mat_n(\bF)$ Denote by $\Antiaut R$ the set of antiautomorphisms of the algebra $R$. Then by restriction we get an isomorphism
    \[
    \Aut\frsl_n(\bF)\simeq \Aut\Mat_n(\bF)\cup\bigl(-\Antiaut\Mat_n(\bF)\bigr)\ \text{(disjoint union).}
    \]
\end{itemize}

Given any MAD of $\frsl_2(\bF)$, by identifying $\Aut\frsl_2(\bF)$ with $\Aut\Mat_2(\bF)$, it induces a fine grading on $\Mat_2(\bF)$ and two MADs are conjugated in $\Aut\frsl_2(\bF)$ if and only if so are they considered as MADs of $\Aut\Mat_2(\bF)$.

Therefore the fine gradings on $\frsl_2(\bF)$ are just the restrictions to $\frsl_2(\bF)$ of the fine gradings on $\Mat_2(\bF)$ which are determined in Theorem \ref{th:fine_gradings_R}. Note that if $\Gamma:R=\oplus_{g\in G}R_g$ is a fine grading of $R=\Mat_2(\bF)$, then $1\in R_0$ and $R_g\subseteq \frsl_2(\bF)$ for any $g\ne 0$, as any automorphism of $R$ leaves invariant $\frsl_2(\bF)=[R,R]$ and the homogeneous components are the common eigenspaces of the elements in the associated MAD. Hence $\Gamma$ induces the grading (also denoted by $\Gamma$) with $\frsl_2(\bF)_0=R_0\cap \frsl_2(\bF)$ and $\frsl_2(\bF)_g=R_g$ for any $g\ne 0$. (This argument applies to $\frsl_n(\bF)$ for arbitrary $n\geq 2$.)

Thus, there are, up to equivalence, two fine gradings on $\frsl_2(\bF)$: the Cartan grading over $\bZ$ and the division grading over $\bZ_2^2$ with
\begin{equation}\label{eq:sl2}
\frsl_2(\bF)_{(\bar 1,\bar 0)}=\bF q_1,\
\frsl_2(\bF)_{(\bar 0,\bar 1)}=\bF q_2,\
\frsl_2(\bF)_{(\bar 1,\bar 1)}=\bF q_3,
\end{equation}
with $q_1,q_2,q_3$ in \eqref{eq:quaternion_units}.

\smallskip

Assume now that $n\geq 3$, then there are two types of MADs in $\Aut\frsl_n(\bF)$. A MAD $M$ will be said to be \emph{inner} if it is contained in $\Aut\Mat_n(\bF)$ (once $\Aut\frsl_n(\bF)$ is identified to
$\Aut\Mat_n(\bF)\cup\bigl(-\Antiaut\Mat_n(\bF)\bigr)$), otherwise it is called \emph{outer}. If a MAD $M$ is inner, it is a MAD in $\Aut\Mat_n(\bF)$ so it induces a fine grading on $R=\Mat_n(\bF)$. Proposition \ref{pr:fine_gradings_varphi} shows then that a MAD $N$ in $\Aut R$ remains a MAD in $\Aut\frsl_n(\bF)$ unless $n=2^m$ for some $m$ and the associated grading $\Gamma$ satisfies that $\calI(R,\Gamma)=[Q^{\otimes \bar m}]$ with either $\bar m=m$ ($R\simeq Q^{\otimes m}$) or $\bar m=m-1$ ($R\simeq \Mat_2(\bF)\otimes Q^{\otimes\bar m}$).

On the other hand, if $M$ is an outer MAD, it contains the negative of an antiautomorphism $\varphi$ and then, with $M^{int}=M\cap\Aut\Mat_n(\bF)$, it follows that
\[
M=M^{int}\cup M^{int}(-\varphi)\ \text{(disjoint union).}
\]
(Note that if $\psi$ is another antiautomorphism with $-\psi\in M$, then $-\psi=(\psi\varphi^{-1})(-\varphi)$ belongs to $M^{int}(-\varphi)$ since $\psi\varphi^{-1}$ is an automorphism of $\Mat_n(\bF)$ contained in $M$.)

Moreover, the maximality of $M$ and the fact that $\varphi^2=(-\varphi)^2\in M^{int}$ shows that $M^{int}$ induces a fine $\varphi$-grading $\Gamma$ of $R=\Mat_n(\bF)$. Besides, the grading $\Gamma_\varphi$ induced by $M$ in $\frsl_n(\bF)$ is given by the eigenspaces of the action of $M=M^{int}\cup M^{int}(-\varphi)$, that is, it is given by the eigenspaces of $M^{int}\cup\{\varphi\}$. Thus, if $\Gamma:R=\oplus_{g\in G}R_g$ is the $\varphi$-grading induced by $M^{int}$, then $\varphi^2\vert_{R_g}=\alpha_g id$ for some $0\ne\alpha_g\in \bF$ with $\alpha_0=1$ (see Definition \ref{de:varphi_grading}) and we may fix a square root $\sqrt{\alpha_g}$ (with $\sqrt{1}=1$). The homogeneous components of the grading $\Gamma_\varphi$ induced by $M$ are
\[
\frsl_n(\bF)_{g^+}=\{x\in R_g: \varphi(x)=\sqrt{\alpha_g}x\},\
\frsl_n(\bF)_{g^-}=\{x\in R_g: \varphi(x)=-\sqrt{\alpha_g}x\}
\]
for $g\ne 0$, and
\[
\frsl_n(\bF)_{0^+}=\{x\in R_0: \varphi(x)=x,\ \trace(x)=0\},\
\frsl_n(\bF)_{0^-}=\{x\in R_0: \varphi(x)=-x\}.
\]

\smallskip

\begin{remark}\label{re:grading_group_A}
Let $M$ be an outer MAD as above, because of Remark \ref{re:varphiB_gradings} $M=M^{int}\cup M^{int}(-\varphi)$ for an antiautomorphism $\varphi$ of order $2$ or $4$. Let $\Gamma: R=\oplus_{g\in G} R_g$ be the $\varphi$-grading induced by $M^{int}$. It restricts to a grading $\Gamma_0:\frsl_n(\bF)=\oplus_{g\in G}\frsl_n(\bF)_g$. If $\varphi^2=id$, then the fine grading induced by $M$ is
\[
\frsl_n(\bF)=\oplus_{h\in G\times\bZ_2}\frsl_n(\bF)_h,
\]
with
\[
\begin{split}
\frsl_n(\bF)_{(g,\bar 0)}&=\{x\in\frsl_n(\bF)_g: (-\varphi)(x)=x\}\\
    &=\{x\in\frsl_n(\bF)_g: \varphi(x)=-x\},\\[3pt]
\frsl_n(\bF)_{(g,\bar 1)}&=\{x\in\frsl_n(\bF)_g: \varphi(x)=x\}.
\end{split}
\]
In case $\varphi^4=id\ne \varphi^2$, let $\epsilon$ be a primitive fourth root of unity, then the fine grading induced by $M$ is
\[
\frsl_n(\bF)=\oplus_{h\in G\times\bZ_4}\frsl_n(\bF)_h,
\]
with
\[
\begin{split}
\frsl_n(\bF)_{(g,\bar \imath)}&=\{x\in\frsl_n(\bF)_g:
        (-\varphi)(x)=\epsilon^i x\}\\
    &=\{x\in\frsl_n(\bF)_g: \varphi(x)=-\epsilon^i x\}.
\end{split}
\]
Note that the subgroup of $G\times\bZ_4$ generated by the support of this grading is never the whole $G\times\bZ_4$, because for any $g\in G$, the restriction $(-\varphi)^2\vert_{R_g}=\varphi^2\vert_{R_g}$ acts as $\pm id$, so this subgroup is contained in
\[
\Bigl(\{g\in G: \varphi^2\vert_{R_g}=id\}\times\{\bar 0,\bar 2\}\Bigr)\cup
\Bigl(\{g\in G: \varphi^2\vert_{R_g}=-id\}\times\{\bar 1,\bar 3\}\bigr),
\]
which is a proper subgroup of $G\times \bZ_4$. \qed
\end{remark}

\medskip

Two MADs of different types (inner and outer) cannot be conjugated in $\Aut\frsl_n(\bF)$, because $\Aut\Mat_n(\bF)$ is a normal subgroup of $\Aut\frsl_n(\bF)$. If two MADs $M_1$ and $M_2$ of inner type are conjugated by an automorphism of $R=\Mat_n(\bF)$ then the induced fine gradings $\Gamma_1$ and $\Gamma_2$ of $R$ are equivalent and this happens if and only if $\calI(R,\Gamma_1)=\calI(R,\Gamma_2)$ (Theorem \ref{th:fine_gradings_R_invariant}). In case they are conjugated by $-\varphi$ for an antiautomorphism $\varphi$ and $e$ is a graded primitive idempotent of $(R,\Gamma_1)$, then $eRe$ is a central graded division subalgebra of $(R,\Gamma_1)$ and so is $\varphi(eRe)=\varphi(e)R\varphi(e)$ for $(R,\Gamma_2)$. Hence $\varphi(e)$ is a graded primitive idempotent of $(R,\Gamma_2)$. In particular the graded division algebras $eRe$ and $\varphi(e)R\varphi(e)$ are antiisomorphic. But the structure of the central graded division algebras (Proposition \ref{pr:graded_division}) shows that two such algebras are antiisomorphic if and only if they are isomorphic, because the algebras $A_n$ in \eqref{eq:divisionAn} have antiautomorphisms (the assignment $x\leftrightarrow y$ induces an antiautomorphism). Hence $\calI(R,\Gamma_1)=\calI(R,\Gamma_2)$ in this case too. (Recall that $\calI(R,\Gamma_1)$ is just the isomorphism class of the central graded division algebra $eRe$.)

On the other hand, if two MADs of outer type $M_1=M_1^{int}\cup M_1^{int}(-\varphi_1)$ and $M_2=M_2^{int}\cup M_2^{int}(-\varphi_2)$
are conjugated by the negative of an antiautomorphism $\psi$: $(-\psi)M_1(-\psi)^{-1}=\psi M_1\psi^{-1}=M_2$, then also $(\psi\varphi_1)M_1(\psi\varphi_1)^{-1}=\psi M_1\psi^{-1}=M_2$, so they are conjugated by an element $\phi$ in $\Aut\Mat_n(\bF)$. But then $\phi\varphi_1\phi^{-1}\in M_2^{int}\varphi_2$ and therefore two such MADs $M_1$ and $M_2$ are conjugated if and only if there is an element $\psi\in M_2^{int}$ such that the $\varphi_1$-grading $\Gamma_1$ induced by $M_1^{int}$ is equivalent to the $(\psi\varphi_2)$-grading $\Gamma_2$ induced by $M_2^{int}$. By Theorems \ref{th:calI_cal2I} and \ref{th:equivalence_varphi_gradings} this happens if and only if $\calI(R,\varphi_1,\Gamma_1)=\calI(R,\varphi_2,\Gamma_2)$ ($R=\Mat_n(\bF)$).

\smallskip

We summarize our discussion in the next result:

\begin{theorem}\label{th:gradings_sln}
Let $R$ be the matrix algebra $\Mat_n(\bF)$, $n\geq 2$. Then:
\begin{enumerate}
\renewcommand*{\labelenumii}{(\theenumi.\theenumii)}
\item Up to equivalence, the only fine gradings of $\frsl_2(\bF)$ are its Cartan grading and the division grading over $\bZ_2^2$ in \eqref{eq:sl2}.

    \item If $n\geq 3$, any fine grading of $\frsl_n(\bF)$ is either:
    \begin{enumerate}
        \item the restriction of a fine grading $\Gamma$ of $R$ such that $\calI(R,\Gamma)\ne [Q^{\otimes m}]$ with $2^m\in\{n,\frac{n}{2}\}$.
        \item the fine grading $\Gamma_\varphi$ induced by a fine $\varphi$-grading of $R$, where $\varphi$ is an antiautomorphism of $F$.
    \end{enumerate}
    Moreover, the gradings in different items are not equivalent. Two gradings of type \textup{(2.a)} are equivalent if and only if so are the gradings $\Gamma_1$ and $\Gamma_2$ of $R$, if and only if $\calI(R,\Gamma_1)=\calI(R,\Gamma_2)$; and two gradings $\Gamma_{\varphi_1}$ and $\Gamma_{\varphi_2}$ induced by the $ \varphi_i$-gradings $\Gamma_i$ ($i=1,2$) of type \textup{(2.b)} are equivalent if and only if $\calI(R,\varphi_1,\Gamma_1)=\calI(R,\varphi_2,\Gamma_2)$. \qed
\end{enumerate}
\end{theorem}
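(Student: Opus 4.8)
The statement collects the discussion carried out in this section, so the proof amounts to assembling the pieces already in place. For part (1), since $\Aut\frsl_2(\bF)$ is identified with $\Aut\Mat_2(\bF)$ the MADs of the two groups coincide, so the fine gradings of $\frsl_2(\bF)$ are the restrictions of those of $R=\Mat_2(\bF)$; as $\frsl_2(\bF)=[R,R]$ is invariant under every automorphism and the homogeneous components of a fine grading are common eigenspaces for the associated MAD, each fine grading of $R$ restricts to one of $\frsl_2(\bF)$, and Theorem \ref{th:fine_gradings_R} leaves exactly the Cartan grading over $\bZ$ and the division grading over $\bZ_2^2$ in \eqref{eq:sl2}.

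For $n\geq 3$ I would split a MAD $M$ of $\Aut\frsl_n(\bF)$ into inner and outer types via $\Aut\frsl_n(\bF)=\Aut\Mat_n(\bF)\cup(-\Antiaut\Mat_n(\bF))$. If $M$ is inner it is a MAD of $\Aut\Mat_n(\bF)$, hence induces a fine grading $\Gamma$ of $R$ (Theorem \ref{th:fine_gradings_R}), and Proposition \ref{pr:fine_gradings_varphi} says that $M$ fails to remain maximal in $\Aut\frsl_n(\bF)$ precisely when $\calI(R,\Gamma)=[Q^{\otimes m}]$ with $2^m\in\{n,\frac{n}{2}\}$, giving case (2.a). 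If $M$ is outer, choose an antiautomorphism $\varphi$ with $-\varphi\in M$ and put $M^{int}=M\cap\Aut\Mat_n(\bF)$, so that $M=M^{int}\cup M^{int}(-\varphi)$; since $\varphi^2=(-\varphi)^2\in M^{int}$, the maximality of $M$ forces $M^{int}$ to induce a fine $\varphi$-grading $\Gamma$ of $R$, and the grading induced on $\frsl_n(\bF)$ is the one obtained from the eigenspaces of $M^{int}\cup\{\varphi\}$ as already described, giving case (2.b); conversely every fine $\varphi$-grading of $R$ arises from such an $M$.

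It remains to handle the equivalences. Gradings in distinct items are never equivalent because $\Aut\Mat_n(\bF)$ is normal in $\Aut\frsl_n(\bF)$, so an inner MAD is never conjugate to an outer one. For two inner MADs $M_1,M_2$: conjugation by an automorphism of $R$ makes $\Gamma_1,\Gamma_2$ equivalent, hence $\calI(R,\Gamma_1)=\calI(R,\Gamma_2)$ by Theorem \ref{th:fine_gradings_R_invariant}; conjugation by $-\varphi$ sends a graded primitive idempotent $e$ of $(R,\Gamma_1)$ to one of $(R,\Gamma_2)$, so $eRe$ and $\varphi(e)R\varphi(e)$ are antiisomorphic, and since the algebras $A_n$ of \eqref{eq:divisionAn} admit the antiautomorphism $x\leftrightarrow y$, Proposition \ref{pr:graded_division} shows antiisomorphic central graded division algebras are isomorphic, so again $\calI(R,\Gamma_1)=\calI(R,\Gamma_2)$; the converse is Theorem \ref{th:fine_gradings_R_invariant}. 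For two outer MADs $M_i=M_i^{int}\cup M_i^{int}(-\varphi_i)$, if they are conjugate via $-\psi$ they are also conjugate via $\psi\varphi_1\in\Aut\Mat_n(\bF)$, hence via some $\phi\in\Aut\Mat_n(\bF)$, and then $\phi\varphi_1\phi^{-1}\in M_2^{int}\varphi_2$; thus conjugacy of $M_1,M_2$ is equivalent to the existence of $\psi\in M_2^{int}$ for which the $\varphi_1$-grading $\Gamma_1$ is equivalent, as a $\varphi$-grading, to the $(\psi\varphi_2)$-grading $\Gamma_2$, which by Theorems \ref{th:calI_cal2I} and \ref{th:equivalence_varphi_gradings} holds if and only if $\calI(R,\varphi_1,\Gamma_1)=\calI(R,\varphi_2,\Gamma_2)$.

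The genuinely substantial ingredient is Theorem \ref{th:equivalence_varphi_gradings}, together with the reduction just made to conjugation inside $\Aut\Mat_n(\bF)$; everything else is bookkeeping with the normality of $\Aut\Mat_n(\bF)$ and with the invariants $\calI(R,\cdot)$ and $\calI(R,\cdot,\cdot)$. I would therefore expect the write-up to be essentially a clean recap of the section, the main point of care being to state the inner/outer dichotomy and the passage between gradings of $R$ and gradings of $\frsl_n(\bF)$ precisely.
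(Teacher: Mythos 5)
Your proof is correct and follows essentially the same route as the paper: both treat the theorem as a summary of the preceding discussion, splitting MADs of $\Aut\frsl_n(\bF)$ into inner and outer types via the normality of $\Aut\Mat_n(\bF)$, invoking Proposition \ref{pr:fine_gradings_varphi} to decide when an inner MAD stays maximal, and reducing the equivalence questions to Theorems \ref{th:fine_gradings_R_invariant}, \ref{th:calI_cal2I} and \ref{th:equivalence_varphi_gradings} together with the antiisomorphism argument for the division algebras $A_n$. The steps, the auxiliary results cited, and even the small remark that conjugation by $-\psi$ can be replaced by conjugation by $\psi\varphi_1\in\Aut\Mat_n(\bF)$ all coincide with the paper's write-up.
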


\smallskip

The possibilities of the fine gradings for $\frsl_4(\bC)$ have been treated in \cite{PPS02}.

Let us consider here a detailed example of greater complexity:

\begin{example}\label{ex:gradings_sl8}
\textbf{Fine gradings of $\frsl_8(\bF)$.}
\begin{description}
\item[\textbf{Inner}] Up to equivalence, there are the following five fine inner gradings on $\frsl_8(\bF)$.
    \begin{itemize}
    \item The Cartan grading over $\bZ^7$ (here $\calI(R,\Gamma)=[\bF]$).
    \item A $\bZ^3\times \bZ_2^2$-grading with $\calI(R,\Gamma)=[Q]$.
    \item A $\bZ\times \bZ_4^2$-grading with $\calI(R,\Gamma)=[A_4]$ (see \eqref{eq:divisionAn}).
    \item A $\bZ_2^2\times\bZ_4^2$-grading with $\calI(R,\Gamma)=[Q\otimes A_4]=[A_2\otimes A_4]$.
    \item A $\bZ_8^2$-grading with $\calI(R,\Gamma)=[A_8]$.
    \end{itemize}
    (Note that the fine gradings of $R=\Mat_8(\bF)$ with $\calI(R,\Gamma)=[Q^{\otimes 2}]$ or $[Q^{\otimes 3}]$ do not appear here as they induce outer gradings of $\frsl_8(\bF)$ because of Proposition \ref{pr:fine_gradings_varphi}.)

\item[\textbf{Outer}] Here the possibilities are given by the different classes $[D,(d_1,\ldots,d_p)]$, where $D$ is a central graded division subalgebra of $R=\Mat_8(\bF)$ with a graded involution, so that $D=Q^{\otimes r}$, $0\leq r\leq 3$, and $d_1,\ldots,d_p$ are homogeneous elements in $D$, $p$ is an even number with $0\leq p\leq \frac{8}{2r}$. Note that the definition of these classes (Definition \ref{de:equivalenceDx1xp}) implies that we can reorder the elements $d_1,\ldots,d_p$, that these can be scaled and that if, for instance $d_1=\cdots=d_q$, then by multiplying by $z=d_1^{-1}$ we may assume $d_1=\cdots=d_q=1$. In this way we may always assume that if $p\geq 1$, then $d_1=1$ and $1$ is the element in the sequence that appears a greater number of times. Note that we may choose an arbitrary graded involution on $D$ (see the proof of Theorem \ref{th:equivalence_varphi_gradings}). Therefore there are the following possibilities:

    \begin{description}
    \item[\fbox{$D=\bF$}] We must consider here the classes $[\bF,(1,\stackrel{p}{\ldots},1)]$, $p=0,2,4,6,8$. In this situation the grading is, up to equivalence the grading on $\frsl_8(\bF)$ induced by the $\varphi$-grading on $R=\Mat_8(\bF)\simeq\End_{\bF}(V)$, where $V$ is an eight dimensional vector space and $\varphi$ is given by the adjoint map relative to the symmetric bilinear form with coordinate matrix \[
        \diag\left(1,\ldots,1,
        \bigl(\begin{smallmatrix}0&1\\1&0\end{smallmatrix}\bigr),\ldots,
        \bigl(\begin{smallmatrix}0&1\\1&0\end{smallmatrix}\bigr)\right),
        \]
        where there are $p$ $1$'s and $\frac{1}{2}(8-p)$ blocks $\bigl(\begin{smallmatrix}0&1\\1&0\end{smallmatrix}\bigr)$. Hence $\varphi^2=id$ and therefore we have the following gradings:
        \begin{itemize}
        \item A $\bZ^4\times\bZ_2$-grading corresponding to $p=0$. Here the copy of $\bZ_2$ corresponds to the action of the involution $\varphi$.
        \item A $\bZ^3\times\bZ_2^2$-grading for $p=2$. Here the first copy of $\bZ_2$ corresponds to the grading of a good basis where $\degree(v_1)=0$ and $\degree(v_2)$ is an element of order $2$. The second copy corresponds to the action of $\varphi$.
        \item A $\bZ^2\times \bZ_2^4$-grading for $p=4$.
        \item A $\bZ\times \bZ_2^6$-grading for $p=6$.
        \item A $\bZ_2^8$-grading for $p=8$.
        \end{itemize}

    \item[\fbox{$D=Q$}] Here the equivalence classes to be considered are the classes $[Q,(d_1,\ldots,d_p)]$, with $p=0,2$ or $4$. We can always assume that each $d_i\in \{1,q_1,q_2,q_3\}$ in \eqref{eq:quaternion_units} and that $d_1=1$ is the element that appears the most. The grading is then, up to equivalence, the grading on $\frsl_8(\bF)$ induced by a $\varphi$-grading on $R\simeq \End_Q(V)$ ($\dim_Q(V)=4$) where $\varphi$ is given by the adjoint map relative to the sesquilinear form $B$ with coordinate matrix
        \[
        \diag\left(d_1,\ldots,d_p,
        \bigl(\begin{smallmatrix}0&1\\1&0\end{smallmatrix}\bigr),\ldots,
        \bigl(\begin{smallmatrix}0&1\\1&0\end{smallmatrix}\bigr)\right),
        \]
        the number of $2\times 2$ blocks being $\frac{1}{2}(4-p)$) (see Corollary \ref{co:varphiB_gradings} and Remark \ref{re:varphiB_gradings}). Here are the different possibilities:
        \begin{itemize}
        \item A $\bZ^2\times\bZ_2^3$-grading corresponding to $[Q,\emptyset]$. (The first two copies of $\bZ_2$ constitute the support of the grading of $Q$ and the last copy corresponds to the action of the involution $\varphi$.)
        \item A $\bZ\times \bZ_2^4$-grading which corresponds to $[Q,(1,1)]$.
        \item A $\bZ\times\bZ_4\times\bZ_2^2$-grading which corresponds to $[Q,(1,q_1)]$. Here we have a good basis $\calB=\{ v_1,v_2,v_3,v_4\}$ with $\degree(v_1)=0$, $\degree(v_2)=(0,\bar 1,\bar 0,\bar 0)\in \bZ\times \bZ_4\times\bZ_2\times\bZ_2$ and $\degree(v_3)=(1,\bar 0,\bar 0,\bar 0)=-\degree(v_4)$. The support of $Q$ is $0\times 2\bZ_4\times\bZ_2\simeq \bZ_2^2$ and the last copy of $\bZ_2$ corresponds to the action of the involution $\varphi$.
        \item A $\bZ_2^6$-grading which corresponds to $[Q,(1,1,1,1)]$.
        \item A $\bZ_4\times\bZ_2^4$-grading which corresponds to $[Q,(1,1,1,q_1)]$.
        \item A $\bZ_4\times\bZ_2^4$-grading which corresponds to $[Q,(1,1,q_1,q_1)]$. Here the fine $\varphi$-grading $\Gamma_V$ in Proposition \ref{pr:GammatildeV} is the grading on $\End_Q(V)$ where we have a good basis $\calB=\{v_1,v_2,v_3,v_4\}$ with $\degree(v_1)=0$, $\degree(v_2)=g_2$, $\degree(v_3)=g_3$ and $\degree(v_4)=g_4$ such that \eqref{eq:relationstilde}
            \[
            2g_2=2g_3+h_1=2g_4+h_1=0,
            \]
            with $h_1=\degree(q_1)$. The grading group $G$ of the $\varphi$-grading is generated by $g_2,g_3,g_4$ and $H=\group{h_1,h_2}=\Supp Q$, and hence it is generated too by the elements $g_3,g_2,g_4-g_3$ and $h_2$ with respective orders $4,2,2,2$, thus getting a $\varphi$-grading over $\bZ_4\times\bZ_2^3$. The action of $\varphi$ gives the last copy of $\bZ_2$.
        \item A $\bZ_4^2\times\bZ_2^2$-grading which corresponds to $[Q,(1,1,q_1,q_2)]$.
        \item A $\bZ_4^3$-grading which corresponds to $[Q,(1,q_1,q_2,q_3)]$. Here $\tau=\tau_o$ is the orthogonal involution of $A$, and as above we have a good basis $\calB=\{v_0,v_1,v_2,v_3\}$ (note that we have shifted the indices) with $\degree(v_0)=0$, $\degree(v_i)=g_i$, $i=1,2,3$, subject to
            \[
            2g_1+h_1=2g_2+h_2=2g_3+h_3=0
            \]
            ($h_i$ being the degree of $q_i$), and $\varphi$ is given by the adjoint map relative to the $\tau$-sesquilinear form $B$ with $B(v_i,v_i)=q_i$ $i=0,1,2,3$ with $q_0=1$, $B(v_i,v_j)=0$ for $i\ne j$. The subgroup generated by $g_1,g_2,g_3$ and $H$ is the group generated by $g_1,g_2$ and $g_1+g_2+g_3$ of respective orders $4,4,2$ (as $2(g_1+g_2+g_3)=h_1+h_2+h_3=0$). This group is $\bZ_4^2\times\bZ_2$ and it is the universal grading group of the $\varphi$-grading $\Gamma$. If $M$ is the MAD group of our grading, $M=M^{int}\cup M^{int}(-\varphi)$, and $M^{int}$ is then isomorphic to $\bZ_4^2\times\bZ_2$ and $M/M^{int}$ is isomorphic to $\bZ_2$. And since there is no homogeneous $z\in Q$ and involution $\tau'$ on $Q$ such that $z,zq_1,zq_2,zq_3$ are all symmetric, necessarily $M\simeq \bZ_4^3$. Alternatively one can compute explicitly the MAD $\Diag_{\Gamma}(R)$. (See Example \ref{ex:counterex_Bahturinetal}.)
        \end{itemize}

    \item[\fbox{$D=Q^{\otimes 2}$}] The possibilities now are:

        \begin{itemize}
        \item A $\bZ\times\bZ_2^5$-grading which corresponds to $[Q^{\otimes 2},\emptyset]$.
        \item A $\bZ_4\times\bZ_2^4$-grading which corresponds to $[Q^{\otimes 2},(1\otimes 1,1\otimes q_1)]$. (Note that the argument in the proof of Corollary \ref{co:graded_division_involution} shows that we may always take any homogeneous element of degree $\ne 0$  to be $1\otimes q_1$.)
        \end{itemize}

    \item[\fbox{$D=Q^{\otimes 3}$}] Only one more possibility appears here:
        \begin{itemize}
        \item A $\bZ_2^6$-grading which corresponds to $[Q^{\otimes 3},(1)]$. Actually the universal grading group here is $\bZ_2^7$, because the associated MAD group $M$ is $M^{int}\cup M^{int}(-\varphi)$, with $\varphi$ any graded involution of $Q^{\otimes 3}$ and $M^{int}\cong \bZ_2^6=(\bZ_2^2)^3$ the MAD of the division grading on the central graded division algebra $Q^{\otimes 3}$.
        \end{itemize}
    \end{description}
\end{description}

The conclusion is that, up to equivalence, there are exactly $21$ fine gradings of $\frsl_8(\bF)$.

Note that this number is easy to get, as it is enough to look at the different possibilities for $\calI(R,\varphi,\Gamma)$. The subtle point is to check what the grading group looks like in each case. \qed
\end{example}

\bigskip

\section{Fine gradings on orthogonal and symplectic Lie algebras}\label{se:finesosp}

Let $*$ be an involution of the matrix algebra $R=\Mat_n(\bF)$ and let $K(R,*)=\{x\in R: x^*=-x\}$ be the Lie algebra of skew symmetric elements for the involution. Thus, if $*$ is orthogonal, then $K(R,*)$ is isomorphic to the orthogonal Lie algebra $\frso_n(\bF)$, while if $*$ is symplectic, $n$ is even and $K(R,*)$ is isomorphic to the symplectic Lie algebra $\frsp_n(\bF)$.

The automorphism group of these Lie algebras is described in \cite[Chapter IX]{Jacobson}. The results there are rephrased in the following Proposition \ref{pr:AutR*}. Each automorphism of $R$ which commutes with the involution $*$ induces and automorphism of $K(R,*)$. Denote by $\Aut(R,*)$ this group of automorphisms: $\Aut(R,*)=\{ f\in \Aut R : f(x^*)=f(x)^*\ \forall x\in R\}$.

\begin{proposition}\label{pr:AutR*}
The restriction map
\[
\begin{split}
\Aut(R,*)&\longrightarrow \Aut K(R,*)\\
f\ &\mapsto \ f\vert_{K(R,*)}
\end{split}
\]
is an isomorphism of groups if $n\geq 5$ unless $n=6$ or $n=8$ and $*$ is orthogonal. \qed
\end{proposition}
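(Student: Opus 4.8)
The plan is to show the restriction map is a group homomorphism which is both injective and surjective, splitting the analysis of $\Aut K(R,*)$ into its inner part and its (at most $\bZ_2$) outer part; the two excluded values of $n$ will surface exactly as the place where the outer part cannot be controlled. Throughout write $K=K(R,*)$, let $V=\bF^n$ be the natural module, and let $b$ be the symmetric (resp.\ alternating) bilinear form on $V$ for which $x^{*}$ is the adjoint of $x$, so that $K=\frso(V,b)$ (resp.\ $K=\frsp(V,b)$). \textbf{Injectivity.} For $n\geq 5$ the module $V$ is irreducible over $K$, and since $\bF$ is algebraically closed the associative subalgebra of $R=\End_\bF(V)$ generated by $K$ is all of $R$ (Burnside / Jacobson density). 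Hence every $f\in\Aut(R,*)$ is determined by $f|_{K}$, so the restriction map is injective.

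\textbf{The inner part and the geometric outer automorphism.} Each inner automorphism of $K$ is a product of maps $\exp(\ad x)$ with $x\in K$, and $\exp(\ad x)$ is conjugation by $\exp x$, which lies in the isometry group $\mathsf{O}(V,b)$ (resp.\ $\mathsf{Sp}(V,b)$). More generally, for any element $g$ of that isometry group the conjugation $f_g$ with $f_g(y)=gyg^{-1}$ is an automorphism of $R$; because $g$ preserves $b$ it commutes with the adjoint operation, i.e.\ $f_g\circ{*}={*}\circ f_g$, so $f_g\in\Aut(R,*)$ and $f_g|_{K}\in\Aut K$. As $g$ ranges over the connected group $\mathsf{SO}(V,b)$ (resp.\ $\mathsf{Sp}(V,b)$) the automorphisms $f_g|_{K}$ exhaust $\operatorname{Inn}K$; and when $n$ is even and $g$ is an improper isometry, $f_g|_{K}$ is the outer automorphism of $\frso_n(\bF)$ inducing the flip of the two extreme nodes of the diagram $D_\ell$ (one checks this on a Cartan subalgebra and the associated root system). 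Thus the image of the restriction map contains $\operatorname{Inn}K$ together with this outer automorphism whenever $n=2\ell$.

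\textbf{Surjectivity.} By the classification of automorphisms of simple Lie algebras, $\Aut K/\operatorname{Inn}K$ is the group of symmetries of the Dynkin diagram of $K$. For $K$ of type $B_\ell$ ($n=2\ell+1$, $\ell\geq 2$) or $C_\ell$ ($n=2\ell$, $\ell\geq 3$) this group is trivial, so by the previous paragraph the restriction map is onto; for $K$ of type $D_\ell$ with $\ell\geq 5$ the group is $\bZ_2$, and its nontrivial element is the class of $f_g|_{K}$ for $g$ an improper isometry, again in the image. In all these cases the restriction map is a bijective homomorphism, hence an isomorphism of groups.

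\textbf{The main obstacle: the excluded cases.} The argument breaks precisely in low rank type $D$. For $n=8$ with $*$ orthogonal, $K$ is of type $D_4$ and $\Aut K/\operatorname{Inn}K\cong S_3$, yet only the order-$2$ subgroup arising from improper isometries lies in the image of the restriction map: a triality automorphism carries the isomorphism class of the natural module $V$ to that of a half-spin module --- all three are $8$-dimensional and pairwise non-isomorphic --- and therefore cannot be induced by conjugation on $R=\End_\bF(V)$. Hence the restriction map fails to be surjective, and this loss of rigidity of the natural module is exactly why $D_4$ must be set aside and handled separately later. For $n=6$ with $*$ orthogonal, $K$ has type $D_3=A_3$ and the exceptional isomorphism $\frso_6(\bF)\cong\frsl_4(\bF)$ places this algebra among type $A$, so it is excluded here and absorbed into the type $A$ classification; in any case, for all the $n$ under consideration the statement can be read off the explicit computation of $\Aut K$ in \cite[Chapter IX]{Jacobson}.
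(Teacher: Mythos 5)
Your argument is correct and genuinely fills a gap: the paper ``proves'' this proposition by citing \cite[Chapter IX]{Jacobson} with no further argument, so you are supplying a proof where the paper has none. The route is sound. Injectivity via Burnside/Jacobson density is exactly the right observation: $V$ is $K$-irreducible for $n\geq 5$, hence $K$ generates $R=\End_\bF(V)$ as an associative algebra, so $f|_K$ determines $f$. For surjectivity you correctly identify $\operatorname{Inn}K$ with the image of the connected isometry group and the $D_\ell$ node flip with conjugation by an improper isometry, and you pin down the obstruction at $D_4$: triality moves the isomorphism class of the natural module to a half--spin module, and three pairwise non-isomorphic $8$-dimensional irreducibles cannot all be realized by conjugation on a single $\End_\bF(V)$.

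Two small refinements. First, your analysis in fact shows the map is an isomorphism for $n=6$ with $*$ orthogonal as well: the $A_3=D_3$ diagram flip, which is $X\mapsto -X^T$ under $\frso_6\cong\frsl_4$, is realized by an improper isometry (for instance the Hodge star on $\Lambda^2\bF^4$, which has determinant $-1$). The exclusion of $n=6$ in the statement is therefore a matter of bookkeeping --- $\frso_6\cong\frsl_4$ is handled under type $A$ in Section \ref{se:A} --- not a failure of the restriction map; your reading is right, and it would have been worth saying so more directly. Second, the assertion that ``each inner automorphism is a product of $\exp(\ad x)$ with $x\in K$'' should either be restricted to ad-nilpotent $x$, or replaced by the invocation that $\operatorname{Inn}K$ is the adjoint group and hence the image of $\mathsf{SO}(V,b)$ (resp.\ $\mathsf{Sp}(V,b)$); over an abstract algebraically closed $\bF$ the exponential of a non-nilpotent element is not defined. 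Neither point affects the conclusion.
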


Note that for orthogonal $*$, $K(\Mat_6(\bF),*)\simeq \frso_6(\bF)$ is isomorphic to $\frsl_4(\bF)$, while the automorphism group of $K(\Mat_8(\bF),*)\simeq \frso_8(\bF)$ is larger than the subgroup formed by the restrictions of the elements in $\Aut(\Mat_8(\bF),*)$ due to the existence of the triality automorphisms. This will be dealt with in the last section of the paper.

\smallskip

For $n\geq 5$ and $n\not\in\{6,8\}$ if $*$ is orthogonal, any MAD $M$ of $\Aut K(R,*)$ can be identified to a MAD of $\Aut(R,*)$, and hence it induces a fine $*$-grading of $R$, whose restriction to $K(R,*)$  is the fine grading induced by $M$. (If $R=\oplus_{g\in G}R_g$ is a $*$-grading, then its restriction $K(R,*)=\oplus_{g\in G}(K(R,*)\cap R_g)$ makes sense and it is a grading of $K(R,*)$.)

Moreover, two such MADs $M_1$ and $M_2$ are conjugated if and only if there exists a $\phi\in \Aut(R,*)$ such that $\phi M_1\phi^{-1}=M_2$, where $M_1$ and $M_2$ are considered as MADs in $\Aut(R,*)$ through the isomorphism in Proposition \ref{pr:AutR*}. Hence Theorems \ref{th:grading_involutions} and  \ref{th:calI_cal2I} imply our main result of this section:

\begin{theorem}\label{th:KR*}
Let $*$ be an involution of the matrix algebra $R=\Mat_n(\bF)$ and assume thatn $n\geq 5$ and that $n\ne 6,8$ if $*$ is orthogonal. The any fine grading of $K(R,*)$ is the restriction of a fine $*$-grading of $R$. Two such fine gradings $\Gamma_1$ and $\Gamma_2$ of $R$ restrict to equivalent gradings of $K(R,*)$ if and only if $\calI_2(R,*,\Gamma_1)=\calI_2(R,*,\Gamma_2)$. \qed
\end{theorem}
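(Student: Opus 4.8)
The plan is to transport the standard correspondence ``fine abelian group grading $\leftrightarrow$ MAD of the automorphism group'' through the isomorphism of Proposition~\ref{pr:AutR*}, and then to feed the output into Theorem~\ref{th:grading_involutions}.

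The one elementary observation that makes everything run is the following: if $\Gamma:R=\oplus_{g\in G}R_g$ is any $*$-grading of $R$ (i.e. $R_g^*=R_g$ for all $g$), then $\Diag_\Gamma(R)\subseteq\Aut(R,*)$. Indeed, an element $\Phi\in\Diag_\Gamma(R)$ acts as $\lambda_g\,\mathrm{id}$ on $R_g$, and for homogeneous $x\in R_g$ one has $x^*\in R_g$, so $\Phi(x^*)=\lambda_g x^*=(\lambda_g x)^*=\Phi(x)^*$; since $R$ is spanned by homogeneous elements, $\Phi$ commutes with $*$. Combined with the fact that the eigenspace decomposition $\hat\Gamma$ of $\Diag_\Gamma(R)$ is again a $*$-grading refining $\Gamma$, this yields: a $*$-grading $\Gamma$ of $R$ is fine (as a $*$-grading) if and only if $\Gamma=\hat\Gamma$ and $\Diag_\Gamma(R)$ is a MAD of $\Aut(R,*)$; consequently $\Gamma\mapsto\Diag_\Gamma(R)$ is a bijection between fine $*$-gradings of $R$ and MADs of $\Aut(R,*)$, under which $\Gamma$ corresponds to $M$ precisely when the joint eigenspaces of $M$ on $R$ are the homogeneous components of $\Gamma$.

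Now I would assemble the statement. A fine grading of $K=K(R,*)$ is the eigenspace decomposition of a MAD $N$ of $\Aut K$; via Proposition~\ref{pr:AutR*} (valid for $n\ge 5$, $n\notin\{6,8\}$ in the orthogonal case) $N$ corresponds to a MAD $M$ of $\Aut(R,*)\subseteq\Aut R$, hence, by the previous paragraph, to a fine $*$-grading $\Gamma$ of $R$; restricting joint eigenspaces to $K$ shows $\Gamma|_K$ is exactly the fine grading of $K$ we started with, which proves the first assertion. For the second, given fine $*$-gradings $\Gamma_1,\Gamma_2$ of $R$, the restricted gradings $\Gamma_i|_K$ are the fine gradings of $K$ attached to the MADs $\Diag_{\Gamma_i}(R)$ of $\Aut(R,*)\cong\Aut K$. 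Two fine gradings of $K$ are equivalent if and only if the corresponding MADs of $\Aut K$ are conjugate, i.e. if and only if there is $\Phi\in\Aut(R,*)$ with $\Phi\,\Diag_{\Gamma_1}(R)\,\Phi^{-1}=\Diag_{\Gamma_2}(R)$. Since a graded isomorphism conjugates the diagonal group of the source onto that of the target, and conversely an automorphism conjugating the diagonal groups carries joint eigenspaces to joint eigenspaces (hence homogeneous components of $\Gamma_1=\hat\Gamma_1$ to those of $\Gamma_2=\hat\Gamma_2$), such a $\Phi$ exists if and only if there is a graded isomorphism $\Phi:(R,\Gamma_1)\to(R,\Gamma_2)$ lying in $\Aut(R,*)$, that is, with $\Phi *\Phi^{-1}=*$. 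By Theorem~\ref{th:grading_involutions} applied with $\varphi_1=\varphi_2=*$, such a $\Phi$ exists if and only if $\calI_2(R,*,\Gamma_1)=\calI_2(R,*,\Gamma_2)$ (these invariants being well defined by Theorem~\ref{th:calI_cal2I}), which is the desired conclusion.

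The part requiring genuine care is not the transport of structure but the hypothesis-dependent input: Proposition~\ref{pr:AutR*} fails exactly at $n=6$ (where $\frso_6\simeq\frsl_4$) and at $n=8$ in the orthogonal case (triality), where $\Aut K$ is strictly larger than the image of $\Aut(R,*)$, so those cases genuinely fall outside this argument and are treated separately, as the paper does for $D_4$. Once the observation $\Diag_\Gamma(R)\subseteq\Aut(R,*)$ for $*$-gradings is in hand, everything else is the formal manipulation above.
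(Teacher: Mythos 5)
Your argument is correct and follows the same route as the paper's own (terse) proof: transport MADs of $\Aut K(R,*)$ to MADs of $\Aut(R,*)$ via Proposition~\ref{pr:AutR*}, identify the latter with fine $*$-gradings of $R$, and invoke Theorems~\ref{th:calI_cal2I} and~\ref{th:grading_involutions} for the equivalence criterion. The explicit observation that $\Diag_\Gamma(R)\subseteq\Aut(R,*)$ for any $*$-grading $\Gamma$, which the paper leaves implicit, is exactly the right detail to spell out in order to make the bijection between fine $*$-gradings and MADs of $\Aut(R,*)$ precise.
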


\begin{example}
\label{ex:gradings_sp8}
\textbf{Fine gradings of $\frsp_8(\bF)$.}\quad
Here $R=\Mat_8(\bF)$ and $*$ is a symplectic involution. The possibilities for the central graded division algebra attached to a fine $*$-grading $\Gamma$ are $D=Q^{\otimes m}$ for $m=0,1,2$ or $3$.
\begin{description}
\item[\fbox{$D=\bF$}] Here $\calI_2(R,*,\Gamma)=[(\bF,id),\emptyset]$ (there are no skew symmetric elements in $\bF$). Here $R$ is graded isomorphic to $\End_{\bF}(V)$ for an eight dimensional vector space $V$ endowed with a nondegenerate skew-symmetric form $B$. This gives a $\bZ^4$-grading.

\item[\fbox{$D=Q$}] Here $\calI_2(R,*,\Gamma)=[(Q,\tau),(d_1,\ldots,d_p)]$, $p=0,2,4$, for a graded involution $\tau$. Definition \ref{de:equivalenceDtaux1xp} shows that we may assume $\tau=\tau_s$, and hence $d_1,\ldots,d_p$ must be symmetric for $\tau_s$ since $*$ is symplectic, so that we may take $d_1=\ldots=d_p=1$. We are left with three possibilities:
    \begin{itemize}
    \item A $\bZ^2\times\bZ_2^2$-grading corresponding to $\calI_2(R,*,\Gamma)=[(Q,\tau_s),\emptyset)]$.
    \item A $\bZ\times\bZ_2^3$-grading corresponding to $\calI_2(R,*,\Gamma)=[(Q,\tau_s),(1,1)]$.
    \item A $\bZ_2^5$-grading corresponding to $\calI_2(R,*,\Gamma)=[(Q,\tau_s),(1,1,1,1)]$.
    \end{itemize}

\item[\fbox{$D=Q^{\otimes 2}$}] Here $\calI_2(R,*,\Gamma)=[(Q^{\otimes 2},\tau),(d_1,\ldots,d_p)]$, $p=0$ or $2$. As before, we may assume $\tau=\tau_s\otimes \tau_o$, which is symplectic. Also, if $p=2$, $d_1$ and $d_2$ are symmetric for $\tau$ and we may multiply $d_1$ and $d_2$ by $d_1^{-1}$ and assume $d_1=1$ ($\tau^{d_1}$ remains symplectic so it is again isomorphic to $\tau_s\otimes\tau_o$ by Corollary \ref{co:graded_division_involution}). The argument in the proof of this Corollary shows that we may assume that $d_2=1$ or $d_2=1\otimes q_1$. But the case of $d_2=1$ does not give a fine grading (Proposition \ref{pr:GammatildeV}), so we are left with two possibilities:
    \begin{itemize}
    \item A $\bZ\times\bZ_2^4$-grading corresponding to $\calI_2(R,*,\Gamma)=[(Q^{\otimes 2},\tau_s\otimes\tau_o),\emptyset]$.
    \item A $\bZ_4\times\bZ_2^3$-grading corresponding to $\calI_2(R,*,\Gamma)=[(Q^{\otimes 2},\tau_s\otimes\tau_o),(1,1\otimes q_1)]$.
    \end{itemize}

\item[\fbox{$D=Q^{\otimes 3}$}] Here $\calI_2(R,*,\Gamma)=[(Q^{\otimes 3},\tau),(d_1)]$ and, as above, we may assume $\tau=\tau_s\otimes\tau_o\otimes\tau_o$ and $d_1=1$. This gives a $\bZ_2^6$-grading.
\end{description}

The conclusion is that, up to equivalence, there are exactly $7$ fine gradings of $\frsp_8(\bF)$. \qed
\end{example}

\bigskip

\section{Fine gradings on $D_4$}\label{se:D4}

The orthogonal Lie algebra $\frso_8(\bF)$ is quite special among the orthogonal Lie algebras, due to the fact that if $*$ denotes an orthogonal involution on the matrix algebra $R=\Mat_8(\bF)$, so that we may identify $\frso_8(\bF)$ to $K(R,*)=\{x\in R: x^*=-x\}$, the automorphism group $\Aut\frso_8(\bF)$ is larger than the group $\Aut(R,*)$. In fact, $\Aut(R,*)$ is a subgroup of index $3$ in $\Aut\frso_8(\bF)$ due to the triality phenomenon. That is, there are outer automorphisms of order $3$.

The fine gradings of $D_4$ not involving outer automorphisms could in principle be determined following the ideas in Section \ref{se:finesosp}, but there is a subtle point here: two such fine gradings may not be equivalent under the action of $\Aut(R,*)$, while being so under the action of the full group $\Aut\frso_8(\bF)$. Actually, we will check that among the $15$ non equivalent fine gradings in Example \ref{ex:R*_D4}, two of them are equivalent under $\frso_8(\bF)$. On the other hand, the fine gradings whose associated MAD groups contain outer automorphisms not in $\Aut(R,*)$ have been determined in \cite{DMVpr}. Here a quite self contained proof will be provided not relying on computations on conjugacy classes in the Atlas \cite{Atlas}, although based on ideas already present in \cite{DMVpr}. The list of nonequivalent fine gradings in \cite{DMVpr} or \cite{DV} will be corrected.

\smallskip

The natural way to deal with the triality phenomenon is to start with the algebra of octonions $C$ over our ground field $\bF$. This algebra contains a basis
\begin{equation}\label{eq:basisC}
\calB=\{e_1,e_2,u_1,u_2,u_3,v_1,v_2,v_3\}
\end{equation}
with multiplication given by:
\[
\begin{split}
&e_j^2=e_j,\ (j=1,2)\\
&e_1u_i=u_i=u_ie_2,\ e_2v_i=v_i=v_ie_1,\ (i=1,2,3)\\
&u_iv_i=-e_1,\ v_iu_i=-e_2,\ (i=1,2,3)\\
&u_iu_{i+1}=-u_{i+1}u_i=v_{i+2},\ v_iv_{i+1}=-v_{i+1}v_i=u_{i+2},\ \text{(indices modulo $3$)}
\end{split}
\]
all the other products among basic elements being $0$. Note that $e_1$ and $e_2$ are orthogonal idempotents with $e_1+e_2=1$. This algebra is endowed too with a nondegenerate quadratic form $q:C\rightarrow \bF$ which is multiplicative ($q(xy)=q(x)q(y)$ for any $x,y\in C$) and whose associated polar form: $q(x,y)=q(x+y)-q(x)-q(y)$, is given by
\[
q(e_1,e_2)=1=q(u_i,v_i)\quad (i=1,2,3)
\]
and $q(a,b)=0$ for any other basic elements. This allows to identify the Lie algebra $\frso_8(\bF)$ to
\[
\frso(C,q)=\{ f\in\End_\bF(C): q\bigl(f(x),y\bigr)+q\bigl(x,f(y)\bigr)=0\ \forall x,y\in C\}.
\]
The diagonal linear maps in $\frso(C,q)$ relative to our basis $\calB$ constitute a Cartan subalgebra $\frh$ of $\frso(C,q)$ with the basis $\{h_0,h_1,h_2,h_3\}$ defined by:
\begin{equation}\label{eq:basish}
\begin{split}
h_0&=\diag(1,-1,0,0,0,0,0,0),\\
h_1&=\diag(0,0,1,0,0,-1,0,0),\\
h_2&=\diag(0,0,0,1,0,0,-1,0),\\
h_3&=\diag(0,0,0,0,1,0,0,-1).
\end{split}
\end{equation}
(We identify linear maps and their matrices relative to our basis $\calB$.)

The weights of $\frh$ in $C$ are $\pm \epsilon_0$, $\pm\epsilon_1$, $\pm\epsilon_2$ and $\pm\epsilon_3$, with $\epsilon_i(h_j)=0$ if $i\ne j$ and $\epsilon_i(h_i)=1$ ($i,j=1,2,3,4$).

Consider the invariant bilinear form on $\frso(C,q)$ given by $b\bigl(d_1,d_2)=\frac{1}{2}\trace(d_1d_2)$. Then $\{h_0,h_1,h_2,h_3\}$ is an orthonormal basis of $\frh$ and $\{\epsilon_0,\epsilon_1,\epsilon_2,\epsilon_3\}$ is the dual basis in $\frh^*$. Denote by $(.\vert .)$ the nondegenerate symmetric bilinear form induced by $b$ on $\frh^*$, so that $(\epsilon_i\vert\epsilon_j)=1$ for $i=j$ and $0$ otherwise. If we identify, as it is usual, $\frh$ and $\frh^*$ by means of $b$, then $\epsilon_i$ is identified to $h_i$ for any $i=0,1,2,3$.

The algebra $C$ is endowed with a natural involution $\sigma:C\rightarrow C$, $x\mapsto \bar x=q(x,1)1-x$, such that $x\bar x=\bar xx=q(x)1$ for any $x\in C$. Thus $\overline{xy}=\bar y\bar x$ for any $x,y\in C$. Note that $\sigma$ belongs to the orthogonal group $O(C,q)$. Besides
\[
q(xy,z)=q(y,\bar xz)=q(x,z\bar y)
\]
for any $x,y,z\in C$.

Given any automorphism $\eta\in\Aut C$ with $\eta^3=id$, consider the new multiplication on $C$ defined by
\begin{equation}\label{eq:Ceta}
x*y=\eta(\bar x)\eta^2(\bar y),
\end{equation}
and denote by $\bar C_\eta$ the algebra $(C,*)$ thus defined. Note that for any $x,y,z\in C$:
\begin{equation}\label{eq:qassoc}
\begin{split}
q(x*y,z)&=q\bigl(\eta(\bar x)\eta^2(\bar y),z\bigr)\\
  &=q\bigl(\eta(\bar x),z\eta^2(y)\bigr)\quad \text{($\eta$ commutes with $\sigma$)}\\
  &=q\bigl(\bar x,\eta^2(z)\eta(y)\bigr)\quad \text{($\Aut C\subseteq O(C,q)$)}\\
  &=q\bigl(x,\overline{\eta^2(z)\eta(y)}\bigr)\\
  &=q\bigl(x,\eta(\bar y)\eta^2(\bar z)\bigr)\\
  &=q\bigl(x,y*z\bigr).
\end{split}
\end{equation}

For $\eta=id$, the algebra $\bar C_{id}$ is the so called \emph{para-octonion algebra} (with product $x\bullet y=\bar x\bar y$). It turns out that the algebra $\bar C_\eta$ is either isomorphic to the para-octonion algebra or the so called \emph{pseudo-octonion algebra} defined by Okubo \cite{Oku78} (see \cite{EP96} or \cite{KMRT98}), which can be defined as the algebra $\bar C_\tau$, where $\tau\in\Aut C$ is defined by
\begin{equation}\label{eq:tau}
\begin{split}
&\tau(e_j)=e_j,\ (j=1,2)\\
&\tau(u_i)=u_{i+1},\ \tau(v_i)=v_{i+1},\ (i=1,2,3\ \text{modulo $3$}).
\end{split}
\end{equation}

Let us consider the special orthogonal group $SO(C,q)$ and define:
\[
\begin{split}
\Tri(C,\eta,q)&=\{(f_0,f_1,f_2)\in SO(C,q)^3: f_0(x*y)=f_1(x)*f_2(y)\ \forall x,y\in C\},\\[2pt]
\tri(C,\eta,q)&=\{ (d_0,d_1,d_2)\in\frso(C,q)^3: d_0(x*y)=d_1(x)*y+x*d_2(y)\ \forall x,y\in C\},
\end{split}
\]
where the product $*$ is the multiplication in $\bar C_\eta$ (equation \eqref{eq:Ceta}). Because of equation \eqref{eq:qassoc}, we may define alternatively the trilinear map $Q_\eta(x,y,z)=q(x*y,z)=q(y*z,x)$ and then:
\[
\begin{split}
\Tri(C,\eta,q)&=\{ (f_0,f_1,f_2)\in SO(C,q)^3:
    Q_\eta\bigl(f_1(x),f_2(y)y,f_0(z)\bigr)\\
    &\null\hspace{2in} =Q_\eta(x,y,z)\ \forall x,y,z\in C\},\\[3pt]
\tri(C,\eta,q)&=\{ (d_0,d_1,d_2)\in\frso(C,q)^3:
    Q_\eta\bigl(d_1(x),y,z\bigr)+Q_\eta\bigl(x,d_2(y),z\bigr)\\
    &\null\hspace{2in} +Q_\eta\bigl(x,y,d_0(z)\bigr)=0\ \forall x,y,z\in C\}.
\end{split}
\]

Then $\Tri(C,\eta,q)$ is a subgroup of $SO(C,q)^3$ (componentwise multiplication) and $\tri(C,\eta,q)$ is a Lie subalgebra of $\frso(C,q)^3$. Also, as $Q(x,y,z)=Q(y,z,x)$ for any $x,y,z\in C$ it follows that a triple $(f_0,f_1,f_2)$ lies in $\Tri(C,\eta,q)$  if and only if so does $(f_2,f_0,f_1)$, and a triple $(d_0,d_1,d_2)$ lies in $\tri(C,\eta,q)$ if and only if so does $(d_2,d_0,d_1)$. Let us denote by $\theta_\eta$ both the group automorphism $\Tri(C,\eta,q)\rightarrow \Tri(C,\eta,q): (f_0,f_1,f_2)\mapsto (f_2,f_0,f_1)$ and the Lie algebra automorphism $\tri(C,\eta,q)\rightarrow \tri(C,\eta,q): (d_0,d_1,d_2)\mapsto (d_2,d_0,d_1)$. The \emph{Principle of Triality} (see \cite{KMRT98} or \cite{Eld00}) can be recast as follows:

\begin{proposition} Under the above conditions:
\begin{romanenumerate}
\item The projection $\pi_0:\Tri(C,\eta,q)\rightarrow SO(C,q);\ (f_0,f_1,f_2)\mapsto f_0$ is a surjective group homomorphism with kernel $\{(id,id,id), (id,-id,-id)\}$, and the projection $\pi_0:\tri(C,\eta,q)\rightarrow \tri(C,\eta,q);\ (d_0,d_1,d_2)\mapsto d_0$ is a Lie algebra isomorphism.
\item The triality automorphism $\theta_\eta$ induces a group automorphism of the projective special orthogonal group (again denoted by $\theta_\eta$) $PSO(C,q)\rightarrow PSO(C,q): [f_0]\mapsto [f_2]$ (where $(f_0,f_1,f_2)\in \Tri(C,\eta,q)$ and $[f]$ denotes the class of the element $f\in SO(C,q)$ in the quotient $PSO(C,q)=SO(C,q)/\{\pm id\}$), and a Lie algebra automorphism $\frso(C,q)\rightarrow \frso(C,q): d_0\mapsto d_2$ (with $(d_0,d_1,d_2)\in \tri(C,\eta,q)$). \qed
\end{romanenumerate}
\end{proposition}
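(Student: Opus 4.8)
The plan is to prove this — the Principle of Triality for the symmetric composition algebra $\bar C_\eta$ — in four stages: first verify that $\bar C_\eta$ is a symmetric composition algebra, then prove the Lie algebra assertion (\emph{local triality}), then integrate it to the group assertion and identify the kernel, and finally read off the two descriptions of $\theta_\eta$. For the first stage: the norm $q$ is multiplicative for the product $*$ of \eqref{eq:Ceta}, since $q(x*y)=q\bigl(\eta(\bar x)\bigr)\,q\bigl(\eta^2(\bar y)\bigr)=q(x)q(y)$ because $\eta$ and $\sigma$ lie in $O(C,q)$, while the identity $q(x*y,z)=q(x,y*z)$ is exactly \eqref{eq:qassoc}, so the trilinear form $Q_\eta(x,y,z)=q(x*y,z)$ is invariant under cyclic permutations of $x,y,z$. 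Expanding $x*(y*x)$ and $(x*y)*x$ directly, using $\eta^3=\mathrm{id}$, $\eta\sigma=\sigma\eta$ and the alternative identity $(zu)\bar u=z(u\bar u)=q(u)z$ of the Hurwitz algebra $C$, one obtains the fundamental identity
\[
x*(y*x)=(x*y)*x=q(x)\,y\qquad(x,y\in C);
\]
equivalently the left and right multiplications $L_a$, $R_a$ of $\bar C_\eta$ satisfy $L_aR_a=R_aL_a=q(a)\,\mathrm{id}$, so they are invertible whenever $q(a)\neq 0$.

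For the second stage I would construct an explicit inverse of the first projection $\pi_0\colon\tri(C,\eta,q)\to\frso(C,q)$. As $\frso(C,q)$ is spanned by the rotations $\sigma_{a,b}\colon z\mapsto q(a,z)b-q(b,z)a$, it suffices to attach to each $\sigma_{a,b}$ a triple $\bigl(\sigma_{a,b},d_1,d_2\bigr)$, with $d_1$, $d_2$ assembled from $L_a,L_b,R_a,R_b$, to check it belongs to $\tri(C,\eta,q)$, and to check that these triples project onto a spanning set of $\frso(C,q)$; this is a bounded computation with the fundamental identity and its linearization $(a*b)*c+(c*b)*a=q(a,c)b$, and it gives surjectivity of $\pi_0$. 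Injectivity is more direct: if $(0,d_1,d_2)\in\tri(C,\eta,q)$, pairing $d_1(x)*y+x*d_2(y)=0$ against a third vector and using the cyclic invariance of $Q_\eta$ and the skew\-symmetry of $d_1,d_2$ yields $d_1(x*y)=d_2(x)*y$ and $d_2(x*y)=x*d_1(y)$, whereupon fixing $x$ with $q(x)\neq 0$ and composing with $L_x$, $R_x$ forces $d_1=d_2=0$. Thus $\pi_0$ is a linear bijection, hence — being the restriction of a coordinate projection — a Lie algebra isomorphism. I expect this stage, and inside it the explicit construction of $\pi_0^{-1}$, to be the main obstacle; the rest is formal.

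For the third stage, $\tri(C,\eta,q)$ is the Lie algebra of the algebraic group $\Tri(C,\eta,q)$, and $\pi_0\colon\Tri(C,\eta,q)\to SO(C,q)$ is a morphism of algebraic groups whose differential is the isomorphism just established; hence $\pi_0$ is \'etale and its image is an open subgroup — so also closed, hence all — of the connected group $SO(C,q)$. Thus $\pi_0$ is surjective with finite kernel. If $(f_0,f_1,f_2)\in\ker\pi_0$ then $x*y=f_1(x)*f_2(y)$ for all $x,y$, and composing with $L_x$, $R_x$ (for $q(x)\neq 0$) via the fundamental identity forces $f_1=\pm\mathrm{id}$ and $f_2=\pm\mathrm{id}$ with matching signs; conversely $(\mathrm{id},-\mathrm{id},-\mathrm{id})\in\Tri(C,\eta,q)$ since $(-x)*(-y)=x*y$. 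Hence $\ker\pi_0=\{(\mathrm{id},\mathrm{id},\mathrm{id}),(\mathrm{id},-\mathrm{id},-\mathrm{id})\}$.

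Finally, the cyclic permutation $\theta_\eta$ of the three coordinates is manifestly an automorphism of order dividing $3$ of both $\tri(C,\eta,q)$ and $\Tri(C,\eta,q)$. On the Lie algebra side $\pi_0\circ\theta_\eta\circ\pi_0^{-1}$ is then an automorphism of $\frso(C,q)$ sending $d_0=\pi_0(d_0,d_1,d_2)$ to $\pi_0(d_2,d_0,d_1)=d_2$, which is the asserted map. On the group side the rule $[f_0]\mapsto[f_2]$ is well defined on $PSO(C,q)=SO(C,q)/\{\pm\mathrm{id}\}$: any two lifts of a fixed $[f_0]$ to $\Tri(C,\eta,q)$ differ by an element of $\ker\pi_0$ — possibly after composing with $(-\mathrm{id},-\mathrm{id},\mathrm{id})\in\Tri(C,\eta,q)$, when the two first coordinates differ by a sign — so their third coordinates agree up to $\pm\mathrm{id}$; surjectivity of $\pi_0$ then makes this a group automorphism, again of order dividing $3$.
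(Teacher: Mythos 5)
The paper does not supply a proof for this proposition; it is stated with a closing \qed and justified only by the remark that the Principle of Triality ``(see [KMRT98] or [Eld00]) can be recast as follows,'' so there is no in-paper argument to compare against. On its own merits, your four-stage plan is the standard one and matches the cited references: show $\bar C_\eta$ is a symmetric composition algebra (multiplicative $q$, cyclic invariance of $Q_\eta$, and $x*(y*x)=(x*y)*x=q(x)y$); establish local triality by exhibiting an explicit section of $\pi_0$ over the rotations $\sigma_{a,b}$ spanning $\frso(C,q)$ and checking injectivity; integrate via the \'etale argument over the connected group $SO(C,q)$; identify the kernel; and read off $\theta_\eta$ on $\frso(C,q)$ and on $PSO(C,q)$. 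The logical skeleton is sound, and the ancillary observations --- $L_aR_a=R_aL_a=q(a)\,\mathrm{id}$, closure of $\tri$ and $\Tri$ under the cyclic shift, and using $(-\mathrm{id},-\mathrm{id},\mathrm{id})\in\Tri(C,\eta,q)$ to reconcile lifts of $f_0$ and $-f_0$ in Stage~4 --- are all correct.

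Two of the deferred computations are somewhat heavier than the phrasing suggests. The section over $\sigma_{a,b}$ really does have to be written out and checked; this is the heart of the Triality Lemma in both references and is not a two-line calculation. And the kernel determination is not forced merely by ``composing with $L_x,R_x$'': from $x*y=f_1(x)*f_2(y)$ and $q(x)\ne 0$ this only gives $f_2=q(x)^{-1}R_{f_1(x)}L_x$, i.e.\ it expresses $f_2$ in terms of $f_1$. To pin $f_1,f_2$ to $\pm\mathrm{id}$ you also need the cyclic shifts $f_2(x*y)=x*f_1(y)$ and $f_1(x*y)=f_2(x)*y$, exactly parallel to your infinitesimal injectivity argument. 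These are honest but bounded gaps in the detail; the route itself is the right one.
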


\smallskip

Denote by $\theta$ the triality automorphism $\theta_{id}$ corresponding to the para-octonion algebra. Also, for any $f$ in the orthogonal group $O(C,q)$, denote by $\iota_f$ the automorphism of $\frso(C,q)$ given by $\iota_f(d)=fdf^{-1}$. Recall the natural involution $\sigma: x\mapsto \bar x$ of $C$:

\begin{proposition}\label{pr:autsoCq}
Under the above conditions:
\begin{enumerate}
\item The automorphisms $\iota_\sigma$ and $\theta$ of $\frso(C,q)$ generate a subgroup isomorphic to the symmetric group $S_3$ on three symbols.
\item $\Aut\frso(C,q)$ is the semidirect product of $\iota_{SO(C,q)}$ (which is isomorphic to $PSO(C,q)$) and the subgroup isomorphic to $S_3$ in item \textup{(1)}.
\item Given an automorphism $\eta\in\Aut C$ with $\eta^3=id$, the triality automorphism $\theta_\eta$ coincides with $\iota_\eta\theta=\theta\iota_\eta$.
\item The centralizer in $\Aut\frso(C,q)$ of the triality automorphism $\theta_\eta$ is the direct product of $\iota_{\Aut(C,*)}$ ($*$ as in \eqref{eq:Ceta}) and the subgroup generated by $\theta_\eta$ (which is a cyclic group of order $3$).
\end{enumerate}
\end{proposition}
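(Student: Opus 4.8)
The plan is to run everything through the Principle of Triality, using the Lie algebra isomorphism $\pi_0:\tri(C,\eta,q)\to\frso(C,q)$, $(d_0,d_1,d_2)\mapsto d_0$, to translate the automorphisms in sight into elementary operations on triples. The dictionary I set up first has two entries: (a) under $\pi_0$ the automorphism $\theta_\eta$ becomes the cyclic shift $(d_0,d_1,d_2)\mapsto(d_2,d_0,d_1)$, which is immediate from the definitions; and (b) for $f\in SO(C,q)$ the inner automorphism $\iota_f$ of $\frso(C,q)$ becomes componentwise conjugation $(d_0,d_1,d_2)\mapsto(f_0d_0f_0^{-1},f_1d_1f_1^{-1},f_2d_2f_2^{-1})$ by any triple $(f_0,f_1,f_2)\in\Tri(C,\eta,q)$ with $f_0=f$ — such a triple exists because $\pi_0:\Tri(C,\eta,q)\to SO(C,q)$ is onto, and it is unique up to the kernel $\{(id,id,id),(id,-id,-id)\}$. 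Checking (b) takes one line: apply $f_0$ to $d_0\bigl(f_1^{-1}(x)*f_2^{-1}(y)\bigr)=(d_1f_1^{-1})(x)*f_2^{-1}(y)+f_1^{-1}(x)*(d_2f_2^{-1})(y)$. Since $C$ is an irreducible $\frso(C,q)$-module, Schur's lemma yields the \emph{commutation criterion} I will use repeatedly: conjugation by $(f_0,f_1,f_2)$ commutes with the cyclic shift if and only if $f_0,f_1,f_2$ coincide up to scalars; in particular conjugation by any triple $(f,f,f)$ does.

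For (1) and (2) I take $\eta=id$, so $*$ is the para-octonion product $x\bullet y=\bar x\bar y$ and $\theta=\theta_{id}$ has order $3$ (the cyclic shift does). A short computation from $q(ab,c)=q(b,\bar ac)=q(a,c\bar b)$ gives $Q(\sigma x,\sigma y,\sigma z)=q(xy,\bar z)=Q(z,y,x)$, whence $(d_0,d_1,d_2)\in\tri(C,q)$ implies $(\sigma d_0\sigma,\sigma d_2\sigma,\sigma d_1\sigma)\in\tri(C,q)$; thus under $\pi_0$ the automorphism $\iota_\sigma$ becomes the involution $(d_0,d_1,d_2)\mapsto(\sigma d_0\sigma,\sigma d_2\sigma,\sigma d_1\sigma)$, and chasing the composite shows $\iota_\sigma\theta\iota_\sigma=\theta^{-1}$. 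As $\iota_\sigma\ne id$ (because $\sigma\notin\bF1$) has order $2$ and $\theta$ has order $3$, these relations force $\langle\iota_\sigma,\theta\rangle\cong S_3$. For (2): the inner automorphisms form the identity component of $\Aut\frso(C,q)$ and equal $\iota_{SO(C,q)}$ (every $\exp(\ad d)=\iota_{\exp d}$ with $\exp d\in SO(C,q)$, and $SO(C,q)$ is connected), with $\iota_g=id$ only for $g\in\bF1\cap O(C,q)=\{\pm id\}$ by Schur, so $\iota_{SO(C,q)}\cong PSO(C,q)$; and $\Aut\frso(C,q)/\iota_{SO(C,q)}$ is the symmetry group $S_3$ of the $D_4$ diagram. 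The $S_3$ from (1) meets $\iota_{SO(C,q)}$ in a normal subgroup of $S_3$ that must be trivial, because $\iota_\sigma$ is not inner ($\det\sigma=-1$, so $\sigma\notin SO(C,q)$) and $\theta$ is not inner (it is the triality automorphism); hence $\langle\iota_\sigma,\theta\rangle$ maps isomorphically onto the quotient and (2) follows.

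For (3): since $\eta$ fixes $1$ and preserves $q$, it commutes with $\sigma$, and the multiplication of $\bar C_\eta$ rewrites as $x*y=\eta(\bar x)\eta^2(\bar y)=\overline{\eta(x)}\cdot\overline{\eta^2(y)}=\eta(x)\bullet\eta^2(y)$. Substituting $a=\eta(x)$, $b=\eta^2(y)$ and using $\eta^3=id$ shows that $(d_0,d_1,d_2)\in\tri(C,\eta,q)$ if and only if $(d_0,\eta d_1\eta^{-1},\eta^{-1}d_2\eta)\in\tri(C,q)$; comparing the uniquely determined triples with first component $d_0$ gives $\theta_\eta(d_0)=\eta\,\theta(d_0)\,\eta^{-1}=\iota_\eta(\theta(d_0))$, i.e.\ $\theta_\eta=\iota_\eta\circ\theta$. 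Finally $\eta\in\Aut C\subseteq SO(C,q)$ is an automorphism of the para-octonion algebra, so $(\eta,\eta,\eta)\in\Tri(C,q)$ and the commutation criterion gives $\iota_\eta\theta=\theta\iota_\eta$; combining, $\theta_\eta=\iota_\eta\theta=\theta\iota_\eta$.

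For (4): $\Aut(C,*)=\Aut(\bar C_\eta)$ lies in $SO(C,q)$, and each $f\in\Aut(\bar C_\eta)$ gives $(f,f,f)\in\Tri(C,\eta,q)$, so $\iota_{\Aut(C,*)}$ centralizes $\theta_\eta$ by the commutation criterion; together with $\langle\theta_\eta\rangle\subseteq\Centr(\theta_\eta)$, and since $\theta_\eta,\theta_\eta^2$ are outer while $\iota_{\Aut(C,*)}$ is inner, the subgroup they generate is the direct product $\iota_{\Aut(C,*)}\times\langle\theta_\eta\rangle$. For the reverse inclusion, let $\phi\in\Aut\frso(C,q)$ commute with $\theta_\eta$; its class in $\Aut\frso(C,q)/\iota_{SO(C,q)}\cong S_3$ centralizes the $3$-cycle which is the class of $\theta_\eta$ (the same as that of $\theta$, since $\theta_\eta=\iota_\eta\theta$), hence lies in the order-$3$ subgroup $A_3$; so after multiplying $\phi$ by a power of $\theta_\eta$ we may assume $\phi=\iota_g$ is inner and still commutes with $\theta_\eta$. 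Lifting to a triple $(g,g_1,g_2)\in\Tri(C,\eta,q)$, the commutation criterion forces $g_1,g_2\in\{\pm g\}$, and since the kernel flips the two signs simultaneously, up to it the lift is $(g,g,g)$ or $(g,g,-g)$; in the first case $g\in\Aut(\bar C_\eta)$, and in the second, using bilinearity of $*$ (so that $g(x*y)=-(g(x)*g(y))$ rewrites as $(-g)(x*y)=(-g)(x)*(-g)(y)$), $-g\in\Aut(\bar C_\eta)$; either way $\iota_g=\iota_{-g}\in\iota_{\Aut(C,*)}$, completing (4). I expect the real work to be in this last step — bookkeeping the $\pm1$ ambiguities in the triple lift and the reduction to the inner case through the $S_3$-quotient — while for (2) I would simply quote the standard facts that $\Aut\frso(C,q)/\iota_{SO(C,q)}\cong S_3$ and that triality automorphisms are outer.
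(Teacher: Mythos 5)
Your proof is correct and follows essentially the same route as the paper's: the Principle of Triality is used to translate the automorphisms in question into operations on triples $(d_0,d_1,d_2)\in\tri(C,\eta,q)$, and then $\iota_f$ commuting with the cyclic shift $\theta_\eta$ is tested by the Schur-type criterion forcing the components of a lifting triple $(f_0,f_1,f_2)\in\Tri(C,\eta,q)$ to agree up to $\pm 1$, from which $\pm f_0\in\Aut(\bar C_\eta)$. The only genuine presentational difference is in part (1), where you work with the cubic form $Q$ and the identity $Q(\sigma x,\sigma y,\sigma z)=Q(z,y,x)$ to get the action $(d_0,d_1,d_2)\mapsto(\sigma d_0\sigma,\sigma d_2\sigma,\sigma d_1\sigma)$ of $\iota_\sigma$ on triples, whereas the paper computes directly with the para-octonion product $\bullet$; both yield $\iota_\sigma\theta\iota_\sigma=\theta^{-1}$, so this is the same argument in different clothing.
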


\begin{proof}
Part (1) is well-known. For $(d_0,d_1,d_2)\in\tri(C,id,q)$, $d_0(x\bullet y)=d_1(x)\bullet y+x\bullet d_2(y)$ ($x\bullet y=\bar x\bar y$) for any $x,y\in C$, and hence for any $x,y\in C$, since $\sigma: x\mapsto \bar x$ is an involution we get:
\[
\begin{split}
(\sigma d_0\sigma)(x\bullet y)&=\sigma d_0\bigl(\sigma(y)\bullet\sigma(x)\bigr)\\
  &=\sigma\bigl(d_1(\sigma(y))\bullet \sigma(x)+\sigma(y)\bullet d_2(\sigma(x))\bigr)\\
  &=x\bullet (\sigma d_1\sigma)(y)+(\sigma d_2\sigma)(x)\bullet y.
\end{split}
\]
Hence $\theta\iota_\sigma(d_0)=\sigma d_1\sigma=\iota_\sigma\theta^2(d_0)$. Thus $\iota_\sigma\theta=\theta^2\iota_\sigma$ and the group generated by $\iota_\sigma$ and $\theta$ is isomorphic to $S_3$.

The result in (2) appears in \cite[Chapter IX]{Jacobson}. Note that the group $\iota_{SO(C,q)}$ is the group of inner automorphisms of $\frso(C,q)$.

As for (3), this is proved in \cite{EldHamburgo}. We include a proof for completeness. For $d_0\in\frso(C,q)$,
\[
\begin{split}
d_0(x*y)&=d_0\bigl(\eta(x)\bullet\eta^2(y)\bigr)=\theta^2(d_0)\bigl(\eta(x)\bigr)\bullet \eta^2(y) +\eta(x)\bullet\theta(d_0)\bigl(\eta^2(y)\bigr)\\
 &=\eta\bigl(\eta^2\theta^2(d_0)\eta^{-2}(x)\bigr)\bullet \eta^2(y)+
  \eta(x)\bullet\eta^2\bigl(\eta\theta(d_0)\eta^{-1}(y)\bigr)\\
  &=\bigl(\eta^2\theta^2(d_0)\eta^{-2}(x)\bigr)*y+x*\bigl(\eta\theta(d_0)\eta^{-1}(y)\bigr),
\end{split}
\]
so that $\theta_\eta(d_0)=\eta\theta(d_0)\eta^{-1}=\iota_\eta\theta(d_0)$. Also,
\[
\begin{split}
\eta d_0\eta^{-1}(x\bullet y)&=\eta d_0\bigl(\eta^{-1}(x)\bullet\eta^{-2}(y)\bigr)\\
    &=\eta\bigl(\theta^2(d_0)(\eta^{1}(x))\bullet\eta^{-1}(y)\bigr) +
    \eta\bigl(\eta^{-1}(x)\bullet\theta(d_0)(\eta^{-1}(y))\bigr)\\
    &=\eta\theta^2(d_0)\eta^{-1}(x)\bullet y +x\bullet \eta\theta(d_0)\eta^{-1}(y),
\end{split}
\]
so that $\theta\iota_\eta=\iota_\eta\theta$ and (3) follows.

Finally, consider an automorphism $\varphi\in\Aut\frso(C,q)$ such that $\varphi\theta_\eta=\theta_\eta\varphi$. Because of (3) and since $\Aut(C,*)$ is contained in $SO(C,q)$, the projection of $\theta_\eta$ onto the subgroup $S_3$ of item (1) is $\theta$ and hence $\varphi=\psi\theta_\eta^j$ for $j=0,1,2$ and $\psi\in\iota_{SO(C,q)}$: $\psi=\iota_{f_0}$, $f_0\in SO(C,q)$.
Take elements $f_1,f_2\in SO(C,q)$ such that $(f_0,f_1,f_2)\in\Tri(C,\eta,q)$. Then for any $d_0\in \frso(C,q)$ and $x,y\in C$,
\[
\begin{split}
f_0d_0f_0^{-1}(x*y)&=f_0d_0\bigl(f_1^{-1}(x)*f_2^{-1}(y)\bigr)\\
    &=f_0\bigl(\theta_\eta^2(d_0)(f_1^{-1}(x))*f_2^{-1}(y)\bigr) +
       f_0\bigl(f_1^{-1}(x)*\theta_\eta(d_0)(f_2^{-1}(y))\bigr)\\
    &=f_1\theta_\eta^2(d_0)f_1^{-1}(x)*y+x*f_2\theta_\eta(d_0)f_2^{-1}(y).
\end{split}
\]
Hence $\theta_\eta\iota_{f_0}=\iota_{f_2}\theta_\eta$ (see also \cite{EldHamburgo}). But $\iota_{f_0}=\psi$ and $\psi\theta_\eta=\theta_\eta\psi$. we conclude that $\iota_{f_0}=\iota_{f_2}$ or $f_2=\pm f_0$ and also $f_1=\pm f_0$.
Hence there exists $\epsilon\in\{\pm 1\}$ such that $f_0(x*y)=\epsilon f_0(x)*f_0(y)$ for any $x,y\in C$, and hence $\epsilon f_0\in\Aut(C,*)$ and $\psi=\iota_{f_0}=\iota_{-f_0}\in \iota_{\Aut(C,*)}$.
\end{proof}

\smallskip

Let us go back to our basis $\calB$ of $C$ in \eqref{eq:basisC} and to the natural Cartan subalgebra $\frh$ of $\frso(C,q)$ in \eqref{eq:basish}. Let $U=\bF u_1+\bF u_2+\bF u_3$ and $V=\bF v_1+\bF v_2+\bF v_3$. Given any bijective linear map $f_U\in GL(U)$ with $\det f_U=1$ and its ``dual'' map $f_V\in GL(V)$ (that is, $q\bigl(f_U(u),f_V(v)\bigr)=q(u,v)$ for any $u\in U$ and $v\in V$) it is clear that the linear map on $C$ defined by:
\begin{equation}\label{eq:fUfV}
f(e_j)=e_j,\ j=1,2,\quad f(u)=f_U(u)\ \forall u\in U,\quad f(v)=f_V(v)\ \forall v\in V,
\end{equation}
is an automorphism of $C$. Also, given any trace zero linear map $d_U\in \frsl(U)$, consider the trace zero linear map $d_V$ of $V$ given by $q\bigl(d_U(u),v\bigr)+q\bigl(u,d_V(v)\bigr)=0$ for any $u\in U$ and $v\in V$.
Then the linear map $d$ of $C$ defined by:
\[
d(e_j)=0,\ j=1,2,\quad d(u)=d_U(u)\ \forall u\in U,\quad d(v)=d_V(v)\ \forall v\in V,
\]
is a derivation of $C$. By its own definition, if $d\in \der C$, then $(d,d,d)\in\tri(C,id,q)$ so that $\theta(d)=d$. In particular,
\[
\theta(h_1-h_2)=h_1-h_2,\quad \theta(h_2-h_3)=h_2-h_3.
\]
Now, using that $d(x\bullet y)=\theta^2(d)(x)\bullet y+x\bullet\theta(d)(y)$ for any $d\in\frso(C,q)$ and $x,y\in C$, it follows that
\[
\theta(h_0)=-\frac{1}{2}(h_0+h_1+h_2+h_3),\quad \theta(h_1+h_2+h_3)=\frac{3}{2}h_0-\frac{1}{2}(h_1+h_2+h_3).
\]
Therefore:
\begin{equation}\label{eq:theta}
\begin{split}
\theta(h_0)&=\frac{1}{2}(-h_0-h_1-h_2-h_3),\\
\theta(h_1)&=\frac{1}{2}(h_0+h_1-h_2-h_3),\\
\theta(h_2)&=\frac{1}{2}(h_0-h_1+h_2-h_3),\\
\theta(h_3)&=\frac{1}{2}(h_0-h_1-h_2+h_3).
\end{split}
\end{equation}
In particular, the automorphism $\theta$ leaves invariant the Cartan subalgebra $\frh$ and hence induces the corresponding linear automorphism of $\frh^*$ (also denoted by $\theta$; recall that $\frh$ and $\frh^*$ are identified through the bilinear form given by one half the trace form, so that $h_i\leftrightarrow\epsilon_i$ for any $i=0,1,2,3$. Hence we just change the $h_i$'s by the $\epsilon_i$'s in the formula above).

On the other hand, the automorphism $\tau$ of $C$ in \eqref{eq:tau} induces the automorphism $\iota_\tau\in\Aut\frso(C,q)$ which leaves $\frh$ invariant too and hence induces a linear automorphism of $\frh^*$ (which again will be denoted too by $\iota_\tau$):
\begin{equation}\label{eq:iotatau}
\iota_\tau: \epsilon_0\mapsto\epsilon_0,\quad \epsilon_1\mapsto\epsilon_2\mapsto\epsilon_3\mapsto\epsilon_1.
\end{equation}
Let $\Phi$ denote, as usual, the set of roots of $\frh$ in $\frso(C,q)$. Then
\[
\Phi=\{\pm\epsilon_i\pm\epsilon_j: 0\leq i< j\leq 3\}.
\]
The root system $\Phi$ decomposes as the disjoint union $\Phi=\Phi_1\cup \Phi_2\cup \Phi_3$ with
\[
\Phi_i=\{ \pm\epsilon_0\pm\epsilon_i, \pm\epsilon_{i'}\pm\epsilon_{i''}\}
\quad (\{i,i',i''\}=\{1,2,3\}),
\]
and this is the only possible decomposition in three subsets of size $8$ such that for any two elements $\alpha,\beta$ in each subset, either $\beta\in\{\pm\alpha\}$ or $(\alpha\vert\beta)=0$. In other words, for any $i=1,2,3$ and any $\alpha\in\Phi_i$, $\Phi_i=\{\pm\alpha\}\cup\{\beta\in\Phi: (\alpha\vert\beta)=0\}$.

For any root $\alpha$ consider (see \cite[III.10]{Humphreys}) the corresponding reflection in $\frh^*$: $\sigma_\alpha(\beta)=\beta-(\beta\vert\alpha)\alpha$ (note that $(\alpha\vert\alpha)=2$ for any $\alpha\in\Phi$). These reflections generate the Weyl group $W$. Then for $i\ne j$, $\sigma_{\epsilon_i-\epsilon_j}$ permutes $\epsilon_i$ and $\epsilon_j$ and leaves fixed $\epsilon_h$ for $h\ne i,j$, while $\sigma_{\epsilon_i+\epsilon_j}$ takes $\epsilon_i\leftrightarrow -\epsilon_j$ and leaves fixed $\epsilon_h$ for $h\ne i,j$. In particular the automorphism $\iota_\tau$ in \eqref{eq:iotatau} equals $\sigma_{\epsilon_1-\epsilon_2}\sigma_{\epsilon_2-\epsilon_3}$, so it belongs to the Weyl group.

On the other hand, the automorphism $\theta$ satisfies:
\[
\epsilon_0-\epsilon_1\stackrel{\theta}{\mapsto}-\epsilon_0-\epsilon_1\stackrel{\theta}{\mapsto} \epsilon_2+\epsilon_3\stackrel{\theta}{\mapsto} \epsilon_0-\epsilon_1,\quad
\epsilon_1-\epsilon_3\stackrel{\theta}{\mapsto}\epsilon_1-\epsilon_3,
\]
and we get the system of simple roots $\Pi=\{\epsilon_0-\epsilon_1,\epsilon_1-\epsilon_3,-\epsilon_0-\epsilon_1,\epsilon_2+\epsilon_3\}$ which is fixed by $\theta$ ($\theta(\Pi)=\Pi$), but where $\theta$ permutes cyclically the `outer' roots (this shows that $\theta$ is an outer automorphism).

\[
\begin{gathered}
\begin{picture}(100,70)(-30,-50)
 \put(0,0){\circle*{3}}
 \put(0,-35){\circle*{3}}
 \put(-30,18){\circle*{3}}
 \put(30,18){\circle*{3}}
 \put(0,0){\line(5,3){30}}
 \put(0,0){\line(-5,3){30}}
 \put(0,0){\line(0,-1){35}}
 \put(-47,18){\makebox(0,0){$\epsilon_0-\epsilon_1$}}
 \put(51,18){\makebox(0,0){$-\epsilon_0-\epsilon_1$}}
 \put(16,-3){\makebox(0,0){$\epsilon_1-\epsilon_3$}}
 \put(17,-35){\makebox(0,0){$\epsilon_2+\epsilon_3$}}
\end{picture}
\end{gathered}
\]

The natural involution $\sigma$, which permutes $e_1$ and $e_2$ and sends $u_i$ to $-u_i$ and $v_i$ to $-v_i$ for any $i$, also induces an automorphism $\iota_\sigma\in\Aut\frso(C,q)$ which leaves invariant the Cartan subalgebra $\frh$ and induces the linear automorphism of $\frh^*$: $\epsilon_0\mapsto-\epsilon_0$, $\epsilon_i\mapsto\epsilon_i$ ($i=1,2,3$). Hence
\[
\epsilon_0-\epsilon_1\stackrel{\iota_\sigma}{\leftrightarrow}-\epsilon_0-\epsilon_1,\quad
\epsilon_1-\epsilon_3\stackrel{\iota_\sigma}{\mapsto}\epsilon_1-\epsilon_3,\quad
\epsilon_2+\epsilon_3\stackrel{\iota_\sigma}{\mapsto}\epsilon_2+\epsilon_3.
\]
Thus $\iota_\sigma$ and $\theta$ generate the group $\Gamma$ of diagram automorphisms of $\Phi$ relative to the system $\Pi$ of simple roots. Finally, the group of automorphisms of the root system: $\Aut\Phi$ (see \cite[\S 12.2]{Humphreys}), is the semidirect product of $W$ and $\Gamma$. The Weyl group is generated by the reflections $\sigma_{\epsilon_i-\epsilon_j}$, which induce transpositions of the elements in the basis $\{\epsilon_0,\epsilon_1,\epsilon_2,\epsilon_3\}$ of $\frh^*$ and hence generate a subgroup isomorphic to the symmetric group $S_4$, and by the elements $\sigma_{\epsilon_i+\epsilon_j}\sigma_{\epsilon_i-\epsilon_j}$ which changes signs to $\epsilon_i$ and $\epsilon_j$ and leaves fixed $\epsilon_h$ for $i\ne j$. Then $W$ is the semidirect product
\begin{equation}\label{eq:WZS4}
W=Z \rtimes S_4,
\end{equation}
where $S_4$ denotes the group of permutations of $\{\epsilon_0,\epsilon_1,\epsilon_2,\epsilon_3\}$ and $Z$ denotes the group generated by the linear maps $\nu:\frh^*\rightarrow \frh^*$ such that $\nu(\epsilon_i)=\pm\epsilon_i$, $i=0,1,2,3$ with an even number of minus signs. Hence $Z$ is isomorphic to $\{(a_0,a_1,a_2,a_3)\in\bZ_2^4: a_0+a_1+a_2+a_3=0\}$ which is isomorphic to $\bZ_2^3$. In particular, as it is well known, $\vert W\vert=8\times 4!=2^6\times 3$, and $\vert\Aut\Phi\vert=\vert W\rtimes \Gamma\vert=2^7\times 3^2$.

The elements $\iota_\tau$ and $\theta$ commute, so they generate a Sylow $3$-subgroup of $\Aut\Phi$ isomorphic to $\bZ_3\times\bZ_3$ (the direct product of two cyclic groups of order $3$).

\begin{lemma}\label{le:order3}
$\Aut\Phi$ does not contain elements of order $9$ and any element of order $3$ is conjugated to either $\iota_\tau$, $\theta$ or $\iota_\tau\theta$.
\end{lemma}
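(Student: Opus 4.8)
The plan is to combine Sylow's theorem with a handful of explicit conjugations inside $\Aut\Phi$. First I would record that $\vert\Aut\Phi\vert=\vert W\rtimes\Gamma\vert=2^7\times 3^2$, so a Sylow $3$-subgroup has order $9$; and that the subgroup $P=\langle\iota_\tau,\theta\rangle$ is such a subgroup, since $\iota_\tau$ and $\theta$ have order $3$ and commute (as already observed), while $\iota_\tau\in W$ and $\theta,\theta^2\in\Gamma\setminus W$ force $\langle\iota_\tau\rangle\cap\langle\theta\rangle=1$, whence $P\cong\bZ_3\times\bZ_3$ has order $9$. Since an element of order $9$ would generate a cyclic $3$-subgroup lying in some Sylow $3$-subgroup, and every Sylow $3$-subgroup is conjugate to $P$, which has exponent $3$, this at once rules out elements of order $9$ in $\Aut\Phi$.

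For the classification of order-$3$ elements, the plan is: every such element lies in a Sylow $3$-subgroup, hence---up to conjugacy---in $P$, so it suffices to treat the eight elements $\iota_\tau^{a}\theta^{b}$ with $(a,b)\in(\bZ/3\bZ)^2\setminus\{(0,0)\}$. I would produce two order-$2$ elements of $\Aut\Phi$ normalizing $P$: the reflection $s=\sigma_{\epsilon_2-\epsilon_3}\in W$, acting on $\frh^*$ as the transposition of $\epsilon_2$ and $\epsilon_3$, and the diagram automorphism $\iota_\sigma\in\Gamma$, which negates $\epsilon_0$ and fixes $\epsilon_1,\epsilon_2,\epsilon_3$. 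From the explicit actions on $\frh^*$ one checks immediately that $s\iota_\tau s^{-1}=\iota_\tau^{-1}$ and that $s$ commutes with $\theta$, while $\iota_\sigma$ commutes with $\iota_\tau$ and satisfies $\iota_\sigma\theta\iota_\sigma^{-1}=\theta^{2}=\theta^{-1}$ (the last relation being already contained in the proof of Proposition \ref{pr:autsoCq}). Hence conjugation by $\langle s,\iota_\sigma\rangle$ acts on $P$ by all four sign changes $(a,b)\mapsto(\pm a,\pm b)$, and the eight nonzero pairs split into exactly three orbits, $\{(\pm1,0)\}$, $\{(0,\pm1)\}$ and $\{(\pm1,\pm1)\}$, with representatives $\iota_\tau$, $\theta$ and $\iota_\tau\theta$ respectively. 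This yields the statement.

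I expect no serious obstacle: the order computation and the structure $\Aut\Phi=W\rtimes\Gamma$ are in place, the key relation $\iota_\sigma\theta\iota_\sigma^{-1}=\theta^{2}$ is already proved, and the remaining identities $s\iota_\tau s^{-1}=\iota_\tau^{-1}$, $[s,\theta]=1$ and $[\iota_\sigma,\iota_\tau]=1$ are immediate from the formulas \eqref{eq:theta} and \eqref{eq:iotatau} for the actions on $\frh^*$. The only point requiring a little care is the bookkeeping---checking that all eight order-$3$ elements of $\bZ_3\times\bZ_3$ are accounted for, and that the conjugating elements $s$ and $\iota_\sigma$ genuinely belong to $\Aut\Phi$ (they do, being respectively an element of $W$ and one of $\Gamma$).
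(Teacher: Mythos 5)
Your proof is correct and follows essentially the same route as the paper: both invoke Sylow's theorem to reduce any order-$3$ element to a conjugate lying in the distinguished Sylow $3$-subgroup $\langle\iota_\tau\rangle\times\langle\theta\rangle$, and then use conjugation by order-$2$ elements drawn from the two commuting factors $S_{\{\epsilon_1,\epsilon_2,\epsilon_3\}}$ and $\langle\iota_\sigma,\theta\rangle\cong S_3$ to collapse the eight nontrivial elements into the three orbits represented by $\iota_\tau$, $\theta$ and $\iota_\tau\theta$. You merely make the conjugating elements ($s=\sigma_{\epsilon_2-\epsilon_3}$ and $\iota_\sigma$) and the orbit bookkeeping fully explicit, which the paper leaves terse.
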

\begin{proof}
All the Sylow $3$-subgroups of $\Aut\Phi$ are conjugated and hence the first assertion follows and any element of order $3$ is conjugated to a nontrivial element in our distinguished Sylow $3$-subgroup: $\langle\iota_\tau\rangle\times\langle\theta\rangle$. Now, this is contained in the larger group
\begin{equation}\label{eq:Se1e2e3S3}
S_{\{\epsilon_1,\epsilon_2,\epsilon_3\}}\times S_3,
\end{equation}
where $S_{\{\epsilon_1,\epsilon_2,\epsilon_3\}}$ denotes the permutation group of the set $\{\epsilon_1,\epsilon_2,\epsilon_3\}$ and $S_3$ is the subgroup generated by $\iota_\sigma$ and $\theta$. (These two subgroups commute elementwise.) Note that $\iota_\tau$ is conjugated to $\iota_\tau^2$ in $S_{\{\epsilon_1,\epsilon_2,\epsilon_3\}}$, while $\theta$ and $\theta^2$ are conjugated in $S_3$. The Lemma follows.
\end{proof}

\smallskip

Consider the subgroups:
\[
\begin{split}
T&=\{\varphi\in\Aut\frso(C,q): \varphi\vert_\frh=id\},\\
N&=\{\varphi\in\Aut\frso(C,q): \varphi(\frh)\subseteq\frh\}.
\end{split}
\]
Any $\varphi\in T$ is an inner automorphism \cite[Proposition IX.3]{Jacobson}, so that there is an element $f\in SO(C,q)$ such that $\varphi=\iota_f$. Since $\varphi\vert_\frh=id$, $f$ must leave invariants the weight spaces of $C$ relative to the action of $\frh$ and hence there are nonzero scalars $\beta_0,\beta_1,\beta_2,\beta_3\in \bF^\times$ such that:
\begin{equation}\label{eq:T}
f=\diag(\beta_0,\beta_0^{-1},\beta_1,\beta_2,\beta_3,\beta_1^{-1},\beta_2^{-1},\beta_3^{-1}).
\end{equation}
Also, any $\varphi\in N$ induces an automorphism of $\Phi$. The corresponding homomorphism
\[
\pi: N\rightarrow \Aut\Phi
\]
is surjective with kernel $T$ \cite[Chapter IX, exercise 11]{Jacobson}. Since $T$ is commutative, given any $\nu\in\Aut\Phi$ and $\varphi\in N$ such that $\pi(\varphi)=\nu$, the centralizer $T^{(\varphi)}=\{\psi\in T: \psi\varphi=\varphi\psi\}$ depends only on $\nu$, and we may write $T^{(\nu)}$ to denote it. In particular, given any permutation $p$ of $\{ 1,2,3\}$, consider the automorphism $f_p$ of $C$ given by
\begin{equation}\label{eq:fp}
\begin{split}
f_p(e_j)&=e_j,\ j=1,2,\\
f_p(u_i)&=(-1)^pu_{p(i)},\ f_p(v_i)=(-1)^pv_{p(i)},\ i=1,2,3,
\end{split}
\end{equation}
where $(-1)^p$ denotes the signature of the permutation $p$. Equation \eqref{eq:fUfV} shows that $f_p$ is indeed an automorphism of $C$ such that the corresponding automorphism of $\frso(C,q)$: $\iota_{f_p}:d\mapsto f_pdf_p^{-1}$ is in $N$. Note that if $p$ is the cycle $(123)$, $f_p$ turns out to be our automorphism $\tau$ in \eqref{eq:tau}. The image under $\pi$ of the subgroup consisting of the $\iota_{f_p}$'s is precisely the group $S_{\{\epsilon_1,\epsilon_2,\epsilon_3\}}$ in \eqref{eq:Se1e2e3S3}. It makes sense then to consider $T^{((12))}$, the centralizer in $T$ of the transposition $(12)$ in $S_{\{\epsilon_1,\epsilon_2,\epsilon_3\}}$.

\begin{lemma}\label{le:centralizers}
\null\quad
\begin{enumerate}
\item The centralizer in $\Aut\Phi$ of $\theta$ is the cartesian product $S_{\{\epsilon_1,\epsilon_2,\epsilon_3\}}\times\{\pm id\}\times\langle\theta\rangle$.
\item $T^{(-id)}$ is an elementary $2$-group.
\item $T^{(\theta)}=T\cap\iota_{\Aut C}$ is isomorphic to the two dimensional torus $(\bF^\times)^2$.
\item $T^{(\theta)}\cap T^{((1,2))}$ is isomorphic to the one dimensional torus $\bF^\times$.
\item $T^{(\theta)}\cap T^{(-(1,2))}$ is isomorphic to the one dimensional torus $\bF^\times$.
\item $T^{(\iota_\tau\theta)}$ is finite.
\end{enumerate}
\end{lemma}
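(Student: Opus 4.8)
Throughout I use the explicit coordinates for $T$. By \eqref{eq:T} an element of $T$ is $\iota_f$ with $f=\diag(\beta_0,\beta_0^{-1},\beta_1,\beta_2,\beta_3,\beta_1^{-1},\beta_2^{-1},\beta_3^{-1})$, $\beta_i\in\bF^\times$; since $\frso(C,q)$ acts irreducibly on $C$, two such agree iff the quadruples $(\beta_0,\beta_1,\beta_2,\beta_3)$ coincide up to a common sign, so $T\cong(\bF^\times)^4/\{\pm(1,1,1,1)\}$, a $4$-torus. The plan is to compute, for each $\nu\in\Aut\Phi$ appearing in the statement, the conjugation action of a convenient lift $\varphi\in N$ on these coordinates: for $\nu$ a permutation of $\{\epsilon_1,\epsilon_2,\epsilon_3\}$ take $\varphi=\iota_{f_p}$ as in \eqref{eq:fp}, which permutes $\beta_1,\beta_2,\beta_3$; for $\nu=-\mathrm{id}$ take $\varphi=\iota_g$ with $g$ interchanging $e_1\leftrightarrow e_2$ and $u_i\leftrightarrow v_i$ (this $g\in SO(C,q)$ and $\iota_g$ acts as $-\mathrm{id}$ on $\frh$), which inverts every $\beta_i$; for $\nu=\iota_\tau$ conjugation cyclically permutes $\beta_1,\beta_2,\beta_3$; and since $\iota_f$ scales each root space by the corresponding monomial in the $\beta_i^{\pm1}$, \eqref{eq:theta} shows $\theta$ acts by $(\beta_0,\beta_1,\beta_2,\beta_3)\mapsto(\lambda^{-1}\beta_0^{-1},\lambda\beta_1,\lambda\beta_2,\lambda\beta_3)$ with $\lambda^2=(\beta_0\beta_1\beta_2\beta_3)^{-1}$, the two choices of $\lambda$ being exactly the sign ambiguity in $T$. (Conceptually these are the actions of $\Aut\Phi$ on the cocharacter lattice of the maximal torus $T$ of the adjoint group, so $T^{(\nu)}\cong\Hom\bigl(Q(D_4)/(\nu-1)Q(D_4),\bF^\times\bigr)$, which also gives the results below; I will present the bare-hands version.)

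Each torus assertion is then a short fixed-point computation. For (2): $\nu=-\mathrm{id}$ forces $\beta_i^{-1}=\varepsilon\beta_i$ for a single sign $\varepsilon$, hence $\beta_i^2=\varepsilon$ for all $i$; these are finitely many classes, each of order dividing $2$ in $T$, so $T^{(-\mathrm{id})}$ is an elementary abelian $2$-group (in fact $\cong\bZ_2^4$). For (3): solving $\theta(\beta)=\pm\beta$ shows each fixed class has a unique representative with $\beta_0=1$ and $\beta_1\beta_2\beta_3=1$, so $T^{(\theta)}\cong(\bF^\times)^2$ via $(\beta_1,\beta_2)$; since the diagonal automorphisms of $C$ relative to $\calB$ are precisely the $\iota_f$ with $\beta_0=1$, $\beta_1\beta_2\beta_3=1$ (these relations being forced by $e_1+e_2=1$ and by $u_iu_{i+1}=v_{i+2}$), this is exactly $T\cap\iota_{\Aut C}$. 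For (4) and (5) I intersect $T^{(\theta)}=\{(1,\beta_1,\beta_2,\beta_3):\beta_1\beta_2\beta_3=1\}$ with the fixed set of the transposition (which imposes $\beta_1=\beta_2$, hence $\beta_3=\beta_1^{-2}$) and with the fixed set of $-(1,2)$ (which imposes $\beta_2=\beta_1^{-1}$ and $\beta_3^{-1}=\beta_3$, hence $\beta_3=1$ using $\beta_1\beta_2\beta_3=1$); both are visibly copies of $\bF^\times$. For (6): composing the $\iota_\tau$- and $\theta$-actions and solving the fixed-point equations forces $\beta_3$ to be a sixth root of unity with all remaining coordinates then determined; equivalently, a direct computation from \eqref{eq:theta} and \eqref{eq:iotatau} gives that $\iota_\tau\theta$ has every diagonal entry $-\tfrac12$ in the basis $\epsilon_0,\dots,\epsilon_3$, hence trace $-2$, and an integer operator of order $3$ on a $4$-dimensional space with trace $-2$ has eigenvalues $\omega,\omega,\bar\omega,\bar\omega$ and no eigenvalue $1$, so the induced action on the cocharacter lattice of $T$ has trivial fixed space and $T^{(\iota_\tau\theta)}$ is finite.

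Part (1) is handled separately. Writing $\Aut\Phi=W\rtimes\Gamma$ with $\Gamma=\langle\iota_\sigma,\theta\rangle\cong S_3$, the only element of $\Gamma$ in the coset $\theta W$ is $\theta$; so if $g=w\gamma$ ($w\in W$, $\gamma\in\Gamma$) commutes with $\theta$, comparing images in $\Gamma$ forces $\gamma\theta\gamma^{-1}=\theta=w\theta w^{-1}$, whence $\gamma\in C_\Gamma(\theta)=\langle\theta\rangle$ and $w\in C_W(\theta)$, giving $C_{\Aut\Phi}(\theta)=C_W(\theta)\times\langle\theta\rangle$. It remains to identify $C_W(\theta)$. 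The inclusion $C_W(\theta)\supseteq S_{\{\epsilon_1,\epsilon_2,\epsilon_3\}}\times\{\pm\mathrm{id}\}$ is clear, since $\theta$ fixes the roots $\epsilon_1-\epsilon_2$ and $\epsilon_2-\epsilon_3$ (so commutes with $\sigma_{\epsilon_1-\epsilon_2}$, $\sigma_{\epsilon_2-\epsilon_3}$, which generate $S_{\{\epsilon_1,\epsilon_2,\epsilon_3\}}$) and $\pm\mathrm{id}$ is central. Conversely, every $w\in C_W(\theta)$ preserves the $2$-plane $(\frh^*)^\theta=\mathrm{span}\{\epsilon_1-\epsilon_2,\epsilon_2-\epsilon_3\}$ and permutes the roots of $D_4$ in it, which form an $A_2$-system $\{\pm(\epsilon_i-\epsilon_j):1\le i<j\le3\}$; as no root $\pm\epsilon_i\pm\epsilon_j$ of $D_4$ is orthogonal to $(\frh^*)^\theta$ (orthogonality to $\epsilon_1-\epsilon_2$ and to $\epsilon_2-\epsilon_3$ would force its $\epsilon_1,\epsilon_2,\epsilon_3$-coordinates to be equal), the restriction map $C_W(\theta)\to\Aut(A_2)$ is injective, so $|C_W(\theta)|\le|\Aut(A_2)|=12$ and equality with the order-$12$ subgroup above follows; together with the central $\{\pm\mathrm{id}\}$ and $\langle\theta\rangle$ this is the asserted direct product. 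The one step that is not pure bookkeeping is precisely this faithfulness of $C_W(\theta)$ on $(\frh^*)^\theta$ in part (1) (handled by the no-orthogonal-root observation); getting the $\theta$-action on the torus coordinates right in parts (3)--(6) also needs a little care, and everything else is routine.
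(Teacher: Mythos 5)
Your proof is correct, and for parts (2), (4), (5) it proceeds in essentially the same coordinate-crunching manner as the paper. There are, however, a few places where you take a genuinely different route, and these are worth noting.

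For part (1), the paper observes that any $w\in C_W(\theta)$ must preserve both $\theta$-invariant subspaces $\bF(\epsilon_1-\epsilon_2)\oplus\bF(\epsilon_2-\epsilon_3)$ and $\bF\epsilon_0\oplus\bF(\epsilon_1+\epsilon_2+\epsilon_3)$, and then reads off $C_W(\theta)$ directly from the description of $W$ as signed permutations. You instead restrict $C_W(\theta)$ to the fixed plane, note the roots there form an $A_2$, bound $|C_W(\theta)|\le|\Aut(A_2)|=12$ via injectivity of the restriction, and match it against the explicit order-$12$ subgroup. The injectivity step silently invokes the fact that the pointwise stabilizer of a subspace in a Weyl group is generated by reflections in the roots orthogonal to that subspace (Steinberg's theorem); this is standard, but you should cite it or at least name it, since it is the only nontrivial input in your version of (1).

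For part (3) the paper first quotes Proposition \ref{pr:autsoCq}(4) to identify $T^{(\theta)}=T\cap\iota_{\Aut C}$ and only then computes coordinates for $T\cap\iota_{\Aut C}$. You compute $T^{(\theta)}$ directly from the $\theta$-action on the torus and observe \emph{a posteriori} that the same equations $\beta_0=1$, $\beta_1\beta_2\beta_3=1$ characterize the diagonal automorphisms of $C$. Both work; the paper's route makes the identity $T^{(\theta)}=T\cap\iota_{\Aut C}$ conceptually transparent, while yours avoids invoking the earlier proposition. I checked your description of the $\theta$-action on $T$: the apparently nonlinear formula with the auxiliary $\lambda$ is a legitimate way of writing a linear map on the cocharacter lattice of $T_{\mathrm{ad}}$ that does not lift to $T_{SO(8)}$, and it agrees with \eqref{eq:theta}.

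For part (6) the paper computes directly that the $\beta_i$ must be cube roots of unity. Your second argument—that $\iota_\tau\theta$ has trace $-2$ on $\frh^*$ and that an integral order-$3$ map on a rank-$4$ lattice with that trace has no eigenvalue $1$, hence $T^{(\iota_\tau\theta)}$ is finite—is cleaner and more conceptual than the paper's; it is a nice addition. Overall the argument is sound, with the one presentational caveat about Steinberg's theorem in (1).
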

\begin{proof}
Since $\Aut\Phi=W\rtimes \Gamma$ and $\Centr_{\Gamma}(\theta)=\langle\theta\rangle$ ($\Centr_G$ denotes the centralizer in the group $G$), for (1) it is enough to compute the centralizer $\Centr_W(\theta)$ in the Weyl group $W$ of $\theta$. Note that $\theta$ fixes the elements $\epsilon_1-\epsilon_2$ and $\epsilon_2-\epsilon_3$ (see \eqref{eq:theta}) and the subspace $\bF \epsilon_0+\bF(\epsilon_1+\epsilon_2+\epsilon_3)$, and the minimal polynomial of the action on $\theta$ on this subspace is $X^2+X+1$. Hence any element $\gamma\in W$ which commutes with $\theta$ must leave invariant the subspaces $\bF(\epsilon_1-\epsilon_2)+\bF(\epsilon_2-\epsilon_3)$ and $\bF \epsilon_0+\bF(\epsilon_1+\epsilon_2+\epsilon_3)$. The interpretation of the elements of $W$ as permutations of $\{\epsilon_0,\epsilon_1,\epsilon_2,\epsilon_3\}$ followed by ``even sign changes'' (see \eqref{eq:WZS4}) proves (1).

The automorphism $-id$ of $\Aut\Phi$ is the image under $\pi$ of the conjugation by the order $2$ automorphism $\ex$ of $C$ given by
\[
\ex(e_1)=e_2,\ \ex(u_i)=v_i\ (i=1,2,3),\ \ex^2=id.
\]
With $f$ as in \eqref{eq:T}, $\iota_f\iota_{\ex}=\iota_{\ex}\iota_f$ if and only if $f\circ \ex=\pm\ex\circ f$, and this is equivalent either to the condition $\beta_i\in\{\pm 1\}$ for $i=0,1,2,3$, or to the condition $\beta_i\in \{\pm\xi\}$ for $i=0,1,2,3$, with $\xi^4=1\ne \xi^2$.
Therefore $T^{(-id)}$ is generated by the order $2$ elements $\iota_f$, with $f=\diag(\beta_0,\beta_0^{-1},\beta_1,\beta_2,\beta_3,
\beta_1^{-1},\beta_2^{-1},\beta_3^{-1})$ and $(\beta_0,\beta_1,\beta_2,\beta_3)$ equal to either $(1,-1,1,1)$, $(1,1,-1,1)$, $(1,1,1,-1)$ and $(\xi,\xi,\xi,\xi)$ (note that for $(\beta_0,\beta_1,\beta_2,\beta_3)=(-1,-1,-1,-1)$ the associated $\iota_f$ is the identity), and hence it is isomorphic to $\bZ_2^4$. This proves (2).

The fact that $T^{(\theta)}=T\cap\iota_{\Aut C}$ follows from part (4) of Proposition \ref{pr:autsoCq} (note that the automorphism group of the octonions and of the para-octonions coincide). Now, with $f$ as in \eqref{eq:T}, $f$ is an automorphism of $C$ if and only if $\beta_0=1$ (as $f(e_1)$ must be an idempotent too) and $\beta_1\beta_2\beta_3=1$. Therefore
\[
T\cap\iota_{\Aut C}=\{\iota_f: f=\diag(1,1,\alpha,\beta,(\alpha\beta)^{-1},\alpha^{-1},\beta^{-1},\alpha\beta),\ \alpha,\beta\in\bF^\times\}
\]
and (3) follows. Now, for item (4) we must check which elements in $T\cap\iota_{\Aut C}$ commute with the automorphism $\iota_{f_{(1,2)}}$ ($f_{(1,2)}$ as in \eqref{eq:fp}), and for item (5) with the automorphism $\iota_{\ex}\iota_{f_{(1,2)}}$, and it easily follows that
\[
T^{(\theta)}\cap T^{((1,2))}=\{\iota_f: f=\diag(1,1,\alpha,\alpha,\alpha^{-2},\alpha^{-1},\alpha^{-1},\alpha^2),\ \alpha\in\bF^\times\},
\]
and
\[
T^{(\theta)}\cap T^{(-(1,2))}=\{\iota_f: f=\diag(1,1,\alpha,\alpha^{-1},1,\alpha^{-1},\alpha,1),\ \alpha\in\bF^\times\}.
\]

Finally, $\iota_\tau\theta$ is the triality automorphism $\theta_\tau$ (Proposition \ref{pr:autsoCq}) and hence $T^{(\iota_\tau\theta)}=T\cap\iota_{\Aut\bar C_\tau}$. Let $f$ be as in \eqref{eq:T}, then $\iota_f\in T^{(\iota_\tau\theta)}$ if and only if (changing $f$ to $-f$ if necessary, see the proof of item (4) of Proposition \ref{pr:autsoCq}) $f\in \Aut\bar C_\tau$. In this case $f(e_1*e_1)=f(e_2)=\beta_0^{-1}e_2$, but $f(e_1)*f(e_1)=\beta_0^2e_2$, and hence $\beta_0^3=1$. Now,
\[
\begin{split}
&f(e_1*v_1)=f(e_2\bar v_3)=-f(v_3)=-\beta_3^{-1}v_3,\\
&f(e_1)*f(v_1)=\beta_0\beta_1^{-1}e_1*v_1=-\beta_0\beta_1^{-1}v_3,
\end{split}
\]
so $\beta_1=\beta_0\beta_3$ and $\beta_2=\beta_0\beta_1$, $\beta_3=\beta_0\beta_2$ too. Also,
\[
\begin{split}
&f(u_1*u_1)=f(u_2u_3)=f(v_1)=\beta_1^{-1}v_1,\\
&f(u_1)*f(u_1)=\beta_1^2u_1*u_1=\beta_1^2v_1,
\end{split}
\]
so that $\beta_1^3=1$, and also $\beta_2^3=1=\beta_3^3$. Hence $T^{(\iota_\tau\theta)}$ is finite.
\end{proof}

We will need the following general result. It can be deduced too, as in \cite{DMVpr}, from conjugacy results in algebraic groups.

\begin{lemma}\label{le:MinT}
Let $\frg=\oplus_{g\in G}\frg_g$ be a grading of a finite dimensional semisimple Lie algebra $\frg$. Then there is an homogeneous Cartan subalgebra $\frh$ of $\frg$ (that is, $\frh=\oplus_{g\in G}(\frh\cap\frg_g)$) such that $\frh\cap\frg_0$ is a Cartan subalgebra of $\frg_0$.
\end{lemma}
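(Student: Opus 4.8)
\emph{Plan.} The plan is to reduce the lemma to the statement that \emph{every grading of a reductive Lie algebra admits a homogeneous Cartan subalgebra}, and to prove this last statement by induction on the dimension. For the reduction one may assume $G$ abelian (which is automatic for semisimple $\frg$), so that the grading is the weight decomposition for the action on $\frg$ of the diagonalizable group $\hat G=\Hom(G,\bF^\times)$. Since $\hat G$ is linearly reductive and $\frg$ is semisimple, $\frg_0=\frg^{\hat G}$ is reductive in $\frg$ --- equivalently, the Killing form $\kappa$ of $\frg$, which pairs $\frg_g$ with $\frg_{-g}$ and kills $\frg_g\times\frg_{g'}$ for $g+g'\neq 0$, restricts nondegenerately to $\frg_0$ --- and in particular every $\ad_{\frg_0}$-semisimple element of $\frg_0$ is $\ad_\frg$-semisimple. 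I would then take a Cartan subalgebra $\frt_0$ of $\frg_0$: it is abelian and consists of semisimple elements of $\frg$, hence is a toral subalgebra of $\frg$ sitting inside $\frg_0$. Put $L:=\Cent_\frg(\frt_0)$. Then $L$ is a homogeneous subalgebra (the centralizer of degree-$0$ homogeneous elements is a sum of homogeneous components), it is reductive (centralizer of a torus in a semisimple Lie algebra), $\frt_0\subseteq Z(L)$ by construction, and $L\cap\frg_0=\Cent_{\frg_0}(\frt_0)=\frt_0$ because a Cartan subalgebra of $\frg_0$ is self-normalizing. Granting that $L$ has a homogeneous Cartan subalgebra $\frh$, it contains $Z(L)\supseteq\frt_0$, it is homogeneous in $\frg$, and $\frh\cap\frg_0=\frh\cap L\cap\frg_0=\frh\cap\frt_0=\frt_0$ is a Cartan subalgebra of $\frg_0$; moreover $\frh$ is a Cartan subalgebra of $\frg$, because it is a maximal toral subalgebra of $L=\Cent_\frg(\frt_0)$ containing $\frt_0$, so any $\ad_\frg$-semisimple element of $\frg$ centralizing $\frh$ centralizes $\frt_0$, hence lies in $L$ and therefore in $\frh$.

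It remains to show that any grading of a reductive Lie algebra $L$ has a homogeneous Cartan subalgebra. Since $Z(L)$ and $[L,L]$ are characteristic --- hence homogeneous --- and a Cartan subalgebra of $L$ is $Z(L)$ together with a Cartan subalgebra of $[L,L]$, it suffices to treat $L$ semisimple, and I would induct on $\dim L$ (the case $L=0$ being vacuous). If $L\neq0$, one first notes that $L$ has a nonzero homogeneous toral subalgebra: the Jordan components of a homogeneous element are homogeneous of the same degree (scaling commutes with Jordan decomposition, so a weight vector has weight-vector components), and if every homogeneous element were $\ad$-nilpotent this would contradict nondegeneracy of $\kappa$ --- directly on $L_0$ when $L_0\neq0$, and otherwise through $\kappa(x,y)=\tr(\ad x\,\ad y)$ for $x\in L_g$, $y\in L_{-g}$, where $[x,y]\in L_0=0$ makes $\ad x,\ad y$ commuting nilpotents whose product has trace $0$. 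Now choose a homogeneous toral subalgebra $\frt$ of $L$ of maximal dimension and set $M:=\Cent_L(\frt)$: it is homogeneous, reductive (centralizer of a torus), and $\frt\subseteq Z(M)$. Because $M$ is reductive \emph{in} $L$, Schur's lemma applied to the $M$-submodules of $L$ shows that $Z(M)$ acts semisimply on $L$, so $Z(M)$ is a homogeneous toral subalgebra of $L$ containing $\frt$; maximality forces $Z(M)=\frt$. If $M=\frt$, then $\frt$ is a self-centralizing toral subalgebra, hence a Cartan subalgebra of $L$, and we are done. Otherwise $S:=[M,M]$ is nonzero, semisimple and homogeneous, with $\dim S<\dim L$ (as $M\subsetneq L$ --- else $\frt\subseteq Z(L)=0$ --- and $S\subsetneq M$ --- else $Z(M)=0$); by induction $S$ has a homogeneous Cartan subalgebra $\frh_S$, and $\frh:=\frt\oplus\frh_S$ is a homogeneous Cartan subalgebra of the reductive $M$, which, containing $\frt$ with $M=\Cent_L(\frt)$, is a Cartan subalgebra of $L$ by the same argument used in the reduction step.

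\emph{Expected difficulty.} The homogeneity bookkeeping and the Jordan-decomposition remarks are routine. The step I expect to be delicate, and which recurs at each stage, is keeping semisimplicity attached to the correct ambient algebra: both the equality $Z(M)=\frt$ and the passage from a Cartan subalgebra of $\Cent_L(\frt)$ to one of $L$ rely on $M=\Cent_L(\frt)$ being reductive \emph{in} $L$ (so that $\ad_L$ restricted to $M$ is completely reducible and Schur's lemma applies), not merely reductive as an abstract Lie algebra, together with the standard fact that the Cartan subalgebras of a reductive Lie algebra passing through a fixed toral subalgebra $\frt$ are precisely the Cartan subalgebras of its centralizer. Working consistently with genuinely toral (diagonalizably acting) subalgebras, rather than abstractly reductive ones, is what makes the recursion go through.
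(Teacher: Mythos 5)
Your proof is correct, but it takes a genuinely different route from the paper's. The paper argues by induction on the number of generators of the (finitely generated abelian) grading group $G$: it writes $G=\langle g\rangle\times G'$, peels off the cyclic factor $\langle g\rangle$, treats $g$ of infinite and finite order separately (via an inner grading derivation in the first case and a centralizer recursion showing $[\frc,\frc]=0$ in the second), and carries the requirement that $\frh\cap\frg_0$ be a Cartan subalgebra of $\frg_0$ along the induction. You instead reduce the refined statement once and for all to the existence of a homogeneous Cartan subalgebra of the graded reductive centralizer $L=\Cent_\frg(\frt_0)$ of a Cartan subalgebra $\frt_0$ of $\frg_0$, and then induct on $\dim L$, mirroring the textbook existence proof for Cartan subalgebras: produce a nonzero homogeneous toral subalgebra via Jordan decomposition of homogeneous elements (with the two cases $L_0\neq0$ and $L_0=0$ handled by nondegeneracy of the Killing form), pass to a maximal one $\frt$, form $M=\Cent_L(\frt)$, identify $Z(M)=\frt$ via Schur's lemma on the completely reducible $M$-module $L$, and recurse on the smaller semisimple homogeneous ideal $[M,M]$. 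The paper's route is adapted to the combinatorics of the grading group (fitting the paper's sustained attention to universal grading groups), while yours is dimension-theoretic and more portable; the price, as you note, is keeping careful track of which ambient algebra supplies the complete reducibility (``reductive in $L$'') that makes each Schur/toral step go through.
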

\begin{proof}
We may assume that $G$ is the group generated by the support of the grading and hence, in particular, that $G$ is finitely generated. The proof will be done by induction on the number of generators. Let $g\in G$ be a generator with $G=\langle g\rangle\times G'$ for a subgroup $G'$.

If the order of $g$ is infinite, then $\frg=\oplus_{i\in\bZ}\frg_{[i]}$, with $\frg_{[i]}=\sum_{g'\in G'}\frg_{ig+g'}$. This gives a $\bZ$-grading of $\frg$. The grading derivation $d\in\der\frg$ such that $d(x)=ix$ for any $x\in\frg_{[i]}$ is inner (as $\frg$ is semisimple) and hence $d=\ad_{z}$ for some $z\in\frg_{[0]}$. Since the Killing form of $\frg$ is nondegenerate, the subalgebra $\frg_{[0]}$ is reductive and any element in its center  $Z(\frg_{[0]})$ is ad-diagonalizable. Hence $\frg_{[0]}=Z(\frg_{[0]})\oplus \frs$ with $\frs=[\frg_{[0]},\frg_{[0]}]$. Note that $\frs$ is $G'$-graded. The induction hypothesis allows us to find a Cartan subalgebra $\frh'$ of $\frs$ which is homogeneous and such that $\frh'\cap\frs_0$ is a Cartan subalgebra of $\frs_0$. Then $\frh=Z(\frg_{[0]})\oplus \frh'$ is an homogeneous Cartan subalgebra of $\frg$, and $\frg_0=Z(\frg_{[0]})_0\oplus\frs_0$, so $\frh\cap\frg_0$ is a Cartan subalgebra of $\frg_0$.

Otherwise $g$ has finite order, say $m$, and hence $\frg=\oplus_{i=0}^{m-1}\frg_{[i]}$ with $\frg_{[i]}=\oplus_{g'\in G'}\frg_{ig+g'}$. Note that $\frg_{[0]}\ne 0$, otherwise the elements of $\frg_{[1]}$ would be ad-nilpotent and $[\frg_{[1]},\frg_{[m-1]}]=0$, which would imply that $\kappa(\frg_{[1]},\frg)=0$ ($\kappa$ denotes the Killing form) and $\frg_{[1]}=0$. This forces the elements of $\frg_{[2]}$ to be ad-nilpotent (for any $x\in\frg_{[2]}$ and $y\in\frg_{[i]}$, there is an $r$ such that $\ad_x^r(y)\in\frg_{[0]}+\frg_{[1]}=0$). As before, we would conclude that $\frg_{[2]}=0$, and so on (see \cite[Lemma 8.1]{Kac}), thus getting a contradiction. Again $\frg_{[0]}=Z(\frg_{[0]})\oplus \frs$ is a reductive subalgebra as before and there is a Cartan subalgebra $\frh'$ of $\frs$ such that $\frh'\cap\frs_0$ is a Cartan subalgebra of $\frs_0$. Consider $\frh_{[0]}=Z(\frg_{[0]})\oplus\frh'$. This is an homogeneous ad-diagonalizable subalgebra of $\frg$ and a Cartan subalgebra of $\frg_{[0]}$. Let $\frc$ be its centralizer in $\frg$, which is homogeneous too. On the other hand, $\frh_{[0]}$ is contained in a Cartan subalgebra $\frh$ of $\frg$, which is abelian ($\frg$ is semisimple) so $\frh\subseteq\frc$ and hence $\frh$ is a Cartan subalgebra of $\frc$. Thus there is the root decomposition of $\frc$: $\frc=\frh\oplus\bigl(\oplus_{\{\alpha\in\Phi:\alpha(\frh_{[0]})= 0\}}\frg_\alpha\bigr)$, where $\Phi$ denotes the root system of $\frh$. In particular the Killing form restricts to a nondegenerate form on $\frc$ and $\frc=\frh+[\frc,\frc]$. If different from $0$, the semisimple subalgebra $[\frc,\frc]$ is homogeneous and $[\frc,\frc]\cap\frg_{[0]}\subseteq \frc\cap\frg_{[0]}=\frh_{[0]}$. But we have $\frh_{[0]}\subseteq Z(\frc)$. Hence $[\frc,\frc]\cap\frg_{[0]}=0$ and we conclude as before that $[\frc,\frc]_{[1]}=0$ and, eventually, $[\frc,\frc]=0$. Therefore $\frh=\frc$ is a Cartan subalgebra of $\frg$ such that $\frh\cap\frg_0$ is a Cartan subalgebra of $\frg_0$.
\end{proof}

\smallskip

We are ready for a proof of the main result of this section (see \cite{DMVpr}). Let us denote by $R$ the associative algebra $\End_\bF(C)\simeq \Mat_8(\bF)$ and by $*$ the orthogonal involution associated to the norm $q$ of the octonion algebra $C$.

\begin{theorem}\label{th:R*outer}
Let $M$ be a MAD in $\Aut\frso(C,q)$. Then either it is conjugated to a MAD in $\Aut(R,*)$ or it contains an order $3$ outer automorphism.
\end{theorem}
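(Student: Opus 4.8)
The plan is to exploit the split short exact sequence furnished by Proposition~\ref{pr:autsoCq}(2). Write $\pi\colon\Aut\frso(C,q)\to S_3$ for the projection onto the subgroup $S_3=\group{\iota_\sigma,\theta}$; its kernel is $\iota_{SO(C,q)}$, i.e. the group of inner automorphisms (the identity component), and the subgroup $\Aut(R,*)=\iota_{O(C,q)}$ is exactly $\pi^{-1}\bigl(\langle\pi(\iota_\sigma)\rangle\bigr)$, the preimage of an order--$2$ subgroup of $S_3$ (here $\sigma\in O(C,q)\setminus SO(C,q)$). Since $M$ is abelian, $\pi(M)$ is an abelian subgroup of $S_3$, hence of order $1$, $2$ or $3$, and the proof is organized according to these three possibilities.

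If $\lvert\pi(M)\rvert\le 2$: when $\pi(M)=1$ we already have $M\subseteq\iota_{SO(C,q)}\subseteq\Aut(R,*)$; when $\lvert\pi(M)\rvert=2$, all order--$2$ subgroups of $S_3$ are conjugate, so choosing $g\in S_3$ with $g\,\pi(M)\,g^{-1}=\langle\pi(\iota_\sigma)\rangle$ and lifting it to some $\widehat g\in\Aut\frso(C,q)$, the conjugate $\widehat gM\widehat g^{-1}$ lies inside $\Aut(R,*)$. In either case $M$ is conjugated into $\Aut(R,*)$, and a maximal abelian diagonalizable subgroup of $\Aut\frso(C,q)$ that happens to be contained in the subgroup $\Aut(R,*)$ is a fortiori a MAD of $\Aut(R,*)$. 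This gives the first alternative.

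Now assume $\lvert\pi(M)\rvert=3$. Then no conjugate of $M$ can lie in $\Aut(R,*)$ (conjugation only moves $\pi(M)$ within its $S_3$-conjugacy class, and the $\pi$-image of any subgroup of $\Aut(R,*)$ has order $\le 2$), so it suffices to exhibit an order--$3$ element of $M$ lying outside $\iota_{SO(C,q)}$. Since this property is conjugation-invariant, Lemma~\ref{le:MinT}, applied to the grading of $\frso(C,q)$ induced by $M$, lets us assume that $M$ stabilizes the homogeneous Cartan subalgebra $\frh$, i.e. $M\subseteq N$. Composing the inclusion with $\pi_\Phi\colon N\to\Aut\Phi$ (whose kernel is the torus $T$ of \eqref{eq:T}), the abelian group $\pi_\Phi(M)$ has image $\pi(M)\cong\bZ_3$ in the diagram group $\Gamma\cong S_3$. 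Pick $m\in M$ with $\pi(m)$ a generator of $\pi(M)$; the order of $m$ is $3^{a}k$ with $\gcd(k,3)=1$, and since $\pi_\Phi(m^{k})$ has $3$-power order in $\Aut\Phi$, Lemma~\ref{le:order3} (absence of order $9$) forces $\pi_\Phi(m^{k})$ to have order exactly $3$; replacing $m$ by $m^{k}$ (which still maps to a generator of $\pi(M)$) we may assume $\pi_\Phi(m)$ has order $3$, so that $m^{3}\in\ker\pi_\Phi=T$ and conjugation by $m$ acts on $T$ through the order--$3$ automorphism $\bar b:=\pi_\Phi(m)\vert_{T}$. A direct computation in the group $\langle m\rangle\ltimes T$ shows $(mx^{-1})^{3}=m^{3}\bigl(x\,\bar b(x)\,\bar b^{2}(x)\bigr)^{-1}$ for $x\in T$; since $\pi(mx^{-1})=\pi(m)\ne 1$, it therefore suffices to find $x\in M\cap T$ with $x\,\bar b(x)\,\bar b^{2}(x)=m^{3}$, as then $mx^{-1}$ is an order--$3$ outer automorphism in $M$.

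The crux, and the main obstacle, is solving this last equation. The norm map $N(x)=x\,\bar b(x)\,\bar b^{2}(x)$ sends $T$ onto the identity component of the fixed subtorus $T^{\bar b}$, and $m^{3}$ does lie in $T^{\bar b}$ (it commutes with $m$), but a priori $m^{3}$ may fall outside that identity component, and one must moreover arrange $x\in M$ rather than merely $x\in T$ (the relevant obstruction lives in a Tate cohomology group $\widehat H^{0}(\bZ_3,T^{\bar b})$ which, from the Jordan form of the order--$3$ unipotent action of $\bar b$ on $T[3]\cong\bF_3^{4}$, need not vanish in general). This is handled by combining: the conjugacy classification of order--$3$ elements of $\Aut\Phi$ (Lemma~\ref{le:order3}), reducing to $\bar b\in\{\theta,\iota_\tau\theta\}$; the description of the fixed subtori $T^{(\theta)}\cong(\bF^\times)^2$ (connected, hence divisible, so the cube root needed for $N$ is available) and $T^{(\iota_\tau\theta)}$ finite (forcing $m^{3}$ into a finite group, where the obstruction collapses) from Lemma~\ref{le:centralizers}; and, decisively, the maximality of $M$: were $M$ to contain no order--$3$ outer automorphism, then a suitable cube root of $m^{3}$ chosen in the appropriate fixed subtorus would, together with $M$, generate a strictly larger abelian diagonalizable subgroup, a contradiction. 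Carrying this through produces the required order--$3$ outer automorphism in $M$ and completes the proof.
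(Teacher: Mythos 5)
Your organization of the argument---split by $\lvert\pi(M)\rvert\in\{1,2,3\}$, conjugate into $\Aut(R,*)$ when $\lvert\pi(M)\rvert\le 2$, and otherwise produce an order-$3$ outer element by seeking $x\in M\cap T$ with $x\,\bar b(x)\,\bar b^2(x)=m^3$ so that $mx^{-1}$ has order $3$---is a sound reorganization of what happens in the paper's proof, and the computation $(mx^{-1})^3=m^3\bigl(x\bar b(x)\bar b^2(x)\bigr)^{-1}$ is correct. But the crux, which you yourself flag, is never actually resolved, and the resolution you gesture at does not work. For $\bar b=\theta$, divisibility of $T^{(\theta)}$ gives a cube root $y$ of $m^3$ in $T^{(\theta)}$, but you need $y\in M\cap T$. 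Your appeal to maximality (``a suitable cube root of $m^3$ ... would, together with $M$, generate a strictly larger abelian diagonalizable subgroup'') is circular: $y$ commutes with all of $M$ if and only if $y$ is fixed by $\pi_\Phi(f)$ for every $f\in M$, i.e.\ $y\in\bigcap_{\nu\in\pi_\Phi(M)}T^{(\nu)}$, and by maximality this intersection equals $M\cap T$---exactly the membership you were trying to establish. In concrete terms, if $\pi_\Phi(M)$ contains $-id$, your $y\in T^{(\theta)}$ need not commute with a lift of $-id$, so $\langle M,y\rangle$ need not be abelian, and no contradiction arises.

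For $\bar b=\iota_\tau\theta$ the situation is worse: $T^{(\iota_\tau\theta)}$ is finite, so the image of the norm map $N\colon T\to T^{\bar b}$ (continuous from a connected group into a finite one) is $\{id\}$, and the equation $N(x)=m^3$ is solvable precisely when $m^3=id$. Finiteness does not make the Tate obstruction vanish---in fact $\widehat H^0(\bZ_3,T)=T^{\bar b}/N(T)$ is the whole nontrivial finite group here---so the assertion that ``the obstruction collapses'' is simply false. What saves the day is that $m^3$ does equal $id$, but showing this requires an argument you omit: write $m=t\varphi_0$ with $\varphi_0=\iota_\tau\theta$ a genuine order-$3$ lift in $N$, so $m^3=N(t)\varphi_0^3=N(t)$, and then use the constancy of $N$ on connected $T$. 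The paper instead carries out a direct case analysis of $\pi_\Phi(M)$ using Lemma~\ref{le:order3} and Lemma~\ref{le:centralizers}: if $-id\in\pi_\Phi(M)$ then $M\cap T$ is a $2$-group and $M$ is finite of order $2^a\cdot 3$, giving an order-$3$ element by Cauchy; otherwise $M\cap T$ is a subtorus and $M=M^0\times F$ with $M^0=M\cap T$, forcing $F$ to contain an order-$3$ element; and when $\iota_\tau\theta\in\pi_\Phi(M)$ the above norm computation applies. Your outline stops short of this case analysis, so the heart of the theorem remains unproved.
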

\begin{proof}
By Lemma \ref{le:MinT}, there is Cartan subalgebra $\frh$ of $\frso(C,q)$ such that $M$ is contained in $\{\varphi\in\Aut\frso(C,q):\varphi(\frh)\subseteq \frh\}$. Since all the Cartan subalgebras are conjugated we may assume that $\frh$ is the Cartan subalgebra of diagonalizable endomorphisms in $\frso(C,q)$ relative to our basis $\calB$ of $C$ in \eqref{eq:basisC}.

Consider the projection $\pi:N\rightarrow \Aut\Phi=W\rtimes S_3$. If $\pi(M)$ were contained in $W\rtimes\langle\iota_\sigma\theta^i\rangle$ for some $i=0,1,2$, then as $\pi\bigl(\theta^{i}M\theta^{-i}\bigr)\subseteq W\rtimes\langle\iota_\sigma\rangle$, we have that $M$ is conjugated to $\theta^{i}M\theta^{-i}$, which is contained in $\iota_{SO(C,q)}\rtimes\langle\iota_\sigma\rangle=\iota_{O(C,q)}=\Aut(R,*)$. Therefore we may assume that $\pi(M)$ contains elements of order $3$, not all contained in $W$. Lemma \ref{le:order3} shows then that, after passing to a conjugate of $M$ if necessary, we may assume that $M$ contains and element $g$ such that either $\pi(g)=\theta$ or $\pi(g)=\iota_\tau\theta$.

In the latter case: $\pi(g)=\iota_\tau \theta$, we have $M\cap T\subseteq T^{(\iota_\tau\theta)}$, which is finite (Lemma \ref{le:centralizers}), but then, with $\varphi=\iota_\tau\theta$, the map
\[
\begin{split}
(\bF^\times)^4&\longrightarrow T^{(\varphi)}=\{t\in T: t\varphi=\varphi t\}\\
(\beta_0,\beta_1,\beta_2,\beta_3)&\mapsto t(\varphi t\varphi t^{-1})(\varphi^2 t\varphi^{-2}),
\end{split}
\]
where $t=\iota_f$ with $f=\diag(\beta_0,\beta_0^{-1},
\beta_1,\beta_2,\beta_3,\beta_1^{-1},\beta_2^{-1},\beta_3^{-1})$, is a continuous map (in the Zariski topology) of the torus $(\bF^\times)^4$ (connected) into the finite (and hence discrete) group $T^{(\varphi)}$. Hence this map is constant and thus $t(\varphi t\varphi t^{-1})(\varphi^2 t\varphi^{-2})=id$ for any such $t$. But $g=t\varphi$ for some $t\in T$ and hence
\[
g^3=t(\varphi t\varphi^{-1})(\varphi^2 t\varphi^{-2})\varphi^3=\varphi^3=id,
\]
and $M$ contains the order $3$ automorphism $g$, which is outer, as it induces the automorphism $\iota_\tau\theta$ in $\Aut\Phi$, which does not belong to $W$. (This argument is borrowed from \cite{DMVpr}.)

In the first case: $\pi(g)=\theta$, since $\pi(M)$ is an abelian subgroup of $\Aut(\Phi)$ containing $\theta$, it is contained in $S_{\{\epsilon_1,\epsilon_2,\epsilon_3\}}\times\{\pm id\}\times\langle\theta\rangle$ (Lemma \ref{le:centralizers}) and we may assume that its projection in $S_{\{\epsilon_1,\epsilon_2,\epsilon_3\}}$ does not contain elements of order $3$ (otherwise both $\iota_\tau$ and $\theta$ would belong to $\pi(M)$ and hence $\iota_\tau\theta$ would belong to $\pi(M)$, a case already dealt with). After a cyclic permutation of the indices $i=1,2,3$ we may assume that $\theta\in\pi(M)\subseteq \langle (1,2)\rangle\times\{\pm id\}\times\langle\theta\rangle$, and hence there are the following possibilities:
\[
\pi(M)=\langle\theta\rangle,\ \langle -id,\theta\rangle,\ \langle (1,2),\theta\rangle,\ \langle-(1,2),\theta\rangle\ \text{or}\ \langle (1,2),-id,\theta\rangle.
\]
But since $M$ is a maximal abelian diagonalizable subgroup of $\Aut\frso(C,q)$, $M\cap T=\cap_{\nu\in\pi(M)}T^{(\nu)}$. If $-id\in\pi(M)$ then $M\cap T\subseteq T^{(-id)}$ which is a $2$-group (Lemma \ref{le:centralizers}), so that $M$ is finite of size $\vert M\cap T\vert\vert\pi(M)\vert =2^m\times 3$ for some $m$, it follows that $M$ contains an order $3$ element, as required.
Otherwise $\pi(M)=\langle\theta\rangle$, $\langle (1,2),\theta\rangle$, or $\langle -(1,2),\theta\rangle$, and Lemma \ref{le:centralizers} shows that $M\cap T$ is a torus. But $M$  is the direct product  of the connected component of $1$ and a finite group: $M=M^0\times F$ (see
\cite[\S 16.2]{HumphreysGroups}), and $M\cap T$ is connected, so $M\cap T\subseteq M^0$. It follows that the order of $F$ divides the order of $\pi(M)=M/M\cap T$ which is $2^m\times 3$ for some $m$. Again, since $\pi(M^0)=\{id\}$ as $M^0$ is connected, this means that there is an order $3$ element in $F$ which projects onto $\theta$.
\end{proof}

\smallskip

The classification, up to conjugacy, of the MADs containing order $3$ outer automorphisms is now easy (see \cite{DMVpr}):

\begin{theorem}\label{th:fineouterD4}
Up to equivalence, there are exactly three fine gradings of the Lie algebra $\frso_8(\bF)$ whose associated MAD contains an outer automorphism of order $3$. Their universal grading groups and types are the following:
\begin{itemize}
\item A $\bZ^2\times\bZ_3$-grading of type $(26,1)$.
\item A $\bZ_2^3\times\bZ_3$-grading of type $(14,7)$.
\item A $\bZ_3^3$-grading of type $(24,2)$.
\end{itemize}
\end{theorem}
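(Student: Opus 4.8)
The plan is to parametrise each such MAD $M\subseteq\Aut\frso(C,q)$ by an outer automorphism of order $3$ that it contains, and to exploit the description of the centralizer of a triality automorphism in Proposition~\ref{pr:autsoCq}.

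\textbf{Reduction and splitting.} Let $M$ be a MAD containing an outer automorphism of order $3$. By the proof of Theorem~\ref{th:R*outer} together with Lemma~\ref{le:order3}, after replacing $M$ by a conjugate we may assume that $M$ contains an order-$3$ automorphism $g$ with $\pi(g)\in\{\theta,\iota_\tau\theta\}$; since $\iota_\tau\in W$ while $\theta$ and $\iota_\tau\theta$ project to the two outer order-$3$ classes of $\Aut\Phi$, and since every order-$3$ outer automorphism of $\frso(C,q)$ is conjugate to a triality automorphism $\theta_\eta=\iota_\eta\theta$ (Principle of Triality, cf. Proposition~\ref{pr:autsoCq}(3)), we may assume $g=\theta_\eta$ with $\eta^3=id$. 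As recalled before Proposition~\ref{pr:autsoCq}, $\bar C_\eta$ is then the para-octonion algebra $\bar C_{id}$ or the Okubo algebra $\bar C_\tau$; the two cases give non-conjugate $g$, distinguished by the fixed subalgebras $\frso(C,q)^{\theta}=\der C$ (type $G_2$, dimension $14$) and $\frso(C,q)^{\theta_\tau}\cong\frpsl_3(\bF)$ (dimension $8$). By Proposition~\ref{pr:autsoCq}(4), $M\subseteq\iota_{\Aut\bar C_\eta}\times\langle g\rangle$. The map $-id$ is an automorphism of $\bar C_\eta$ lying in $\ker\iota$, so $\iota_{\Aut\bar C_\eta}\cong\Aut\bar C_\eta/\{\pm id\}$ and its MADs correspond to fine gradings of $\bar C_\eta$; and since $M$ contains the central direct factor $\langle g\rangle$, it decomposes as $M=\bar M\times\langle g\rangle$ with $\bar M=M\cap\iota_{\Aut\bar C_\eta}$ a (by maximality of $M$, necessarily maximal) abelian diagonalizable subgroup of $\iota_{\Aut\bar C_\eta}$.

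\textbf{The candidate MADs.} The group $\Aut C=\Aut\bar C_{id}$ is the simple group of type $G_2$ and, up to conjugacy, has exactly two MADs: a maximal torus, giving the Cartan grading of $C$ over $\bZ^2$, and an elementary $2$-subgroup, giving the division grading over $\bZ_2^3$ (see \cite{Eld98}). The group $\Aut\bar C_\tau\cong PGL_3(\bF)$ also has exactly two MADs: a maximal torus ($\bZ^2$) and the ``Pauli'' subgroup $\bZ_3^2$ (see \cite{EP96}, or argue directly for $PGL_3$). This yields at most four candidate MADs of $\Aut\frso(C,q)$. For each I would describe the induced grading of $\frso(C,q)$ by refining the weight-space decomposition under $\bar M$ by the eigenspace decomposition under $g$ (the $+1$-eigenspace being $\der C$ or $\frpsl_3(\bF)$, graded by $\bar M$ as $C$ resp. $\bar C_\tau$ dictates, the other two eigenspaces having dimension $7$ resp.\ $10$). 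A direct count of the common eigenspaces gives universal grading groups $\bZ^2\times\bZ_3$, $\bZ_2^3\times\bZ_3$, $\bZ^2\times\bZ_3$, $\bZ_3^3$ and types $(26,1)$, $(14,7)$, $(26,1)$, $(24,2)$, respectively. As the grading groups of the last three are pairwise non-isomorphic, the only possible coincidence is between the two $\bZ^2\times\bZ_3$-gradings, so there are either three or four classes.

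\textbf{The coincidence.} It remains to prove that the $\bZ^2\times\bZ_3$-grading coming from a maximal torus of $G_2$ together with $\theta$ is equivalent to the one coming from a maximal torus of $PGL_3$ together with $\theta_\tau$. Conjugating both MADs into $N=\{\varphi\in\Aut\frso(C,q):\varphi(\frh)\subseteq\frh\}$ (Lemma~\ref{le:MinT}), the identity component $M^0$ is a $2$-torus contained in $T=\ker\pi$ and in $T^{(\pi(g))}$; since $T^{(\iota_\tau\theta)}$, and every $N$-conjugate of it inside $T$, is finite (Lemma~\ref{le:centralizers}(6)) while $T^{(\theta)}$ is a $2$-torus (Lemma~\ref{le:centralizers}(3)), $\pi(g)$ must be conjugate to $\theta$, hence after a further conjugation $\pi(g)=\theta$ and $M^0=T^{(\theta)}$. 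Thus both MADs have the shape $T^{(\theta)}\times\langle g\rangle$ with $g$ an order-$3$ element of $\pi^{-1}(\theta)$ centralising $T^{(\theta)}$, and the proof finishes by checking that the finitely many such subgroups form a single orbit under the normaliser of $T^{(\theta)}$ — equivalently, that every order-$3$ element $t\theta\in T\cdot\langle\theta\rangle$ whose fixed subalgebra is of type $A_2$ can be conjugated to $\theta$. I expect this last reconciliation of the two torus constructions — essentially the verification that the twisted obstruction $\ker(1+\theta+\theta^2)/(\theta-1)$ acting on $T$ and on the normaliser of $T^{(\theta)}$ does not produce a second conjugacy class — to be the main obstacle; everything else is either a structural consequence of Propositions~\ref{pr:autsoCq} and the lemmas, or a direct eigenspace count.
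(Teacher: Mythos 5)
Your reduction and your enumeration of the four candidate MADs agree in substance with the paper, but the argument has a real gap at the step you yourself flag as the main obstacle, and the route you sketch for closing it would not work as stated. You propose to show that ``every order-$3$ element $t\theta\in T\cdot\langle\theta\rangle$ whose fixed subalgebra is of type $A_2$ can be conjugated to $\theta$'' --- but $\theta$ has fixed subalgebra $\der C$ of type $G_2$, so those elements are \emph{never} conjugate to $\theta$, and the goal as formulated is vacuously false. What you actually need is that the two subgroups coincide, not that their generators are conjugate.

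The paper's resolution of the coincidence is more direct and avoids any twisted-cohomology count. It produces an explicit element $\iota_{\tau_\omega}\in T^{(\theta)}$ with $\tau_\omega=\diag(1,1,1,\omega,\omega^2,1,\omega^2,\omega)$ ($\omega$ a primitive cube root of unity), an automorphism of $C$ of order $3$. By Proposition~\ref{pr:autsoCq}(3), $\iota_{\tau_\omega}\theta=\theta_{\tau_\omega}$; this is an outer order-$3$ automorphism whose fixed subalgebra is $\der\bar C_{\tau_\omega}$, and $\bar C_{\tau_\omega}$ is isomorphic to the pseudo-octonion algebra, so $\theta_{\tau_\omega}$ lies in the \emph{second} conjugacy class. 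Since $\iota_{\tau_\omega}\in T^{(\theta)}$ and $T$ is abelian, $T^{(\theta_{\tau_\omega})}=T^{(\theta)}$, so the single subgroup $T^{(\theta)}\times\langle\theta\rangle=T^{(\theta_{\tau_\omega})}\times\langle\theta_{\tau_\omega}\rangle$ is at once a MAD built from a maximal torus of $\Aut C$ and $\theta$, and a MAD built from a maximal torus of an Okubo-automorphism group and a type-$A_2$ triality automorphism. No orbit count is needed: the two constructions literally give conjugate MADs because one subgroup belongs to both families.

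Two smaller points. First, your justification of the reduction is weaker than the paper's: Lemma~\ref{le:order3} classifies order-$3$ elements of $\Aut\Phi$ up to conjugacy, but what is needed is the classification of order-$3$ outer automorphisms of $\frso(C,q)$ up to conjugacy in $\Aut\frso(C,q)$; the paper obtains this directly from Kac's theory of finite-order automorphisms rather than by trying to lift through $\pi:N\to\Aut\Phi$ (where one would still have to control twists by $T$). Second, when you assert that $M$ decomposes as a direct product $\bar M\times\langle g\rangle$ by maximality, recall that this is precisely the structure the paper derives; that part is fine, but it should be flagged as coming from Proposition~\ref{pr:autsoCq}(4), since it is exactly the statement that the centralizer of a triality automorphism $\theta_\eta$ in $\Aut\frso(C,q)$ is the direct product $\iota_{\Aut\bar C_\eta}\times\langle\theta_\eta\rangle$.
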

\begin{proof}
Let $M$ be a MAD containing an outer automorphism of order $3$. According to \cite[Chapter 8]{Kac}, there are, up to conjugacy, only two such automorphisms, namely $\theta$ with fixed subalgebra $\der C$, or $\iota_\tau\theta=\theta_\tau$ with fixed subalgebra $\der\bar C_\tau$ (the Lie algebra of derivations of the pseudo-octonion algebra). Proposition \ref{pr:autsoCq} shows that in the first case $(\theta\in M$), $M$ is contained in the centralizer of $\theta$ in $\Aut\frso(C,q)$, which is $\iota_{\Aut C}\times \langle\theta\rangle$, and in the second case ($\theta_\tau\in M$), $M\subseteq \iota_{\Aut\bar C_\tau}\times\langle \theta_\tau\rangle$.

Hence, by maximality, $M=M'\times\langle\theta\rangle$ or $M=M''\times\langle\theta_\tau\rangle$ for a MAD $M'$ in $\Aut C$ or a MAD $M''$ in $\Aut \bar C_\tau$. There are, up to equivalence, just two fine gradings on $C$, with universal grading groups $\bZ_2^3$ and $\bZ^2$ (see \cite{Eld98} or \cite{Eldprb}). The first one gives the $\bZ_2^3\times\bZ_3$-grading of type $(14,7)$ which has been considered in \cite[Proposition 5.9]{Eldprb}, while the MAD of the second case is precisely $T^{(\theta)}\times\langle\theta\rangle$ in the notation previously used in this section. As the eigenspaces for the action of $\theta$ on $\frso(C,q)$ are $\der C$ and two copies of the natural module $C_0=\{x\in C:q(x,1)=0\}$ for $\der C$, the action of $T^{(\theta)}$ on each such eigenspace is the weight space decomposition for the Cartan subalgebra $\frh\cap \der C$. It follows that its type is $(26,1)$. Also, $T^{(\theta)}$ contains the element $\iota_{\tau_\omega}$ with $\tau_\omega=\diag(1,1,1,\omega,\omega^2,1,\omega^2,\omega)$, for a primitive cubic root $\omega$ of $1$ (see \cite[(3.24)]{Eldprb}). The element $\iota_{\tau_\omega}\theta$ is an outer automorphism of order $3$ in $\Aut\frso(C,q)$ with fixed subalgebra $\der \bar C_{\iota_\omega}$. But $\bar C_{\iota_\omega}$ is (isomorphic) to the pseudo-octonion algebra. Hence this MAD may be described too as $T^{(\iota_\omega\theta)}\times\langle \iota_\omega\theta\rangle=T^{(\theta_{\tau_\omega})}\times\langle\theta_{\tau_\omega}\rangle$, and contains an outer automorphism of the second type.

Finally, if $M$ is a MAD of the form $M''\times\langle\theta_\tau\rangle$, $M''$ gives a fine grading of the pseudo-octonion algebra and there are two possibilities \cite[Remark 4.6]{Eldprb}: a $\bZ_3^2$-grading or a $\bZ^2$-grading. The latter one gives a $\bZ^2\times\bZ_3$-grading of $\frso(C,q)$ already considered (it is conjugated to $T^{(\theta_{\tau_\omega})}\times\langle\theta_{\tau_\omega}\rangle$ above), while the first one gives the fine $\bZ_3^3$-grading of type $(24,2)$ in \cite[Proposition 5.10]{Eldprb}. Alternatively, the pseudo-octonion algebra can be defined (as was done originally in \cite{Oku78}) on the space of trace zero $3\times 3$ matrices and, as such, $\Aut\bar C_\tau\simeq\Aut\Mat_3(\bF)$. Theorem \ref{th:fine_gradings_R} gives the two possible gradings over $\bZ_3^2$ and $\bZ^2$ (see \cite[Remark 4.7]{Eldprb}).
\end{proof}

In order to finish the classification of the fine gradings of $\frso_8(\bF)$ up to equivalence, note that if $M$ is a MAD in $\Aut\frso_8(\bF)$, Theorem \ref{th:R*outer} shows that either the associated grading is one of the $15$ gradings in Example \ref{ex:R*_D4}, or one of the three fine gradings in Theorem \ref{th:fineouterD4}. The $15$-gradings are indeed fine, as otherwise, they could be extended with the use of some order $3$ outer automorphism and this is not possible according to the possibilities in Theorem \ref{th:fineouterD4}. Among the $15$ gradings in Example \ref{ex:R*_D4} there are three of them which share the same universal group:

\medskip
\noindent $\boxed{\Gamma_0}$:\quad The $\bZ_2^3\times\bZ_4$-grading of $\frso_8(\bF)\simeq K(R,*)$ with invariant $[(Q,\tau_o),(1,1,1,q_1)]$ in the notations of Section \ref{se:finesosp}. This corresponds to the identification of $R=\Mat_8(\bF)$ with $\End_Q(V)$, where $V$ is a graded $Q$-module of dimension $4$ with a good basis $\{v_1,v_2,v_3,v_4\}$ and a hermitian form relative to $\tau_o$, $B:V\times V\rightarrow Q$, such that the basis above is orthogonal and $B(v_i,v_i)=1$ for $i=1,2,3$ and $B(v_4,v_4)=q_1$. The involution $*$ is given by $B(rv,w)=B(v,r^*w)$ for any $r\in $ and $v,w\in V$. Besides $\degree(v_i)=g_i$ for any $i$, with $g_1=0$, and $2g_2=2g_3=2g_4+h_1=0$ (see \eqref{eq:relationstilde}).

Note that $(E_{ii}\otimes q)^*=E_{ii}\otimes q^{\tau_o}$ for $i=1,2,3$, while $(E_{44}\otimes q)^*=E_{44}\otimes q_1q^{\tau_o}q_1$. Then with $K=K(R,*)$, $R_0=\sum_{i=1}^4 E_{ii}\otimes\bF$ has dimension $4$ and $K_0=0$, and for $h\in H=\Supp Q$ (recall $h_i=\degree (q_i)$, $i=1,2,3$), $R_h=\sum_{i=1}^4 E_{ii}\otimes Q_h$ and hence $K_{h_1}=0$, $K_{h_2}=E_{44}\otimes \bF q_2$ and $K_{h_3}=\sum_{i=1}^3E_{ii}\otimes \bF q_3$. All the other homogeneous spaces in $R$ have dimension $2$, and their intersection with $K$ have dimension $1$, so that the type of this grading in $K$ is $(25,0,1)$.

\medskip

\noindent $\boxed{\Gamma_1}$:\quad The $\bZ_2^3\times\bZ_4$-grading of $\frso_8(\bF)\simeq K(R,*)$ with invariant $[(Q,\tau_o),(1,1,q_1,q_1)]$. This corresponds to the identification of $R=\Mat_8(\bF)$ with $\End_Q(V)$, where $V$ is a graded $Q$-module of dimension $4$ with a good basis $\{v_1,v_2,v_3,v_4\}$ and a hermitian form relative to $\tau_o$, $B:V\times V\rightarrow Q$, such that the basis above is orthogonal and $B(v_i,v_i)=1$ for $i=1,2$ and $B(v_i,v_i)=q_1$ for $i=3,4$. The same arguments as above gives that $\dim K_{h_2}=\dim K_{h_3}=2$ and all the other nonzero homogeneous spaces have dimension $1$. Hence the type of the grading is $(24,2)$. Here $\degree(v_i)=g_i$ with $g_1=0$ and $2g_2=2g_3+h_1=2g_4+h_1=0$, and the grading group is generated by the elements $g_3,g_2,g_3-g_4,h_2$ whose orders are $4$, $2$, $2$, $2$ respectively. The subalgebra $K\subo$ of $K=K(R,*)$ spanned by the homogeneous subspaces $K_g$ with $2g=0$ is spanned by the homogeneous subspaces:
\begin{itemize}
    \item $K_{h_3}=(\bF E_{11}+\bF E_{22})\otimes q_3$, $K_{h_2}=(\bF E_{33}+\bF E_{44})\otimes q_2$,
    \item $K_{g_2}=\bF(E_{12}-E_{21})\otimes 1$, $K_{g_2+h_1}=\bF(E_{12}-E_{21})\otimes q_1$, $K_{g_2+h_2}=\bF(E_{12}-E_{21})\otimes q_2$, $K_{g_2+h_3}=\bF(E_{12}+E_{21})\otimes q_3$,
    \item $K_{g_3-g_4}=\bF(E_{34}-E_{43})\otimes 1$, $K_{g_3-g_4+h_1}=\bF(E_{34}-E_{43})\otimes q_1$, $K_{g_3-g_4+h_2}=\bF(E_{34}+E_{43})\otimes q_2$, $K_{g_3-g_4+h_3}=\bF(E_{34}-E_{43})\otimes q_3$.
\end{itemize}
This subalgebra $K\subo$ is the direct sum of four ideals, all of them isomorphic, as ungraded algebras, to $\frsl_2(\bF)$, namely:
    \[
    \begin{split}
    \frs_1&=\espan{(E_{11}+E_{22})\otimes q_3,(E_{12}-E_{21})\otimes q_1,(E_{12}-E_{21})\otimes q_2},\\
    \frs_2&=\espan{(E_{11}-E_{22})\otimes q_3,(E_{12}-E_{21})\otimes 1,(E_{12}+E_{21})\otimes q_3},\\
    \frs_3&=\espan{(E_{33}+E_{44})\otimes q_2,(E_{34}-E_{43})\otimes q_1,(E_{34}-E_{43})\otimes q_3},\\
    \frs_4&=\espan{(E_{33}-E_{44})\otimes q_2,(E_{34}-E_{43})\otimes 1,(E_{34}-E_{43})\otimes q_2}.
    \end{split}
    \]
Note that $K_{h_2}\oplus K_{h_3}$ is a Cartan subalgebra of $\frg=\frso_8(\bF)=K(R,*)$, and that the basic elements
    \begin{equation}\label{eq:basisKs}
    \begin{split}
    &(E_{11}+E_{22})\otimes q_3\ \text{and}\ (E_{11}-E_{22})\otimes q_3\ \text{of $K_{h_3}$}\\
    &(E_{33}+E_{44})\otimes q_2\ \text{and}\ (E_{33}-E_{44})\otimes q_2\ \text{of $K_{h_2}$}
    \end{split}
    \end{equation}
are the only elements of $K_{h_3}$ and $K_{h_2}$, up to multiplication by scalars,  which are obtained as a product of homogeneous elements of $K\subo$.

Let us compute the associated MAD subgroup $M_1=\Diag_{\Gamma_1}(R)$. With the arguments in the proof of Theorem \ref{th:equivalence_varphi_gradings} any $\psi\in M_1$ is given by conjugation by an element $a$ of the form $a=(E_{11}+\alpha E_{22}+\beta E_{33}+\gamma E_{44})\otimes d$, for an homogeneous element $d\in Q$. The fact that $\psi$ commutes with $*$ is equivalent to the condition $aa^*=1$ (after scaling $a$ if necessary), that is:
    \[
    dd^{\tau_o}=1,\ \alpha^2dd^{\tau_o}=1,\ \beta^2dq_1d^{\tau_o}q_1=1,\ \gamma^2dq_1d^{\tau_o}q_1=1,
    \]
so that we may take (as we may scale the element $a$) $d$ to be $q_i$ for $i=0,1,2,3$ (with $q_0=1$) and then $\alpha\in\{\pm 1\}$ and either $\beta,\gamma\in \{\pm 1\}$ if $d=1$ or $d=q_1$, or $\beta,\gamma\in\{\pm\epsilon\}$ if $d=q_2$ or $d=q_3$ where $\epsilon$ is a primitive fourth root of $1$.

Note that if $a\in\Mat_r(\bF)$, $b\in\Mat_s(\bF)$ and we consider the Kronecker product $\Mat_{rs}(\bF)=\Mat_r(\bF)\otimes\Mat_s(\bF)$, then $\det(a\otimes b)=\det(a)^s\det(b)^r$. (It is enough, by Zariski density, to consider $a$ and $b$ diagonal matrices and the result is clear.)

Hence, with $a$ as above, $a\in \Mat_4(\bF)\otimes Q\simeq\Mat_4(\bF)\otimes\Mat_2(\bF)\simeq\Mat_8(\bF)$ and we have
\[
\det(a)=(\alpha\beta\gamma)^2(\det d)^2=\alpha^2(\beta^2)^2(\det d)^2=(\pm 1)^2=1.
\]
Therefore $aa^*=1$ and $\det a=1$. Thus $\psi(x)=axa^{-1}$ is an automorphism in the connected component (as an algebraic group) $G^0=\iota_{SO(8)}$ of $G=\Aut\frso_8(\bF)$. We conclude that the associated MAD $M_1$ is contained in $G^0$.

\medskip

\noindent$\boxed{\Gamma_2}$:\quad The $\bZ_2^3\times\bZ_4$-grading of $\frso_8(\bF)\simeq K(R,*)$ with invariant $[(Q^{\otimes 2},\tau_o^{\otimes 2}),(1\otimes 1,1\otimes q_1)]$. This corresponds to the identification of $R=\Mat_8(\bF)$ with $\End_{Q^{\otimes 2}}(V)$, where $V$ is a two dimensional graded module over the graded division algebra $Q^{\otimes 2}$, with a good basis  $\{v_1,v_2\}$ which is orthogonal for a hermitian form relative to $\tau_o^{\otimes 2}$, $B:V\times V\rightarrow Q$, such that $B(v_1,v_1)=1\otimes 1$ and $B(v_2,v_2)=1\otimes q_1$. It turns out that again the type of the grading is $(24,2)$ and the subalgebra $K\subo$ of $K=K(R,*)\simeq\frso_8(\bF)$ spanned by the homogeneous subspaces $K_g$ with $2g=0$ is a direct sum of four copies of $\frsl_2(\bF)$ with the same properties as for $\Gamma_1$. Also $\Diag_{\Gamma_2} R$ is contained in $G^0=\iota_{SO(8)}$ Actually, any $\psi\in\Diag_{\Gamma_2}(R)$ is given by the conjugation by an element $a=(E_{11}+\alpha E_{22})\otimes (q_i\otimes q_j)$ ($0\leq i,j\leq 3$) with $\alpha^2=1$ if $(q_iq_i^{\tau_o})(q_jq_j^{\tau_o})=1$ and $\alpha^4=1\ne \alpha^2$ if $(q_iq_i^{\tau_o})(q_jq_j^{\tau_o})=-1$. Hence $\det a=\alpha^4(\det q_i)^4(\det q_j)^4=1$.

\bigskip

Since $\Gamma_0$ is of type $(25,0,1)$, while the type of both $\Gamma_2$ and $\Gamma_3$ is $(24,2)$, it is clear that $\Gamma_0$ is not equivalent to $\Gamma_1$ or $\Gamma_2$ in $\Aut\frso_8(\bF)$. However, although $\Gamma_1$ and $\Gamma_2$ are not equivalent as $*$-gradings of $R$ (actually $\dim R_0=4$ for $\Gamma_1$, while $\dim R_0=2$ for $\Gamma_2$), we have the following result:

\begin{proposition}\label{pr:gamma1gamma2}
Up to equivalence, there is a unique fine grading of $\frso_8(\bF)$ with universal grading group $\bZ_2^3\times\bZ_4$ and type $(24,2)$. In other words, the gradings $\Gamma_1$ and $\Gamma_2$ of $\frso_8(\bF)$ are equivalent.
\end{proposition}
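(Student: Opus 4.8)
The plan is to prove the equivalent statement that the maximal abelian diagonalizable subgroups $M_i=\Diag_{\Gamma_i}(R)$ of $\Aut\frso_8(\bF)$, $i=1,2$, are conjugate, since the universal grading group of a fine grading is the character group of its associated MAD. By the computations made just before the statement, $M_1$ and $M_2$ are both isomorphic to $\bZ_2^3\times\bZ_4$ and both contained in $G^0=\iota_{SO(C,q)}$. Moreover $\Gamma_1$ and $\Gamma_2$ are \emph{not} equivalent as $*$-gradings of $R$ (their zero components have dimensions $4$ and $2$), so no element of $\Aut(R,*)=\iota_{O(C,q)}$ conjugates $M_1$ onto $M_2$; a conjugating element must therefore be sought in a coset $\iota_{O(C,q)}\theta^{\pm1}$, so the equivalence is genuinely outer and the triality automorphism is forced to intervene.

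The strategy is to attach to each $\Gamma_i$ an intrinsic configuration and to show it determines the grading up to $\Aut\frso_8(\bF)$. This configuration consists of: the $\bZ_2$-grading $\frg=K\subo\oplus\frg\subuno$ obtained by coarsening $\Gamma_i$ along the epimorphism $G\to G/\{g:2g=0\}\cong\bZ_2$, whose even part $K\subo$ is the direct sum of the four ideals $\frs_1,\dots,\frs_4\cong\frsl_2(\bF)$; the distinguished Cartan subalgebra $\frh_i=K_{h_2}\oplus K_{h_3}$ of $\frg$ (the sum of the Cartan subalgebras of the $\frs_j$, characterized by \eqref{eq:basisKs} as the span of the homogeneous elements of $\frh_i$ that are products of homogeneous elements of $K\subo$); and the Pauli grading induced on each $\frs_j$. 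After applying a suitable element of $\Aut\frso_8(\bF)$ one may assume that $K\subo$ and $\frh_i$ agree for $\Gamma_1$ and $\Gamma_2$; it then remains to match the order-$4$ generator of the MAD (whose square lies in $M_i\cap T$) and the action on $\frg\subuno$. Here I would use Proposition \ref{pr:autsoCq}(3) together with the explicit triality product \eqref{eq:Ceta}: working in the octonion model, one transports the good-basis data $(1,1,q_1,q_1)$ over $Q$ that defines $\Gamma_1$ to the data $(1\otimes1,\,1\otimes q_1)$ over $Q^{\otimes2}$ that defines $\Gamma_2$ by a triality-twisted automorphism, using that the underlying associative algebras $\End_Q(V)$ ($\dim_Q V=4$) and $\End_{Q^{\otimes2}}(V')$ ($\dim_{Q^{\otimes2}}V'=2$) are both $\Mat_8(\bF)$ and that only the two involutions, not the algebras, differ.

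The step I expect to be the main obstacle is exactly this last bookkeeping: triality respects neither of the two $*$-structures, so one must follow carefully how the eight weight lines of $C$, the four commuting copies of $\frsl_2(\bF)$ with their Pauli gradings, and the extra $\bZ_2$ coming from the square of the $\bZ_4$-generator are permuted, and verify that the induced transformation of $\frh$ is realized by an element of $N$ whose image in $\Aut\Phi=W\rtimes\langle\iota_\sigma,\theta\rangle$ carries $\pi(M_1)$ to $\pi(M_2)$. A more computational alternative --- the route of \cite{DMVpr} --- is to write generators of $M_1$ and $M_2$ explicitly and conjugate them using conjugacy data in the relevant groups; the argument above is intended to replace that by a structural one. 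Finally, fineness of the resulting grading needs no separate check: by Theorem \ref{th:R*outer} every fine grading of $\frso_8(\bF)$ occurs in Example \ref{ex:R*_D4} or in Theorem \ref{th:fineouterD4}, and the only one among them with universal group $\bZ_2^3\times\bZ_4$ and type $(24,2)$ is $\Gamma_1$ (equivalently $\Gamma_2$), which gives the stated uniqueness.
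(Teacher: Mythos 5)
Your proposal correctly sets up the problem: it identifies that $M_1$ and $M_2$ both land in $G^0=\iota_{SO(C,q)}$, that neither element of $\Aut(R,*)$ conjugates one onto the other (since $\calI_2$ distinguishes them as $*$-gradings), and hence that any conjugating element must involve a triality component. You also correctly reduce the uniqueness statement to the conjugacy of $M_1$ and $M_2$, and your closing paragraph dispatching fineness via Theorem~\ref{th:R*outer} is sound. But the proposal does not actually establish the conjugacy: you lay out a plan to transport the data $(1,1,q_1,q_1)$ over $Q$ to the data $(1\otimes 1,1\otimes q_1)$ over $Q^{\otimes 2}$ by an explicit ``triality-twisted automorphism,'' and then candidly label that transport as ``the main obstacle'' and ``bookkeeping'' that you have not carried out. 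A plan plus an acknowledged hard unverified step is not a proof; as written there is a genuine gap where the conclusion should be.

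The paper avoids the explicit computation entirely with a structural argument you did not find. It shows, using the two distinguished $2$-dimensional homogeneous components and the four lines of \eqref{eq:basisKs}, that for $f$ in the normalizer $N_i=\Norm_G(M_i)$ one has $f^8\in G^0$; hence $N_iG^0/G^0$ has order $\le 2$ inside $G/G^0\cong S_3$, so each $N_i$ is contained in one of the three index-$3$ subgroups $G^1,G^2,G^3$, and after conjugating by a power of $\theta$ one may assume $N_i'\subseteq G^3=\Aut(R,*)$. Then comes the key dichotomy: up to $G^3$-conjugacy there are only the two MADs $M_1'$, $M_2'$ of this shape in $G^0$. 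Since $\theta M_1'\theta^{-1}$ is again a MAD in $G^0$, it is $G^3$-conjugate to $M_1'$ or to $M_2'$; if the former, then $\psi^{-1}\theta\in N_1'\subseteq G^3$ for some $\psi\in G^3$, contradicting $\theta\notin G^3$. Hence $\theta M_1'\theta^{-1}$ is $G^3$-conjugate to $M_2'$, and the two MADs are conjugate in $G$. This normalizer-containment and index-$3$ pigeonhole replaces the explicit triality bookkeeping that your proposal regards as the obstruction; it is the step your proposal is missing.
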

\begin{proof}
Consider the MADs $M_1$ and $M_2$ in $G=\Aut\frso_8(\bF)$ associated to $\Gamma_1$ and $\Gamma_2$. We have already checked that $M_1$ and $M_2$ are contained in $G^0=\iota_{SO(8)}\simeq PSO(8)$, the connected component of $G$ containing the neutral element. Also, $G$ is the semidirect product of the normal subgroup $G^0$ and the subgroup $S_3$ generated by $\iota_\sigma$ and $\theta$ (see Proposition \ref{pr:autsoCq}). Then $G$ contains exactly three subgroups of index $3$ containing $G^0$, namely the subgroups $G^i$ generated by $G^0$ and by $\iota_\sigma\theta^i$ ($i=1,2,3$) and note that we have $G^3=\iota_{O(C,q)}$, which can be identified to $\Aut(R,*)$. Let $f\in G$ be an element in the normalizer of $M_i$: $f\in N_i:=\Norm_G(M_i)$, $i=1,2$. Then $f$ either fixes or permutes the two homogeneous subspaces of $\Gamma_i$ of dimension $2$ and hence $f^2$ fixes each one of these homogeneous subspaces. Moreover, $f^2$ induces a permutation in each of these subspaces of the two
one-dimensional subspaces (see \eqref{eq:basisKs}) which can be obtained as the product of two homogeneous spaces in $K\subo$. In particular $f^4$ fixes each one of these one-dimensional spaces: denote by $h_1,h_2,h_3,h_4$ the generators of this subspaces and by $\frh$ the subspace they generate (it is a Cartan subalgebra of $\frso_8(\bF)$). The eigenvalues of the action of each $h_i$ on $K\subo$ are $\alpha_i,-\alpha_i,0$ for some $0\ne\alpha_i\in\bF$ (recall that $K\subo$ is a direct sum of four copies of $\frsl_2(\bF)$), and hence necessarily we have $f^4(h_i)=\pm h_i$ for each $i=1,2,3,4$. We conclude that $f^8$ belongs to the subgroup $\{g\in G: g\vert_\frh=id\}$ which is a torus isomorphic to $(\bF^\times)^4$ as it is the subgroup that fixes elementwise a Cartan subalgebra, and hence it is conjugated to our previous $T$ (see \eqref{eq:T}). In particular $f^8$ is contained in $G^0$.
Since $G/G^0$ is isomorphic to $S_3$,
it turns out that $N_iG^0/G^0$ is either trivial or a group of order $2$ and hence $N_i$ is contained in $G_{j_i}$ for some $j_i=1,2,3$. Hence $M_1'=\theta^{j_1}M_1\theta^{-j_1}$ and $M_2'=\theta^{j_2}M_2\theta^{-j_2}$ are two MADs both contained in $G^0$ (as $G^0$ is a normal subgroup of $G$) and with normalizers $N_1'$ and $N_2'$ contained in $G^3$.

We are left with two cases:
\begin{itemize}
\item If $M_1'$ and $M_2'$ are conjugated in $G^3$ we are done: $M_1$ and $M_2$ are conjugated an hence $\Gamma_1$ and $\Gamma_2$ are equivalent.
\item Otherwise, $M_1'$ and $M_2'$ are representatives of the MADs associated to the two $\bZ_2^3\times\bZ_4$-gradings of type $(24,2)$ in $\frso_8(\bF)$ inherited from $*$-gradings of $R=\Mat_8(\bF)$. But then $\theta M_1'\theta^{-1}(\subseteq G^0\subseteq G^3)$ is conjugated to either $M_1'$ or $M_2'$ in $G^3=\Aut(R,*)$. If $\theta M_1'\theta^{-1}$ were conjugated to $M_1'$, there would exist $\psi\in G^3$ such that $\theta M_1'\theta^{-1}=\psi M_1'\psi^{-1}$, so that $\psi^{-1}\theta\in N_1'\subseteq G^3$, a contradiction since $\psi\in G^3$ but $\theta\not\in G^3$. Therefore, $\theta M_1'\theta^{-1}$ is conjugated to $M_2'$ and hence $M_1$ and $M_2$ are conjugated.
\end{itemize}
\end{proof}

\smallskip

The next result summarizes the classification of the fine gradings on $\frso_8(\bF)$.

\begin{theorem}\label{th:D4}
Up to equivalence, there are $17$ fine gradings of the simple Lie algebra $\frso_8(\bF)$. Their universal grading groups and types are the following:
\begin{enumerate}
\item $\bZ^4$ (Cartan grading), of type $(24,0,0,1)$,
\item $\bZ^3\times\bZ_2$ of type $(25,0,1)$,
\item $\bZ^2\times\bZ_2^3$ of type $(26,1)$,
\item $\bZ\times\bZ_2^5$ of type $(28)$,
\item $\bZ_2^7$ of type $(28)$,
\item $\bZ^2\times\bZ_2^2$ of type $(20,4)$,
\item $\bZ\times\bZ_2^3$ of type $(25,0,1)$,
\item $\bZ\times\bZ_2\times\bZ_4$ of type $(24,2)$,
\item $\bZ_2^5$ of type $(24,0,0,1)$,
\item $\bZ_2^3\times\bZ_4$ of type $(25,0,1)$,
\item $\bZ_2^3\times\bZ_4$ of type $(24,2)$,
\item $\bZ_2\times\bZ_4^2$ of type $(26,1)$,
\item $\bZ\times\bZ_2^4$ of type $(28)$,
\item $\bZ_2^6$ of type $(28)$,
\item $\bZ^2\times\bZ_3$ of type $(26,1)$,
\item $\bZ_2^3\times\bZ_3$ of type $(14,7)$,
\item $\bZ_3^3$ of type $(24,2)$.
\end{enumerate}
\end{theorem}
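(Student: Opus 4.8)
The plan is to obtain Theorem \ref{th:D4} by assembling the results of this section. First I would note that every fine grading of $\frso_8(\bF)$ is induced by a MAD $M$ of $\Aut\frso_8(\bF)$, and invoke Theorem \ref{th:R*outer}: either $M$ is conjugate to a MAD contained in $\Aut(R,*)$, or $M$ contains an outer automorphism of order $3$. The second alternative is completely settled by Theorem \ref{th:fineouterD4}, which yields precisely the three gradings listed as items (15)--(17), with universal groups $\bZ^2\times\bZ_3$, $\bZ_2^3\times\bZ_3$, $\bZ_3^3$ and types $(26,1)$, $(14,7)$, $(24,2)$. For the first alternative, after conjugating we may assume $M\subseteq\Aut(R,*)$; then $M$ is automatically a MAD of $\Aut(R,*)$ (enlarging it inside $\Aut(R,*)$ would enlarge it inside $\Aut\frso_8(\bF)$ as well), so $M$ induces a fine $*$-grading of $R=\End_\bF(C)\simeq\Mat_8(\bF)$. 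By Theorems \ref{th:grading_involutions} and \ref{th:calI_cal2I} such a grading is classified by the invariant $\calI_2$, so it is one of the fifteen gradings tabulated in Example \ref{ex:R*_D4}, and its restriction to $K(R,*)\simeq\frso_8(\bF)$ is the grading induced by $M$.

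Next I would check that each of those fifteen $*$-gradings restricts to a \emph{fine} grading of $\frso_8(\bF)$. If some $\Gamma$ among them restricted to a grading admitting a proper refinement, the refinement's MAD $M'$ would strictly contain the MAD of $\Gamma$, which is already a MAD of $\Aut(R,*)$; using the decomposition of $\Aut\frso_8(\bF)$ as a semidirect product of $\iota_{SO(C,q)}$ and the triality $S_3$ (Proposition \ref{pr:autsoCq}) together with Theorem \ref{th:R*outer}, one is forced into the situation of Theorem \ref{th:fineouterD4}, so the refined grading would be equivalent to one of the three outer gradings. But the universal grading group of a coarsening is a quotient of the universal grading group of the refinement, and a direct inspection shows that none of the fifteen universal groups occurring in Example \ref{ex:R*_D4} is a quotient of $\bZ^2\times\bZ_3$, $\bZ_2^3\times\bZ_3$ or $\bZ_3^3$. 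This excludes any proper refinement, so all fifteen restrictions are fine.

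We then have $15+3=18$ fine gradings of $\frso_8(\bF)$, and the only coincidence among them is the one established in Proposition \ref{pr:gamma1gamma2}: the $*$-gradings of $R$ with invariants $[(Q,\tau_o),(1,1,q_1,q_1)]$ and $[(Q^{\otimes 2},\tau_o^{\otimes 2}),(1\otimes 1,1\otimes q_1)]$, though inequivalent as $*$-gradings of $R$, restrict to equivalent $\bZ_2^3\times\bZ_4$-gradings of type $(24,2)$ of $\frso_8(\bF)$; deleting one of the two leaves exactly $17$ gradings. To see these $17$ are pairwise inequivalent I would record, for each one, its universal grading group and its type -- available from Example \ref{ex:R*_D4}, from Theorem \ref{th:fineouterD4}, and (for the gradings sharing the group $\bZ_2^3\times\bZ_4$) from the discussion preceding the theorem -- and observe that the pair (universal grading group, type) is an equivalence invariant taking the $17$ distinct values displayed in the statement: whenever two of them share the same type (as with the three of type $(26,1)$, the three of type $(25,0,1)$, the four of type $(28)$, the three of type $(24,2)$, and so on) their universal grading groups differ.

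The main obstacle is the second paragraph: proving that triality cannot be used to refine any of the fifteen $*$-gradings. This is where the passage from $\Aut(R,*)$ to the full group $\Aut\frso_8(\bF)$, enlarged by the $S_3$ of triality, must be handled with care, and it is precisely the subtle point that also produces the single genuine identification $\Gamma_1\sim\Gamma_2$ of Proposition \ref{pr:gamma1gamma2}. Everything else amounts to bookkeeping with the invariants $\calI_2$, the universal grading groups and the types.
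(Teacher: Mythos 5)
Your proposal follows the paper's proof in structure: invoke Theorem \ref{th:R*outer} to split MADs into those conjugate into $\Aut(R,*)$ and those containing an outer order-$3$ automorphism, settle the latter with Theorem \ref{th:fineouterD4}, settle the former with the $\calI_2$-classification and Example \ref{ex:R*_D4}, and merge two of the fifteen via Proposition \ref{pr:gamma1gamma2}.  You also usefully make \emph{explicit} two points the paper's proof leaves tacit: (i) the coarsening/quotient argument on universal grading groups, and (ii) the observation that the pair (universal grading group, type) distinguishes the $17$ equivalence classes — the paper instead appeals to the $\calI_2$-classification within $\Aut(R,*)$ for pairwise inequivalence of the first $14$, and the types are only supplied for $\Gamma_0,\Gamma_1,\Gamma_2$ with the rest declared ``straightforward''.

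The real issue is the step you yourself flag as ``the main obstacle'', and it is a genuine gap that your proposal does not close.  You assert that if some $\Gamma$ among the fifteen admitted a proper refinement, then, by the semidirect-product structure of $\Aut\frso_8(\bF)$ and Theorem \ref{th:R*outer}, ``one is forced into the situation of Theorem \ref{th:fineouterD4}''.  But Theorem \ref{th:R*outer} offers a second alternative:\ the refinement's MAD $\tilde M$ could be \emph{conjugate} in $\Aut\frso_8(\bF)$ to a MAD of $\Aut(R,*)$ without actually lying in $\Aut(R,*)$.  Writing $G^3=\Aut(R,*)$ and $G^0=\iota_{SO(C,q)}$, this is the case $\tilde M\subseteq G^j$ with $j\ne 3$; since $M_X\subseteq\tilde M\cap G^3\subseteq G^j\cap G^3=G^0$, it can only happen when $M_X\subseteq G^0$ — and the paper shows explicitly that $M_1,M_2\subseteq G^0$, so this hypothesis is not vacuous.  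In that case the refinement would be equivalent to one of the \emph{fifteen}, not one of the three outer gradings, and your quotient test on $\bZ^2\times\bZ_3$, $\bZ_2^3\times\bZ_3$, $\bZ_3^3$ never comes into play; what one would instead need is a check that no one of the fifteen restricted gradings properly refines another.  The paper's own sentence (``they could be extended with the use of some order $3$ outer automorphism'') skates over exactly the same point, so you have faithfully reproduced the argument, gap included; but to close it one must either prove directly that each of the fifteen MADs $M_X$ of $G^3$ is already a MAD of $G$ — for instance by analysing $C_G(M_X)\cap(G^1\cup G^2)$ — or rule out, by inspection of universal groups and types with the quotient/support constraints, the possibility that a proper refinement of one of the fifteen is equivalent to another of the fifteen.
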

\begin{proof}
This is just the collection given in Example \ref{ex:R*_D4} with the two gradings over $\bZ_2^3\times\bZ_4$ of type $(24,2)$ appearing as the same grading because of Proposition \ref{pr:gamma1gamma2}, plus the three extra gradings in Theorem \ref{th:fineouterD4}. The computation of the types is straightforward, along the same lines used in the computation with the $\Gamma_i$'s, $i=0,1,2$.
\end{proof}


\begin{thebibliography}{KMRT98}

\bibitem[Atlas]{Atlas} \emph{Atlas of Lie Groups and Representations}; \texttt{http://www.liegroups.org}.

\bibitem[BG08]{BG}
Yu.A.~Bahturin and A.~Giambruno, \emph{Group grading on associative algebras with involution}; Canad. Math. Bull. \textbf{51} (2008), no.~2, 182--194.


\bibitem[BSZ01]{BSZ01}
Yu.A~Bahturin, S.K.~Shegal, and M.V.~Zaicev, \emph{Group gradings on associative algebras}; J.~Algebra \textbf{241} (2001), no.~2, 677--698.


\bibitem[BShZ05]{BShZ}
Yu.A.~Bahturin, I.P.~Shestakov, and M.V.~Zaicev, \emph{Gradings on simple Jordan and Lie algebras}; J.~Algebra \textbf{283} (2005), no.~2, 849--868.

\bibitem[BT09]{BTG2}
Yu.A.~Bahturin and M.V.~Tvalavadze, \emph{Group gradings on $G_2$}; Commun. Algebra \textbf{37} (2009), no.~3, 885--893.


\bibitem[BZ06]{BZ}
Y.~Bahturin and M. Zaicev, \emph{Group Gradings on simple Lie Algebras of Type ``A''}; J.~Lie Theory \textbf{16} (2006), no.~4, 719--742.

\bibitem[DM06]{DMG2}
C.~Draper and C.~Mart\'{\i}n, \emph{Gradings on $\frg_2$}; Linear Algebra Appl. \textbf{418} (2006), no.~1, 85--111.

\bibitem[DMpr]{DMF4}
C.~Draper and C.~Mart\'{\i}n, \emph{Gradings on the Albert Algebra and on $\frf_4$}, to appear in Rev. Mat. Iberoamericana; preprint arXiv:math.RA/0703840

\bibitem[DMVpr]{DMVpr}
C.~Draper, C.~Mart\'{\i}n, and A.~Viruel, \emph{Fine Gradings on the exceptional Lie algebra $\mathfrak d_4$}, to appear in Forum Math.; preprint 	arXiv:0804.1763 [math.RA(physics.mathph)].

\bibitem[DV08]{DV}
C.~Draper and A.~Viruel, \emph{Group gradings on ${\mathfrak{o}(8,\bC)}$}; Rep. Math. Phys. \textbf{61} (2008), no.~2, 265--280.

\bibitem[Eld98]{Eld98}
A.~Elduque, \emph{Gradings on octonions}; J. Algebra \textbf{207} (1998), no.~1, 342--354.

\bibitem[Eld00a]{EldHamburgo}
A.~Elduque, \emph{A note on triality and symmetric compositions}; Abh. Math. Sem. Univ. Hamburg \textbf{70} (2000), 125--135.

\bibitem[Eld00b]{Eld00}
A.~Elduque, \emph{On triality and automorphisms and derivations of composition algebras}; Linear Algebra Appl. \textbf{314} (2000), no.~1-3, 49--74.

\bibitem[Eld06]{Eld06}
A.~Elduque, \emph{A Lie grading which is not a semigroup grading}, Linear Algebra Appl. \textbf{418} (2006), no.~1, 312--314.

\bibitem[Eld09a]{Eldpra}
A.~Elduque, \emph{More non semigroup Lie gradings}, Linear Algebra Appl. \textbf{431} (2009), no.~9, 1603--1606.

\bibitem[Eld09b]{Eldprb}
A.~Elduque, \emph{Gradings on symmetric composition algebras}, J.~Algebra \textbf{322} (2009), no.~10, 3542--3579.



\bibitem[EP96]{EP96}
A.~Elduque and J.M.~P\'erez-Izquierdo, \emph{Composition algebras with associative bilinear form}; Commun. Algebra \textbf{24} (1996), no.~3, 1091--1116.

\bibitem[HPP98]{HPP}
M.~Havl\'{\i}cek, J.~Patera, and E.~Pelantova, \emph{On Lie gradings. II}; Linear Algebra Appl. \textbf{277} (1998), no.~1-3, 97--125.

\bibitem[Hum72]{Humphreys}
J.E.~Humphreys, \emph{Introduction to Lie algebras and representation theory}; Graduate Texts in Mathematics, Vol. \textbf{9}. Springer-Verlag, New York-Berlin, 1972.

\bibitem[Hum75]{HumphreysGroups}
J.E.~Humphreys, \emph{Linear algebraic groups}; Graduate Texts in Mathematics, No. \textbf{21}. Springer-Verlag, New York-Heidelberg, 1975.

\bibitem[Jac79]{Jacobson}
N.~Jacobson, \emph{Lie algebras}; Republication of the 1962 original. Dover Publications, Inc., New York, 1979.

\bibitem[Kac90]{Kac}
V.G.~Kac, \emph{Infinite dimensional Lie algebras}; Third edition. Cambridge University Press, Cambridge, 1990.

\bibitem[KMRT98]{KMRT98}
M-A.~Knus, A.~Merkurjev, M.~Rost, and J-P.~Tignol, \emph{The book of involutions};  American Mathematical Society Colloquium Publications, \textbf{44}. American Mathematical Society, Providence, RI, 1998.

\bibitem[Kochpr]{Misha}
M.~Kochetov, \emph{Gradings on finite-dimensional simple Lie algebras}, preprint.

\bibitem[Oku78]{Oku78}
S.~Okubo, \emph{Pseudo-quaternion and pseudo-octonion algebras}; Hadronic J. \textbf{1} (1978), no. 4, 1250--1278.

\bibitem[PPS02]{PPS02} J.~Patera, E.~Pelantov\'a and M.~Svobodov\'a, \emph{The eight fine gradings of $\frsl_4(\bC)$ and $\fro(6,\bC)$}, J.~Math. Phys. \textbf{43} (2002), 6353--6378.

\bibitem[PZ89]{PZ}
J.~Patera and H.~Zassenhaus, \emph{On Lie gradings}; Linear Algebra Appl. \textbf{112} (1989), 87--159;

\bibitem[Svo08]{Svobodova}
M.~Svobodov\'a, \emph{Fine gradings of low-rank complex Lie algebras and of their real forms}; SIGMA Symmetry Integrability Geom. Methods Appl. \textbf{4} (2008), Paper 039, 13 pp.

\end{thebibliography}
\end{document}